\documentclass[a4paper,notitlepage,twoside,leqno,12pt]{amsart}

\usepackage{mathrsfs}
\usepackage{bbm,pifont,latexsym}
\usepackage{anysize} 
\marginsize{2.7cm}{2.7cm}{2.5cm}{2.5cm}

\usepackage{dcolumn,indentfirst}
\usepackage[pdfpagemode=UseNone,linktocpage=true,pdfstartview=FitH]{hyperref}
\usepackage{amsmath,amssymb,amscd,amsthm,amsfonts,mathrsfs}
\usepackage{color,graphicx,xcolor,graphics,subfigure,extarrows,caption2}
\usepackage{titletoc} 
\usepackage{tikz}
\usepackage{tikz-cd}
\usepackage{mathrsfs}
\usepackage{amsmath, amssymb, amsthm}
\usepackage{amssymb}
\usepackage{enumitem}
\usepackage{setspace}
\usepackage{mathtools}
\usepackage{float}
\newtheorem{thm}{Theorem}[section]
\newtheorem{lema}[thm]{Lemma}
\newtheorem{cor}[thm]{Corollary}
\newtheorem{prop}[thm]{Proposition}

\newtheorem{prob}{Problem}

\theoremstyle{definition}

\newtheorem*{rmk}{Remark}

\newcommand{\D}{\mathbb{D}}

\newcommand{\C}{\mathbb{C}}

\newcommand{\EEE}{\mathcal{E}}
\newcommand{\UUU}{\mathcal{U}}
\newcommand{\DDD}{\mathcal{D}}
\newcommand{\NNN}{\mathcal{N}}

\newcommand{\VVV}{\mathcal{V}}
\newcommand{\Crit}{\textup{Crit}}

\newcommand{\diam}{\textup{diam}}

\newcommand{\pa}{\partial}

\newcommand{\mb}{\mathbb}
\newcommand{\mc}{\mathcal}
\newcommand{\sm}{\setminus}
\newcommand{\tu}{\textup}
\newcommand{\ol}{\overline}

\newcommand{\tb}{\textbf}

\makeatletter\@addtoreset{equation}{section}\makeatother 
\title{Rigidity of McMullen Julia sets}

\author{Yan Gao}
\address{Yan Gao, School of Mathematical Sciences, Shenzhen University, Shenzhen 518061, China}
\email{gyan@szu.edu.cn}

\author{Luxian Yang}
\address{Luxian Yang, School of Mathematical Sciences, Shenzhen University, Shenzhen 518061, China}
\email{lxyang@szu.edu.cn}

\author{Jinsong Zeng}
\address{Jinsong Zeng, School of Mathematical Sciences, Shenzhen University, Shenzhen 518061, China}
\email{jinsongzeng@163.com}

\begin{document}
%\address{School of Mathematical Sciences, Shenzhen University, Shenzhen 518061, China}

%---------------------------------------------------------------------------------------------------------------
%\footnotetext[1]{$^\dag$ the corresponding author.}

\begin{abstract}
We provide a complete  quasisymmetric classification of the Julia sets of postcritically finite McMullen maps $f_\lambda(z)=z^n+\lambda/z^n$ with $\lambda\in\mathbb{C}^*$ and $n\geq 2$, and prove that the quasisymmetry group of each such Julia set is exactly the finite dihedral group generated by the natural symmetries of the map. These results establish quasisymmetric rigidity for all topological classes in this family, including Sierpi\'{n}ski-like carpets, necklaces, and clusters, and provide the first known examples of rigid Julia sets in each of the three classes.
\end{abstract}
% AMS subject classifications (used in AMS journals)
\subjclass[2010]{Primary: 37F45; Secondary: 37F10}

% AMS keywords (used in AMS journals)
\keywords{Julia sets; quasisymmetric rigidity; McMullen maps}

% today's date, or fill in whatever date you prefer
%\date{\today}

% acknowledge support, etc
% \thanks{This research was partially supported by NSF grant DOA-123456789.}
% \thanks{We would like to thank our colleagues for their helpful criticism.}

% dedication
% \dedicatory{Dedicated to Professor Donald Knuth on the occasion of his $100$th birthday}

\maketitle

%----------------------------------------------------------------------------------------------------------------
%\vskip1.0cm
%\tableofcontents
%----------------------------------------------------------------------------------------------------------------

%----------------------------------------------------------------------------------------------------------------
\section{Introduction}\label{introduction}
Quasisymmetric geometry of planar fractal sets has attracted considerable interest recently. A homeomorphism $h:X\to Y$ between subsets of the Riemann sphere $\overline{\mathbb{C}}$ is {\bf quasisymmetric} if it extends to an orientation-preserving homeomorphism over $\overline{\mathbb{C}}$ and there exists a homeomorphism $\eta:[0,\infty)\to[0,\infty)$ such that for any distinct $u,v,w\in X$,
\[
\frac{\sigma(h(u),h(v))}{\sigma(h(u),h(w))}\leq \eta\!\left(\frac{\sigma(u,v)}{\sigma(u,w)}\right),
\]
where $\sigma$ is the standard spherical metric.

A central problem in quasiconformal geometry is to classify fractal sets up to quasisymmetric equivalence, and to investigate the quasisymmetric rigidity for canonical representatives of each class \cite{Bonk2006ICM}. A subset of the Riemann sphere is said to be {\bf quasisymmetrically rigid} if every quasisymmetric self-homeomorphism of this set is the restriction of a M\"obius transformation of $\overline{\mathbb{C}}$ or an even more restrictive map. 

Quasisymmetric rigidity has been established for several types of fractal sets, including the standard Sierpi\'{n}ski carpet $S_p$ for odd integer $p\ge3$ \cite{BM1,BM2}, the slit carpet $S_2$ \cite{Mer}, and the Schottky sets \cite{BKM,Mer12}. 

Conformal dynamics provides another rich source of planar fractal sets, including Julia sets of rational maps and limit sets of Kleinian groups. These two families often produce topologically equivalent fractals, which are conjecturally never quasisymmetrically equivalent, with the exception of two cases: Jordan curves and $\overline{\mathbb{C}}$~\cite{LLMM}. 

By \cite{BLM14}, a Sierpi\'{n}ski carpet arising as the Julia set of a {\bf postcritically finite} (i.e., all critical points are eventually periodic, {\bf PCF} for short) rational map is quasisymmetrically rigid, and this result was extended to some non-PCF cases in \cite{QYZ}.
In contrast, many other Julia sets are known to lack quasisymmetric rigidity. Examples include the Basilica Julia set \cite{LM,LMM}, Apollonian gasket Julia sets \cite{LLMM}, and finitely ramified fractal Julia sets \cite{BF}. 

In this paper, we study the quasisymmetric rigidity for Julia sets of PCF rational maps in the \textbf{McMullen family}
$$f_{\lambda}(z)=z^n+\dfrac{\lambda}{z^n},$$
where $\lambda\in\C^*=\C\sm\{0\}$ and $n\geq 2$ is an integer. This family has been shown to exhibit a variety of interesting dynamical and topological phenomena; see, for example \cite{DLU05,QWY12,QRWY15}.

There exist infinitely many PCF parameters $\lambda$ for which the maps $f_\lambda$ have Sierpi\'{n}ski carpet Julia sets. Devaney and Pilgrim posed a question on classifying these Julia sets up to quasisymmetric equivalence \cite{DP}. 
One such map is $f(z)=z^2-1/(16z^2)$. The authors of \cite{BLM14} speculate that the two M\"{o}bius transformations 
$$\xi_1(z)=\tb{i}z\quad\text{and}\quad \xi_2(z)=\frac{1}{4z}$$
generate the quasisymmetry group of the Julia set $J_f$; see Figure \ref{fig:classification}. 

There also exist infinitely many PCF parameters $\lambda$ for which the Julia sets of $f_\lambda$ are not Sierpi\'{n}ski carpets. As will be shown in Theorem \ref{thm1}, the Julia sets of PCF McMullen maps fall into four distinct topological classes: Sierpi\'{n}ski carpets, Sierpi\'{n}ski-like carpets, necklaces, and clusters. Figure \ref{fig:classification} illustrates examples of each class.

The main results of the paper resolve the quasisymmetric classification and rigidity problems for Julia sets of all PCF McMullen maps. In particular, we confirm the above speculation raised in \cite{BLM14}, and answer the question of Devaney and Pilgrim \cite{DP}.

\begin{figure}[H]
\centering
\begin{tikzpicture}
\node at (0,0){ \includegraphics[width=14.cm]{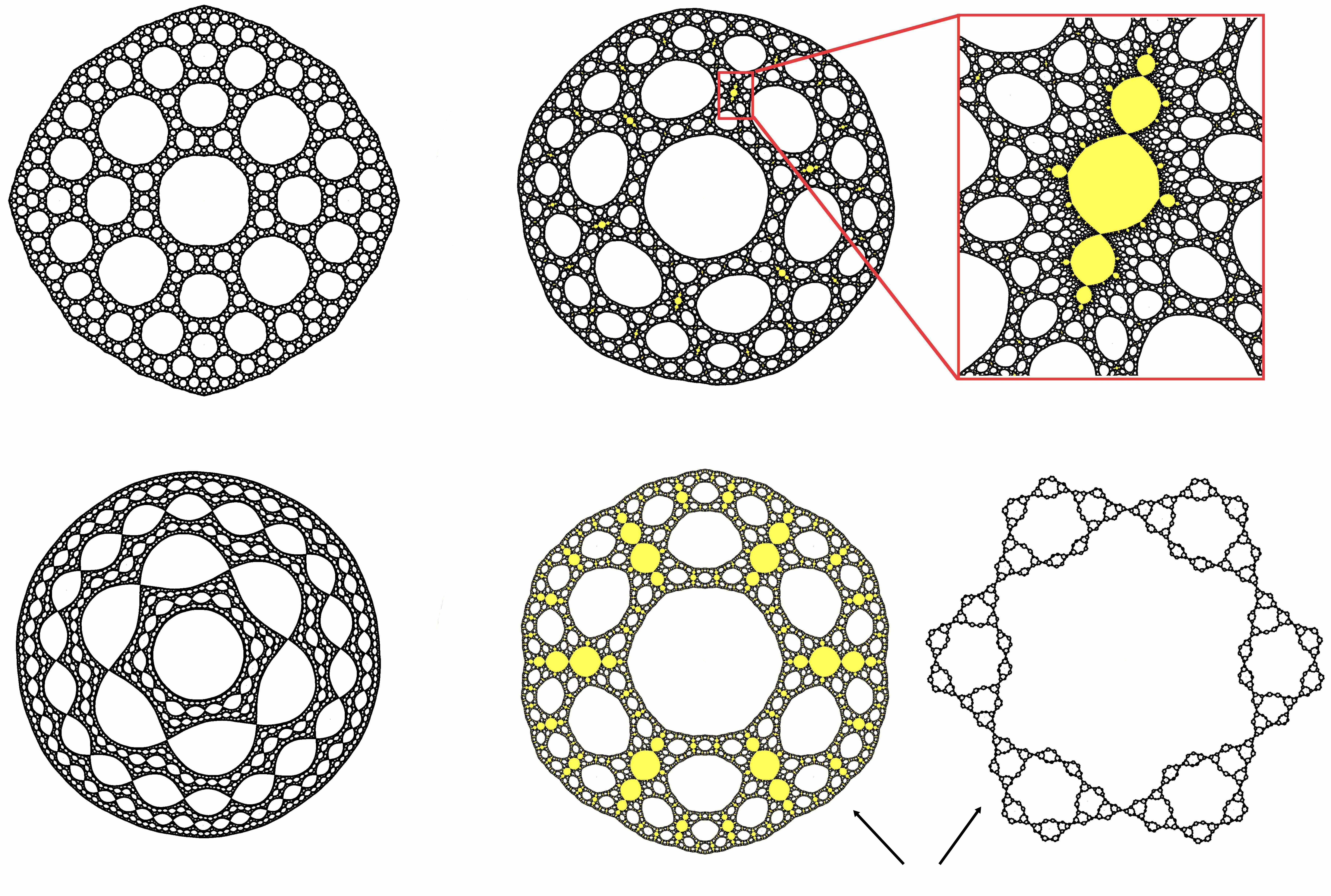}};
\node at (-5.25, 0.25){Sierpi\'{n}ski carpet};
\node at (2.75, 0.25){Sierpi\'{n}ski-like carpet};
\node at (-5.25, -5){Necklace};
\node at (2.85, -5){Clusters};
%\ruler{6}{5}
\end{tikzpicture}
\caption{Classification of PCF McMullen Julia sets.}\label{fig:classification}
\end{figure}

\subsection{Statement of main results}
For a McMullen map $f_\lambda(z)=z^n+\lambda/z^n$, the integer $n\geq 2$ is called its \textbf{exponent}. 
It is easy to check that the Julia set of $f_\lambda$ is invariant under the rotation $z\mapsto e^{\pi \tb{i}/n}z$ and the involution $z\mapsto \sqrt[n]{\lambda}/z$ (Lemma \ref{lem:symmetric}). Hence, the group of all quasisymmetric self-homeomorphisms of the Julia set (called the \textbf{quasisymmetry group}) contains the group $G_\lambda$ generated by these two M\"obius transformations. The following theorem shows that this inclusion is in fact an equality. 

\begin{thm}[Quasisymmetry group]\label{main1}
Let $f_\lambda$ be a PCF McMullen map of exponent $n\ge2$. Then the quasisymmetry group of the Julia set of $f_\lambda$  is exactly $G_\lambda$.
\end{thm}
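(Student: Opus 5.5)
Since every quasisymmetric self-map $h$ of $J=J(f_\lambda)$ extends to an orientation-preserving homeomorphism of $\overline{\mathbb{C}}$, it permutes the Fatou components. The plan is to show that $h$ preserves enough dynamically distinguished structure to be forced into $G_\lambda$. I would first identify the canonical pieces of $J$ that any quasisymmetry must preserve. These are the boundary $\partial B_\infty$ of the immediate basin of $\infty$ and the boundary $\partial T_\lambda$ of the trap door (the preimage of $B_\infty$ containing $0$). Both are distinguished quasisymmetrically by the following facts:
\begin{itemize}
\item in the carpet and Sierpi\'nski-like cases they are the only peripheral circles separating $\partial B_\infty$ from $\partial T_\lambda$-type curves, and they are "largest" in the sense of being non-contractible under the dynamics;
\item in the necklace and cluster cases they are the unique boundary curves meeting the maximal number of other Fatou boundaries.
\end{itemize}
Concretely, I would use the dynamical characterization of $\partial B_\infty$ as the boundary of the unique Fatou component that is invariant and contains no preimage-type separating structure. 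Composing $h$ with the involution $z\mapsto \sqrt[n]{\lambda}/z$ if necessary, I reduce to the case $h(B_\infty)=B_\infty$ and $h(T_\lambda)=T_\lambda$.

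Next I would lift this to a conjugacy statement. The goal is to show that $h$ commutes with $f_\lambda$ up to the symmetry group, i.e.\ $f_\lambda\circ h=\sigma\circ f_\lambda$ on $J$ for some rotation $\sigma$. The standard tool here is the Bonk--Lyubich--Merenkov approach of \cite{BLM14}: for PCF maps, a quasisymmetry that maps a peripheral circle to a peripheral circle can be blown up by the dynamics. Take a small peripheral circle $C$ and its image $h(C)$. Push them forward by iterates $f^k$ and $f^m$ to level-zero circles ($\partial B_\infty$ or $\partial T_\lambda$). By bounded-distortion/uniform quasisymmetry of the inverse branches (available since the map is PCF, hence subhyperbolic), the conjugated maps $f^m\circ h\circ f^{-k}$ form a uniformly quasisymmetric family. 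This family sends $\partial B_\infty$ to itself, and a normal-family limit argument, together with the fact that a quasisymmetry of $\partial B_\infty$ respecting the dense set of contact/marked points must be a rotation of the B\"ottcher coordinate, forces $h|_{\partial B_\infty}$ to be conjugated by B\"ottcher coordinates to a rotation $\theta\mapsto\theta+\alpha$. Since $h$ must send the $2n$ critical-value/prepole points and marked points on $\partial B_\infty$ to themselves, $\alpha$ is a multiple of $1/(2n)$. Composing with a power of $z\mapsto e^{\pi i/n}z$, I reduce to $h=\mathrm{id}$ on $\partial B_\infty$.

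Finally I would propagate the identity. Knowing $h=\mathrm{id}$ on $\partial B_\infty$ and $h(T_\lambda)=T_\lambda$, I would show that $h$ fixes every Fatou component by induction on the level, using that the preimage structure (which components touch or surround which) is combinatorially rigid once the root circle is pointwise fixed. The same B\"ottcher rotation argument on each component, with zero rotation forced by a fixed boundary point, then gives $h=\mathrm{id}$ on every Fatou boundary. Density of the Fatou boundaries in $J$ finishes the argument. In the cluster/necklace cases, where boundaries touch, the touching points serve as the anchors instead of separation.

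I expect the main obstacle to be the first and second steps for non-carpet classes (necklaces, clusters), where Fatou boundaries intersect. There the "peripheral circle" machinery of \cite{BLM14} is unavailable. I would first try replacing it with the combinatorial characterization of $\partial B_\infty$ via the cut-point structure and contact graph of the closed Fatou components, together with a quasisymmetric rigidity lemma for maps between quasicircles that preserve a dense dynamically defined set of contact points.
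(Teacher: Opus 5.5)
\paragraph{There is a genuine gap.} Nothing in your proposal produces a relation between $h$ and $f_\lambda$. Yet Steps 1--3 use properties that only such a relation could supply.

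\begin{itemize}
\item \emph{Step 1 (singling out $\partial B$).} In the carpet case the peripheral circles are topologically indistinguishable: by Whyburn's theorem, a self-homeomorphism can move any peripheral circle to any other. So $\partial B$ cannot be singled out by separation or ``size'' properties. Likewise, ``the invariant Fatou component'' is not a notion that an arbitrary quasisymmetry is known to respect.

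\item \emph{Step 2 (anchoring the rotation).} In the carpet, Sierpi\'nski-like and necklace cases, $\overline{B}$ is disjoint from the closure of every other Fatou domain. The prepoles lie in the Fatou set, and $\pm v\in\partial B$ only in cluster Case (I). So there is no dense marked set on $\partial B$. A limit of the uniformly quasisymmetric maps $f^m\circ h\circ f^{-k}$ is just some quasisymmetric circle map. Nothing forces it to be a B\"ottcher rotation, let alone one by a multiple of $1/(2n)$.

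\item \emph{Step 3 (propagation).} In a carpet, fixing one peripheral circle pointwise gives no combinatorial information about the others. Necklace Julia sets have an infinite homeomorphism group, so no argument using only which components touch or surround which can force $h=\mathrm{id}$. In the paper, the combinatorics only shows that the lift to the Cantor-circle rotates each circle. That all circles share one rotation number is a metric fact: Claim~1 in the proof of Theorem~\ref{thm:necklace} shows that a rotation discrepancy across nested annuli of width comparable to $n^{-k}$ makes the quasisymmetry ratio blow up.

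\item \emph{Cluster case.} Here $\partial B$ meets infinitely many Fatou boundaries, so ``maximal number'' does not discriminate. The paper instead counts components of $\mathcal{K}$ connecting a \emph{pair} of domains. It then needs the tower structure to see that $h$ preserves the depth of contact points.
\end{itemize}

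\paragraph{The missing idea.} What you need is the dynamical relation $f^{m'}\circ h=f^{m}\circ h\circ f^{l}$ on $J$, which the paper establishes class by class.

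\begin{itemize}
\item For carpets it comes from \cite{BLM14}. Compactness of rescalings like yours is only half of that argument. The other half is Bonk's uniformization by round carpets together with Merenkov's rigidity of Schottky maps, which force two rescalings to coincide.
\item For Sierpi\'nski-like carpets, Fatou boundaries touch, so \cite{BLM14} does not apply directly. The paper blows up to a Sierpi\'nski model and proves, via the two-sided distortion Lemma~\ref{lem:distortion}, that the lift of $h$ is quasisymmetric.
\item For necklaces it uses the Cantor-circle blow-up and the common-rotation argument above.
\item For clusters it uses the connecting number and the labelling of the tower.
\end{itemize}

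Proposition~\ref{thm:BLM} then makes $h$ M\"obius, and the rest is short algebra (the case $\tilde\lambda=\lambda$ of Theorem~\ref{thm:mobius}):
\begin{itemize}
\item local-degree counting in the relation gives $h\{B,T\}=\{B,T\}$, so after composing with the involution, $h(z)=az$;
\item comparing leading Laurent coefficients at $\infty$ gives $|a|=1$;
\item again from the relation and degree counting, $h(f^{-1}(T))=f^{-1}(T)$, so $a$ permutes the roots of $z^{2n}=-\lambda$. Hence $a^{2n}=1$ and $h\in G_\lambda$.
\end{itemize}
Your ``$2n$ marked points'' step is this last one. It becomes available only after $h$ is known to be M\"obius and to preserve $f^{-1}(T)$.
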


Ha\"issinsky and Pilgrim proved that for McMullen maps with Cantor-circle Julia sets (non-PCF case), distinct exponents imply quasisymmetric inequivalence~\cite{HP12}. Since the order of $G_\lambda$ is a quasisymmetry invariant, we obtain a corollary to Theorem \ref{main1}. 

\begin{cor}\label{cor:distinct-exponent}
If two PCF McMullen maps have distinct exponents, then their Julia sets are not quasisymmetrically equivalent.
\end{cor}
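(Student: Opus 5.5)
The plan is to derive Corollary \ref{cor:distinct-exponent} from Theorem \ref{main1} by showing that the order of $G_\lambda$ is $4n$, and that this order is a quasisymmetric invariant of $J(f_\lambda)$.

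First I compute $|G_\lambda|$. Set $\rho(z)=e^{\pi\tb{i}/n}z$ and $\tau(z)=c/z$, where $c=\sqrt[n]{\lambda}$ is a fixed choice of root. Then $\rho$ has order $2n$ and $\tau$ is an involution. A direct check gives
$$\tau\rho\tau^{-1}(z)=\frac{c}{e^{\pi\tb{i}/n}c/z}=e^{-\pi\tb{i}/n}z=\rho^{-1}(z).$$
So $G_\lambda$ is a quotient of the dihedral group of order $4n$. I then rule out collapse. The elements $\rho^k$ fix $0$ and $\infty$, while each $\rho^k\tau$ swaps them, so $\tau\notin\langle\rho\rangle$. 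Hence $G_\lambda=\langle\rho\rangle\sqcup\langle\rho\rangle\tau$ has exactly $4n$ elements.

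Next, suppose $h:J(f_\lambda)\to J(f_\mu)$ is a quasisymmetric homeomorphism. Conjugation $g\mapsto h\circ g\circ h^{-1}$ is a bijection between the quasisymmetry groups of the two Julia sets. This uses two standard facts: compositions and inverses of quasisymmetric maps are quasisymmetric, and the orientation-preserving extension property is preserved, since we can compose the extensions over $\overline{\C}$. Consequently the two quasisymmetry groups are isomorphic, and in particular have the same cardinality.

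Finally, I combine this with Theorem \ref{main1}. For PCF $f_\lambda$ of exponent $n$ and PCF $f_\mu$ of exponent $m$, the quasisymmetry groups are $G_\lambda$ and $G_\mu$, of orders $4n$ and $4m$. If the Julia sets were quasisymmetrically equivalent, we would get $4n=4m$, so $n=m$. The only step needing any care is confirming that $G_\lambda$ does not degenerate, which the fixed-point argument handles.
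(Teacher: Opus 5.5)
Your proposal is correct and is exactly the paper's argument: the paper deduces the corollary in one line from Theorem~\ref{main1}, observing that the order of $G_\lambda$ (the dihedral group of order $4n$) is a quasisymmetric invariant. You have simply written out the routine checks the paper leaves implicit, namely that $G_\lambda$ does not degenerate and that conjugation by a quasisymmetry identifies the two quasisymmetry groups.
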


Theorem \ref{main1} implies that if the Julia set of $f_\lambda$ is a Sierpi\'{n}ski carpet, then it is not quasisymmetrically equivalent to the standard Sierpi\'{n}ski carpet $S_p$. As non-elementary Kleinian limit sets have infinite quasisymmetry groups, Theorem \ref{main1} excludes quasisymmetric equivalence between such limit sets and PCF McMullen Julia sets.

\medskip The following result gives the quasisymmetric classification of Julia sets of PCF McMullen maps. For a set $E\subseteq\overline{\mathbb{C}}$ and $a\in\C^*$, we write $aE=\{az:z\in E\}$.

\begin{thm}[Quasisymmetric classification]\label{main2}
   Let $f_\lambda$ and $f_{\tilde{\lambda}}$ be PCF McMullen maps with the same exponent $n\geq 2$, and let $J$ and $\tilde{J}$ be their Julia sets, respectively. Then $J$ and $\tilde{J}$ are quasisymmetrically equivalent if and only if $\tilde{\lambda}=e^{\tb{i}\theta}\lambda$ with $\theta=2\pi k/(n-1)$ for $k\in\{0, \ldots, n-2\}$. In this case, we have $\tilde{J}=e^{\tb{i}\theta/(2n)}J$.
\end{thm}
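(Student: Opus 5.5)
The "if" direction is an explicit conjugation. For $a\in\C^*$ we have
$$a^{-1}f_\lambda(az)=a^{n-1}z^n+\lambda a^{-n-1}z^{-n}.$$
Taking $a=e^{\tb{i}\theta/(2n)}$ gives $a^{2n}=e^{\tb{i}\theta}$. The coefficient of $z^n$ is $a^{n-1}=e^{\tb{i}\theta(n-1)/(2n)}$, and with $\theta=2\pi k/(n-1)$ this equals $e^{\pi\tb{i}k/n}$.

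This coefficient is not $1$ in general, so we compose with the rotation $\rho(z)=e^{\pi\tb{i}/n}z$, which preserves Julia sets by Lemma~\ref{lem:symmetric}. Alternatively, we can solve directly for $a$ with $a^{n-1}=1$ and $a^{-n-1}\lambda=\tilde\lambda$. This forces $a^{-2n}=e^{\tb{i}\theta}$, and the roots of unity reconcile with $a=e^{\tb{i}\theta/(2n)}$ up to multiplication by an element of $\{e^{\pi\tb{i} j/n}\}$, which preserves $J$. I would do this bookkeeping carefully to obtain $\tilde J=e^{\tb{i}\theta/(2n)}J$ exactly. A rotation is Möbius, hence quasisymmetric, and postcritical finiteness is preserved under conjugacy.

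For the "only if" direction, let $h:J\to\tilde J$ be quasisymmetric. The plan is to show that $h$ is the restriction of a Möbius map $M$ that conjugates $f_\lambda$ to $f_{\tilde\lambda}$, up to symmetries. The steps are as follows.

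\begin{enumerate}
\item Use the rigidity machinery behind Theorem~\ref{main1} in a two-map version. In each topological class (carpet, Sierpi\'nski-like carpet, necklace, cluster), $h$ must map distinguished Fatou components to distinguished Fatou components. These are the components of $B_\infty$, $T_0$ (the component around $0$), and their preimages, which are characterized quasisymmetrically, e.g.\ via peripheral circles and the two components of largest "modulus" or valence. So, after composing with an element of $G_{\tilde\lambda}$, we may assume $h$ fixes $\infty$ and sends $T_0$ to $\tilde T_0$.
\item Show that $h$ extends quasiconformally and commutes with the dynamics on the boundaries, $h\circ f_\lambda=f_{\tilde\lambda}\circ h$ on $J$. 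I would follow the argument of \cite{BLM14} (Bonk--Lyubich--Merenkov): blow up by iterating with bounded distortion near repelling or expanding points, and take limits of renormalized $h$ in the precompact family of quasisymmetries. This gives a quasiregular relation, and rigidity of the boundary-circle maps $z\mapsto z^n$ forces the conjugacy.
\item A conjugacy on $J$ extends to a quasiconformal conjugacy on $\overline{\C}$, since PCF maps have no invariant line fields on $J$ (by \cite{BLM14}-type absolute continuity). The extension is therefore Möbius, fixes $\infty$ and $0$, and so $M(z)=az$.
\item The coefficient comparison from the first paragraph then yields $a^{n-1}=1$ up to the rotation symmetries, hence $\tilde\lambda=a^{-2n}\lambda\cdot(\text{root})$. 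This identifies $\tilde\lambda/\lambda$ as an $(n-1)$-th root of unity, giving $\theta=2\pi k/(n-1)$.
\end{enumerate}

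The main obstacle is step 2: proving that an arbitrary quasisymmetry between two a priori different Julia sets intertwines the dynamics, especially in the necklace and cluster classes where the Julia set is not a carpet. There, peripheral circles may touch, and the \cite{BLM14} uniqueness arguments do not apply directly. I would first try to reduce to the carpet-like structure of the boundary of $B_\infty$ together with $T_0$ and its iterated preimages, and use the relative Schottky set and Cantor-circle rigidity of \cite{HP12}-type arguments along the annular structure.
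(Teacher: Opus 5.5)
\paragraph{Sufficiency.} Your sufficiency argument is the paper's (Lemma~\ref{lem:parameter-sym}), with two slips. First, the fact you need is not $\omega J=J$ (Lemma~\ref{lem:symmetric}) but $J_{\omega f}=J_f$ for $\omega^{2n}=1$. Post-composing with a rotation is not a conjugacy. The fact follows from $f(\omega z)=\omega^n f(z)$, which gives $(\omega f)^k=\omega^{(n^k-1)/(n-1)}f^k$, so both maps have the same escaping set. Second, your ``direct'' alternative cannot replace this. With $a^{n-1}=1$ you only get $\tilde\lambda/\lambda=a^{-2}$, which for odd $n$ reaches only even $k$; for example, $n=3$ with $\tilde\lambda=-\lambda$ is missed.

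\paragraph{Necessity: Step~2 aims at a false statement.} Take $n=3$, $\lambda$ PCF and $\tilde\lambda=-\lambda$ (so $k=1$; $f_{-\lambda}$ is then PCF too). Then $\tilde J=e^{\pi\mathbf{i}/6}J$, so $J$ and $\tilde J$ are quasisymmetrically equivalent. Yet no quasisymmetric $h$ with $h\circ f_\lambda=f_{\tilde\lambda}\circ h$ on $J$ exists. By your Step~3 such an $h$ would be a global M\"obius conjugacy. It would then fix $\infty$, the only critical fixed point of local degree $3$, and hence $0$, so $h(z)=bz$. Comparing coefficients of $b f_\lambda(z/b)$ and $f_{-\lambda}(z)$ gives $b^{2}=1$ and $b^{4}=-1$, a contradiction.

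What can actually be obtained is weaker. For clusters the paper gets a conjugacy to $\omega f_{\tilde\lambda}$ with $\omega^{2n}=1$. For carpets one gets only the iterate relation $\tilde f^{m'}\circ\xi=\tilde f^{m}\circ\xi\circ f^{l}$ of Proposition~\ref{pro:sierpinski}. From that relation your Step~4 coefficient comparison does not suffice. The paper needs:
\begin{enumerate}
\item degree counting to show $\xi\{B,T\}=\{\tilde B,\tilde T\}$ and $\xi(f^{-1}(T))=\tilde f^{-1}(\tilde T)$, with separate treatment of $n=2$ and $\lambda=-1/16$;
\item from these, $\xi(z)=az$, $|a|=1$ and $\tilde\lambda=a^{2n}\lambda$;
\item a second application of rigidity to $\mathrm{id}:J_h\to\tilde J$ with $h=a^{-(n-1)}\tilde f$, plus a local-degree computation, to force $a^{2n(n-1)}=1$.
\end{enumerate}

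\paragraph{Necessity: Step~2 has no mechanism outside genuine carpets.} The \cite{BLM14} argument rests on Bonk's uniformization of carpets and Merenkov's rigidity of Schottky maps. Neither is available for Sierpi\'nski-like carpets, necklaces or clusters. Also, \cite{HP12} only separates different exponents and gives no rigidity. Necklaces have infinite homeomorphism groups, so quasisymmetry must enter essentially. The ideas you are missing are:
\begin{enumerate}
\item blowing a Sierpi\'nski-like carpet up to a genuine carpet (via \cite{CGZ}), then proving the lifted map is quasisymmetric through a two-sided distortion lemma;
\item blowing a necklace up to the standard Cantor-circle, then showing that the lift, which must respect the marked preimages of critical points, is a single global rotation;
\item for clusters, a topological invariant (the number of components joining two Fatou domains) that singles out $\{B,T\}$, followed by a tower and labelling argument that produces the conjugacy.
\end{enumerate}
Your Step~1, identifying the distinguished Fatou domains by ``modulus or valence'', is likewise only asserted. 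In the paper that identification is a consequence of these arguments, not an input to them.
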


Geometrically, Theorem \ref{main2} reflects the rotational symmetry of the parameter space. For a fixed exponent $n$, the connectedness locus $$\Lambda_n=\{\lambda\in\C^*: \text{the Julia set of $f_\lambda$ is connected.}\}$$ is invariant under the rotation $\lambda\mapsto e^{2\pi\tb{i}/(n-1)}\lambda$, see Figure \ref{fig:Parameter}. This parameter rotation corresponds precisely to a rotation of the Julia set by $e^{\pi\tb{i}/(n(n-1))}$, yielding a quasisymmetrically equivalent copy. Theorem \ref{main2} asserts that this is the \textbf{only} source of quasisymmetric equivalence; hence the quasisymmetric moduli space of PCF McMullen Julia sets is the quotient of \(\Lambda_n\) by this \((n-1)\)-fold rotation. In particular, for \(n=2\) the parameter rotation is trivial, so distinct parameters always produce Julia sets that are not quasisymmetrically equivalent.

\medskip As an interesting byproduct of these rigidity results, we obtain the following:

\begin{cor}\label{cor:distinct}
A PCF McMullen map is uniquely determined by its Julia set.
\end{cor}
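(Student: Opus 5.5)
Suppose $f_\lambda$ and $f_{\tilde\lambda}$ are PCF McMullen maps, of exponents $n$ and $\tilde n$, with the same Julia set $J=\tilde J$. We will show $\tilde n=n$ and $\tilde\lambda=\lambda$, so that $f_{\tilde\lambda}=f_\lambda$.

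The first step is to match the exponents. The identity map $J\to\tilde J$ is trivially quasisymmetric, so Corollary~\ref{cor:distinct-exponent} gives $\tilde n=n$. Theorem~\ref{main2} then applies to the pair, and quasisymmetric equivalence of $J=\tilde J$ forces $\tilde\lambda=e^{\mathbf{i}\theta}\lambda$ with $\theta=2\pi k/(n-1)$ for some $k\in\{0,\dots,n-2\}$. Moreover $\tilde J=e^{\mathbf{i}\theta/(2n)}J$. Since $\tilde J=J$, the rotation $\rho(z)=e^{\mathbf{i}\theta/(2n)}z$ maps $J$ onto itself. Being Möbius, $\rho$ is quasisymmetric.

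Theorem~\ref{main1} now says $\rho\in G_\lambda$, the group generated by $z\mapsto e^{\pi\mathbf{i}/n}z$ and the involution $\iota(z)=\sqrt[n]{\lambda}/z$. This group is dihedral of order $4n$. Its elements fixing $0$ are exactly the rotations $z\mapsto e^{\pi\mathbf{i}m/n}z$; the remaining elements $z\mapsto c/z$ swap $0$ and $\infty$. Since $\rho$ fixes $0$, we get $e^{\mathbf{i}\theta/(2n)}=e^{\pi\mathbf{i}m/n}$ for some integer $m$. This means $\theta/(2n)\in(\pi/n)\mathbb{Z}$, i.e.\ $\theta\in 2\pi\mathbb{Z}$. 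With $\theta=2\pi k/(n-1)$ and $0\le k\le n-2$, this forces $k=0$, hence $\tilde\lambda=\lambda$.

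The only point needing care is bookkeeping. One must check that the rotation produced by Theorem~\ref{main2} is genuinely not among the $2n$ rotations in $G_\lambda$ when $k\neq0$. This holds because $k/(n-1)\notin\mathbb{Z}$ for $1\le k\le n-2$. The case $n=2$ is trivial, since then $k=0$ is the only option.
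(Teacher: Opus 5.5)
Your proof is correct and follows the paper's argument: Corollary~\ref{cor:distinct-exponent} matches the exponents, Theorem~\ref{main2} applied to the identity yields the rotation $z\mapsto e^{\mathbf{i}\theta/(2n)}z$ preserving $J$, and Theorem~\ref{main1} then forces $\theta\in 2\pi\mathbb{Z}$, hence $k=0$. Your remark that the only elements of $G_\lambda$ fixing $0$ are the rotations $z\mapsto e^{\pi\mathbf{i}m/n}z$ spells out a step the paper leaves implicit.
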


\begin{proof}[Proof of Corollary \ref{cor:distinct}]
Suppose that $f_{\lambda}$ and $f_{\tilde{\lambda}}$ are two PCF McMullen maps with exponents $n,\tilde{n} \geq 2$ respectively. Assume that $J=\tilde{J}$. Then by Corollary \ref{cor:distinct-exponent}, $n=\tilde{n}$. Applying Theorem \ref{main2} to $\xi=\tu{id}$, we obtain $\tilde{\lambda}=e^{\tb{i}\theta}\lambda$, where $\theta=2k\pi/(n-1)$ for some integer $k$ satisfying $0\leq k<n-1$, and the rotation $z\mapsto e^{\tb{i}\theta/(2n)}z$ belongs to the quasisymmetry group of $J$. By Theorem \ref{main1}, there exists an integer $m$ such that $\theta/(2n)=m\pi/n$. This implies $k=m(n-1)$. The condition $0\leq k< n-1$ forces $k=0$. Hence, $\lambda=\tilde{\lambda}$.
\end{proof}

\begin{rmk}
There exist rational maps outside the McMullen family whose Julia sets coincide with McMullen Julia sets. For example, $g(z)=e^{\tb{i}\pi/n}f_\lambda(z)$ and $f_\lambda$, which do not commute (i.e., $g\circ f_\lambda\neq f_\lambda\circ g$), share the same Julia set (Lemma~\ref{lem:parameter-sym}). 
\end{rmk}

\begin{figure}[http]
\centering
\begin{tikzpicture} 
\node at (0,0){ \includegraphics[width=15cm]{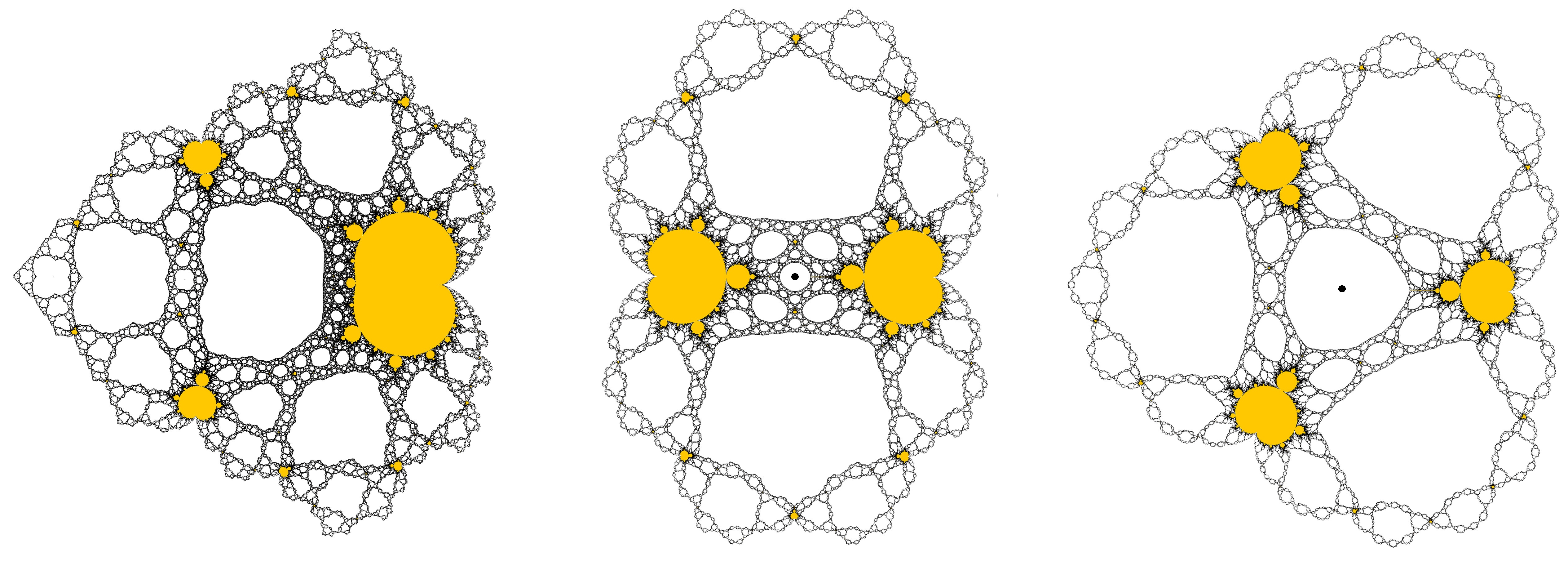}};
\node at (0.2, -2.8){$\Lambda_3$};
\node at (-5, -2.8){$\Lambda_2$};
\node at (5.5, -2.8){$\Lambda_4$};
\end{tikzpicture}
\caption{The connectedness loci of McMullen maps. Here $\Lambda_2$ is the complement of the unbounded hyperbolic component; $\Lambda_3$ is the complement of the union of the two hyperbolic components containing $0$ and $\infty$, respectively, and so is $\Lambda_4$.}\label{fig:Parameter}
\end{figure}

Theorems \ref{main1} and \ref{main2} are both consequences of the following rigidity theorem, which constitutes the main technical achievement of this paper. The sufficiency in Theorem \ref{main2} follows from a simple fact (Lemma \ref{lem:parameter-sym}): if $\tilde{\lambda}=e^{\tb{i}\theta}\lambda$ with $\theta=2\pi k/(n-1)$, then $\tilde{J}=e^{\tb{i}\theta/(2n)}J$.  

\begin{thm}\label{thm:mobius}
   Let $f_\lambda$ and $f_{\tilde{\lambda}}$ be PCF McMullen maps with the same exponent $n\geq 2$, and let $J$ and $\tilde{J}$ be their Julia sets, respectively. Suppose that $\xi: J\to \tilde{J}$ is a quasisymmetry. Then there exists an angle $\theta = 2\pi k/(n-1)$ for some integer $0\le k\le n-2$ such that
\begin{itemize} 
\item[(1)] $\tilde{\lambda} = e^{\textup{\tb{i}}\theta}\lambda$; and\vspace{2pt}
\item[(2)] $\xi(z) = e^{\textup{\tb{i}}\theta/(2n)}z$ up to precomposition with an element of $G_\lambda$.
\end{itemize} 
\end{thm}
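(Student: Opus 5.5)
Since $\xi$ extends to an orientation-preserving homeomorphism of $\overline{\mathbb{C}}$, it maps Fatou components of $f_\lambda$ to Fatou components of $f_{\tilde\lambda}$, with $\xi(\partial U)=\partial\tilde U$ for each component $U$ and its image $\tilde U$. The plan is to first identify which components $\xi$ can send where, then upgrade $\xi$ to a Möbius map, and finally read off the parameter relation.

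The first step is topological. I would show that the two distinguished components, namely the immediate basin $B_\infty$ of $\infty$ and the trap door $T$ containing $0$ (which in the necklace case coincides with $B_\infty$ up to the symmetry $z\mapsto\sqrt[n]{\lambda}/z$), are characterized intrinsically by quasisymmetric invariants. Candidate invariants are: the number of other Fatou boundaries meeting $\partial U$, the local topology at points of $\partial U$, and, most robustly, the \emph{conformal modulus/extremal length} of families of curves separating $\partial U$ from small peripheral circles. Here I would use the dynamics. Every Fatou component is eventually mapped onto $B_\infty$. By the PCF hypothesis (hyperbolicity or subhyperbolicity near $J$), the map $f^k\colon\partial U\to\partial B_\infty$ has bounded distortion on small scales. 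Hence the peripheral circles form a family of uniform quasicircles, and they are uniformly relatively separated wherever they are disjoint. In the carpet case I would invoke the Bonk–Lyubich–Merenkov machinery: a quasisymmetry between such carpets sends peripheral circles to peripheral circles, and the conformal-elevator argument shows that $\xi$, transported by $f^k$ and $\tilde f^{\tilde k}$, yields quasisymmetries between $\partial B_\infty$-type circles with uniformly bounded distortion. In the necklace and cluster cases, where boundaries touch, I would replace this with the cut-point structure (points where two Fatou boundaries meet). These points are iterated preimages of a finite set, and counting them distinguishes $\partial B_\infty$ and $\partial T$ from the other components. After precomposing with an element of $G_\lambda$ (which swaps $B_\infty$ and $T$), we may assume $\xi(B_\infty)=\tilde B_\infty$ and $\xi(T)=\tilde T$.

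The second step, which I expect to be the main obstacle, is showing that $\xi$ conjugates the dynamics: $\tilde f\circ\xi=\xi\circ f$ on $J$, possibly up to a rotation. The approach I would try first is the one in \cite{BLM14}, adapted. Boundaries of Fatou components carry natural boundary parametrizations via the Böttcher coordinates. On $\partial B_\infty$, the restriction of $f$ is conjugate to $z\mapsto z^n$ on the circle. The quasisymmetry $\xi|_{\partial B_\infty}$ induces a circle homeomorphism $\phi$. Using the conformal elevator, that is, blowing up small pieces of $J$ near $\partial B_\infty$ by high iterates and taking limits via the uniform quasisymmetry bounds, I would show that $\phi$ commutes with $z\mapsto z^n$ up to a rotation. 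A homeomorphism commuting with $z^n$ is itself a rotation by an $(n-1)$-st root of unity. For the touching cases the carpet modulus estimates fail, so I would instead rely on the combinatorial rigidity of how preimages of $B_\infty$ attach. Since all such preimages hit $\partial B_\infty$ at points of prescribed angles, $\phi$ must preserve the set of angles of attachment points at all depths. This forces $\phi$ to be a rotation.

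The final step is to extend to a Möbius conclusion and compare parameters. Once $\xi$ agrees with a rotation $z\mapsto e^{\mathbf{i}\alpha}z$ on $\partial B_\infty$ and respects the peripheral-circle structure, a pullback argument shows that $\xi$ equals this rotation on all of $J$. The argument uses the fact that iterated preimages of $\partial B_\infty$ are dense in $J$, together with uniqueness of lifts under $f$ and $\tilde f$ normalized on the root circle. Thus $e^{\mathbf{i}\alpha}J=\tilde J$, and the rotation conjugates $f_\lambda$ to a map with the same Julia set as $f_{\tilde\lambda}$. Comparing the two maps, for instance through the critical values or equivalently through the position of the critical points $\sqrt[2n]{\lambda}$, gives $e^{2n\mathbf{i}\alpha}\lambda\cdot e^{-\mathbf{i}\cdot\text{(rotation from }z^n\text{ term)}}=\tilde\lambda$. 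The commuting condition forces $\alpha=\theta/(2n)$ modulo $\pi/n$ with $\theta=2\pi k/(n-1)$, which yields both (1) and (2).
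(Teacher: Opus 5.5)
Your outline has the right overall shape (locate $B$ and $T$, obtain a dynamical relation, upgrade $\xi$ to a M\"obius map, compare parameters), but the two load-bearing steps are asserted rather than proved, and one premise is false.

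\textbf{Step 2 and the touching cases.} Your claim that every Fatou component is eventually mapped onto $B_\infty$ fails whenever $f_\lambda$ has a bounded superattracting cycle. This happens for many PCF McMullen maps: carpets with bounded cycles, the Sierpi\'nski-like case (S2), and the cluster case (II). Your distortion picture and your attachment-angle bookkeeping both rest on this claim.

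More seriously, the conformal elevator only gives uniform quasisymmetric control of rescalings $\tilde f^{k}\circ\xi\circ f^{-j}$. Turning that control into an identity requires a genuine rigidity input: Bonk's uniformization \cite{Bon11} and Merenkov's rigidity of Schottky maps \cite{Mer14}, as used in \cite{BLM14}. What this yields is the global relation $\tilde f^{m'}\circ\xi=\tilde f^{m}\circ\xi\circ f^{l}$ on $J$, and only for genuine carpets. It does not give a commutation statement for a circle map on $\partial B_\infty$.

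For the touching classes, combinatorics alone cannot work. All necklace Julia sets of a fixed exponent are homeomorphic and have an infinite homeomorphism group. Moreover, in the necklace case $\partial B_\infty$ meets no other Fatou closure at all: a contact point would be iterated into $\partial B\cap\partial T=\emptyset$. So there are no attachment angles to preserve.

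The missing idea is the lifting argument:
\begin{itemize}
\item blow $J$ up to a Sierpi\'nski carpet (Corollary~\ref{cor:sier}) or to a standard Cantor-circle (Theorem~\ref{thm:blow-upII}), and lift $\xi$;
\item prove the lift is quasisymmetric via the two-sided distortion estimate (Lemma~\ref{lem:distortion});
\item in the carpet-model case, apply \cite{BLM14};
\item for necklaces, use quasisymmetry across the nested annuli to force the lift to be a global rotation, and the parametrization of $\partial\mathcal{H}$ (Proposition~\ref{prop:fundamental}) to force it to be the identity.
\end{itemize}
The cluster class is the one place where a purely topological argument succeeds. Even there, the paper needs the connecting-number, tower and labeling machinery of Section~\ref{sec7}, not just angles on $\partial B_\infty$.

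\textbf{Step 3 and the endgame.} Your third step is circular. Knowing that $\xi$ is a rotation on $\partial B_\infty$ says nothing about $\xi$ elsewhere unless you already have a relation on all of $J$ to pull back along, and your Step 2 did not provide one. The paper gets the M\"obius property from the global relation via Proposition~\ref{thm:BLM}. That argument uses conformal extension to the Fatou set through B\"ottcher coordinates, uniform quasicircles, Bonk's quasiconformality criterion, and the fact that $J$ has measure zero.

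The parameter comparison is where the arithmetic content lies, and you skip it. For $\xi(z)=az$ one has $\xi\circ f_\lambda\circ\xi^{-1}=a^{-(n-1)}f_{a^{2n}\lambda}$. So every $a$ with $|a|=1$ ``commutes with $z^n$ up to a rotation,'' and that condition constrains nothing. Exact commutation is not the right target either: the normalized map in the theorem satisfies $\xi\circ f_\lambda\circ\xi^{-1}=e^{-\pi\mathbf{i}k/n}f_{\tilde\lambda}$.

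What must be shown is that $a^{-(n-1)}$ is a $2n$-th root of unity, i.e.\ that $a^{2n}=\tilde\lambda/\lambda$ is an $(n-1)$-th root of unity. The paper does this by reapplying the rigidity statement to $\mathrm{id}\colon J_h\to\tilde J$ with $h=\xi\circ f\circ\xi^{-1}$, then comparing local degrees at preimages of roots of $\tilde f$; the map $z^2-1/(16z^2)$ is treated separately. Before that, it needs the degree-counting Claims~1 and~2 (that $\xi$ preserves $\{B,T\}$ and $f^{-1}(T)$) to obtain $\xi(z)=az$ and $\tilde\lambda=a^{2n}\lambda$ in the carpet cases.
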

\vskip 0.3cm

\subsection{Outline of the proof and main techniques}\label{sec:outline}

In \cite{BLM14}, the authors observe that any quasisymmetry between Sierpi\'{n}ski carpet Julia sets of PCF rational maps forces a hidden relation between the underlying dynamics:

\begin{prop}[{\cite[Theorem 1.4]{BLM14}}]\label{pro:sierpinski}
Let \( J \) and \(\tilde{J} \) be Sierpi\'{n}ski carpet Julia sets of PCF rational maps \( f \) and \(\tilde{f} \), respectively. If \( \xi: J \to \tilde{J} \) is a quasisymmetry, then there exist integers  \( m', l \geq 1 \) and \( m \geq 0 \) such that
\[
\tilde{f}^{m'} \circ \xi = \tilde{f}^{m} \circ \xi \circ f^l  \text{\,\,on } J.
\]

\end{prop}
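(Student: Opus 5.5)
The plan follows the strategy of \cite{BLM14}: exploit the \emph{conformal elevator} provided by the PCF dynamics, together with a discreteness statement for quasisymmetries between carpets. Throughout, the peripheral circles of $J$ (resp.\ $\tilde J$) are the boundaries of the Fatou components.

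\textbf{Step 1 (geometry of the carpets).} Since $f$ is PCF and $J$ is a carpet, there are no critical points on $J$ and every Fatou component is eventually mapped to a superattracting cycle. Hence $f$ is subhyperbolic on a neighbourhood of $J$, and I will show the peripheral circles are uniform quasicircles that are uniformly relatively separated. The same holds for $\tilde J$. A quasisymmetry extends to an orientation-preserving homeomorphism of $\overline{\mathbb C}$ and preserves the topological notion of peripheral circle (non-separating Jordan curves in a carpet). Therefore $\xi$ maps peripheral circles of $J$ bijectively onto those of $\tilde J$.

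\textbf{Step 2 (conformal elevator).} Using the orbifold metric of the subhyperbolic map, I will show the following. For every $p\in J$ and every small scale $r$, some iterate $f^{l}$ maps $B(p,r)\cap J$ onto a set of definite size. This map factors, up to bounded distortion, through a branched cover of uniformly bounded degree, since the postcritical set is finite.

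I then take the following sequence of maps. Pick a periodic peripheral circle $C$ of $J$, of period $l$ under $f$. Its image $\xi(C)$ is a peripheral circle of $\tilde J$, and some iterate $\tilde f^{m}$ maps $\xi(C)$ to a periodic circle $\tilde C$. For $k\ge1$ consider
\[
\psi_k=\tilde f^{\,m_k}\circ\xi\circ (f^{kl})^{-1},
\]
defined near $C$ by a branch of the inverse, where $m_k$ is chosen by the elevator on the $\tilde f$ side so that $\psi_k$ is normalised at a definite scale. The bounded-degree and bounded-distortion estimates from both elevators show that the maps $\tilde f^{m_k}\circ\xi\circ f^{kl}$ are uniformly quasiregular of bounded degree near $J$. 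They send peripheral circles to peripheral circles, and they take values among finitely many periodic circles with normalised three-point data.

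\textbf{Step 3 (finiteness, the main obstacle).} The key is a rigidity lemma: a family of uniformly quasiregular maps of bounded degree between carpets whose peripheral circles are uniformly relatively separated quasicircles is \emph{discrete} once its action on three peripheral circles is fixed. Concretely, a sequence from such a family that converges uniformly must be eventually constant. To prove it, I will first use relative separation and the measure-zero property of PCF carpets to show that limits of such maps still send peripheral circles to peripheral circles. Then a modulus (carpet-modulus or transboundary) argument shows that two nearby maps of this kind must coincide on every peripheral circle, hence on $J$ by density. This is the delicate part, since it replaces the rigidity of round carpets by one adapted to quasicircle peripheries and branched maps.

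\textbf{Conclusion.} By Arzel\`a--Ascoli the family from Step 2 is precompact, so by Step 3 it is finite. Hence two indices $k_1<k_2$ give the same map on $J$:
\[
\tilde f^{m_{k_1}}\circ\xi\circ f^{k_1l}=\tilde f^{m_{k_2}}\circ\xi\circ f^{k_2l}\quad\text{on }J.
\]
Since $f^{k_1l}$ maps $J$ onto $J$, this relation on $J$ descends through $f^{k_1l}$. Adjusting indices then yields $\tilde f^{m'}\circ\xi=\tilde f^{m}\circ\xi\circ f^{l'}$ with $m'\ge1$, $l'\ge1$ and $m\ge0$.
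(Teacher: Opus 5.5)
\textbf{Gap in Step 3.} Your skeleton (conformal elevator, a normalized precompact family, discreteness, two members coincide) is that of \cite{BLM14}. However, Step~3 carries all of the weight and is asserted rather than proved. You give no mechanism by which a carpet- or transboundary-modulus argument would force two uniformly close maps, each sending every peripheral circle to the same peripheral circle, to agree \emph{pointwise} on those circles. This is exactly the point where the existing proof relies on roundness. As the paper notes in Section~\ref{sec:outline}, rigidity in \cite{BLM14} comes from conformal structure, in three steps. First, the peripheral circles of $J$ and $\tilde J$ are uniformly relatively separated uniform quasicircles, so Bonk's uniformization theorem \cite{Bon11} gives quasisymmetries onto round carpets. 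Second, these quasisymmetries extend quasiconformally to $\overline{\mathbb{C}}$, so the measure-zero PCF Julia sets are carried to round carpets of measure zero. Third, in these coordinates your normalized maps become Schottky maps, and Merenkov's local rigidity of Schottky maps on round carpets of measure zero \cite{Mer14} supplies both the discreteness you need and an identity principle. This uniformization step, or a genuine substitute for it, is the missing idea; a direct rigidity lemma for quasiregular maps of quasicircle carpets is not available off the shelf.

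\textbf{Local versus global.} Finiteness only gives $\psi_{k_1}=\psi_{k_2}$ on the small set where these local maps are defined. Composing with $f^{k_2l}$, this yields $\tilde f^{m_{k_1}}\circ\xi\circ f^{(k_2-k_1)l}=\tilde f^{m_{k_2}}\circ\xi$ only on a small relatively open subset of $J$. Your conclusion instead treats the global maps $\tilde f^{m_k}\circ\xi\circ f^{kl}$ as members of the normal family and equates them on all of $J$. Their degrees tend to infinity, so no compactness applies to them. Spreading the local identity over $J$ requires an identity principle: two such maps agreeing on a nonempty open subset of the carpet agree everywhere, using that the carpet stays connected after removing finitely many branch points. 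That is again the Schottky rigidity.

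\textbf{Two false claims in Steps 1--2.}
\begin{itemize}
\item A PCF carpet Julia set can contain critical points. In case~(5) of Proposition~\ref{prop:classify-no-bounded-cycles} the free critical points lie in $J$. So your maps are genuinely branched, and the rigidity must be applied away from branch points.
\item $f^{kl}$ has no inverse branch on a neighbourhood of the periodic circle $C$, because $f^l|_C$ is a covering of degree at least $2$. The maps $\psi_k$ must therefore be based at a periodic point, for instance one on $C$, rather than along all of $C$.
\end{itemize}
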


From this, they prove quasisymmetric rigidity for Sierpi\'nski carpet Julia sets. We observe that the passage from the dynamical relation in Proposition \ref{pro:sierpinski} to the M\"{o}bius property works in wider generality: it applies to any PCF rational map whose Fatou domains are Jordan domains:

\begin{prop}\label{thm:BLM}
Let $f$ and $\tilde{f}$ be PCF rational maps with non-empty Fatou sets such that each Fatou domain of $f$ is a Jordan domain. Suppose $\xi: J\to \tilde{J}$ is a quasisymmetry satisfying the dynamical relation
\begin{equation}\label{eq:26}
\tilde{f}^{m'}\circ \xi = \tilde{f}^m \circ \xi \circ f^l \text{\,\,on\,\,\,} J
\end{equation}
for some integers $m', l\ge 1$ and $m\ge 0$.
Then $\xi$ is the restriction of a M\"obius transformation, and \eqref{eq:26} still holds on $\overline{\mathbb{C}}$ (also denoted by $\xi$). In particular, $\xi$ maps each Fatou domain of $f$ and its center to those of $\tilde{f}$. 
\end{prop}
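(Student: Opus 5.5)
The plan follows the strategy of \cite{BLM14}. Write $g=\tilde{f}^{m'}$ and $h=\tilde{f}^m\circ\xi\circ f^l\circ\xi^{-1}$, both defined on $\tilde{J}$. The relation \eqref{eq:26} says $g=h$ on $\tilde{J}$, where the left side is a rational map and the right side is $\tilde f^m$ composed with a conjugate of $f^l$ by the quasisymmetry $\xi$.

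\emph{Step 1: extend $\xi$ to a quasiconformal map of the sphere.} Each Fatou domain of $f$ is a Jordan domain, and $\xi$ extends to an orientation-preserving homeomorphism of $\overline{\mathbb{C}}$. Hence $\xi$ maps each Fatou component $U$ of $f$ to a complementary component of $\tilde J$, that is, to a Fatou component $\tilde U$ of $\tilde f$, and $\partial U\mapsto\partial\tilde U$ is a quasisymmetry between quasicircles. Since $f$ is PCF, these quasicircles are uniformly relatively separated and uniform quasicircles (as in \cite{BLM14}). We choose the extension of $\xi$ inside each Fatou domain as follows.
\begin{itemize}
\item[(a)] First extend $\xi$ on the periodic Fatou domains so that it conjugates the relevant dynamics, using Böttcher coordinates in which $f^p$ acts as $z\mapsto z^d$. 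The boundary relation \eqref{eq:26} forces the degrees to match and the boundary maps, in Böttcher angles, to conjugate $z^d$-type maps up to rotations.
\item[(b)] Then pull back via the dynamics to all preperiodic domains.
\end{itemize}
This yields a global homeomorphism $\Xi$ that is conformal or affine in Böttcher coordinates in every Fatou domain. Moreover, $\tilde f^{m'}\circ\Xi=\tilde f^m\circ\Xi\circ f^l$ holds on $\overline{\mathbb C}$: on $J$ by hypothesis, and on Fatou domains by the construction, which is compatible because both sides are determined by boundary values on Jordan domains in the radial-line coordinates. Quasiconformality of $\Xi$ follows from the uniform quasicircle and separation properties, by gluing uniformly quasiconformal extensions; a PCF Julia set with Jordan Fatou components has measure zero, so it is removable for this gluing.

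\emph{Step 2: show the dilatation vanishes.} Let $\mu$ be the Beltrami coefficient of $\Xi$. The extended relation shows that $\mu$ satisfies $(f^l)^*(\Xi^*\text{-pullback of }\tilde f^m)\ldots$, and more simply that $\mu$ is invariant under an iterate structure. The key observation in \cite{BLM14} is that the Julia set has measure zero, so only the Fatou part matters. On each Fatou domain, $\Xi$ is affine in Böttcher coordinates: the boundary relation of $z^d$-type conjugations forces the circle map to be a rotation, because a quasisymmetric circle map conjugating $z\mapsto z^{d}$ to itself is a rotation. Hence $\Xi$ is conformal on every Fatou domain. Then $\mu=0$ a.e., and by Weyl's lemma $\Xi$ is Möbius.

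Once $\Xi$ is Möbius, the relation on $J$ extends to $\overline{\mathbb C}$ by the identity principle, and Fatou domains and their centers (the preimages of the superattracting cycles, i.e. the Böttcher origins) correspond. The expected main obstacle is Step 1(a): showing that the boundary correspondence on periodic Fatou domains, in Böttcher angles, is exactly a rotation. I would argue that the induced circle homeomorphism $\phi$ satisfies $\phi(z^{a})=\phi(z)^{b}$ type relations derived from \eqref{eq:26}, and then use that a homeomorphism semiconjugating expanding power maps of equal degree is a rotation, by lifting to $\mathbb R$ and using density of iterated preimages.
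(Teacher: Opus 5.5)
Your route is the paper's (following \cite{BLM14}). You show that in B\"ottcher coordinates the boundary maps of $\xi$ are rotations, so $\xi$ extends conformally to every Fatou domain. You then glue to a quasiconformal map of $\overline{\mathbb{C}}$ and conclude from $\operatorname{area}(J)=0$ that it is M\"obius.

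The rotation step works once stated precisely, and it should come first, so the extension is conformal from the start rather than ``affine'' and corrected later. The relation is not a self-conjugacy. On $\partial\mathbb{D}$ it reads $\psi_U(z)^{a_U}=\psi_{f^l(U)}(z^{b_U})^{c_U}$ with $a_U=b_Uc_U$. Composing around the $f^l$-cycle of a periodic $U$ gives $\psi_U(z^B)=\omega\,\psi_U(z)^B$ with $|\omega|=1$ and $B\ge 2$ (the cycle is superattracting), and this forces a rotation. For strictly preperiodic $U$, the same identity with $\psi_{f^l(U)}$ already a rotation gives the conclusion immediately.

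The genuine gap is your justification that the glued map is quasiconformal. You invoke uniform relative separation of the Fatou boundaries. That is a carpet property, and it is false under the present hypotheses: only the Fatou domains are assumed Jordan, and their closures may meet. For cluster Julia sets $\partial B\cap\partial T=\Crit$, and touching closures also occur for Sierpi\'nski-like carpets and necklaces. These are exactly the cases for which the paper needs this proposition. You also claim that zero area makes $J$ removable for the gluing. It does not: there are non-removable compact sets of zero area, so a homeomorphism that is conformal off $J$ need not be quasiconformal.

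The missing argument is the one the paper uses:
\begin{itemize}
\item[(1)] The conformal extensions $\overline U\to\overline{\xi(U)}$ are \emph{uniformly} quasisymmetric. This holds because $\partial U$ and $\partial\xi(U)$ are uniform quasicircles \cite[Proposition 3.5]{QYZ}, so you can reflect across them.
\item[(2)] A homeomorphism of $\overline{\mathbb{C}}$ that is quasisymmetric on $J$ and uniformly quasisymmetric on every $\overline U$ has uniformly bounded metric dilatation at each point of $J$. This is the latter part of the proof of \cite[Proposition 5.1]{Bon11}, and it uses only the uniform quasicircle property, not separation.
\end{itemize}
Only after quasiconformality is established does $\operatorname{area}(J)=0$ \cite[Theorem 3.8]{QYZ} enter, to give a Beltrami coefficient vanishing almost everywhere.
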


To prove Theorem~\ref{thm:mobius}, we establish the dynamical relation \eqref{eq:26} for all PCF McMullen maps (Sections~\ref{sec5}--\ref{sec7}). Proposition~\ref{thm:BLM} then implies that $\xi$ is the restriction of a M\"{o}bius transformation. Tracking the images of the critical Fatou domains $B$ and $T$ (containing $\infty$ and $0$, respectively) under the dynamical relation, we find that $\xi$ fixes $0$ and $\infty$ up to an element of $G_\lambda$, whence $\xi(z)=az$. A comparison of Laurent expansions at infinity yields $|a|=1$; comparing local degrees further forces $a^{2n}$ to be an $(n-1)$-th root of unity. This identifies $\xi$ as the rotation $z\mapsto e^{\tb{i}\theta/(2n)}z$ and yields Theorem~\ref{thm:mobius}.
\vskip 0.1cm

The proof of the dynamical relation \eqref{eq:26} proceeds differently for each topological class of Julia sets. We therefore need  the following topological classification theorem for McMullen maps. 
\vskip 0.3cm

A \textbf{continuum} is a compact and connected subset of $\overline{\mathbb{C}}$ containing at least two points. A continuum is called \textbf{full} if its complement in $\overline{\mathbb{C}}$ is connected. A point $z$ in a compact set $E$ is called {\bf buried} in $E$ if it does not lie on the boundary of any component of $\overline{\mathbb{C}}\sm E$. A continuous onto map $\pi:\overline{\mathbb{C}}\to \overline{\mathbb{C}}$ is a \textbf{quotient map} if $\pi^{-1}(z)$ is a singleton or a full continuum for all $z\in\overline{\mathbb{C}}$.
\vskip 0.1cm

These notions allow us to describe the four possible topological classes of Julia sets for PCF McMullen maps precisely.

\begin{thm}\label{thm1}
Let $J$ be the Julia set of a PCF McMullen map $f_\lambda$ with exponent $n\geq 2$. Then exactly one of the following occurs:
\begin{enumerate}[itemsep=2pt,parsep=0pt]
\item[(1)] $J$ is a Sierpi\'{n}ski carpet; 
\item[(2)] $J$ is a \tb{Sierpi\'{n}ski-like carpet}, i.e., the image of a Sierpi\'{n}ski carpet by a quotient map over $\overline{\mathbb{C}}$ that is injective on the buried point set;
\item[(3)] $J$ is a \tb{necklace}, i.e., the image of a Cantor-circle by a quotient map over $\overline{\mathbb{C}}$ that is at most two-to-one on the Cantor-circle; 
\item[(4)] $J$ is a \tb{cluster}, i.e., the Fatou domains $B$ and $T$ (containing $\infty$ and 0, respectively) can be joined by a curve meeting $J$ in countably many points.
\end{enumerate}
Moreover, Case $(3)$ occurs only when $n\geq 3$.
\end{thm}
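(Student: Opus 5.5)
The plan is to split according to the escape behavior of the free critical orbit. The critical points of $f_\lambda$ are $0$, $\infty$ and the $2n$ points $c$ with $c^{2n}=\lambda$. By the symmetries of Lemma \ref{lem:symmetric}, all the free critical points have the same orbit up to sign: they map to the two critical values $\pm 2\sqrt{\lambda}$, and these have a common image. Since $f_\lambda$ is PCF, either the free critical orbit lands in $B$ (it is eventually mapped to $\infty$) or it stays in $J\cup$ (bounded Fatou components) and is preperiodic there. I would use the standard structure of the family as in \cite{DLU05,QWY12,QRWY15}:
\begin{enumerate}
\item[(a)] the escape time $k$ of the critical values into $B$, meaning the least $k$ with $f_\lambda^k(v)\in B$;
\item[(b)] in the non-escaping case, whether the free critical points lie on $\partial B$, on $\partial T$, or in $J$ away from both, together with the mutual position of $\overline B$ and $\overline T$.
\end{enumerate}

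In the escaping case, the cases $k=1$ ($v\in B$ itself) and $k=2$ ($v\in T$) are impossible for a PCF map: the first gives a Cantor set, and the second forces $T$ to contain critical points with infinite orbit. So $k\ge 3$. Then every Fatou domain is a preimage of $B$, and no critical point lies on the boundary of one. Each Fatou domain is a Jordan domain (subhyperbolic, connected $J$), and closures of distinct Fatou domains are disjoint because the boundaries map homeomorphically-by-covering without critical points. Whyburn's criterion then gives case (1).

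In the non-escaping case the critical points lie in $J$ or in bounded periodic Fatou components. First suppose $\partial B\cap\partial T=\emptyset$. I would construct the quotient map by collapsing. For Sierpiński-like carpets, the closures of Fatou domains touch only at iterated preimages of critical points. I would blow up each touching point via a combinatorial surgery (replace $f$ by a Thurston-equivalent map on a carpet, i.e. a semiconjugacy $\pi$ from a carpet model), and show injectivity on buried points because touchings occur only at points lying on Fatou boundaries. Next, the necklace case: if $\overline T$ and $\overline B$ are separated by an annulus $A$ with $f^{-1}(A)\subset A$ of degree $n$ whose nested preimages give Cantor-circle combinatorics, with touching of consecutive circles at finitely many points per level, the map from the Cantor circle model (as in the McMullen escape case) is at most two-to-one. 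Finally, if $\partial B\cap\partial T\ne\emptyset$, or more generally $B$ and $T$ are chained by finitely many Fatou closures meeting at points, the curve through those contact points meets $J$ in countably many points, giving case (4). The four cases are distinguished topologically (local cut points, buried points, the structure of complementary components), hence the "exactly one" clause. For $n=2$, I would show the annulus needed for a necklace cannot exist: $f$ restricted to it would have degree $2$ from each side, and the critical points would have to sit on the circles so that $\partial B$ touches $\partial T$, yielding a cluster instead.

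The main obstacle is the Sierpiński-like carpet case, namely building the quotient from a genuine carpet that is injective on buried points. I would first try a Thurston-type pinching/unpinching argument together with Moore's theorem on upper semicontinuous decompositions into full continua.
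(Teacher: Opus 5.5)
The main gap is the case where $f_\lambda$ has a bounded attracting cycle. The cycle must contain a free critical point, hence $v$ or $-v$. Since $J=-J$ (Lemma~\ref{lem:symmetric}), both $\pm v$, and therefore all $2n$ free critical points, then lie in the Fatou set. So no point of $J$ is an iterated preimage of a critical point.

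Your assertion that Fatou closures touch only at iterated preimages of critical points would therefore make all Fatou closures pairwise disjoint, i.e.\ $J$ a carpet, throughout this case. That is false. The bounded basins and their preimages can touch at preperiodic repelling points, e.g.\ as in a basilica-type renormalization, forming infinite connected chains.

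The missing idea is the maximal Fatou chain $\mathcal E$ generated by $\mathcal U$, with the properties of Lemma~\ref{classification}: its components are full and carry at most one critical value, and if $\mathcal E\cap\partial B\neq\emptyset$ each component meets $\partial B$ in exactly one point. These are derived from the rotational disjointness Lemma~\ref{lem:Fatou-cycle}. It is $\mathcal E$, not $\partial B\cap\partial T$, that decides the class:
\begin{itemize}
\item $\mathcal E=\overline{\mathcal U}$ with $\mathcal E\cap\partial B=\emptyset$ gives a carpet;
\item $\mathcal E\neq\overline{\mathcal U}$ with $\mathcal E\cap\partial B=\emptyset$ gives a Sierpi\'nski-like carpet;
\item $\mathcal E\cap\partial B\neq\emptyset$ gives a cluster, even though $\partial B\cap\partial T=\emptyset$ (as $v\notin\partial B$).
\end{itemize}

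In that cluster case a finite chain of Fatou closures is not enough. The point where a chain component meets $\partial B$ need not lie on the closure of any Fatou domain of the chain (think of the $\beta$-fixed point of a basilica). One needs that chain components are quasiconformal images of PCF polynomial filled Julia sets \cite[Theorem 1.5]{CGZ}, together with \cite[Proposition 2.3]{GZ}, to get a curve meeting $J$ in countably many points.

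In the Sierpi\'nski-like case, the touchings inside a chain are periodic and forced by that polynomial renormalization, so they cannot simply be unpinched. The paper instead applies the exact-subsystem blow-up \cite[Theorem 1.6]{CGZ} with $\mathcal V=\overline{\mathbb C}\setminus\mathcal E_0$. Under it, each component of $f^{-k}(\mathcal E_0)$ is the $\pi$-image of a \emph{single} closed Fatou disk of $g$; such a component is a whole chain, or, in Case (S1), two Fatou closures pinched at a critical point. The fullness statements above are exactly what make $f:\mathcal V_1\to\mathcal V$ an exact subsystem.

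Several further steps fail as written.
\begin{itemize}
\item Your exclusion of necklaces for $n=2$ is not an argument; the correct one is a modulus estimate. If $v\in\partial T$, then $f^{-1}(A)$, for $A=\overline{\mathbb C}\setminus\overline{B\cup T}$, consists of two disjoint essential subannuli $A_1,A_2\subset A$. Each is a degree-$n$ covering of $A$, and $f^{-1}(T)\neq\emptyset$ lies between them. Gr\"otzsch then gives $\mathrm{mod}(A)>\mathrm{mod}(A_1)+\mathrm{mod}(A_2)=\frac{2}{n}\,\mathrm{mod}(A)$, which is impossible for $n=2$.
\item ``Subhyperbolic with connected $J$'' does not make Fatou domains Jordan; the basin of $\infty$ of $z^2-1$ is a counterexample. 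The Jordan property of $B$ and $T$, which the remaining cases rely on, is given by \cite{QWY12} only for $n\ge 3$ and needs Lemma~\ref{lem:jordan-domains}.
\item ``Free critical points on $\partial T$'' is the same case as ``on $\partial B$'', since $c\in\partial T$ implies $v=f(c)\in\partial B$. The necklace case is $v\in\partial T$.
\item The necklace semiconjugacy needs an actual construction rather than an analogy with the McMullen-domain case. The paper pushes $\pm v$ into $T$ and verifies via \cite{CT} that the only irreducible stable multicurve is the core curve, with eigenvalue $2/n<1$. It then lifts homotopies, proves uniform convergence, and analyses the fibres to get the at-most-two-to-one property.
\end{itemize}
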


Figure~\ref{fig:classification} shows examples of each class. We note that all necklace Julia sets of the same exponent are topologically equivalent and have an infinite homeomorphism group. In contrast, cluster Julia sets of McMullen maps exhibit a stronger rigidity: they are topologically rigid (hence also quasisymmetrically rigid); see Theorem~\ref{thm:intersection-I}.

The cases of Sierpi\'nski carpet and necklace Julia sets are well understood \cite{DLU05,QWY12,QRWY15}. Theorem \ref{thm1} completes the classification by showing that the remaining Julia sets of PCF McMullen maps are either Sierpi\'nski-like or cluster.  
To this end, we use the tool of {Fatou chains} developed in \cite{CGZ} (see Section \ref{sec:fatouchain}), instead of the Yoccoz puzzle technique in previous works.
 \vspace{3pt}

Building on the classification of Theorem~\ref{thm1}, we establish the dynamical relation by treating each topological class separately. The outline is as follows. 
\vspace{3pt}

For Sierpi\'nski carpet Julia sets, relation \eqref{eq:26} has been established in \cite{BLM14}, and the proof depends essentially on the quasisymmetric rigidity of Schottky maps \cite{Mer14} and the uniformization of Sierpi\'{n}ski carpets \cite{Bon11}.\vspace{3pt}

For Sierpi\'nski-like carpet and necklace Julia sets, we use a {lifting argument}: blowing up $J$ to a Sierpi\'nski carpet and a standard Cantor-circle, respectively; lifting $\xi$ to a quasisymmetry $\tilde{\xi}$ between the blown-up Julia sets of model maps $g$; then showing that $\tilde{\xi}$ and $g$ satisfy \eqref{eq:26}, which projects down to $\xi$ and the original McMullen maps.\vspace{3pt}

Specifically, when $J$ is a Sierpi\'nski-like carpet, such a model map exists by \cite[Theorem~1.6]{CGZ}. The heart of the argument is to prove that the lifted map $\tilde{\xi}$ is quasisymmetric; this relies on a distortion estimate (Lemma~\ref{lem:distortion}) controlling the diameters of iterates. Once $\tilde{\xi}$ is known to be quasisymmetric, Proposition~\ref{pro:sierpinski} applies and yields the required dynamical relation (Theorem~\ref{thm:sierpinski-like}).\vspace{3pt}

When $J$ is a necklace, we prove that it can be blown up to a standard Cantor-circle, the Julia set of a model map $g$\,(Theorem~\ref{thm:blow-upII}). Strikingly, although the Cantor-circle admits infinitely many quasisymmetries, the rule that ``equivalent points are mapped to equivalent points'' forces the lift $\tilde{\xi}$ to be a rotation, which therefore satisfies the desired relation (Theorem~\ref{thm:necklace}).\vspace{3pt}

For cluster Julia sets, we introduce the {connecting number}, a topological invariant: simply count how many components connect two Fatou domains. We prove that $B$ and $T$ are the only pair with the maximal count, so any homeomorphism must preserve these two domains. Moreover, the count records how deep each Fatou domain sits in the cluster. Using this, we argue inductively that the homeomorphism respects the entire internal structure, then label the Fatou domains to match the two dynamics, and pass to the limit to obtain the conjugacy (Theorem~\ref{thm:intersection-I}).
\vskip 0.3cm

\subsection{Further problems and generalizations}
The arguments in this paper lead to several natural generalizations. Theorem~\ref{thm:sierpinski-like} is stated for general PCF rational maps with Sierpi\'{n}ski-like carpet Julia sets. Together with Proposition~\ref{thm:BLM}, this gives quasisymmetric rigidity for such Julia sets in general. Similarly, the necklace case admits a natural generalization via {folding rational maps}, introduced and studied extensively in \cite[Section~8]{CPT2}; it seems likely that the Cantor-circle lifting argument applies to these maps as well, and their quasisymmetric rigidity is the subject of our ongoing work.

In a broader context, Cui together with the first and third authors have shown that the dynamics of any PCF rational map with non-empty Fatou set decomposes into three types of subsystems: {exact subsystems}, {annular subsystems}, and {cluster subsystems}~\cite[Theorem~1.7]{CGZ}. Sierpi\'{n}ski-like carpets arise as special cases of exact subsystems; necklaces are the simplest instances of annular subsystems; and cluster Julia sets of McMullen maps together with polynomial Julia sets are special cases of cluster subsystems. In general, a cluster Julia set is one in which every pair of Fatou domains can be joined by a curve meeting the Julia set in at most countably many points. Our results therefore cover the simplest representatives of each subsystem type.

This motivates the following problem:

\begin{prob}
    If a PCF rational map has non-empty Fatou set and its Julia set is not a cluster, does it necessarily exhibit quasisymmetric rigidity?
\end{prob}

In contrast, cluster Julia sets behave quite differently. As shown by Lyubich and Merenkov \cite{LM}, the Basilica Julia set admits quasisymmetric self-maps that are not M\"{o}bius; similarly, Apollonian gasket Julia sets \cite{LLMM} and finitely ramified fractal Julia sets \cite{BF} are known to possess nontrivial quasisymmetry groups. To date, the cluster Julia sets of PCF McMullen maps remain the first known examples in the cluster family to exhibit quasisymmetric rigidity.
\vspace{8pt}

The paper is organized as follows. Section \ref{sec2} proves some basic properties of McMullen maps. Section \ref{Fatou chains} studies the symmetry of Fatou chains. Section \ref{sec3} presents the blow-up theorems for Sierpi\'{n}ski-like and necklace Julia sets and gives the proof of Theorem \ref{thm1}. Sections~\ref{sec5}, \ref{sec6} and \ref{sec7} establish the dynamical relation for PCF McMullen maps whose Julia sets are Sierpi\'nski-like carpets, necklaces and clusters, respectively. Section \ref{sec8} completes the proof of the rigidity theorems (Theorem \ref{thm:mobius}).

\medskip\noindent\textbf{Acknowledgements.}
We would like to thank Guizhen Cui and Yueyang Wang for valuable discussions and suggestions. The first author is supported by the  National Key R\&D Program of China (Grant no. 2021YFA1003203), the NSFC (Grant no. 12131016,12322104). The second author is supported by the NSFC (Grant 
no. 12526609). The third author is supported by the NSFC (Grant 
no. 12271115).

\section{Topology, symmetries and Fatou domains}\label{sec2}
This section develops the topological properties and symmetries of McMullen maps. We establish a rotation-invariant continuum lemma, and employ the dihedral symmetry to control the geometry of the Julia sets and Fatou domains.

\subsection{A topological lemma}\label{sec:topo}
 We begin with a topological lemma to be used repeatedly below. We say that a set $E\subseteq\overline{\mathbb{C}}$ \textbf{separates} two points $x,y\in\overline{\mathbb{C}}$ if $x$ and $y$ lie in different connected components of $\overline{\mathbb{C}}\setminus E$.

\begin{lema}\label{lem:separate}
Let $E\subseteq \ol{\mb{C}}\setminus\{0,\infty\}$ be a continuum and let $m\geq 2$ be an integer. 
\begin{itemize}
\item[(1)] If $E\cap e^{\textup{\tb{i}}\theta}E\neq \emptyset$ for some $\theta\in \mathbb{R}\setminus 2\pi\mathbb{Z}$, then $K=\overline{\bigcup_{k\geq 0}e^{\textup{\tb{i}}k\theta}E}$ separates $0$ and $\infty$. 

\item[(2)] If $E$ separates $0$ and $\infty$, then for any $\theta\in \mb{R}$, $E\cap e^{ \textup{\tb{i}}\theta} E\neq \emptyset$. 

\item[(3)] If $e^{\textup{\tb{i}}\theta}E=E$ for some $\theta\in\mathbb{R}\setminus 2\pi\mathbb{Z}$, then $E$ separates 0 and $\infty$. Moreover, for any component $U$ of $\ol{\mb{C}}\setminus E$ containing neither $0$ nor $\infty$, $e^{\textup{\tb{i}}k\theta}U\cap U\neq\emptyset$ if and only if $e^{\textup{\tb{i}}k\theta}= 1$.
\item[(4)] Suppose there exists a dense subset $A$ of $E$ such that any two points of $E$ can be joined by an open arc within $A$. If $\omega A\cap A=\emptyset$ for any $\omega\neq 1$ with $\omega^m=1$, then
$\omega_0 E\cap E=\emptyset$, where $\omega_0\neq 1$ is a non-primitive $m$-th root of unity.

\end{itemize}
\end{lema}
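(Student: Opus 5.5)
The plan is to prove (3) first, since it is the engine for (1) and (4). Part (2) gets a separate area argument.

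\emph{Part (3).} Let $R(z)=e^{\textup{\tb{i}}\theta}z$ with $R(E)=E$, and suppose $0$ and $\infty$ lie in the same component $V$ of $\ol{\mb{C}}\sm E$. Choose an arc $\gamma\subset V$ from $0$ to $\infty$. Then $E$ lies in the simply connected domain $\ol{\mb{C}}\sm\gamma\subset\mb{C}^*$, where a branch $\log$ exists. The function $z\mapsto(\log(Rz)-\log z-\textup{\tb{i}}\theta)/2\pi\textup{\tb{i}}$ is continuous and integer valued on the connected set $E$, hence equal to a constant $j$. So $L=\log E$ is compact and invariant under translation by $\textup{\tb{i}}(\theta-2\pi j)\neq 0$, which is impossible. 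Hence $E$ separates $0$ and $\infty$.

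For the second statement, suppose $R^kU\cap U\neq\emptyset$. Since $R^k$ permutes the components of $\ol{\mb{C}}\sm E$, this gives $R^kU=U$; assume for contradiction that $R^k\neq\textup{id}$. Since $E$ is a continuum, the component $U$ is simply connected. Also $U\subset\mb{C}^*$, because $\partial U\subset E$ and $U$ contains neither $0$ nor $\infty$. So $\log$ is defined and injective on $U$, and the same integer-valued argument gives $\log(R^kz)=\log z+\textup{\tb{i}}c$ on $U$ for some constant $c$. Here $c=k\theta-2\pi j\neq0$ (for some integer $j$), because $R^k\neq\textup{id}$.

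If $R^k$ has finite order $d$, iterating gives $\log(R^{kd}z)=\log z+\textup{\tb{i}}dc$ while $R^{kd}z=z$. This contradicts single-valuedness of the branch unless $dc=0$, which is false. If $R^k$ has infinite order, then $\overline U$ is invariant under an irrational rotation, hence under all rotations. This forces $U$ to be a rotation-invariant simply connected domain in $\mb{C}^*$, i.e.\ a round annulus or a punctured disk, which is not simply connected; contradiction.

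\emph{Part (1).} The sets $K_N=\bigcup_{k=0}^N e^{\textup{\tb{i}}k\theta}E$ are continua, since consecutive rotates intersect. Their closure $K$ is therefore a continuum lying in the annulus $\min_E|z|\le|z|\le\max_E|z|$, so it avoids $0$ and $\infty$. Moreover $e^{\textup{\tb{i}}\theta}K\subseteq K$.
\begin{itemize}
\item If $\theta/2\pi$ is rational, the union is finite and $e^{\textup{\tb{i}}\theta}K=K$.
\item If $\theta/2\pi$ is irrational, $K$ is closed and forward invariant under a dense set of rotations, so $K$ is invariant under every rotation.
\end{itemize}
In either case (3) applies and shows that $K$ separates $0$ and $\infty$.

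\emph{Part (2).} Suppose $E'=e^{\textup{\tb{i}}\theta}E$ is disjoint from $E$. Then $E'$ lies in a single component $W$ of $\ol{\mb{C}}\sm E$.
\begin{itemize}
\item If $W$ contains neither $0$ nor $\infty$, then $\ol{\mb{C}}\sm W$ is connected (a complementary component of a continuum has connected complement), contains $0$ and $\infty$, and misses $E'$. Thus $E'$ does not separate, contradicting that $E'$ is a rotate of $E$.
\item Otherwise say $W=V_0\ni 0$. The connected set $\ol{\mb{C}}\sm V_0$ misses $E'$ and contains $\infty$, so the component $V_0'$ of $\ol{\mb{C}}\sm E'$ containing $0$ satisfies $V_0'\subseteq V_0$. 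The component $V'_\infty$ of $\ol{\mb{C}}\sm E'$ containing $\infty$ accumulates on $E'\subset V_0$, so $V_0\cap V'_\infty$ is a nonempty open set. Hence $V_0'$ has strictly smaller spherical area than $V_0$. But $V_0'=e^{\textup{\tb{i}}\theta}V_0$ and rotation preserves area; contradiction. The case $W\ni\infty$ is symmetric.
\end{itemize}

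\emph{Part (4).} Suppose $\omega_0E\cap E\neq\emptyset$, and pick $x=\omega_0y$ with $x,y\in E$. Then $x\neq y$, since $y\neq0$ and $\omega_0\neq1$. Let $d$ be the order of $\omega_0$. Join $y$ to $x$ by an arc $\alpha$ whose interior lies in $A$, and set $\Gamma=\bigcup_{j<d}\omega_0^j\alpha$. Then $\Gamma$ is a continuum in $\mb{C}^*$ invariant under $\omega_0$, so by (3) it separates $0$ and $\infty$.

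Because $\omega_0$ is non-primitive, we may take a primitive $m$-th root $\omega$; it is not a power of $\omega_0$. By (2), $\omega\Gamma\cap\Gamma\neq\emptyset$. For all $i,j$, the interiors of $\omega\omega_0^i\alpha$ and $\omega_0^j\alpha$ are disjoint, because $\omega\omega_0^{i-j}\neq1$ and $\omega\omega_0^{i-j}A\cap A=\emptyset$. So the intersection occurs at endpoints, which all lie in the orbit $\{\omega_0^jy\}$. This gives $\omega\omega_0^iy=\omega_0^jy$, hence $\omega\in\langle\omega_0\rangle$, a contradiction.

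The step I expect to need the most care is the second half of (3), specifically the infinite-order case and the justification that the complementary components are simply connected and lie in $\mb{C}^*$.
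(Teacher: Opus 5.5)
Your proofs of (1)--(3) are correct, and they take a different route from the paper. You get the separation claim in (3) from a branch of $\log$ on $\overline{\mathbb{C}}\setminus\gamma$ (a nonempty compact set cannot be invariant under a nonzero translation) and then deduce (1) from (3). The paper does the reverse: it proves (1) directly and derives (3) from it. Your area comparison for (2) is also tighter than the paper's annulus argument.

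The gap is in (4), at ``so the intersection occurs at endpoints, which all lie in the orbit $\{\omega_0^j y\}$''. Disjoint interiors of $\omega\omega_0^i\alpha$ and $\omega_0^j\alpha$ only tell you that a common point is an endpoint of \emph{one} of the two arcs. It may be an interior point of the other. In that case you learn only that some nontrivial rotate of $y$ or of $x$ lies in $A$. The hypothesis does not exclude this, since $x,y$ need not belong to $A$, and the relation $\omega\omega_0^i y=\omega_0^j y$ is not available.

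This cannot be patched, because (4) is false as stated. Take $m=4$ and $\omega_0=-1$. Let $E$ be the arc consisting of the segment $[1,\mathbf{i}]$ followed by the quarter circle $\{e^{\mathbf{i}t}:\pi/2\le t\le\pi\}$, and let $A=E\setminus\{1,-1\}$.
\begin{itemize}
\item Any two points of $E$ are joined by the open sub-arc of $E$ between them, which lies in $A$.
\item $A$ lies in the open upper half-plane, so $-A\cap A=\emptyset$.
\item $\mathbf{i}A$ is the chord $(\mathbf{i},-1]$ together with the arc $[-1,-\mathbf{i})$, and $-\mathbf{i}A$ is the chord $(-\mathbf{i},1]$ together with the arc $[1,\mathbf{i})$. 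Both are disjoint from $A$.
\end{itemize}
Yet $1,-1\in E\cap(-E)$.

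In your construction, with $y=1$, $\alpha=E$ and $\omega=\mathbf{i}$, one gets $\omega\Gamma\cap\Gamma=\{\pm1,\pm\mathbf{i}\}$. Here $\mathbf{i}=\omega y$ is an endpoint of $\omega\alpha$ but an interior point of $\alpha$. Likewise $-1=x$ is an endpoint of $\alpha$ but an interior point of $\omega\alpha$. The paper's own proof has exactly the same hole: ``if the intersection occurs only at the endpoints of $\gamma$, then $\omega=\omega_0^i$'' ignores these mixed cases.

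What your argument does prove is (4) under the stronger hypothesis that $\omega E\cap A=\emptyset$ for every $\omega\neq1$ with $\omega^m=1$. Then no endpoint of one rotate of $\alpha$ can lie in the interior of another, so every common point is a common endpoint, and your computation gives $\omega\in\langle\omega_0\rangle$. This hypothesis holds automatically when $A$ is open, for example $A=U$ a Fatou domain as in Lemma~\ref{lem:non-sym-domain}. For the non-open sets $A$ used in the proof of Lemma~\ref{lem:Fatou-cycle}, it must be checked separately.
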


\begin{proof}
(1) Since $E\cap e^{\tb{i}\theta}E\neq\emptyset$, there exist points $z_0, w_0\in E$ with $w_0=e^{\tb{i}\theta}z_0$. Note that $z_0\neq0,\infty$ because $E\subseteq\overline{\mathbb{C}}\setminus\{0,\infty\}$.

If $\theta/(2\pi)\notin\mathbb{Q}$, then the set $\{e^{\tb{i}k\theta}z_0:k\geq0\}$ is dense in the circle $C=\{z:|z|=|z_0|\}$. Since $e^{\tb{i}k\theta}z_0\in e^{\tb{i}k\theta}E\subseteq K$ for every $k\geq0$ and $K$ is closed, we obtain $C\subseteq K$. Hence $K$ separates $0$ and $\infty$.

If $\theta/(2\pi)\in\mathbb{Q}\setminus\mathbb{Z}$, write $\theta=2\pi p/q$ with $\gcd(p,q)=1$ and $q\geq2$. 
Then $K=\bigcup_{k=0}^{q-1}e^{\tb{i}k\theta}E$ and $K=e^{\tb{i}\theta}K$. Assume, to the contrary, that $0$ and $\infty$ lie in the same component of $\overline{\mathbb{C}}\setminus K$. Then there exists an arc $\gamma\subseteq\overline{\mathbb{C}}\setminus K$ connecting $0$ and $\infty$. Set
$$\Gamma=\gamma\cup e^{\tb{i}\theta}\gamma\cup\cdots\cup e^{\tb{i}(q-1)\theta}\gamma.$$
Since $e^{\tb{i}\theta}K=K$ and $\gamma\cap K=\emptyset$, we have $\Gamma\cap K=\emptyset$; hence $K$ is contained in a component $W$ of $\overline{\mathbb{C}}\setminus\Gamma$. The invariance $e^{\tb{i}\theta}K=K$ and $e^{\tb{i}\theta}\Gamma=\Gamma$ forces $e^{\tb{i}\theta}W=W$. Since $W$ is simply connected and invariant, $W$ contains a fixed point of $z\mapsto e^{\tb{i}\theta}z$, contradicting that $0,\infty\in \Gamma$. Therefore, $K$ separates $0$ and $\infty$. 

\medskip\noindent(2) Let $r=\tu{min}\{|z|: z\in E\}$ and $R=\tu{max}\{|z|: z\in E\}$. If $r=R$, then $E$ is a circle centered at 0, and the conclusion is obvious. Assume $r<R$, and consider the closed annulus $A=\{z: r\leq |z|\leq  R\}$. If there exists some $\theta$ such that $E\cap e^{\tb{i}\theta}E=\emptyset$, then $e^{\textbf{i}\theta}E$ would lie entirely in a component of $A\setminus E$, which is a bounded simply connected domain. This is impossible, as such a domain cannot separate $0$ and $\infty$. 

\medskip\noindent(3) By Statement (1), $E$ clearly separates $0$ and $\infty$. If $\theta/(2\pi)\in\mathbb{R}\sm\mathbb{Q}$, then the circle $\{z\in\mathbb{C}:|z|=|z_0|\}$ is contained in $E$ provided that $z_0\in E$. This implies $E$ is a circle or a closed annulus centered at $0$, so the conclusion is trivial in this case. 

If $\theta/(2\pi)\in\mathbb{Q}\setminus\mathbb{Z}$, write $\theta=2\pi p/q$ with $\gcd(p,q)=1$ and $q\geq2$. For the sake of contradiction, assume that $e^{\tb{i}k_0\theta}U\cap U\not=\emptyset$ for some integer $k_0$ with $e^{\tb{i}k_0\theta}\neq 1$. Then there exists a point $z\in U$ such that $e^{\tb{i}k_0\theta}z\in U$. Choose an arc $\gamma\subseteq U$ joining $z$ and $e^{\tb{i}k_0\theta} z$. By Statement (1), $$\Gamma=\gamma\cup e^{\tb{i}k_0\theta}\gamma\cup \cdots\cup e^{\tb{i}k_0(q-1)\theta}\gamma$$ separates $0$ and $\infty$. Since $E=e^{\tb{i}k_0\theta}E$ and $\gamma\cap E=\emptyset$, we have $\Gamma\cap E=\emptyset$. Hence $\Gamma\subseteq U$. This contradicts the fact that $U$ does not separate $0$ and $\infty$.  

\medskip\noindent(4) Suppose that $\omega_0E\cap E\not=\emptyset$ for some non-primitive $m$-th root of unity $\omega_0$. Then there exist $z, \omega_0 z\in E$ joined by an open arc $\gamma\subseteq A$. 
By Statement (1), $$\Gamma=\overline{\gamma}\cup \omega_0\overline{\gamma}\cup \cdots\cup \omega_0^{m-1}\overline{\gamma}$$
 separates $0$ and $\infty$. Since $\omega_0$ is non-primitive, there exists $\omega$ such that $\omega^m=1$ and $\omega\neq \omega_0^i$ for any integer $i\geq 0$. By Statement (2), $\Gamma\cap \omega\Gamma\neq\emptyset$. Hence $\ol{\gamma}\cap \omega\omega_0^j\ol{\gamma}\neq \emptyset$ for some integer $j\geq1$. If the intersection occurs only at the endpoints of $\gamma$, then $\omega=\omega_0^i$ for some integer $i\geq 1$, a contradiction. Thus $\gamma\cap \omega\omega_0^j\gamma\neq\emptyset$. Since $\gamma\subseteq A$ and $(\omega\omega_0^j)^m=1$ with $\omega\omega_0^j\neq 1$, the hypothesis yields $(\omega\omega_0^j)A\cap A=\emptyset$, a contradiction.
\end{proof}

\subsection{Symmetries of McMullen maps}\label{sec:symmetry}

The results in this subsection apply to all McMullen maps
$f_\lambda(z)=z^n+{\lambda}/{z^n},$
where $n\geq 2$ is an integer and $\lambda\in\mathbb{C}^*$. For simplicity, we write $f=f_\lambda$.

Let $J$ denote the Julia set of $f$. Note that $\infty$ is a superattracting fixed point of $f$ and that $f^{-1}(\infty)=\{0,\infty\}$. We write $B$ for the immediate basin of $\infty$, and let $T$ be the component of $f^{-1}(B)$ containing $0$. The map $f$ has $2n$ simple critical points
$$\tu{Crit}=\{\sqrt[2n]{\lambda}\,\omega : \omega^{2n}=1\},$$
called the \textbf{free} critical points, and two \textbf{free} critical values $\pm v=\pm2\sqrt{\lambda}$. Moreover, $f$ satisfies $f(-z)=(-1)^n f(z)$; hence $f(v)=f(-v)$ when $n$ is even and $f(v)=-f(-v)$ when $n$ is odd. 

Recall that $G_\lambda$ denotes the group generated by the rotation $z\mapsto e^{\pi \tb{i}/n} z$ and the involution $z\mapsto \sqrt[n]{\lambda}/z$, which is the dihedral group $D_{2n}$ of order $4n$.

\begin{lema}[$G_\lambda$-invariance of the Julia set]\label{lem:symmetric} Let $\omega$ be a $2n$-th root of unity, and $\tau$ be the involution given by $z\mapsto \tilde{\omega}/z$, where $\tilde{\omega}^n=\lambda$. Then %Let $f$ be a McMullen map with exponent $n\geq2$, and $\omega$ be a $2n$-th root of unity. Then
\begin{enumerate}
\item[(1)] $J=\omega J$, $B=\omega B$, and $T=\omega T$. 
\item[(2)] $\tau(B)=T,\tau(T)=B$ and $\tau(J)=J$.
\end{enumerate} 
\end{lema}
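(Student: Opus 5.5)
The plan is to verify two functional identities for $f$ and then use that the Julia set and the Fatou components are transported correctly by any Möbius map that commutes, or semi-commutes, with $f$.

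For (1), let $\omega^{2n}=1$. Since $\omega^n=\pm1$ and $\omega^{-n}=\omega^n$, we get
\[
f(\omega z)=\omega^n z^n+\lambda\omega^{-n}z^{-n}=\omega^n f(z).
\]
Set $\epsilon=\omega^n\in\{\pm1\}$. If $\epsilon=1$, then $f$ commutes with $R(z)=\omega z$, hence with every iterate: $f^k\circ R=R\circ f^k$. If $\epsilon=-1$, then $f(-z)=(-1)^n f(z)$ gives $f^{2}(\omega z)=f(-f(z))=(-1)^nf^2(z)$. Inductively, $f^k(\omega z)=\epsilon_k f^k(z)$ with $\epsilon_k\in\{\pm1\}$.

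In every case $f^k\circ R=S_k\circ f^k$, where $S_k$ is the isometry $z\mapsto\pm z$. The family $\{f^k\}$ is therefore normal near $z_0$ if and only if it is normal near $\omega z_0$, so $J=\omega J$ and the Fatou set is also $\omega$-invariant.

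Next, $R$ fixes $\infty$ and maps Fatou components to Fatou components. Hence $R(B)$ is the Fatou component containing $\infty$, namely $B$. Since $R$ fixes $0$, $R(T)$ is the Fatou component containing $0$, namely $T$. (Here I use that $T$ is by definition the component of $f^{-1}(B)$ containing $0$. Because $f^{-1}(B)$ is a union of Fatou components, $T$ is exactly the Fatou component containing $0$.)

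For (2), let $\tau(z)=\tilde\omega/z$ with $\tilde\omega^n=\lambda$. Then
\[
f(\tau z)=\lambda z^{-n}+\lambda z^{n}/\lambda=z^n+\lambda z^{-n}=f(z),
\]
so $f\circ\tau=f$, and hence $f^k\circ\tau=f^k$ for all $k\ge1$. Since $\tau$ is a homeomorphism of $\overline{\mathbb{C}}$, normality of $\{f^k\}$ at $z$ is equivalent to normality at $\tau(z)$. Thus $\tau$ preserves the Fatou set, and $\tau(J)=J$. Because $\tau$ swaps $0$ and $\infty$ and permutes Fatou components, it maps the component $B\ni\infty$ onto the component containing $0$, which is $T$. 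It also maps $T$ onto $B$.

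No step is a real obstacle. The only point needing care is the sign bookkeeping $f^k(\omega z)=\pm f^k(z)$ when $\omega^n=-1$, and this is handled by $f(-z)=(-1)^nf(z)$.
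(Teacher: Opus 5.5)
Your proof is correct and follows essentially the same route as the paper: both rest entirely on the identities $f(\omega z)=\omega^n f(z)$ and $f\circ\tau=f$, which let the rotation and the involution transport the dynamics. The only difference is cosmetic. You transfer normality of $\{f^k\}$ directly, handling the sign in $f^k(\omega z)=\pm f^k(z)$ via spherical isometries, whereas the paper transfers the set of points escaping to $\infty$ and uses that $J$ is the boundary of the basin of $\infty$; identifying $B$ and $T$ as the Fatou components containing $\infty$ and $0$ is the same in both.
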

\begin{proof}
The proof relies on the symmetry satisfied by McMullen maps $f$, namely
\begin{equation}\label{symmetry}
f(\omega z)=\omega^n f(z)
\end{equation}
and $f\circ\tau(z)=f(z)$.
This implies that escaping to infinity under iteration of $f$ is preserved by both maps $z\mapsto \omega z$ and $\tau$. Consequently, the attracting basin of infinity, and therefore  its boundary, which is the Julia set $J$, is invariant under these transformations. The remaining statements follow from a direct verification using the definitions of $B$ and $T$.
\end{proof}

\begin{lema}\label{lem:non-sym-domain}
Let $U$ be a Fatou domain of $f$ such that $U\neq B, T$ and $f(U)\neq T$. Then $\omega\ol{U}\cap\ol{U}=\emptyset$ for any $\omega\neq 1$ with $\omega^{2n}=1$. 
\end{lema}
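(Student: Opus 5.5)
The plan is to argue by contradiction. Suppose $\omega\ol{U}\cap\ol{U}\neq\emptyset$ for some $\omega\neq 1$ with $\omega^{2n}=1$. Since $U\neq B,T$, we have $0,\infty\notin\ol{U}$, because $0\in T$, $\infty\in B$ and distinct Fatou domains are disjoint open sets. So Lemma \ref{lem:separate}(1) applies to the continuum $E=\ol{U}$. It shows that
$$K=\bigcup_{k=0}^{2n-1}\omega^k\ol{U}$$
separates $0$ and $\infty$. By Lemma \ref{lem:symmetric}(1) each $\omega^kU$ is a Fatou domain, distinct from $B$ and $T$. Hence $K$ is disjoint from $B\cup T$, and $B$ and $T$ lie in different components of $\ol{\C}\setminus K$. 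Let $W$ be the component containing $T$ (and $0$), and $W'$ the one containing $B$.

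Next I push this configuration forward by $f$, using the symmetry \eqref{symmetry}, $f(\omega z)=\omega^n f(z)$ with $\omega^n=\pm1$. Write $V=f(U)$. Then
$$f(K)=\ol{V}\cup(-\ol{V}),$$
where $-V$ is also a Fatou domain by Lemma \ref{lem:symmetric}. The hypothesis $f(U)\neq T$ gives $V\neq T$, and since $-T=T$ also $-V\neq T$. I also need $V\neq B$. For this I would show that the only components of $f^{-1}(B)$ are $B$ and $T$, where each is mapped with degree $n$. Since $f$ has degree $2n$ and $\infty$ has the two preimages $0,\infty$, each of local degree $n$, this should follow from counting preimages. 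Granting it, $U\neq B,T$ gives $V\neq B$. Hence $0,\infty\notin f(K)$.

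The key claim is that $f(K)$ again separates $0$ and $\infty$. The argument is a properness/degree one. The set $f(W)$ is open and contains $f(T)=B\ni\infty$, and its boundary lies in $f(\partial W)\subseteq f(K)$. If $0$ and $\infty$ lay in the same component of $\ol{\C}\setminus f(K)$, then $f(W)\ni 0$. So $W$ would contain a preimage of $0$, and hence a preimage of $T$. Applying the same reasoning to $W'$, it would contain a preimage of $T$ as well. Counting preimages of $0$, which are the $2n$ points $z^{2n}=-\lambda$ permuted by rotations, against preimages of $\infty$ on each side should yield a contradiction. Each of $W$ and $W'$ carries exactly $n$ preimages of $\infty$ with multiplicity, so the degrees of $f$ over $W$ and over $W'$ are forced.

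Iterating gives the contradiction. The argument applies verbatim to $f(K)=\ol{V}\cup-\ol{V}$ provided $V$ again satisfies the hypotheses. So, inductively, $f^k(K)$ separates $0$ and $\infty$ for every $k$, and $f^k(K)$ is a union of at most two closures of Fatou domains $\pm\ol{f^k(U)}$. Eventually $f^k(U)$ enters $B$ or $T$, or a periodic cycle of bounded Fatou domains. Entering $B$ or $T$ is excluded at the first step where it would happen, since $f^k(U)=T$ with $f^{k-1}(U)\neq B,T$ is exactly the situation the hypothesis rules out. That case needs a separate look at the preimages of $T$ surrounding $T$. A periodic bounded cycle would carry a free critical point. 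Combining this with the separation property, $\ol{V}\cup-\ol{V}$ separating $0$ from $\infty$, and the $2n$-fold symmetry of the critical points, contradicts Lemma \ref{lem:separate}(2) applied to rotations by non-primitive roots.

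I expect the main obstacle to be the key claim that $f(K)$ still separates $0$ and $\infty$, that is, the degree-counting argument on $W$ and $W'$. I would try first to compute the degree of $f|_W$ onto its image via the $n$ preimages of $\infty$ at $0$. The final contradiction for a periodic cycle may also require a careful case analysis.
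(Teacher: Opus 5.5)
Your setup is fine: $0,\infty\notin\ol U$, and $K$ separates $0$ from $\infty$. The gap is in the key claim, and the iteration cannot repair it.

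\textbf{The key claim.} The map $f\colon W\to f(W)$ need not be proper, because $f^{-1}(f(K))$ is in general strictly larger than $K$. So there is no well-defined degree of $f$ over $W$ or $W'$. What your open-map argument actually gives is that $W$ and $W'$ each contain a zero of $f$. Both sets are invariant only under $\langle\omega\rangle$. This yields a contradiction only when $\omega$ is a primitive $2n$-th root of unity, since then a single $\langle\omega\rangle$-orbit contains all $2n$ zeros. If instead $\omega^n=1$, the zeros fall into several orbits, and $n$ zeros on each side is perfectly consistent with your count. (Note also that in this case $f(K)=\ol V$, not $\ol V\cup(-\ol V)$.) Even in the primitive case the push-forward gains nothing. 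By Lemma \ref{lem:separate}(1),(2) and Janiszewski's theorem, $\ol V\cup(-\ol V)$ separates $0$ from $\infty$ if and only if $\ol V\cap(-\ol V)\neq\emptyset$. So you have only restated the problem for $V$ with the non-primitive root $-1$.

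\textbf{The iteration.} The hypothesis $f(U)\neq T$ is not inherited by $f(U)$. Every Fatou domain $U\neq B,T$ in the basin of $\infty$ has a forward image $Y$ that is a component of $f^{-1}(T)$. Your sentence that this is ``exactly the situation the hypothesis rules out'' is wrong, since the hypothesis concerns $U$ only. For such $Y$ the conclusion can genuinely fail: in Proposition \ref{prop:classify-no-bounded-cycles}(3), $\ol Y$ meets $e^{\pi\tb{i}/n}\ol Y$. Hence tracking only the fact that closures separate $0$ from $\infty$ can never produce the contradiction. The periodic-cycle endgame is also left unspecified.

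\textbf{The missing idea.} You need to use the \emph{open} domains. The rotates $\omega^kU$ are pairwise disjoint, by Lemma \ref{lem:separate}(3) applied to the rotation-invariant continuum $J$. Take an arc through $U$ from $z$ to $\omega_0 z$. The union of its $\langle\omega_0\rangle$-rotates separates $0$ from $\infty$, so it must meet its rotate by some $\omega\notin\langle\omega_0\rangle$. Such an $\omega$ exists precisely because $\omega_0$ is non-primitive. This forces two distinct rotates of $U$ to intersect, which is impossible. This is Lemma \ref{lem:separate}(4), and it disposes of every non-primitive $\omega_0$ directly.

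For primitive $\omega$, a single application of $f$ gives $f(\omega\ol U)=-\ol{f(U)}$. Since $f(U)\neq B,T$ (the only place $f(U)\neq T$ is used), Lemma \ref{lem:separate}(4) with $\omega_0=-1$ applied to $f(U)$ gives $\ol{f(U)}\cap(-\ol{f(U)})=\emptyset$. Hence $\omega\ol U\cap\ol U=\emptyset$. No iteration is needed.
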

\begin{proof} Since $\omega J=J$ for any $\omega\neq 1$ with $\omega^{2n}=1$ (Lemma \ref{lem:symmetric}), the set $\omega U$ is also a Fatou domain of $f$. By Lemma \ref{lem:separate}\,(3), $U\cap\omega U=\emptyset$. Thus, $E=\ol{U}$ and $A=U$ satisfies the assumption of Lemma \ref{lem:separate}\,(4). We then obtain that $\omega \ol{U}\cap\ol{U}=\emptyset$ when $\omega$ is non-primitive. 

If $\omega$ is primitive, then $\omega^n=-1$. By symmetry \eqref{symmetry}, we have $f(\omega\overline{U})=-f(\overline{U})$. Since $f(U)\neq B, T$ by assumption, we have $\omega^kf(U)\cap f(U)=\emptyset$ whenever $\omega^k\neq 1$ by Lemma \ref{lem:separate}\,(3). Note that $-1$ is a non-primitive $2n$-th root of unity for $n\geq 2$, we apply Lemma \ref{lem:separate}\,(4) to $E=\ol{f(U)}$ and $A=f(U)$ with $\omega_0=-1$ to obtain $$(-\overline{f(U)})\cap \overline{f(U)}=\emptyset.$$Since $f(\omega U)=-f(U)$, it follows that $$f(\omega \overline{U}\cap \overline{U})\subset f(\omega \overline{U})\cap f(\overline{U})=(-\overline{f(U)})\cap\overline{f(U)}=\emptyset.$$  Hence, $\omega \ol{U}\cap\ol{U}=\emptyset$. Now the proof is complete. 
\end{proof}

The following lemma implies that the parameter space of McMullen maps also has the rotational symmetries; see Figure \ref{fig:Parameter}. 
\begin{lema}\label{lem:parameter-sym} Let $f=f_\lambda$ be a McMullen map with exponent $n\geq 2$. \vspace{3pt}

(1) The Julia sets of $g(z):=\omega f(z)$ and $f$ coincide for any $2n$-th root $\omega$ of unity.

(2) Suppose $\tilde{\lambda}=e^{{\bf i}\theta}\lambda$ with $\theta={2\pi k}/{(n-1)}, k=1,\ldots,n-2$. Let $\tilde{J}$ be the Julia set of $f_{\tilde{\lambda}}$. Then $\tilde{J}=e^{{\bf i}\theta/(2n)}J$.
\end{lema}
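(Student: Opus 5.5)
The plan is to reduce both statements to the elementary symmetry \eqref{symmetry}, $f(\omega z)=\omega^n f(z)$ for $\omega^{2n}=1$. The idea is that multiplying $f$ by a $2n$-th root of unity does not change $|f^k|$, so it does not change the set of points escaping to $\infty$. Part (2) should then follow from (1) by a linear conjugacy.

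For (1), I would first show by induction on $k$ that $g^k(z)=c_k f^k(z)$ for some $2n$-th root of unity $c_k$. The case $c_1=\omega$ is given. For the inductive step,
$$g^{k+1}(z)=\omega f\bigl(c_k f^k(z)\bigr)=\omega c_k^n f^{k+1}(z),$$
and $\omega c_k^n$ is again a $2n$-th root of unity. Hence $|g^k(z)|=|f^k(z)|$ for all $k$ and $z$, with the obvious convention at $0$ and $\infty$.

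It follows that the basins $\{z: f^k(z)\to\infty\}$ and $\{z: g^k(z)\to\infty\}$ coincide. The point $\infty$ is a superattracting fixed point of both maps, since $g(\infty)=\infty$ and $g$ has local degree $n$ there. For a rational map of degree at least $2$, the Julia set equals the boundary of the full basin of any attracting cycle. Therefore $J(g)=\partial\{f^k\to\infty\}=J$. The only point needing care is this standard fact about basin boundaries; it is the same fact already used in the proof of Lemma~\ref{lem:symmetric}.

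For (2), set $a=e^{\mathbf{i}\theta/(2n)}$ and $h(z)=az$, and compute the conjugate
$$h^{-1}\circ f_{\tilde\lambda}\circ h(z)=a^{-1}\bigl(a^nz^n+\tilde\lambda a^{-n}z^{-n}\bigr)=a^{n-1}z^n+\tilde\lambda a^{-n-1}z^{-n}.$$
Since $a^{2n}=e^{\mathbf{i}\theta}$, we have $\tilde\lambda a^{-n-1}=\lambda a^{2n}a^{-n-1}=\lambda a^{n-1}$. So the conjugate equals $a^{n-1}f_\lambda$. Moreover
$$a^{n-1}=e^{\mathbf{i}\theta(n-1)/(2n)}=e^{\pi\mathbf{i}k/n},$$
which is a $2n$-th root of unity.

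By part (1), the Julia set of $a^{n-1}f_\lambda$ is $J$. Julia sets transform naturally under conjugacy, so $J(h^{-1}\circ f_{\tilde\lambda}\circ h)=h^{-1}(\tilde J)$. Combining the two gives $h^{-1}(\tilde J)=J$, that is, $\tilde J=e^{\mathbf{i}\theta/(2n)}J$.

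Nothing here is a serious obstacle. The computation in (2) is mechanical, and the only conceptual input is the identification of the Julia set with the boundary of the basin of $\infty$ used in (1).
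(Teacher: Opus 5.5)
Your proposal is correct and follows the paper's proof. Part (1) uses the same induction showing that $g^k$ differs from $f^k$ by a $2n$-th root of unity, so the two basins of $\infty$ coincide; part (2) conjugates by the rotation $z\mapsto e^{\mathbf{i}\theta/(2n)}z$ and invokes (1). The only cosmetic difference is direction: you conjugate $f_{\tilde\lambda}$ to $e^{\pi\mathbf{i}k/n}f_\lambda$ and apply (1) to $f_\lambda$, whereas the paper conjugates $f_\lambda$ to $e^{-\pi\mathbf{i}k/n}f_{\tilde\lambda}$ and applies (1) to $f_{\tilde\lambda}$.
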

\begin{proof}
(1) By the symmetry \eqref{symmetry},
$$g^2(z)=\omega f(\omega f(z))=\omega\cdot\omega^n f^2(z)=\omega^{n+1}f^2(z).$$
By induction, $g^k(z)=\omega^{a_k}f^k(z)$ for all $k\geq1$, where $a_k=(n^k-1)/(n-1)$. Consequently, a point $z$ tends to $\infty$ under iteration of $f$ if and only if it does so under $g$. Hence $f$ and $g$ have the same attracting basin of $\infty$, and therefore $J_g=J$.

\medskip\noindent(2) Let $\xi(z)=e^{\tb{i}\theta/(2n)}z$ be a rotation. The conjugate map
$$h(z)=\xi\circ f\circ\xi^{-1}(z)=e^{\tb{i}\theta/(2n)}\left(\frac{z^n}{e^{\tb{i}\theta/2}}+\frac{\lambda e^{\tb{i}\theta/2}}{z^n}\right)$$
equals $e^{-\pi\tb{i}k/n}f_{\tilde{\lambda}}(z)$, where $\tilde{\lambda}=e^{\tb{i}\theta}\lambda$. By Statement~(1) the Julia sets of $h$ and $f_{\tilde{\lambda}}$ coincide. Therefore $\tilde{J}=\xi(J)$.
\end{proof}

\subsection{Fatou domains $B$ and $T$}\label{}
We now assume that the McMullen map $f$ is PCF with exponent $n\geq2$. Then $B\neq T$ and $J$ is connected and locally connected. Since both $B$ and $T$ are simply connected, there exist unique Riemann mappings  $$\phi_B: B\to\overline{\mathbb{C}}\setminus\overline{\mathbb{D}}\ \tu{ and }\ \phi_T: T\to{\mathbb{D}}$$ satisfying $\phi'_B(\infty)>0$ and $\phi'_T(0)>0$. 

The maps $\phi_B$ and $\phi_T$ are called \textbf{B\"{o}ttcher maps} of $f$ on $B$ and $T$, respectively. We define $\phi$ on $B\cup T$ by $\phi|_B=\phi_B$ and $\phi|_T=\phi_T$. 
 By the uniqueness of Riemann maps and Lemma \ref{lem:symmetric}, we have $\phi(\omega z)=\omega \phi(z)$. 
The preimages of radial rays in $\mathbb{D}$ and $\mathbb{C}\setminus\overline{\mathbb{D}}$ of $\phi$ are called \textbf{internal rays} in $T$ and $B$, respectively.

The Yoccoz puzzle technique was employed in \cite{QWY12} to prove that for a McMullen map with connected Julia set and exponent $n \geq 3$, the basin $B$ is a Jordan domain. Their approach fails for $n = 2$, leaving open whether $\partial B$ is a Jordan curve in this case. When $f$ is PCF, by using Lemma \ref{lem:separate}, instead of the Yoccoz puzzle technique, we obtain the following result: both $B$ and $T$ are Jordan domains.

\begin{lema}\label{lem:jordan-domains}
Let $f$ be a PCF McMullen map with exponent $n\geq 2$. Then the Fatou domains $B$ and $T$ are Jordan domains. 
\end{lema}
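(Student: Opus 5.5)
By Lemma \ref{lem:symmetric}(2), the involution $\tau$ maps $B$ onto $T$, so it suffices to show that $B$ is a Jordan domain. Since $f$ is PCF, $J$ is connected and locally connected. By Carathéodory's theorem, $\phi_B^{-1}$ then extends continuously to $\partial\mathbb{D}$, and every internal ray $R(t)$ of $B$ (with $t\in\mathbb{R}/\mathbb{Z}$) lands. Thus $B$ is a Jordan domain if and only if distinct internal rays land at distinct points. Two facts are available:
\begin{itemize}
\item since $f^{-1}(B)=B\cup T$ and $f|_B$ is conjugate by $\phi_B$ to $z\mapsto z^n$, we have $f(R(t))=R(nt)$;
\item since $\phi(\omega z)=\omega\phi(z)$, we have $\omega R(t)=R(t+1/(2n))$ for $\omega=e^{\pi\mathbf{i}/n}$.
\end{itemize}

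Suppose, for contradiction, that $R(s)$ and $R(t)$ with $s\neq t$ land at a common point $z_0$. Then $\Gamma=R(s)\cup R(t)\cup\{z_0,\infty\}$ is a Jordan curve. The domain $T$ is connected and disjoint from $\Gamma$, so it lies in one complementary component. Let $W$ be the other component, a \emph{wedge}, and let $I$ be the arc of angles of the rays contained in $W$. Note that $W\cap J\neq\emptyset$, since $J=\partial B$ is not contained in $\Gamma$.

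I next want to push the wedge forward. Among all such wedges (over all pairs of co-landing rays), the plan is to choose one with $|I|$ minimal, or else to iterate, and to show that $f$ maps $W$ homeomorphically onto the wedge bounded by $R(ns)$ and $R(nt)$, with angle arc $nI$.
\begin{itemize}
\item For injectivity, I would use Lemma \ref{lem:separate}: a wedge not containing $T$ does not separate $0$ from $\infty$. So, by Lemma \ref{lem:separate}(3), it cannot be invariant under a nontrivial rotation.
\item If $|I|<1/(2n)$, the rotates $\omega^jW$ should be pairwise disjoint. Then $W$ contains no two points with the same image, and contains no critical point, since the free critical points come in a full $\omega$-orbit.
\end{itemize}
Iterating, the angle lengths $n^k|I|$ grow, and the landing points $f^k(z_0)$ remain common landing points (they are distinct from $\infty$ because $z_0\in J$). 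Hence at some first step $k$ the wedge $W_k$ has angular length at least $1/(2n)$.

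At that step, $W_k$ contains both $R(u)$ and $\omega R(u)=R(u+1/(2n))$ for some $u$. I then take the continuum $E=\overline{W_k}\cap J$, or the closure of $\partial W_k\cap\mathbb{C}$ thickened into $J$, and note that $E\cap\omega E\neq\emptyset$. Lemma \ref{lem:separate}(1) then shows that $\bigcup_j\omega^jE$ separates $0$ from $\infty$. On the other hand, every $\omega^jW_k$ avoids $T$, and these sets are glued only along rays of $B$ and points of $\partial B$. I expect this to force $T$ to be enclosed by a union of wedges whose complement meets $B$ only inside ray-bounded sectors, contradicting the fact that $T$ is a Fatou domain distinct from $B$ with $f(T)=B$.

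The main obstacle is the injectivity and bookkeeping in the pushforward step. One must rule out that $W_k$ contains a preimage of $T$, or a free critical point, before its angular length reaches $1/(2n)$. My first attempt would use the symmetry \eqref{symmetry} together with Lemma \ref{lem:separate}(4): when $|I|<1/(2n)$, the rays bounding $W$ and those bounding $\omega W$ are disjoint, so $W$ and $\omega W$ are nested or disjoint; nesting is excluded by the angle count, so the rotates are disjoint. Since the $2n$ free critical points form a single $\omega$-orbit, at most one of them can lie in $W$. The remaining case, where exactly one critical point lies in $W$, should be handled by a degree count showing $\deg(f|_W)\le 2$ while the image wedge still avoids $T$. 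If that degree argument fails, I would instead argue through the PCF hypothesis: the point $z_0$ is eventually periodic with finitely many landing rays, and a periodic co-landing pair gives a wedge mapped onto itself. That wedge must then contain a critical value, and I would rule this out via the location of $\pm v$ relative to $T$.
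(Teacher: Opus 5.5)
Your reduction to $B$ is correct: $\tau$ is a M\"obius map with $\tau(B)=T$, so this is shorter than the paper's Lefschetz fixed-point count for $\partial T$. Your wedge argument for $B$, however, has a gap at both of its load-bearing steps.

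\emph{The pushforward.} You never prove that $f$ maps $W$ onto the wedge with arc $nI$. Nothing in your setup rules out that $W$ contains a component of $f^{-1}(T)$. In that case $f(W)$ is the complementary region of $f(\Gamma)$ that contains $T$, and the wedge of the image pair has arc $(nI)^c$. This is the configuration to expect here: in the paper's proof by contradiction, a non-Jordan $\partial B$ yields a component $\Omega\neq\Omega_0$ of $\mathrm{int}(\overline{\mathbb{C}}\setminus B)$ with $f(\Omega)=\Omega_0\supset T$. Your fallbacks (a degree-$\le 2$ count, or a periodic wedge containing a critical value) are not carried out.

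\emph{The endgame.} Knowing that $\bigcup_j\omega^jE$ separates $0$ from $\infty$ is not a contradiction, since $\partial B$ does that too. It is also unclear that $E=\overline{W_k}\cap J$ is a continuum, and the sentence that follows is only an expectation. (Also, $J\neq\partial B$ in general, though that slip is harmless.)

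Both steps can be repaired, and the first becomes unnecessary. The missing observation is: \emph{for every pair of distinct co-landing rays $R(s),R(t)$, the wedge arc satisfies $|I|<1/(2n)$.}

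Suppose instead $|I|\ge 1/(2n)$. Every ray with angle in $\overline{I}$ lies in $\overline{W}$, and $\omega R(\theta)=R(\theta+1/(2n))$. Hence $F=\bigcup_{j=0}^{2n-1}\omega^j\overline{W}\supseteq B$. On the other hand $F\cap T=\emptyset$, because $\omega T=T$ and $T\cap\overline{W}=\emptyset$. Let $D$ be the component of $\overline{\mathbb{C}}\setminus F$ containing $T$. Since $B\subseteq\mathrm{int}(F)$ and each $\omega^jW$ is open,
\[
\partial D\subseteq F\setminus\mathrm{int}(F)\subseteq\bigcup_j(\omega^j\Gamma\setminus B)=\{\omega^jz_0\}_j,
\]
a finite set. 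So $D$ is the sphere minus finitely many points, contradicting $D\cap B=\emptyset$.

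Consequently $|I|$ is the shorter arc. Writing $\|s-t\|$ for the distance in $\mathbb{R}/\mathbb{Z}$, we get $\|s-t\|=|I|<1/(2n)$.

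Now you need not follow $W$ at all. The rays $R(ns)$ and $R(nt)$ land together at $f(z_0)$. They are distinct because $0<n\|s-t\|<1/2$, and for the same reason $\|ns-nt\|=n\|s-t\|$. Applying the observation inductively to the pairs $(n^ks,n^kt)$ gives $n^k\|s-t\|<1/(2n)$ for all $k$, which is absurd. No injectivity, critical-point, or $f^{-1}(T)$ bookkeeping is needed. The iteration works because the symmetry threshold $1/(2n)$ times the degree $n$ of $f|_B$ equals $1/2$, so distances never wrap around.

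With this fix, your route is a genuinely different proof from the paper's. It is purely ray-combinatorial and uses only local connectivity of $\partial B$ and the rotational symmetry. The paper instead studies the components of $\mathrm{int}(\overline{\mathbb{C}}\setminus B)$ and obtains a point of $\partial\Omega_0$ with $2n+1$ preimages.
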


\begin{proof}
%[Proof of Lemma \ref{lem:jordan-domains}]
We argue by contradiction and assume $\partial B$ is not a Jordan curve. Since $B$ is simply connected and $\partial B$ is locally connected, $K=\ol{\mb{C}}\setminus B$ is a full continuum and each component of $\tu{int}(K)$ is a Jordan domain. Let $\Omega_0$ be the component of $\tu{int}(K)$ containing $0$.

We claim that $f$ maps any component $\Omega\neq\Omega_0$ of $\tu{int}(K)$ onto a component of $\tu{int}(K)$. In fact, the set $f(\Omega)$ is disjoint from $B$ since $f^{-1}(\infty)=\{0,\infty\}$. Moreover, $f(\partial \Omega)\subseteq \partial B=\pa K$. Consequently, the open connected set $f(\Omega)$ is a component of $\tu{int}(K)$. This proves the claim. 

Since for any point $z\in\Omega_0$, the Julia set $J$ is contained in the closure of the backward orbit $O^-(z)=\{w: f^k(w)=z, k\in\mathbb{N}\}$, there exists a component $\Omega\neq\Omega_0$ of $\tu{int}(K)$ such that $f(\Omega)=\Omega_0$. Otherwise, $f^{-1}(\Omega_0)\subset \Omega_0$ implies that $O^-(z)$ is contained in $\ol{\Omega_0}$. This contradicts that $\pa B\sm \ol{\Omega_0}\neq \emptyset$. 
   
Let $\omega$ be a primitive $2n$-th root of unity. Since $\omega\overline{ B}=\overline{ B}$ (Lemma \ref{lem:symmetric}), it follows from Lemma \ref{lem:separate}\,(3) that
 the $2n$ domains $\omega^k\Omega, k=0,\ldots, 2n-1$ are pairwise disjoint, and are all mapped onto $\Omega_0$ by the symmetry \eqref{symmetry}. Note that the closures of any two components of ${\rm int}(K)$ intersect in at most one point. Then there exists $w\in \partial \Omega_0$ which does not lie in the intersection $\pa \Omega_0 \cap(\bigcup_{k=0}^{2n-1}\omega^k\ol{\Omega})$, and $2n$ distinct points $z_k\in \omega^k\overline{ \Omega}\setminus \overline{\Omega_0},k=0,\ldots,2n-1$ such that $f(z_k)=w$. As $T\subset \Omega_0$ and $f(\partial T)=\partial B$, there also exists a point $z\in \overline{\Omega}_0$ such that $f(z)=w$. Thus, $f^{-1}(w)$ consists of at least $2n+1$ distinct points, a contradiction to the degree of $f$. Therefore, $\partial B$ is a Jordan curve.

We now show, by contradiction, that $\partial T$ is a Jordan curve. Assume that there are two internal rays of $T$ landing at the same point $z \in \partial T$. Since $\partial B$ is a Jordan curve, their images under $f$ form a single internal ray $R$ of $B$ landing at $f(z) \in \partial B$. Clearly, $z$ is a free critical point. Let $W$ be the bounded domain enclosed by these two rays. Then
$$W\subseteq \Omega_0\tu{ and }f(W)\supseteq \overline{\mathbb{C}}\setminus \overline{R}.$$ Thus, $W$ contains a component of $f^{-1}(W)$. By the Lefschetz fixed point theorem, $W$ contains at least one fixed point of $f$.

By symmetry \eqref{symmetry}, all free critical points of $f$ lie on $\partial \Omega_0$, each receiving at least two internal rays of $T$. Similar arguments as above show that each bounded domain enclosed by the two internal rays landing at a free critical point contains a fixed point of $f$. Hence, $\Omega_0$ contains at least $2n$ fixed points of $f$. Moreover, $f: \overline{B} \to \overline{B}$ is conjugate to $z \mapsto z^n$ on $\overline{\mathbb{C}} \setminus \mathbb{D}$, so $f$ has $n$ fixed points on $\overline{B}$. Hence, $f$ would have at least $3n$ fixed points in total. This contradicts that a rational map of degree $2n$ has exactly $2n+1$ fixed points counted with multiplicity and $n\geq 2$. Therefore, $\partial T$ is a Jordan curve.
\end{proof}

\begin{lema}\label{lem:disjoint}
Let $f$ be a PCF McMullen map with exponent $n\geq 2$. Then $\pa B\cap \pa T\neq \emptyset$ if and only if $v\in\partial B$. In this case, $\pa B\cap \pa T=\tu{Crit}$. 
\end{lema}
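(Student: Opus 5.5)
The plan is to compare, for a point $w\in\partial B$, the preimages of $w$ lying on $\partial B$ with those lying on $\partial T$, using that $f$ has degree $2n$. An overlap $\partial B\cap\partial T\neq\emptyset$ forces a coincidence of preimages, which is possible only over a critical value.

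\emph{Step 1: boundary coverings.} By Lemma \ref{lem:jordan-domains}, $B$ and $T$ are Jordan domains. The map $f:B\to B$ is proper of degree $n$ and its only critical point there is $\infty$. Via the B\"ottcher map it is conjugate to $z\mapsto z^n$, and this conjugacy extends to the boundary circle. Hence $f|_{\partial B}:\partial B\to\partial B$ is an $n$-fold covering. Likewise $f:T\to B$ is proper of degree $n$ with only critical point $0$, and $\phi_B\circ f=\phi_T^{-n}$ up to normalization; this relation extends to $\overline T$ by Carath\'eodory. Hence $f|_{\partial T}:\partial T\to\partial B$ is also an $n$-fold covering. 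Consequently every $w\in\partial B$ has exactly $n$ distinct preimages on $\partial B$ and exactly $n$ distinct preimages on $\partial T$.

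\emph{Step 2: $\partial B\cap\partial T\subseteq\mathrm{Crit}$.} Let $z\in\partial B\cap\partial T$ and $w=f(z)\in\partial B$. Then $z$ is counted both among the $n$ preimages on $\partial B$ and among the $n$ on $\partial T$. So $f^{-1}(w)$ has at most $2n-1$ distinct points, and $w$ must be a critical value. The critical values are $\infty$ (from $0,\infty$, which are not in $J$) and $\pm v$, so $w=\pm v$. Each $c\in\mathrm{Crit}$ satisfies $f(c)=2c^n$ with $c^{2n}=\lambda$. Thus exactly $n$ simple critical points map to $v$ and $n$ map to $-v$. Counting multiplicity $2$ for each, this accounts for all $2n$ preimages. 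Hence $f^{-1}(\pm v)\subseteq\mathrm{Crit}$ and $z\in\mathrm{Crit}$. In particular, if $\partial B\cap\partial T\ne\emptyset$, then $\pm v\in\partial B$. Since $-1$ is a $2n$-th root of unity, Lemma \ref{lem:symmetric} gives $-\partial B=\partial B$, so $v\in\partial B$.

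\emph{Step 3: converse and equality.} Suppose $v\in\partial B$, and recount $f^{-1}(v)$. By Step 1 there are $n$ distinct preimages on $\partial B$ and $n$ distinct ones on $\partial T$. By Step 2 there are only $n$ distinct points in $f^{-1}(v)$, namely the critical points over $v$. Hence the two sets of preimages must overlap, giving a point of $\partial B\cap\partial T$; by Step 2 it is a critical point $c$. Finally, the critical points are $\{\omega c:\omega^{2n}=1\}$, and $\partial B$, $\partial T$ are invariant under these rotations by Lemma \ref{lem:symmetric}(1). Therefore $\mathrm{Crit}\subseteq\partial B\cap\partial T$, and together with Step 2, $\partial B\cap\partial T=\mathrm{Crit}$.

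The main point needing care is Step 1: that the boundary maps are genuine $n$-to-one coverings with distinct preimages. This follows from the extension of the B\"ottcher coordinates to the Jordan boundaries together with the absence of critical points of $f|_{B}$ and $f|_{T}$ other than $\infty$ and $0$. The rest is degree counting.
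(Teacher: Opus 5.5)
Your proof is correct, but the mechanism differs from the paper's. The paper argues locally with internal rays. At $z\in\partial B\cap\partial T$, the ray of $B$ and the ray of $T$ landing at $z$ must both map onto the unique internal ray of $B$ landing at $f(z)$ (unique because $\partial B$ is a Jordan curve), so $f$ is not locally injective at $z$. Conversely, the ray of $B$ landing at $v$ lifts to two rays landing at a critical point $c$ over $v$; since $B$ and $T$ are Jordan domains, these two rays cannot both lie in the same domain. You instead make the argument global. The B\"ottcher coordinates turn $f|_{\partial B}$ and $f|_{\partial T}$ into $n$-fold circle coverings, so over a non-critical value the $2n$ preimages split into $n$ on each boundary with no overlap. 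Over $v$ the whole fiber has only $n$ points, which forces both $n$-point sets to coincide with the critical fiber. Your version makes the ``hence $z$ is critical'' step fully explicit, and it correctly produces $f(z)\in\{\pm v\}$ followed by symmetry, where the paper writes $f(z)=v$. It also places all $n$ critical points over $v$ on $\partial B\cap\partial T$ at once. The paper's version is shorter and purely local. One step you use silently should be stated: every preimage of a point of $\partial B$ lies on $\partial B\cup\partial T$. This holds because $f^{-1}(B)=B\cup T$ and $f$ is open, so $f^{-1}(\overline{B})=\overline{B\cup T}$, and preimages of Julia points are Julia points. Without it, the bound ``at most $2n-1$ distinct points in $f^{-1}(w)$'' does not follow from the counts on the two boundaries alone.
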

\begin{proof}
Suppose $\partial B\cap\partial T\neq\emptyset$. For any $z\in\partial B\cap\partial T$, internal rays in $B$ and $T$ landing at $z$ map under $f$ to a single internal ray of $B$; hence $z$ is a free critical point and $f(z)=v\in\partial B$. Thus $\partial B\cap\partial T\subseteq\Crit$.

Conversely, suppose $v\in\partial B$. Then $-v\in\partial B$ by Lemma~\ref{lem:symmetric}. Let $c\in\Crit$ with $f(c)=v$. Since $c$ is a simple critical point, $f$ near $c$ is locally a two-to-one branched cover. The internal ray of $B$ landing at $v$ lifts under $f$ to two rays landing at $c$. Since both $\partial B$ and $\partial T$ are Jordan domains, the two rays lie in distinct components of $f^{-1}(B)=B\cup T$; hence one is in $B$ and the other in $T$. This implies $c\in \partial B\cap\partial T$. The same holds for preimages of $-v$. Thus $\Crit\subset\partial B\cap\partial T$ and $\partial B\cap\partial T\neq\emptyset$.

Combining the two directions yields $\partial B\cap\partial T=\Crit$ in this case.
\end{proof}
\begin{prop}\label{prop:classify-no-bounded-cycles}
    Let $f$ be a PCF McMullen map with $n\geq 2$. Then every Fatou domain eventually iterated onto $B$ is a Jordan domain. If $f$ has no bounded attracting cycles, then exactly one of the following holds:
    \begin{enumerate}
    \item $f^k(v)=\infty$ for some integer $k\geq 0$. Then $k\geq 2$ and $J$ is a Sierpi\'{n}ski carpet.
    \item $v\in\partial B$. This occurs if and only if $\partial B\cap \partial T\neq \emptyset$. In this case, $\partial B\cap\partial T=\tu{Crit}$ and thus $J$ is a cluster.
     \item $v\in\partial T$. Then there exists a bounded Fatou domain $U$ such that $\ol{U}$ and $e^{\pi\textup{\tb{i}}/n}\ol{U}$ meet at a free critical point, and such that $$f^{-1}(\overline{T})=\bigcup_{k=0}^{2n-1}e^{\pi\textup{\tb{i}}k/n}\overline{U}$$ is a degenerate annulus.
        When $n=2$, this case does not occur. 
        \item $f^k(v)\in \partial T$ for some integer $k\geq 1$. Then the components of $$T\cup f^{-1}(T)\cup\cdots\cup f^{-k}(T)$$ are Jordan domains with pairwise disjoint closures; the free critical values $\pm v$ lie on the boundaries of two of them, and the preimage of each such closure is a full continuum whose interior consists of two Jordan domains.
        \item $v$ is buried in $J$. Then $J$ is a Sierpi\'{n}ski carpet.
    \end{enumerate}
\end{prop}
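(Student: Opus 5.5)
We first treat the preliminary claim: every Fatou domain $U$ with $f^k(U)=B$ is a Jordan domain. We argue by induction on the least such $k$. The cases $k=0,1$ are Lemma \ref{lem:jordan-domains}, since $f^{-1}(B)=B\cup T$. For $k\ge 2$, the map $f:U\to f(U)$ is a proper branched cover onto a Jordan domain $V\ne B,T$. If $U$ contains no critical point, then $f|_U$ is conformal onto $V$. It extends to a homeomorphism $\ol U\to\ol V$ unless two points of $\pa U$ have the same image. Such a coincidence would force a critical point on $\pa U$ at which two accesses from $U$ meet. We rule this out by the argument used for $\pa T$ in Lemma \ref{lem:jordan-domains}: pull back internal rays, and count the fixed points produced by the Lefschetz argument, or use symmetry to produce too many preimages. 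If $U$ contains a free critical point, then by symmetry \eqref{symmetry} all of $\Crit$ lie in a single $G_\lambda$-orbit of domains. Since $f$ is PCF and no bounded attracting cycle exists, the critical point eventually lands in $B$, so $f^k(v)=\infty$. That is case (1).

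For the classification we split on the orbit of $v$. PCF with no bounded attracting cycles means $v\in J$ or $f^k(v)=\infty$. If $v\in J$, then $v$ lies either on the boundary of some Fatou domain or is buried. By the Jordan-domain claim, every Fatou domain is eventually mapped to $B$, and this gives cases (2)--(5) as follows.
\begin{itemize}
\item If $v\in\pa B$, Lemma \ref{lem:disjoint} gives (2). Then $B$ and $T$ are joined through the critical points, so a curve through $\Crit$ meets $J$ in finitely many points, and $J$ is a cluster.
\item If $v\in\pa U$ with $U\neq B$ and $f(U)\neq B$, let $k$ be minimal with $f^k(v)\in\pa T$. This is case (3) when $k=0$ and case (4) when $k\ge1$.
\item If $v$ lies on no Fatou boundary, we are in case (5).
\end{itemize}
These cases are mutually exclusive: $\pa B\cap\pa T=\emptyset$ outside case (2), and the first hitting time of $\pa T$ is unique.

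For cases (1) and (5) we apply Whyburn's criterion. It suffices to show that the closures of Fatou domains are pairwise disjoint and that $J$ has empty interior, the latter being automatic. Two Fatou closures $\ol U,\ol{U'}$ meeting at a point $z$ can be pushed forward until one of them becomes $B$ or $T$. Since $B$ and $T$ are Jordan domains with disjoint closures, the first time the images coincide or touch must occur at a critical point lying on both boundaries. In case (5) no critical point lies on any Fatou boundary. In case (1) the critical points lie in the interior of Fatou domains. In both cases we get a contradiction, with $\pa B\cap\pa T=\emptyset$ by Lemma \ref{lem:disjoint}. We also need $k\ge2$ in case (1). If $k=0$ or $1$, then $v$ or the critical points lie in $B\cup T$, making that domain contain a critical point. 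This contradicts that $f:B\to B$ has degree $n$ and $f:T\to B$ has degree $n$ with the only critical point $0$ (respectively $\infty$), since the total degree is $2n$.

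In case (3), $f^{-1}(T)$ is a union of $2n$ symmetric domains $\omega^kU$; this follows from Lemma \ref{lem:separate}(3) and the degree count. Each critical point $c$ with $f(c)\in\pa T$ lies on the boundaries of two adjacent preimages, because the two lifts of a ray landing at $f(c)$ go to different components. Symmetry then places them at $\omega^k\ol U\cap\omega^{k+1}\ol U$, which gives the degenerate annulus. Lemma \ref{lem:non-sym-domain} is unavailable here since $f(U)=T$, and Lemma \ref{lem:separate} prevents non-adjacent contacts. To exclude $n=2$, we count: there are $2n$ preimage domains of $T$, each mapped with degree $1$, plus the degree-$n$ component... the degree of $f$ onto $T$ is $2n$. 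With $4$ domains and $4$ critical points for $n=2$, we expect a contradiction from $f(v)=f(-v)$: both critical values land at the same point of $\pa T$, and the resulting chain would separate $0$ from $\infty$ compatibly only when $n\ge3$. We expect this to need care. Case (4) follows the same pattern at depth $k$. Before depth $k$ no critical point is on any boundary, so the closures are disjoint as in the Whyburn argument. At depth $k+1$ each domain containing $\pm v$ on its boundary pulls back to two Jordan domains glued at the critical point. \textbf{Main obstacle:} ruling out the boundary-pinching coincidences in the Jordan-domain induction, and excluding case (3) for $n=2$.
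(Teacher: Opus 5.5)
The genuine gap is the exclusion of case (3) for $n=2$. You only express the hope that $f(v)=f(-v)$ makes the chain $f^{-1}(\overline T)$ incompatible with separating $0$ from $\infty$; note also that this common image lies on $\partial B$, not on $\partial T$. No argument of that kind can succeed, because for $n=2$ the configuration is topologically consistent. Take a cyclic chain of four closed disks pinched alternately at preimages of $v$ and $-v$, with an annulus on each side double covering $A=\overline{\mathbb C}\setminus\overline{B\cup T}$; this is realized by topological branched coverings. What fails is conformal. The core curve of $A$ has two preimages, each homotopic to it rel $P_f$ and mapped with degree $n$, so its Thurston eigenvalue is $2/n$, which equals $1$ exactly when $n=2$.

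The missing idea is to turn this into a modulus estimate, which is how the paper argues:
\begin{itemize}
\item Since $v\notin\partial B$, Lemma~\ref{lem:disjoint} gives $\partial B\cap\partial T=\emptyset$. So $A$ is an annulus containing no critical value, and $f^{-1}(A)\subseteq A$ is a union of annuli.
\item Let $A_2$ be the component adjacent to $\partial B$ and $A_1$ the one adjacent to $\partial T$. Each covers $A$ with degree $n$, because $f$ has degree $n$ on $\partial B$ and on $\partial T$. Hence there are no other components, and $\mathrm{mod}(A_i)=\mathrm{mod}(A)/n$.
\item $A_1$ and $A_2$ are disjoint essential subannuli of $A$, and $A\setminus(A_1\cup A_2)\supseteq f^{-1}(T)$ has interior. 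So Gr\"otzsch's inequality is strict: $\mathrm{mod}(A)>\mathrm{mod}(A_1)+\mathrm{mod}(A_2)=\frac{2}{n}\,\mathrm{mod}(A)$, which forces $n\ge 3$.
\end{itemize}

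Your other declared obstacle, pinching in the Jordan-domain induction, is not a real difficulty, although your write-up of that step is incomplete.

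\emph{Univalent case.} If $f|_U:U\to V$ is univalent and $V$ is a Jordan domain, set $\phi_U=\phi_V\circ f|_U$. Since $\partial U$ is locally connected, the Carath\'eodory extensions satisfy $f\circ\phi_U^{-1}=\phi_V^{-1}$ on $\partial\mathbb D$. So $\phi_U^{-1}$ is injective on $\partial\mathbb D$, and $U$ is Jordan whether or not $\partial U$ contains critical points. No Lefschetz count is needed.

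\emph{Branched case.} Here you only observe that one is in case (1) and never prove that $U$ is Jordan. Two internal rays of $U$ landing at a common point map to rays of the Jordan domain $V$ landing at a common point, hence to the same ray. That forces a critical point on $\partial U$, and in case (1) there is none.

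With these two remarks your induction proves the first assertion uniformly. This is cleaner than the paper's route, which goes through $\partial B\cap P_f=\emptyset$ and treats (3) and (4) separately.

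Smaller points:
\begin{itemize}
\item For $k=2$ the image $V$ is $T$, not a domain different from $B$ and $T$.
\item $k\ge 2$ in case (1) is immediate from $f^{-1}(\infty)=\{0,\infty\}\not\ni v$; your argument via critical points in $B\cup T$ does not work for $k=1$.
\item In case (4) you must check $U_+\neq U_-$. This follows from the disjointness of closures you prove: $-U_+$ is also a component of $f^{-k}(T)$, and $U_+=-U_+$ is excluded by Lemma~\ref{lem:separate}(3).
\end{itemize}
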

\begin{proof}
We first claim: if $\partial B\cap P_f=\emptyset$, then every Fatou domain eventually mapped onto $B$ is a Jordan domain with pairwise disjoint closures. Here $P_f$ denotes the postcritical set of $f$. Since $\partial B$ is a Jordan curve (Lemma~\ref{lem:jordan-domains}) and $\overline{B}$ contains only one postcritical point, each component of $f^{-i}(\partial B)$ ($i\ge1$) is a Jordan curve; any Fatou domain mapped onto $B$ by $f^i$ is bounded by such a component. For disjointness, if two such domains map to $B$ by $f^i$ and $f^j$ with $i\le j$, forward iteration by $f^i$ sends their boundaries into $\partial B$ and $f^{-(j-i)}(\partial B)$, which are disjoint because $\partial B\cap P_f=\emptyset$.

If $f$ has a bounded attracting cycle, then $\partial B\cap P_f=\emptyset$, and the claim implies that any Fatou domain iterated onto $B$ is a Jordan domain.

 \noindent(1) Since $0$ is the only pole and $v\neq0$, we have $k\ge2$. In this case $\partial B\cap P_f=\emptyset$, so the claim applies. Since $J$ is locally connected, then it is a Sierpi\'{n}ski carpet.

 \noindent(2) It follows from Lemma~\ref{lem:disjoint} and the definition of cluster Julia sets in Theorem~\ref{thm1}.

 \noindent(3) By symmetry $-v\in\partial T$. Lemma~\ref{lem:disjoint} implies $\partial B\cap\partial T=\emptyset$. Let $A=\ol{\mb{C}}\setminus\overline{B\cup T}$. The Riemann-Hurwitz formula implies that every component of $f^{-1}(A)$ is an annulus. 
 Since
    $$\partial B\cup\partial T\cup f^{-1}(\partial T)=\partial f^{-1}(A),$$
    $f^{-1}(A)$ consists of two sub-annuli $A_1$ and $A_2$, each sharing a boundary component with $A$. Furthermore, $\partial A_1\cap\partial A_2$ is precisely the set of $2n$ free critical points. Hence $f^{-1}(T)=A\setminus\ol{A_1\cup A_2}$ and it consists of $2n$ Fatou domains, all of which are Jordan domains. Since $\pm v\notin\ol{f^{-1}(T)}$, Fatou domains iterated into $f^{-1}(T)$ are Jordan domains.

    A comparison of the conformal moduli of $A$, $A_1$ and $A_2$ together with Gr\"{o}tzsch's inequality gives
    $$\tu{mod}(A)>\tu{mod}(A_1)+\tu{mod}(A_2)=\frac{2}{n}\tu{mod}(A),$$
    which forces $n\neq2$. Therefore, this configuration cannot occur when $n=2$.

    \medskip\noindent (4) By symmetry $f^k(-v)=(-1)^n f^k(v)\in\partial T$. Since Case~(4) is distinct from Case~(2), we have $v\notin\partial B$; Lemma~\ref{lem:disjoint} then yields $\partial B\cap\partial T=\emptyset$. Consequently $\partial B\cap P_f\neq\emptyset$ (because $f^{k+1}(v)\in\partial B$), so the initial claim does \emph{not} apply directly. We argue separately.

    \smallskip\noindent\textit{Step~1: Jordan domain property.}
    Since $f^{k}(v)\in\partial T$, the points $\{v,f(v),\ldots,f^{k-1}(v)\}$ lie in the Julia set, hence are disjoint from every Fatou domain $f^{-(i-1)}(T)$ ($i\le k$). Therefore for each $i\le k$, the map $f:f^{-i}(T)\to f^{-(i-1)}(T)$ is a covering; each component of $f^{-i}(T)$ maps homeomorphically onto a component of $f^{-(i-1)}(T)$ and is thus a Jordan domain. Closures at the same level are disjoint because they are bounded by distinct components of $f^{-i}(\partial T)$, which are disjoint Jordan curves.

    \smallskip\noindent\textit{Step~2: Disjointness across levels.}
    Let $U\subseteq f^{-i}(T)$ and $V\subseteq f^{-j}(T)$ with $i<j\le k$. If $\overline{U}\cap\overline{V}\neq\emptyset$, then $f^i(\overline{U}\cap\overline{V})\subseteq\partial T\cap f^{-(j-i)}(\partial T)$. Since the critical orbit $\{v,f(v),\ldots,f^{k-1}(v)\}$ avoids $\bigcup_{\ell=1}^{k}f^{-(\ell-1)}(T)$, $\partial T$ is disjoint from $f^{-(j-i)}(\partial T)$, a contradiction. Hence closures at different levels are also disjoint.

    \smallskip\noindent\textit{Step~3: Location of critical values.}
    Since $\pm v\in f^{-k}(\partial T)$, each lies on the boundary of some component of $f^{-k}(T)$; denote them by $U_+$ and $U_-$. If $U_+=U_-$, then both $\pm v\in\partial U_+$, so $\ol{U_+}\cap -\ol{U_+}\neq \emptyset$, contradicting Lemma \ref{lem:non-sym-domain}. Hence $U_+\neq U_-$.

    \smallskip\noindent\textit{Step~4: Preimage structure.}
    Consider $f^{-1}(\overline{U_+})$. The interior $f^{-1}(U_+)$ consists of $2n$ components, each a Jordan domain. The $n$ simple critical points mapping to $v$ lie on the common boundary of these components; each such critical point is adjacent to exactly two of them. Consequently $f^{-1}(\overline{U_+})$ splits into $n$ connected components. Each component is the closure of two Jordan domains meeting at a single critical point. The same description holds for $f^{-1}(\overline{U_-})$.

    \medskip\noindent (5) This follows directly from the claim, since $v$ buried in $J$ implies $\partial B\cap P_f=\emptyset$.
\end{proof}
\section{Fatou chains}\label{Fatou chains}

This section studies the maximal Fatou chain of a PCF McMullen map with bounded attracting cycles, establishing the topological properties needed for the blow-up theorems in Section~\ref{sec3}. 

In this section we suppose $f$ is such a map with exponent $n\geq 2$; thus the free critical value $v$ or $-v$ (or both) lies in a periodic attracting cycle. Denote by $\mathcal{U}$ the union of immediate basins of all bounded attracting cycles. There are two cases:

{ \textbf{Case 1.} $n$ is even}. Then $\mathcal{U}$ contains exactly one critical value. Indeed, the critical values $\pm v$ have the same image, so at most one domain in $\mathcal{U}$ can contain a critical value. 

{  \textbf{Case 2.} $n$ is odd}. Then the symmetry \eqref{symmetry} implies that $\mathcal{U}$ consists of  one or two Fatou cycles and contains both critical values $\pm v$. Moreover, we have $\mathcal{U}=-\mathcal{U}$. 

\subsection{Maximal Fatou chains}\label{sec:fatouchain}
We now give the definition of maximal Fatou chains. Set $\mc{E}^1=\overline{\mathcal{U}}$. For $k\geq 1$, define $\mc{E}_k^1$ inductively as the union of all components of $f^{-k}(\mc{E}^1)$ that intersect $\mc{E}^1$.  As $f(\mc{E}^1)\subset \mc{E}^1$, we have 
$$\mc{E}^1\subseteq\mc{E}^1_1\subseteq\cdots \subseteq \mc{E}_{k-1}^1\subseteq \mc{E}_{k}^1\subseteq\cdots $$
and each set in this chain has the same number of components.

If $\mc{E}_1^1=\mc{E}^1$, then $\mc{E}_k^1=\mc{E}^1$ for all $k$, and $\mc{E}^1$ is {\bf stable}, i.e., $f(\mc{E}^1)=\mc{E}^1$ and every component of $\mc{E}^1$ is a component of $f^{-1}(\mc{E}^1)$. 

If $\mc{E}_1^1\neq \mc{E}^1$, we iterate the construction: let $\mc{E}^2=\overline{\bigcup_k\mc{E}_k^1}$ and for $k\geq 1$ define $\mc{E}_k^2$ as the union of all components of $f^{-k}(\mc{E}^2)$ that intersect $\mc{E}^2$. If $\mc{E}_1^2=\mc{E}^2$, then $\mc{E}^2$ is stable. If not, we repeat the procedure with $\mc{E}^2$ playing the role of $\mc{E}_1$, thereby generating a sequence of invariant sets $\mc{E}^1, \mc{E}^2, \ldots, \mc{E}^m,\ldots$. Since the number of components of $\mc{E}^m$ is non-increasing, there exists an index $m_0$ such that
$$\#\textup{Comp}(\mc{E}^m)=\#\textup{Comp}(\mc{E}^{m_0})\textup{
for all }m\geq m_0,$$
where $\tu{Comp}(\cdot)$ denotes the collection of all components of a set. If a component $E$ of $\mc{E}^m$ is strictly contained in a component $E'$ of $\mc{E}_1^m$ for some $m\geq m_0$, then for any point $z\neq v_{\pm}$ in $f(E)\cap \mc{E}^1$ we have 
$$\#(f^{-1}(z)\cap E)<\#(f^{-1}(z)\cap E')\leq 2n.$$ Consequently, there is a large integer $N$ for which $\mc{E}^{N}=\mc{E}_1^N$. It is clear that $\mc{E}^N$ is stable. We call $\mathcal{E}=\mc{E}^N$ the {\bf maximal Fatou chain} of $f$ ({ generated by $\mathcal{U}$}). For every integer $m\geq 1$, the continuum $\mc{E}^m=\overline{\bigcup_k\mc{E}_k^{m-1}}$ is called the {\boldmath \bf level-$m$ Fatou chain\unboldmath}. 

\vspace{3pt}

The following lemma characterize the topological and combinatorial properties of the maximal Fatou chain. 
\begin{lema}\label{classification}
Suppose a PCF McMullen map $f$ has bounded attracting cycles and $\mc{U}$ is the union of immediate basins of all bounded attracting cycles. Let $\mathcal{E}$ be the maximal Fatou chain of $f$ generated by $\mc{U}$. Then 
\begin{enumerate}
\item each component of $\mc{E}$ is a full continuum containing at most one critical value;
\item every two points in a component $E$ of $\mathcal{E}$ can be joined by a curve in $E$ passing through at most countably many points in the Julia set $J$;
\item if $\mc{E}\cap \pa B\neq \emptyset$, then every component of $\mc{E}$ intersects $\partial B$ at a single point. 
\end{enumerate}
\end{lema}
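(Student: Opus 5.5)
We plan to prove the three statements by induction along the construction of $\mc{E}$. We track the properties through $\mc{E}^1\subseteq\mc{E}^1_1\subseteq\cdots\subseteq\mc{E}^2\subseteq\cdots\subseteq\mc{E}^N=\mc{E}$, and then pass to closures.

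\textbf{Base case.} $\mc{E}^1=\overline{\mc{U}}$ is a finite union of closures of bounded Fatou domains. Since $f$ is PCF and $J$ is locally connected, each such closure is a continuum. Two points of one closure can be joined by a pair of internal rays through the center. Such a path meets $J$ in at most two points, which gives (2) at level $1$.

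\textbf{Inductive step for (1) and (2).} Suppose (2) holds for every component of $\mc{E}^m_k$. A component $E'$ of $\mc{E}^m_{k+1}$ is a component of $f^{-(k+1)}(\mc{E}^m)$ meeting $\mc{E}^m$. Pulling back curves along the branched covering $f^{k+1}$ shows that $E'$ is a union of countably many lifts of components of $\mc{E}^m$. Each lift inherits (2), because lifting a curve through countably many Julia points yields countably many Julia points: preimages are finite. Lifts are attached to each other at preimage points, so any two points of $E'$ are joined through countably many lifts, hence through countably many Julia points.

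For the closure $\mc{E}^{m+1}=\overline{\bigcup_k \mc{E}^m_k}$, (2) needs more care. New limit points may appear. We will use the stability $\mc{E}=\mc{E}^N=\mc{E}^N_1$ together with the expansion of $f$ near $J$ (the orbifold metric). The plan is to show that the pieces attached at deep levels shrink geometrically. Their diameters tend to zero by the standard shrinking lemma for PCF maps, and a limit point is then reached by a concatenation of curves of summable length, each contributing countably many Julia points.

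Fullness in (1) follows because a bounded complementary component $W$ of a component $E$ would have $\partial W\subset E\subset K_f$. By the maximum principle applied to the escape to $\infty$, $W$ would then be disjoint from $B$ and its preimages. One then argues that $W$ contains Julia points whose neighborhoods contain preimages of $B$, which is a contradiction. For the critical-value count, an application of Lemma~\ref{lem:separate}(3)--(4) to $E$ and $-E$ (using $\mc{E}=-\mc{E}$ when $n$ is odd) shows that $E\cap -E=\emptyset$ for any component $E$ not separating $0$ from $\infty$. Components cannot separate $0$ from $\infty$ because $\overline{B}$ and $\overline{T}$ are not in $\mc{E}$. Hence $v$ and $-v$ lie in distinct components.

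\textbf{Statement (3).} If a component $E$ met $\partial B$ in two points $x\neq y$, join them by a curve $\gamma\subset E$ as in (2). Together with two internal rays of $B$, $\gamma$ bounds a region whose closure contains an arc of $\partial B$. By forward invariance ($f(\mc{E})=\mc{E}$, and $f$ acts on $\partial B$ as $z^n$), iterating expands the angular length of the arc until the image component contains all of $\partial B$, or until $E$ separates $0$ and $\infty$. This contradicts fullness plus $T\not\subset\mc{E}$. The remaining part is to show that every component meets $\partial B$ at all. This comes from stability: components are permuted by $f$ up to preimages, and Lemma~\ref{lem:symmetric} gives transitivity under rotation, applied to the component containing the given intersection point. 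Its pullbacks then meet $f^{-k}(\partial B)\cap\partial B$ appropriately.

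The main obstacle is proving (2) on the closure, i.e.\ controlling the accumulation of infinitely many attached pieces. We intend to use the shrinking of pullbacks together with the fact that each component of $f^{-k}(\mc{E})$ has only finitely many critical attachments.
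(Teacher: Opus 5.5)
There are genuine gaps in all three parts, and they come from two missing ingredients.

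\emph{First missing ingredient: rotational disjointness of the chain (Lemma~\ref{lem:Fatou-cycle}).} The statement is that $\omega\mathcal{E}\cap\mathcal{E}=\emptyset$ for every $2n$-th root of unity $\omega\neq 1$ when $n$ is even. When $n$ is odd it holds for $\omega\neq\pm1$, and in addition $E\cap -E=\emptyset$ for each component $E$. This has to be proved by induction over the levels $\mathcal{E}^m$, and both (1) and (3) rest on it.

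\begin{itemize}
\item Your claim that no component separates $0$ from $\infty$ ``because $\overline{B}$ and $\overline{T}$ are not in $\mathcal{E}$'' is a non sequitur, since a continuum avoiding both can still separate them. The real reason is that if $E$ separates, so does $e^{\pi\mathbf{i}/n}E$, and Lemma~\ref{lem:separate}(2) then forces $E\cap e^{\pi\mathbf{i}/n}E\neq\emptyset$.
\item For the critical values with $n$ even, $-E$ is not a component of $\mathcal{E}$, so your argument does not apply. Ruling out $\pm v\in E$ again needs $\mathcal{E}\cap-\mathcal{E}=\emptyset$. Lemma~\ref{lem:separate}(4) cannot substitute, because it presupposes exactly this kind of disjointness.
\item Your maximum-principle argument for fullness ignores the pole at $0$ and its iterated preimages; excluding $0\in W$ alone is not enough. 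The paper takes a component $W\neq\Omega_0$ of $\overline{\mathbb{C}}\setminus\mathcal{E}$, where $\Omega_0\ni 0,\infty$. Since $\partial f^k(W)\subset\mathcal{E}$, each $f^k(W)$ either contains $\Omega_0$ or misses it. For the least $k_0$ with $0\in f^{k_0}(W)$, this gives $\infty\in f^{k_0}(W)\setminus f^{k_0-1}(W)$, hence $0\in f^{k_0-1}(W)$, a contradiction.
\item In (3), the arc $A\subset\partial B$ between $x$ and $y$ is not contained in $\mathcal{E}$. So $f^k(A)=\partial B$ says nothing about $f^k(E)$; expansion only shows that $\mathcal{E}\cap\partial B$ is totally disconnected. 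The paper gets $\#(E\cap\partial B)\le 1$ by proving that $\overline{\mathbb{C}}\setminus(\bigcup_{\omega^{2n}=1}\omega\mathcal{E}\cup\overline{B})$ is connected. An extra component leads, as in (1), to a component $\Omega\neq\Omega_0$ with $f(\Omega)\supseteq T$. Its $2n$ rotations are pairwise disjoint and all cover $T$, and $f(\Omega_0)\supseteq T$ as well, giving $2n+1$ preimages of a point of $T$.
\item Rotations by primitive $2n$-th roots do not preserve $\mathcal{E}$, so ``transitivity under rotation'' is unavailable. That every component meets $\partial B$ follows instead from $f$ permuting the periodic components of $\mathcal{E}$ cyclically, plus the symmetry $z\mapsto -z$ when $n$ is odd.
\end{itemize}

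\emph{Second missing ingredient: (2) on the closures $\mathcal{E}^{m+1}=\overline{\bigcup_k\mathcal{E}^m_k}$.} You correctly flag this as the main obstacle but do not resolve it, so your induction already breaks at $m=1$. The shrinking lemma applies to pullbacks of disks avoiding $P_f$. Your pieces are instead pullbacks of continua containing postcritical points, and the stable components are even their own pullbacks. Reaching a limit point by concatenation would also require the whole cluster of pieces hanging off a deep piece to be small. That is a local-connectivity statement for $\overline{E_\infty}$, which your plan does not supply.

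The paper bypasses this entirely. By the renormalization theorem \cite[Theorem~1.5]{CGZ}, each component $E$ of period $p$ is quasiconformally the filled Julia set of a PCF polynomial $P$, with $P\circ\psi=\psi\circ f^p$. This gives local connectivity of $E$. Then \cite[Proposition~2.3]{GZ} shows that a regulated arc, using two internal rays in each Fatou domain it crosses, meets $J_P$ in a countable set.
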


%Recall that for every $1\leq m\leq N$, $$\mc{E}^m $$
% is called the {\bf level-m Fatou chain}. 

The proof of Lemma \ref{classification} relies on the following {rotational disjointness property} of Fatou chains. Recall that $n\geq 2$ is the exponent of $f$. 

\begin{lema}\label{lem:Fatou-cycle}
 For any $1\leq m\leq N$, the following statements hold.
\begin{enumerate}
\item If $n$ is even, then $\omega \EEE^m\cap \EEE^m=\emptyset$ for any $\omega$ with $\omega^{2n}=1$ and $\omega\neq 1$. 
\item If $n$ is odd, then $\omega \EEE^m\cap \EEE^m=\emptyset$ for any $\omega$ with $\omega^{2n}=1$ and $\omega\neq \pm 1$.  
\item If $n$ is odd, then $\EEE^m=-\EEE^m$. Moreover, if $E$ is a component of $\EEE^m$, then $-E$ is a component of $\EEE^m$ disjoint from $E$.
\end{enumerate}
\end{lema}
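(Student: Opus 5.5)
We argue by induction on the level $m$, following the inductive construction of $\EEE^m$. The basic tools are the symmetry \eqref{symmetry}, $f(\omega z)=\omega^n f(z)$, and Lemma~\ref{lem:separate}. Throughout we use that $f(\EEE^m)\subseteq\EEE^m$, that each $\EEE^m$ is compact and contained in $\ol{\mb{C}}\setminus(B\cup T)$, and hence in $\ol{\mb{C}}\setminus\{0,\infty\}$.

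\emph{Base case $m=1$.} Here $\EEE^1=\ol{\mathcal U}$. Every bounded attracting Fatou domain $U$ in $\mathcal U$ satisfies $U\neq B,T$, and $f(U)\neq T$ because $f(U)$ lies in a bounded cycle. Lemma~\ref{lem:non-sym-domain} therefore gives $\omega\ol U\cap\ol U=\emptyset$ for all $\omega\neq1$ with $\omega^{2n}=1$. It remains to compare distinct domains $U\neq U'$ of $\mathcal U$; we need that $U'\neq \omega U$ and that $\ol{U'}\cap\omega\ol U=\emptyset$.

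For the first point, suppose $\omega U\subseteq\mathcal U$. Applying $f$ gives $f(\omega U)=\omega^n f(U)$. When $n$ is even, $\mathcal U$ contains only one critical value, and one counts the cycle: $\omega^n=\pm1$, and $-\mathcal U\cap \mathcal U=\emptyset$ in the even case, since $f(-z)=f(z)$ would force both $\pm v$ into $\mathcal U$. For general $\omega$, note that $\mathcal U$ consists of the orbit of the single domain containing the critical value. Each domain of $\mathcal U$ is mapped by $f$ to the next one, and the invariance $\omega\mathcal U\cap\mathcal U\neq\emptyset$ would yield $2n$ preimages of a critical value inside $\mathcal U$, which is too many. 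When $n$ is odd, $\mathcal U=-\mathcal U$, so the same counting applies modulo $\pm1$.

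For the second point, the closures of distinct Fatou domains of a PCF map meet in at most finitely many points. If $\ol{U'}\cap\omega\ol U\neq\emptyset$, then $E=\ol{U\cup U'}$ is a continuum meeting its rotation. Lemma~\ref{lem:separate}(1) then shows that the $\omega$-orbit of $E$ separates $0$ from $\infty$, and hence so does $\EEE^1$, or its $\pm$-symmetrization in the odd case. We will refute this by showing that $\EEE^m$ cannot separate $0$ from $\infty$; that argument is given in the next paragraph.

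\emph{Key step: no separation.} We show that for each $m$, $\EEE^m$ does not separate $0$ and $\infty$. Suppose instead that some component $E$ of $\EEE^m$ separates them. Since $f(E)\subseteq\EEE^m$, the set $f(E)$ separates as well: it surrounds $f(\pa T)=\pa B$, because $E$ separates $\pa T$ from $\pa B$ and $f$ maps the annular complement properly. Iterating, $E$ eventually contains points near $\pa B$, and a separating continuum must meet every radial line. By Lemma~\ref{lem:separate}(2) it then meets all of its rotations, and $f^{-1}(E)$ would contain a separating component mapping with degree $2n$ onto $E$. This contradicts the fact that $E$ contains at most one critical value $v$, or $\pm v$, whose orbit lies in a periodic cycle of $\mathcal U$. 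I expect this to be the main obstacle. The cleanest route is a degree count: a separating component of $f^{-1}(\EEE^m)$ lying in $\ol{\mb C}\setminus\ol{B\cup T}$ maps onto a separating set with degree $n$ on each boundary side, and this is incompatible with the Riemann--Hurwitz count on the annulus $\ol{\mb C}\setminus\ol{B\cup T}$ unless both $\pm v$ lie in it with the symmetric structure. That structure occurs only in Case (3) of Proposition~\ref{prop:classify-no-bounded-cycles}, which excludes bounded attracting cycles.

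\emph{Inductive step.} Assume the claim holds for $\EEE^{m}$, and let $E'$ be a component of $\EEE^m_k$, that is, a component of $f^{-k}(\EEE^m)$ meeting $\EEE^m$. If $\omega E'\cap E'\neq\emptyset$ with $\omega\neq1$ (or $\omega\neq\pm1$ in the odd case), then Lemma~\ref{lem:separate}(1) shows that the $\omega$-orbit of $E'$ separates $0$ and $\infty$. Using Lemma~\ref{lem:separate}(4), with $A$ the union of Fatou domains in $E'$ (which is dense and arc-connected through countably many Julia points), one reduces to the case where $\omega$ is primitive. Then $f(\omega E')=-f(E')$, so $f(E')$ meets $-f(E')$. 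This is forbidden by the induction hypothesis in the even case; in the odd case one passes to the next power. The disjointness then passes to closures of increasing unions, giving $\EEE^{m+1}$, because the components stay at positive distance from their rotations: the components of $\EEE^m_k$ are nested, finitely many, and compact.

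\emph{Statement (3).} This follows from $\mathcal U=-\mathcal U$ together with the identity $f(-z)=-f(z)$ for odd $n$, which makes each step of the construction commute with $z\mapsto -z$. The disjointness of $E$ and $-E$ is obtained from the case $m=1$ by the same lifting argument, using that $-E$ contains $-v$ while $E$ contains at most one critical value.
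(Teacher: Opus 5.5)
There is a genuine gap at the core of your strategy. You argue that if $\omega\EEE^m\cap\EEE^m\neq\emptyset$, then by Lemma~\ref{lem:separate}(1) the $\omega$-orbit of a continuum $E\subseteq\EEE^m$ separates $0$ from $\infty$, ``and hence so does $\EEE^m$''. But that orbit lies in $\bigcup_k\omega^k\EEE^m$, not in $\EEE^m$. The set $\EEE^m$ is not $\omega$-invariant; that is exactly what is to be proved.

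So your ``key step'' would not give the lemma even if it were proved. Rotated copies of a non-separating continuum can link up into a separating necklace, exactly as the copies of $\overline{U}$ do in the degenerate annulus $f^{-1}(\overline{T})$ of Proposition~\ref{prop:classify-no-bounded-cycles}(3). In the paper the implication runs the other way: non-separation of the components of $\EEE$ is derived in Lemma~\ref{classification}(1) from the present lemma via Lemma~\ref{lem:separate}(2).

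The missing tool is Lemma~\ref{lem:separate}(4). For a non-primitive $\omega_0$, join $z$ to $\omega_0 z$ by an arc inside a dense, arc-connected set $A$ already known to satisfy $\omega A\cap A=\emptyset$ for all $\omega\neq 1$. The $\omega_0$-orbit $\Gamma$ of this arc separates $0$ from $\infty$, so it meets $\omega\Gamma$ for some $\omega\notin\langle\omega_0\rangle$, contradicting the disjointness of $A$. For a primitive $\omega_0$ this cannot work, as the necklace example shows. There the paper pushes forward once: $\omega_0^n=-1$ gives $f(\omega_0 z)=-f(z)$, reducing to the rotation $-1$, which is non-primitive since $n\ge 2$.

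Several supporting steps also fail.

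\begin{itemize}
\item \emph{Connectedness at level $1$.} You need $E=\overline{U\cup U'}$ to be connected, i.e.\ $\overline{U}\cap\overline{U'}\neq\emptyset$, which you never show. The paper gets this from $f^2(\omega_0 z)=f^2(z)$ (or $f(\omega_0 z)=f(z)$ after normalizing $\omega_0^n=1$ for odd $n$) together with periodicity of $U$ and $U'$.
\item \emph{Disjointness of $\UUU$ from its rotations.} The same periodicity fact (distinct periodic domains have distinct images) is what gives $\omega\UUU\cap\UUU=\emptyset$. Your count of ``$2n$ preimages of a critical value'' does not establish this.
\item \emph{Distinct components of $\EEE^m_k$.} You only treat $\omega E'\cap E'$. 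For distinct components with $\omega^n=1$, one uses $f(\omega E)=f(E)$ to force $E=E'=\omega E$. Then $E$ separates $0$ from $\infty$ and meets its other rotations, contradicting either the case $\omega^n=-1$ (handled by $f(\omega\EEE^m_1)=-\EEE^m$) or Statement~(3).
\item \emph{Your set $A$ at level $\EEE^m_k$.} The set you propose (Fatou domains in $E'$, joined through Julia points) is not yet known to be disjoint from its rotations, so Lemma~\ref{lem:separate}(4) cannot be invoked there.
\item \emph{Passage to the closure.} Most seriously, the step from $\EEE^m_k$ to $\EEE^{m+1}=\overline{\bigcup_k\EEE^m_k}$ does not follow from each $\EEE^m_k$ having positive distance from its rotations, since these distances may tend to $0$. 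This is exactly where the paper needs Proposition~\ref{pro:arc}, a common point of the closures of the two increasing unions of components (again from periodicity), Lemma~\ref{lem:separate}(4), and the push-forward trick for primitive roots.
\item \emph{Statement~(3).} It cannot rest on $\pm v$, since most components contain no critical value. The correct argument is that $E\cap -E\neq\emptyset$ forces $E=-E$. Then $E$ separates $0$ and $\infty$ and meets all its rotations, contradicting Statement~(2).
\end{itemize}
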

\begin{proof}[Proof of Lemma \ref{classification} under Lemma \ref{lem:Fatou-cycle}]

\medskip\noindent (1) To prove  that every component of $\EEE$ is a full continuum, it suffices to show that $\overline{\mathbb{C}}\setminus\EEE$ is connected. 

Note first that $0$ and $\infty$ lie in the same component $\Omega_0$ of $\overline{\mathbb{C}}\setminus\EEE$; for otherwise, some component $E$ of $\EEE$ would separate $0$ and $\infty$. Then $e^{\pi \tb{i}/n} E$ would also separate $0$ and $\infty$. By Lemma \ref{lem:separate}(2), we would have $e^{\pi \tb{i}/n} E\cap E\neq\emptyset$, contradicting Lemma \ref{lem:Fatou-cycle}\,(1) and (2) since $n\geq 2$.

Assume that some components of $\mc{E}$ are not full. Then $\overline{\mathbb{C}}\sm\mc{E}$ has a component $\Omega\neq\Omega_0$. For any $k\geq 1$, the set $f^k(\Omega)$ is open and connected with $\partial f^k(\Omega)\subseteq f^k(\partial \Omega)\subseteq\mc{E}$, so $\Omega_0$ is either contained in $f^k(\Omega)$ or disjoint from it. Hence $\Omega$ cannot be a Fatou domain eventually mapped to $B$; and since $\EEE$ is stable and contains $\UUU$, it cannot be a Fatou domain eventually mapped to $\UUU$ either. Therefore, $\Omega\cap J\neq\emptyset$. 

Let $k_0\ge 1$ be the smallest integer such that $0\in f^{k_0}(\Omega)$. Since $\Omega_0$ contains $0,\infty$, we have $\infty\in f^{k_0}(\Omega)$ and $\infty\notin f^{k_0-1}(\Omega)$. Hence $f^{k_0-1}(\Omega)$ contains the pole $0$, a contradiction. Therefore $\overline{\mathbb{C}}\setminus\mc{E}$ is connected, and consequently every component of $\mc{E}$ is a full continuum. 

If a component $E$ of $\mc{E}$ contains both $\pm v$, then $E\cap -E\neq\emptyset$, contradicting Lemma \ref{lem:Fatou-cycle}~(1)(3). This proves Statement (1).

\medskip\noindent (2) By \cite[Theorem 1.5]{CGZ}, for any component $E$ of $\EEE$, there exist a quasiconformal map $\psi$ on $\overline{\mathbb{C}}$ and a PCF polynomial $P$ such that $\psi(E)$ is the filled Julia set of $P$ and $P\circ\psi=\psi\circ f^p$, where $p$ is the period of $E$. Hence, $E$ is locally connected. 

For any two points in $E$, let $\gamma\subset E$ be an arc joining these two  points such that its intersection with any Fatou domain consists of two internal rays. By \cite[Proposition 2.3]{GZ}, $\psi(\gamma)\cap J_P$ is a countable set. Hence every two points in $E$ can be joined by a curve in $E$ passing through at most countably many points in $J$. 

\medskip\noindent (3) Now suppose $\mathcal{E}\cap\partial B\neq \emptyset$. Since $\EEE\cap \partial B$ is $f$-invariant and $\partial B$ is a Jordan curve, each component of $\mathcal{E}\cap\partial B$ is a single point: otherwise, by the expanding property of $J$, we would have $\partial B\subseteq \EEE$, a contradiction to the fact that each component of $\EEE$ is a full continuum.

To show that every component of $\mc{E}$ intersects $\partial B$ at a single point, by Lemma \ref{lem:Fatou-cycle} and Statement (1), it suffices to prove the connectivity of the open set $$W=\overline{\mathbb{C}}\setminus (\tilde{\EEE}\cup \overline{B}) ,$$
where $\tilde{\EEE}:=\bigcup_{\omega^{2n}=1}\omega\EEE.$

Denote by $\Omega_0$ the component of $W$ containing $T$. If $W$ were disconnected,  as in the proof of Statement (1), there would exist another component $\Omega\neq \Omega_0$ of $W$ such that $f(\Omega)$ covers $T$. Since $\tilde{\EEE}\cup \overline{B}$ is invariant under the rotation of $2n$-th root of unity, it follows from Lemma \ref{lem:separate}\,(3) that  $\{\omega\Omega, \omega^{2n}=1\}$ are $2n$ pairwise disjoint components of $W$. The symmetry \eqref{symmetry} implies  $\Omega_0\subseteq f(\omega\Omega)$.

On the other hand, the boundary of $f(\Omega_0)$ is contained in $\partial B\cup\mathcal{E}\cup-\mathcal{E}$. By Lemma \ref{lem:Fatou-cycle}\,(1) and (2), $\pa W\cap \pa B$ contains some arcs. Since $f$ preserves orientation on these arcs, we obtain $f(\Omega_0)\cap\Omega_0\not=\emptyset$. Consequently $T\subseteq \Omega_0\subseteq f(\Omega_0)$. This implies that $T$ has at least $2n+1$ pre-images under $f$. This leads to a contradiction. Hence, $W$ is connected, which implies every component of $\EEE$ intersects $\pa B$ at a single point. 
\end{proof}

\subsection{Disjointness under rotations}
This subsection is devoted to proving Lemma \ref{lem:Fatou-cycle}. Recall that for each $k\geq1$, $\EEE_k^m$ is the union of all components of $f^{-k}(\EEE^m)$ intersecting $\EEE^m$. The following result is a corollary of \cite[Lemmas 6.1 and 6.3]{CGZ}. 

\begin{prop}\label{pro:arc}
Fix an integer $m\geq 1$. Let $E_k$ be a sequence of sets such that each $E_k$ is a component of $\mc{E}_k^{m}$ and $E_k\subset E_{k+1}$. Let $E_\infty=\bigcup_{k\geq 1}E_k$. Then any two points in $\overline{E_\infty}$ can be joined by an open arc within $E_\infty$.
\end{prop}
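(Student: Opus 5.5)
Let $x,y\in\overline{E_\infty}$. I will use the cited structure from \cite[Lemmas 6.1 and 6.3]{CGZ}. Each $E_k$ is a full, locally connected continuum, and it is a finite union of closures of Fatou domains glued along Julia points. Moreover, the diameters of the pieces added at stage $k$ tend to zero uniformly. The plan has two cases: both points lie in $E_\infty$, or at least one lies in $\overline{E_\infty}\setminus E_\infty$.

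\textbf{Case 1: $x,y\in E_\infty$.} Then $x,y\in E_k$ for some $k$. Since $E_k$ is a locally connected continuum, it is arcwise connected. So there is an arc in $E_k\subset E_\infty$ joining $x$ and $y$. Its interior is an open arc in $E_\infty$.

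\textbf{Case 2: $x\in\overline{E_\infty}\setminus E_\infty$.} Here I build the arc as a convergent concatenation.

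1. Choose points $x_j\in E_{k_j}$ with $x_j\to x$, with $k_j$ increasing and $|x_j-x_{j+1}|<2^{-j}$.
2. Join $x_j$ to $x_{j+1}$ by an arc $\gamma_j\subset E_{k_{j+1}}$.
3. Control the diameter of $\gamma_j$. This is the key input from \cite{CGZ}. Points of $E_{k+1}\setminus E_k$ lie in components of $f^{-(k+1)}(E_1)$ attached to $E_k$. By expansion of $f$ on $J$ near the postcritical-finite data, these attached pieces have diameters shrinking geometrically in $k$ (a PCF map is expanding in the orbifold metric). Hence $x_j$ and $x_{j+1}$ can be joined inside $E_{k_{j+1}}$ by an arc of diameter $O(\epsilon_j)$ with $\epsilon_j\to0$.
4. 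Concatenate the arcs and add the limit point $x$. This gives a path from $x_1$ to $x$ whose points other than $x$ all lie in $E_\infty$.
5. Pass to an arc by the standard fact that a path contains an arc between its endpoints. Removing $x$ leaves a half-open arc in $E_\infty$.

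Doing the same at $y$ and linking $x_1$ to $y_1$ by Case 1 gives a path from $x$ to $y$. Extracting an arc and deleting its endpoints gives the desired open arc in $E_\infty$.

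\textbf{Main obstacle.} The hard step is step 3: uniform shrinking of the connecting arcs. Two points of $E_\infty$ that are close in the plane need not be joinable by a short arc inside $E_\infty$ unless the pieces are uniformly locally connected. This is exactly what \cite[Lemma 6.3]{CGZ} should supply. I would take it to say that the limit $\overline{E_\infty}$ is locally connected, with local connectivity witnessed by sub-continua of the $E_k$.

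Equivalently, one can argue this way. Since $\overline{E_\infty}$ is a Peano continuum, it has a basis of small connected neighborhoods. The relative-interior density of $E_\infty$ in these neighborhoods then allows the sequence $x_j$ to be chosen within nested shrinking connected neighborhoods of $x$ that meet $E_\infty$ in connected sets. This gives $\operatorname{diam}\gamma_j\to0$ and completes the argument.
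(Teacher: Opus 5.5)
The paper gives no argument of its own here: it calls the proposition a corollary of \cite[Lemmas 6.1 and 6.3]{CGZ}. Reducing to that input is therefore consistent with the paper. Your Case~1 and Steps~4--5 are also fine: concatenating arcs whose diameters tend to $0$, then extracting an arc whose interior misses the endpoints.

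The gap is Step~3. Knowing that the pieces added at stage $k$ are small does not give short arcs in $E_\infty$ between consecutive terms of an \emph{arbitrary} sequence $x_j\to x$. Close points of $E_\infty$ can lie on two branches that approach $x$ from opposite sides and are joined only far away. So the ``Hence'' in Step~3 is a non sequitur, as your ``Main obstacle'' paragraph concedes. Your fallback does not repair it, because $\overline{E_\infty}$ being a Peano continuum does not force small neighborhoods of $x$ to meet $E_\infty$ in connected sets. For example, take $E_k=(\{0\}\times[0,1])\cup\bigcup_{i\le k}[0,1]\times\{q_i\}$, where $(q_i)$ enumerates $\mathbb{Q}\cap[0,1]$. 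Every $E_k$ is a Peano continuum and $\overline{E_\infty}=[0,1]^2$, yet no arc ending at $(1,r)$ with $r\notin\mathbb{Q}$ has its other points in $E_\infty$. As written, the proof rests on a uniform local connectivity of the $E_k$ that you only guess \cite{CGZ} supplies.

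You do not need that property. The shrinking you already assert suffices if you take the approximating points along a single descending chain.
\begin{itemize}
\item Assume each component of $E_k\setminus E_{k-1}$ has diameter at most $\epsilon_k$, with $\tau_j:=\sum_{l>j}\epsilon_l\to0$; geometric decay gives this.
\item Given $z\in E_\infty$, descend. If $z_{j+1}\in E_{j+1}\setminus E_j$, the boundary bumping theorem gives $z_j\in E_j$ in the closure of the component of $E_{j+1}\setminus E_j$ containing $z_{j+1}$; otherwise put $z_j=z_{j+1}$. Either way $z_j,z_{j+1}$ lie in a subcontinuum of $E_{j+1}$ of diameter at most $\epsilon_{j+1}$, and $|z_j-z|\le\tau_j$.
\item Do this for a sequence $z^{(i)}\to x$. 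Their levels tend to $\infty$, since $x\notin E_j$ and $E_j$ is closed. Pass to a diagonal subsequence along which $z^{(i)}_j\to w_j$ and the connecting continua converge in the Hausdorff metric.
\item This yields $w_j\in E_j$ with $|w_j-x|\le\tau_j$, and a continuum $L_j\subset E_{j+1}$ of diameter at most $\epsilon_{j+1}$ containing $w_j$ and $w_{j+1}$.
\item Since $E_{j+1}$ is a Peano continuum, the component of a small open neighborhood of $L_j$ in $E_{j+1}$ is open and connected, hence arcwise connected. So $w_j$ and $w_{j+1}$ are joined by an arc in $E_{j+1}$ of diameter $O(\epsilon_{j+1})$.
\end{itemize}
Your Steps~4--5 then go through with $x_j:=w_j$.

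Finally, for $m\ge2$ the sets $E_k$ are not finite unions of closed Fatou domains, since they contain components of $\mathcal{E}^m$. Their local connectivity, used in Case~1 and above, must therefore come from the cited results rather than from that description.
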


\begin{proof}[Proof of Lemma \ref{lem:Fatou-cycle}]
The proof goes by induction on the level $m$. We first check that the statements in this lemma hold for $\EEE^1=\overline{\UUU}$.\vspace{2pt}

\medskip\noindent (1) Assume $n$ is even. Suppose for contradiciton that the statement is false. Then there exist a $2n$-th root of unity $\omega_0\neq1$, a component $U$ of $\UUU$, and a point $z\in \overline{U}$ such that $\omega_0 z\in \overline{U'}$ for some component $U'$ of $\UUU$. Lemma~\ref{lem:non-sym-domain} gives $U\neq U'$. By symmetry $f(\omega_0 z)=\omega_0^n f(z)$ and the fact that $n$ is even, we have $f^2(\omega_0 z)=f^2(z)$.
This implies $z\notin U$: for otherwise $f^2(U)=f^2(U')$, contradicting the periodicity of $U$ and $U'$. We then obtain that 
\begin{equation}\label{eq:22}
e^{k\pi \tb{i}/n}\UUU\cap\UUU=\emptyset \text{ for every } k=1,\ldots,2n-1.
\end{equation}

Since $z\in\partial U$ and $ \omega_0 z\in\partial U'$, and $f^2(\omega_0 z)=f^2(z)$, the boundaries of $f^2(U)$ and $f^2(U')$ share the point $f^2(z)$. As $U$ and $U'$ have the same period, $\partial U\cap\partial U'$ contains at least one point, say $x$. %Applying Lemma~\ref{lem:non-sym-domain} to $U$ and $U'$ yields $x\neq \omega^k z$ for all $k\ge 1$.
%Choose open arcs $\gamma\subseteq U$ joining $x$ to $z$ and $\gamma'\subseteq U'$ joining $x$ to $\omega z$. Set $\beta=\gamma\cup\{x\}\cup \gamma'$. 
If $\omega_0$ is non-primitive, then by equation \eqref{eq:22}, we can apply Lemma \ref{lem:separate}\,(4) with $E=\ol{U\cup U'}$, $A=U\cup\{x\}\cup U',\omega=e^{\pi \tb{i}/n}$ and $\omega_0$ to obtain that $\omega_0E\cap E=\emptyset$. This contradicts $\omega_0 z\in E\cap\omega_0E$.

If $\omega_0$ is primitive, then $\omega_0^n=-1$, so $f(z)\in\partial f(U)$ and $-f(z)\in -\partial f(U')$. Since $U$ and $U'$ are periodic, $f(U)\neq f(U')$. Replacing $(U,U',z,\omega_0)$ by $(f(U),f(U'),f(z),-1)$ in the previous argument then gives a contradiction. This completes the proof of (1).\vspace{3pt}

\medskip\noindent (2) Assume $n$ is odd. Suppose for contradiction that the statement fails. Then there exist a $2n$-th root of unity $\omega_0\neq\pm1$, a component $U$ of $\UUU$, and a point $z\in\overline{U}$ such that $\omega_0 z\in\overline{U'}$ for some component $U'$ of $\UUU$. Lemma~\ref{lem:non-sym-domain} implies $U\neq U'$ and similarly $U\neq -U'$ as $-\omega_0z\in -\overline{U'}$. Thus, $\pm U$ and $\pm U'$ are four distinct Fatou domains.

Since $\UUU=-\UUU$, we may replace $(U,U',\omega_0)$ by $(-U,-U',-\omega_0)$ if necessary and assume $\omega_0^n=1$. 
 By symmetry $f(\omega_0 z)=\omega_0^n f(z)$, we have $f(z)=f(\omega_0 z)$.

We claim $z\notin U$. For otherwise, $z\in U$ and $\omega_0z\in U'$ and thus $f(U)=f(U')$, contradicting the periodicity of $U$ and $U'$. The claim implies 
\begin{equation}\label{eq:23}
e^{k\pi \tb{i}/n}\UUU\cap\UUU=\emptyset \text{ for every } 1\leq k\leq 2n-1 \text{ and }k\not=n.
\end{equation}

By the claim, $z\in\partial U$ and $\omega_0 z\in\partial U'$. As in the proof of Statement (1), $\partial U\cap\partial U'$ contains a point $x$.
By equation \eqref{eq:23} and the fact that $\pm U$ and $\pm U'$ are pairwise disjoint when $k=n$, we can apply Lemma \ref{lem:separate}\,(4) with $E=\ol{U\cup U'}$,  $A=U\cup\{x\}\cup U',\omega=e^{\pi \tb{i}/n}$ and the non-primitive $\omega_0$ to obtain that $\omega_0E\cap E=\emptyset$. This contradicts $\omega_0 z\in E\cap\omega_0E$,
 completing the proof of Statement (2).\vskip 0.3cm

\medskip\noindent (3) When $n$ is odd, we have $\mathcal{U}=-\mathcal{U}$, and the same holds for its closure. If a component $E$ of $\EEE^1=\overline{\UUU}$ satisfies $E\cap -E\neq\emptyset$, then $E=-E$. By Lemma \ref{lem:separate}\,(1), $E$ separates $0$ and $\infty$. Lemma \ref{lem:separate}\,(2) then gives $E\cap \omega E\neq\emptyset$ for every $2n$-th root of unity $\omega$, contradicting Statement (2). This proves Statement (3).\vspace{3pt}

We now proceed by induction on the level $m$ of Fatou chains. Assuming the statements of this lemma hold for $\EEE^m$, we shall prove they also hold for $\EEE^{m+1}$. Since $\EEE^{m+1}=\overline{\bigcup_{k\ge 1}\EEE^m_k}$, we first establish the following claim.
\vskip 0.3cm
\noindent{\bf Claim.} \emph{For any $k\geq1$, the statements in the lemma still hold for $\EEE^m_k$.}

\begin{proof}[Proof of the claim]
We prove Statements (1)-(3) only for $\EEE_1^m$; the cases $k\ge 2$ follow by a similar induction and are omitted.  

\medskip\noindent (1) Assume $n$ is even. If $\omega^n=-1$, then $f(\omega \EEE_1^m)=-f(\EEE_1^m)=-\EEE^m$. By induction, we have $\EEE^m\cap -\EEE^m=\emptyset$. Since $f(\EEE_1^m)=\EEE^m$, it follows that $\omega \EEE_1^m\cap \EEE_1^m=\emptyset$. 

If $\omega^n=1$ and $\omega\neq1$, suppose for contradiction that $\omega E\cap E'\neq\emptyset$ for two components $E$ and $E'$ of $\EEE_1^m$. Since $f(\omega E)=f(E)\subseteq\EEE^m$, we have $\omega E\subseteq E'$ and $f(E)\subseteq f(E')$. Hence $f(E)\cap f(E')\neq\emptyset$, and since distinct components of $\EEE_1^m$ map to distinct components of $\EEE^m$, we conclude that $E'=E$. Therefore $\omega E\subseteq E$, which gives $E=\omega E$.
Hence $E=\bigcup_{i=1}^n\omega^i E$. 
By Lemma \ref{lem:separate}\,(1), $E$ separates $0$ and $\infty$. Choose an $n$-th root $\omega_0$ of $-1$. Lemma \ref{lem:separate}\,(2) then yields $E\cap \omega_0 E\neq\emptyset$. However, the first subcase already showed $E\cap \omega_0 E=\emptyset$. This contradiction proves Statement (1).\vspace{2pt}

\medskip\noindent (3) Assume $n$ is odd. Let $E$ be a component of $\EEE_1^m$ containing a component $E_0$ of $\EEE^m$, and let $K=f(E)$. Then $K$ is a component of $\EEE^m$. By the induction hypothesis, $-E_0\neq E_0$ and $-K\neq K$, and both $-E_0$ and $-K$ are components of $\EEE^m$. Since $n$ is odd, $f(-z)=(-1)^n f(z)=-f(z)$, which yields $f(-E)=-K$. Hence, $-E$ is a component of $f^{-1}(-K)$ disjoint from $E$ (as $K\neq -K$). Because $-E_0\subset -E\cap\EEE^m$, the component $-E$ intersects $\EEE^m$, and therefore is a component of $\EEE_1^m$. This proves Statement (3).\vspace{2pt}

\medskip\noindent (2) Assume $n$ is odd. Suppose for contradiction that $\omega E\cap E'\neq\emptyset$ for some components $E,E'$ of $\EEE_1^m$ and some $2n$-th root of unity $\omega\neq\pm1$. By Statement (3), we may replace $(\omega,E,E')$ with $(-\omega,-E,-E')$ if necessary and assume $\omega^n=1$. Then $f(\omega E)=f(E)\subseteq\EEE^m$. The same argument as in Statement (1) gives
$E=\omega E=\bigcup_{i=1}^{2n}\omega^iE.$
By Lemma \ref{lem:separate}\,(1), we conclude that  $E$ separates $0$ and $\infty$.  Lemma \ref{lem:separate}\,(2) then yields $E\cap-E\neq\emptyset$, contradicting Statement (3). This proves the claim.
\end{proof}

It remains to verify that the statements in the lemma hold for $\EEE^{m+1}$. The argument is analogous to the proof for $\EEE^1$ given earlier. Set $\EEE^m_\infty=\bigcup_{k\geq 1} \EEE_k^m$. Then $$f(\EEE_\infty^m)=\EEE_\infty^m\,\tu{ and }\,\EEE^{m+1}=\overline{\EEE_\infty^m}.$$

\noindent (1) Assume $n$ is even.  By the previous claim we have 
\begin{equation}\label{eq:24}
\text{$\EEE^m_\infty\cap e^{k\pi \tb{i}/n} \EEE_\infty^m=\emptyset$ for any $k=1,\ldots,2n-1$.}
\end{equation}

Suppose for contradiction that Statement (1) fails for $\mc{E}^{m+1}$. Then there exist $z,\omega_0 z\in\mc{E}^{m+1}$ for some $2n$-th root of unity $\omega_0\neq 1$. Hence there are sequences $\{E_k\}$ and $\{E'_k\}$ of components of $\mc{E}_k^m$, with $E_k\subset E_{k+1}$ and $E'_k\subset E'_{k+1}$, such that $z$ and $\omega_0z$ are contained in the closures of $ X:=\bigcup_k E_k$ and $Y:=\bigcup_k E'_k$, respectively. 

By symmetry $f(\omega_0 z)=\omega_0^n f(z)$ and the fact that $n$ is even, we have $f^2(\omega_0 z)=f^2(z)$.
Then $f^2(\overline{X})$ and $f^2(\overline{Y})$ share the point $f^2(z)$. As $X$ and $Y$ are periodic with the same period, $\overline{X}\cap\overline{Y}$ contains at least one point, say $x$. 

Suppose first that  $\omega_0$ is non-primitive. Then by relation \eqref{eq:24} and Proposition \ref{pro:arc}, one of the following two cases occurs:

$\bullet$ If $X=Y$, we can apply Lemma \ref{lem:separate}\,(4) with $E=\ol{X}, A=X,\omega=e^{\pi \tb{i}/n}$ and $\omega_0$ to obtain that $\omega_0E\cap E=\emptyset$. This contradicts $\omega_0 z\in E\cap\omega_0E$.

$\bullet$ If $X\not=Y$, we can apply Lemma \ref{lem:separate}\,(4) with $E=\ol{X\cup Y}, A=X\cup\{x\}\cup Y,\omega=e^{\pi \tb{i}/n}$ and $\omega_0$ to obtain that $\omega_0E\cap E=\emptyset$. This contradicts $\omega_0 z\in E\cap\omega_0 E$.

If $\omega_0$ is primitive, then $\omega_0^n=-1$, so that $f(z)\in f(\overline{X})$ and $f(\omega_0z)=-f(z)\in  f(\overline{Y})$.  Replacing $(X,Y,z,\omega_0)$ by $(f(X),f(Y),f(z),-1)$ in the previous argument then gives a contradiction. This completes the proof of Statement (1).\vspace{2pt}

\medskip\noindent (2) Assume $n$ is odd. Suppose for contradiction that the statement fails. Then there exist a $2n$-th root of unity $\omega_0\neq\pm1$, and a point $z\in\overline{X}$ such that $\omega_0 z\in\overline{Y}$. Here $X$ and $Y$ are constructed similarly in (1). The previous claim implies 
\begin{equation}\label{eq:25}
e^{k\pi \tb{i}/n}\EEE_\infty^m\cap\EEE_\infty^m=\emptyset \text{ for } k\in\{1,\ldots,2n-1\}\setminus\{n\} \text{ and } \EEE_\infty^m=-\EEE_\infty^m.
\end{equation}
 
 Then by relation \eqref{eq:25} and Proposition \ref{pro:arc}, one of the following two cases occurs:

Case 1. $X=Y$. Since $X\neq-X$ by the claim, combining with Proposition \ref{pro:arc}, we apply Lemma \ref{lem:separate}\,(4) with $E=\ol{X}, A=X,\omega=e^{\pi \tb{i}/n}$ and $\omega_0$ to obtain that $\omega_0E\cap E=\emptyset$. This contradicts $\omega_0 z\in E\cap\omega_0E$.

Case 2. $X\not=Y$. Clearly $X\neq -Y$, for otherwise, by replacing $\omega_0$ with $-\omega_0$, the above case occurs.
We may replace $(X,Y,\omega_0)$ by $(-X,-Y,-\omega_0)$ if necessary and assume further that $\omega_0^n=1$. Then by symmetry $f(\omega_0 z)=\omega_0^nf(z)=f(z)$. As in (1), $\overline{X}\cap\overline{Y}$ contains a point $x$. 
Note that the four sets $\pm X$ and $\pm Y$ are pairwise disjoint. We apply Proposition \ref{pro:arc} and Lemma \ref{lem:separate}\,(4) with $E=\ol{X\cup Y}, A=X\cup\{x\}\cup Y,\omega=e^{\pi \tb{i}/n}$ and $\omega_0$ to obtain that $\omega_0E\cap E=\emptyset$. This contradicts $\omega_0 z\in E\cap\omega_0E$.
This completes the proof of Statement (2).\vspace{2pt}

\medskip\noindent (3) Assume $n$ is odd. By relation \eqref{eq:25}, $\EEE^{m+1}=-\EEE^{m+1}$. Moreover, if $E$ is a component of $\EEE^{m+1}$, so is $-E$. If $E=-E$, then $E$ separates $0$ and $\infty$. Choose a $2n$-th root of unity $\omega_0\neq\pm1$. Lemma \ref{lem:separate}\,(2) gives $E\cap\omega_0E\neq\emptyset$, contradicting Statement (2). This completes the proof of Statement (3) and hence of Lemma \ref{lem:Fatou-cycle}.
\end{proof}

\section{Blow-up Theorems}\label{sec3}
In this section, we first recall the blow-up theorem for exact subsystems established in \cite[Theorem~1.6]{CGZ}. We then prove a new blow-up theorem for McMullen maps whose two free critical values lie on $\pa T$, blowing up the corresponding Julia sets to Cantor-circles. We also complete the proof of Theorem~\ref{thm1} here.

\subsection{Blow up of exact subsystems}
Let $f$ be a PCF rational map. Recall that  $P_f$ denotes its postcritical set
$\{f^k(c): c \tu{ is a critical point of $f$ and $k\geq 1$}\}$.

Suppose $\VVV_1\subset\VVV$ are open sets such that $\partial\VVV$ is contained in the Julia set of $f$ and each component of $\partial\VVV$ contains more than one point. We say that $f:\VVV_1\to\VVV$ is an {\bf exact subsystem} if
\begin{itemize}
\item [(1)]$ \VVV$ has finitely many components, each of which is finitely connected;

\item [(2)] $\VVV_1$ is the union of some components of $f^{-1}(\VVV)$;

\item [(3)] each component of $\VVV\setminus\VVV_1$ is a full continuum disjoint from $P_f$.
\end{itemize}

\begin{thm}[\textup{\cite[Theorem 1.6]{CGZ}}]\label{thm:blow-upI}
Let $f$ be a PCF rational map and let $f:\VVV_1\to\VVV$ be an exact subsystem such that $\mc{V}$ is connected. Denote
$$
\VVV_n=(f|_{\VVV_1})^{-n}(\VVV)\quad\text{ and }\quad E=\bigcap_{n\geq1}\ol{\VVV_n}.
$$
Then there exists a PCF rational map $g$, a continuum $K_g\supset J_g$ with $g^{-1}(K_g)=K_g$, and a quotient map $\pi:\overline{\mathbb{C}}\to\overline{\mathbb{C}}$ such that
\begin{enumerate}
\item  components of $\overline{\mathbb{C}}\setminus K_g$ are all Jordan domains with pairwise disjoint closures;
\item  $E=\pi(K_g)$ and $f\circ\pi=\pi\circ g$ on $K_g$;
\item  for any point in $\bigcap_{k>0}\VVV_k$, the fiber $\pi^{-1}(z)$ is a single point;
\item  for any component $B_n$ of $\overline{\mathbb{C}}\setminus \VVV_n$, the set $\pi^{-1}(B_n)$ is the closure of a component of $\overline{\mathbb{C}}\sm K_g$;

\item  a point $x$ belongs to $P_g$ if and only if either $\pi(x)\in P_f\cap\VVV$, or $x$ is the B\"{o}ttcher center of a component $D$ of $\overline{\mathbb{C}}\setminus K_g$ with $\pi(\ol{D})\cap P_f\neq\emptyset$. 
\end{enumerate}
Moreover, the map $g$ is unique up to M\"{o}bius conjugacy.
\end{thm}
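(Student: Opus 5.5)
The plan is to build $g$ by a topological surgery on $f$, show that the result is a Thurston map with no obstruction, and then build $\pi$ as the limit of an iterated lifting. Since this is \cite[Theorem 1.6]{CGZ}, the plan follows the standard blow-up strategy.

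\textbf{Step 1: the model branched covering.} The components of $\overline{\mathbb{C}}\setminus\VVV$ and of $\overline{\mathbb{C}}\setminus\VVV_1$ are full continua; for the latter this uses condition (3) and $\VVV$ connected. I would first fix a homeomorphism $h$ of $\overline{\mathbb{C}}$ that is the identity on a large compact subset of $\VVV$. Under $h$, each complementary component of $\VVV$ is replaced by a closed Jordan disk; similarly, each complementary component of $\VVV_1$ corresponds to a disk. Define a branched covering $F$ of $\overline{\mathbb{C}}$ as follows.
\begin{itemize}
\item On the part corresponding to $\VVV_1$, take $F=f$ (conjugated by $h$).
\item On each disk $D'$ of the complement of $\VVV_1$, let $D$ be the disk with $f(\partial D')\subset\partial D$. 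Extend $F$ to $D'$ by a model $z\mapsto z^{d}$ in suitable coordinates, where $d$ is the degree of $f$ on a collar of $\partial D'$.
\end{itemize}
The components of $\VVV\setminus\VVV_1$ avoid $P_f$, so they contribute no new postcritical points beyond the disk centers. Hence $F$ is postcritically finite, with
\[
P_F=h(P_f\cap\VVV)\cup\{\text{centers of disks whose image meets }P_f\}.
\]
This matches item (5).

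\textbf{Step 2: no Thurston obstruction.} I would rule out obstructions as follows. Let $\Gamma$ be an $F$-stable multicurve. Curves inside disks are peripheral or unessential, so the curves of $\Gamma$ can be isotoped into $h(\VVV)$. There they correspond to a curve family for $f$ on $\VVV$. The disk centers are superattracting, with the full continua as their ``basins''. From this I would argue that the Thurston matrix of $\Gamma$ for $F$ is dominated by that of the corresponding family for $f$. Since $f$ is rational, that family has leading eigenvalue $<1$; curves surrounding only disk centers are handled separately using the local degree of the $z^{d}$ models. This is the step I expect to be the main obstacle, because of how $\Gamma$ interacts with the collapsed continua and with the non-hyperbolic part of $P_f$. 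If it fails, I would instead use the Levy-cycle and canonical-obstruction description of PCF rational maps with Julia points in the postcritical set. Once there is no obstruction, Thurston's theorem gives a rational map $g$ combinatorially equivalent to $F$, unique up to M\"obius conjugacy; this gives the uniqueness claim. The disk centers become superattracting, and I would set $K_g$ to be the complement of their immediate basins and preimages. These basins are Jordan domains with disjoint closures, by the same ``disjoint boundaries'' argument as in Proposition~\ref{prop:classify-no-bounded-cycles}, which gives (1).

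\textbf{Step 3: the semiconjugacy.} Start with a homeomorphism $\pi_0$ realizing the equivalence, so that $f\circ\pi_0$ and $\pi_0\circ g$ agree after an isotopy rel $P_g$ on $K_g$. Lift iteratively to get $\pi_{k+1}$ with $f\circ\pi_{k+1}=\pi_k\circ g$ on $g^{-1}$-preimages of $K_g$. Uniform convergence follows from expansion of $f$ on $J_f$ in its orbifold metric: the homotopy tracks between $\pi_k$ and $\pi_{k+1}$ shrink geometrically after pulling back by $f$. The limit $\pi$ satisfies $f\circ\pi=\pi\circ g$ on $K_g$ and $\pi(K_g)=E$, which is (2). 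Each closed basin of $g$ is sent onto the corresponding complementary component of some $\VVV_n$, which gives (4). Points of $\bigcap_k\VVV_k$ have single-point fibers because nested puzzle-like neighborhoods shrink, which gives (3). Finally, every fiber is either a point or a nested intersection of full continua, hence a full continuum, so $\pi$ is a quotient map.
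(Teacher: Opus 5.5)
The paper does not prove this statement; it quotes it from \cite{CGZ}. Your outline (surgery to a postcritically finite branched covering, exclusion of Thurston obstructions, then a Rees--Shishikura type pullback controlled by orbifold expansion) is the same scheme the paper itself carries out for Theorem~\ref{thm:blow-upII}. The concrete gap is in how you relate $F$ to $f$.

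A homeomorphism $h$ of $\overline{\mathbb{C}}$ cannot turn a complementary continuum into a closed Jordan disk unless it already is one, and in the cases this paper needs it is not. In Case (S1), the components of $\VVV\setminus\VVV_1$ include the continua of Proposition~\ref{prop:classify-no-bounded-cycles}(4), each of which is two closed Jordan disks meeting at a critical point. In Case (S2), the components of $\overline{\mathbb{C}}\setminus\VVV$ include components of the maximal Fatou chain, which are typically not disks. Worse, no single $h$ can straighten the components of $\VVV\setminus\VVV_1$ at all: they lie in $\VVV$, the range of $f|_{\VVV_1}$, where $h$ must be a homeomorphism compatible with $f$.

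For the same reason, Step~3 cannot start from a homeomorphism $\pi_0$ relating $f$ and $g$. Thurston's theorem relates $F$ and $g$, and passing to $f$ means composing with a blow-down that is not injective; conclusion (4) already forces $\pi$ to send a closed Jordan disk onto each figure-eight continuum. What you need is the structure used in the proof of Theorem~\ref{thm:blow-upII}:
\begin{itemize}
\item two quotient maps $\beta_0,\beta_1$ from the model sphere, collapsing model disks onto the complementary continua of $\VVV$ and of $\VVV_1$ respectively;
\item the relation $\beta_0\circ F=f\circ\beta_1$ on the model of $\VVV_1$;
\item a homotopy $\beta_0\simeq\beta_1$ through quotient maps relative to the marked set.
\end{itemize}
Condition (3), that the components of $\VVV\setminus\VVV_1$ are full and avoid $P_f$, is exactly what makes this homotopy exist. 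This pair, not a conjugacy, is what transfers $F$-multicurves to $f$-multicurves in Step~2. It also seeds the lifting in Step~3, with $\pi_0=\beta_0\circ\phi_0^{-1}$ a quotient map whose tracks have bounded homotopic length rel $P_f$, as in Claim~2 of that proof.

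Step~2 has a second, independent hole. The domination $M_F(\Gamma)\le M_f(\Gamma)$ needs two things. Curves in $\VVV$ that are essential and homotopic rel $P_F$ must be essential and homotopic rel $P_f$, and distinct $F$-classes must stay distinct. Both hold only if the marked sets correspond exactly: every point of $P_f\cap\VVV$ lies in $P_F$, and a model center lies in $P_F$ if and only if its continuum meets $P_f$. You assert this (``This matches item (5)''), but it is not automatic for your $F$.
\begin{itemize}
\item Critical orbits of $F$ inside the model of $\VVV_1$ never reach a center. So a center lies in $P_F$ only if it is in the forward orbit of a center with boundary degree $d\ge 2$.
\item A continuum, however, can meet $P_f$ through critical points of $f$ lying in a different complementary continuum. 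When $\VVV_1\neq f^{-1}(\VVV)$, a component $D'$ of $\overline{\mathbb{C}}\setminus\VVV_1$ may contain components of $f^{-1}(\VVV)$, so $f(D')$ need not lie in the continuum onto which $\partial D'$ maps.
\item Likewise, points of $P_f\cap\VVV$ reached from critical points outside $\VVV_1$ need not be postcritical for $F$.
\end{itemize}
You must either prove the correspondence, or work with a marked map whose enlarged marked set you check to be forward invariant.

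Finally, Thurston rigidity only gives uniqueness of the rational map equivalent to your particular $F$, which itself depends on choices. To get uniqueness of every $g$ satisfying (1)--(5), you must show that any such $g$ is Thurston equivalent to $F$, for instance by building the equivalence from its semiconjugacy $\pi$.
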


Intuitively, this theorem constructs a new rational map $g$ whose Julia set $J_g$ is a regularized version of the set $E$. Suppose $f$ admits no attracting cycles in $\mathcal{V}_1$. Then $E=J_f$ and $K_g=J_g$ is a Sierpi\'{n}ski carpet.
The quotient map $\pi$ matches $J_g$ with $J_f$ and preserves the dynamics. It is injective at the buried point set; the only collapsed points are boundary points from Fatou domains.
Consequently, one can first study the simpler dynamics of $g$ on $J_g$ and then transport the results back to $f$ via $\pi$.

\medskip The blow-up theorem (Theorem~\ref{thm:blow-upI}) applies to any PCF rational map admitting an exact subsystem. To use it for McMullen maps, we need to exhibit a set $\mc{E}_0$ that is stable (i.e., $f(\mc{E}_0)\subseteq\mc{E}_0$ and every component of $\mc{E}_0$ is a component of $f^{-1}(\mc{E}_0)$). Two cases provide such an $\mc{E}_0$.

\smallskip{\textbf{Case (S1).} No bounded attracting cycles, and $f^k(v)\in\partial T$ for some $k\ge1$.}
By Proposition~\ref{prop:classify-no-bounded-cycles}(4), the components of $T\cup f^{-1}(T)\cup\cdots\cup f^{-k}(T)$ are Jordan domains with pairwise disjoint closures; the critical values $\pm v$ lie on boundaries of two of them, say $U_1$ and $U_2$, and the preimage of each $\overline{U_i}$ is a full continuum whose interior consists of two Jordan domains. We define $\mc{E}_0=\bigcup_{k\geq 0}\bigcup_{i=1,2}f^k(\overline{U_i}).$ Then $\mc{E}_0$ contains $P_f$.

\smallskip{\textbf{Case (S2).} Bounded attracting cycles, with the maximal Fatou chain $\mathcal{E}$ disjoint from $\partial B$.}
By Lemma~\ref{classification}, each component of $\mathcal{E}$ is a full continuum containing at most one critical value. Then components of $f^{-1}(\mc{E})$ are full. Set $\mathcal{E}_0$ to be the union of $\mathcal{E}$, $\overline{B}$, and (when $n$ is even) the preimage of $\mathcal{E}$ under $f$. Then $\mathcal{E}_0$ contains $P_f$.

\smallskip
In both cases, $\mc{E}_0$ is stable, and every component of $\mc{E}_0$ or $f^{-1}(\mc{E}_0)$ is a full continuum.
Now we define $\mc{V}=\ol{\mb{C}}\setminus\mc{E}_0$ and $\mc{V}_1=\ol{\mb{C}}\setminus f^{-1}(\mc{E}_0)$. Then the restriction $f:\mc{V}_1\to\mc{V}$ is an exact subsystem of $f$ by definition. Applying Theorem \ref{thm:blow-upI} yields the following blow-up theorem for McMullen maps.

Recall that a point $z\in E\subseteq\overline{\mathbb{C}}$ is called buried in $E$ if it does not lie on the boundary of any component of $\overline{\mathbb{C}}\setminus E$.

\begin{cor}\label{cor:sier}
    Let $f$ be a PCF McMullen map and $J$ be its Julia set. Suppose $f$ satisfies Case (S1) or Case (S2). Then $J$ is a Sierpi\'{n}ski-like carpet. Moreover, there exists a Sierpi\'{n}ski rational map $g$ and a quotient map $\pi$ over $\ol{\mb{C}}$ such that
\begin{enumerate}
    \item $\pi(J_g)=J$ and $f\circ\pi=\pi\circ g$ on $J_g$;
    \item $\pi$ is injective on the buried point set of $J_g$;
    \item For any Fatou domain $U$ of $g$, any two points in $\pi(\ol{U})$ can be joined by a curve that meets $J$ in at most countably many points.
\end{enumerate}
\end{cor}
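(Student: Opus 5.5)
We apply Theorem~\ref{thm:blow-upI} to the exact subsystem $f:\VVV_1\to\VVV$ with $\VVV=\ol{\mb{C}}\sm\mc{E}_0$ and $\VVV_1=\ol{\mb{C}}\sm f^{-1}(\mc{E}_0)$ constructed in Cases (S1) and (S2). We first check the hypotheses. In both cases $\mc{E}_0$ has finitely many components, each a full continuum, so $\VVV$ is connected and finitely connected. Connectedness of $\VVV$ holds because the complement of finitely many disjoint full continua is connected. Since $\mc{E}_0$ is stable, $\VVV_1$ is a union of components of $f^{-1}(\VVV)$. Every component of $\VVV\sm\VVV_1$ is a component of $f^{-1}(\mc{E}_0)$ that is not a component of $\mc{E}_0$. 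Such a component is full, and it misses $P_f$ because $P_f\subset\mc{E}_0$ and distinct components of $f^{-1}(\mc{E}_0)$ are disjoint.

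Next we identify $E=\bigcap_k\ol{\VVV_k}$ with $J$. Every Fatou domain of $f$ is eventually mapped into $B$, into $T$, or into $\UUU$, and each of these lies in $\mc{E}_0$. In Case (S1), $T$ enters $\mc{E}_0$ through the $f^k(\ol{U_i})$. In Case (S2), $T\subset f^{-1}(\ol B)$ lies in some level $\ol{\mb{C}}\sm\VVV_k$. Hence $\bigcap_k\VVV_k$ contains no Fatou points, and $f$ has no attracting cycle in $\VVV_1$. As remarked after Theorem~\ref{thm:blow-upI}, it follows that $E=J$, that $K_g=J_g$, and that $J_g$ is a Sierpi\'nski carpet. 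Property (1) of the corollary is then Theorem~\ref{thm:blow-upI}(2). The constraint $E=J$ needs care: boundary points of the removed continua belong to $J$, and interior Fatou points are excluded. We would verify this by noting that $\partial\VVV_k\subset J$ and that $J$ is nowhere dense.

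For (2), a buried point $x$ of $J_g$ does not lie on the closure of any Fatou domain $D$ of $g$. By Theorem~\ref{thm:blow-upI}(4), the sets $\pi^{-1}(B_k)$ for components $B_k$ of $\ol{\mb{C}}\sm\VVV_k$ are exactly such closures $\ol D$. Therefore $\pi(x)\in\bigcap_k\VVV_k$, and by Theorem~\ref{thm:blow-upI}(3) the fiber $\pi^{-1}(\pi(x))=\{x\}$. It remains to rule out $\pi(x)=\pi(y)$ for some non-buried $y$. If that happened, $\pi(y)$ would lie in some $B_k$, and then $x$ would lie in $\pi^{-1}(B_k)=\ol D$, a contradiction. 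So $\pi$ is injective on buried points, and $J$ is a Sierpi\'nski-like carpet.

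The main work is (3). By Theorem~\ref{thm:blow-upI}(4), $\pi(\ol U)$ is a component $B_k$ of $\ol{\mb{C}}\sm\VVV_k$, that is, a component of $f^{-k}(\mc{E}_0)$. This component maps onto a component of $\mc{E}_0$ through a branched covering whose branching is only at critical points. We first treat the components of $\mc{E}_0$ themselves. For a component of $\mc{E}$, Lemma~\ref{classification}(2) gives curves meeting $J$ countably often. For $\ol B$, and for closures of Jordan Fatou domains in Case (S1), we use pairs of internal rays, which meet $J$ in at most two points. For the components in Case (S1) made of two Jordan domains touching at a critical point, we concatenate ray pairs through that point. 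We then lift curves along $f^k$. A component of the preimage of a full continuum $K$ with the countable-curve property again has this property. To see this, take the preimage of a connecting curve; it is a finite union of lifts that meet at critical preimages. A finite chain of lifts joins any two points, since the preimage component is connected and its Fatou interior is dense, up to the countable Julia part. We expect this lifting step to be the main obstacle. The hard part is ensuring that connectivity is realized by finitely many lifted curves. We would handle it by choosing, in $K$, a countable-type ``spanning'' tree through all postcritical points of $K$, and pulling that tree back.
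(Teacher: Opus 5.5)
Your proposal is correct and follows the paper's argument. You apply Theorem~\ref{thm:blow-upI} to the exact subsystem $f:\VVV_1\to\VVV$, read off (1) from Theorem~\ref{thm:blow-upI}(2) with $E=J$ and $K_g=J_g$, obtain (2) from Theorem~\ref{thm:blow-upI}(3)--(4), and obtain (3) by checking the components of $\mc{E}_0$ directly (internal rays, Lemma~\ref{classification}(2)) and then pulling back; your spanning-tree device is a sound way to make rigorous the pullback step that the paper only asserts.
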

\begin{proof}
    If Satement~(2) is false, then there is a buried point $z$ in $J_g$ such that $\pi^{-1}(\pi(z))$ is a full continnum which intersects at least two buried points in $J_g$, contradicting Theorem \ref{thm:blow-upI}~(3).

    For Statement~(3), by Theorem~\ref{thm:blow-upI}(4), $\pi(\ol{U})$ is the closure of a component of $\overline{\mathbb{C}}\setminus \VVV_k$ for some $k\ge1$, hence a component of $f^{-k}(\mc{E}_0)$, say $E$. 
    
    Suppose $E$ is periodic. In case (S1), $E$ is the closure of a Fatou domain, so any two points of $E$ can be joined by a curve whose interior lies in the Fatou domain. In case (S2), if $E$ is a component of a maximal Fatou chain, then the required property follows from Lemma~\ref{classification}(2). By pullback, Statement~(3) holds. 
\end{proof}
\subsection{\boldmath Blow up of McMullen maps with $v\in\pa T$\unboldmath }

We now turn to Case~(3) of Proposition~\ref{prop:classify-no-bounded-cycles}, where $v\in\partial T$. As shown there, this forces $n\ge3$. In this subsection we shall prove that the Julia set is a necklace by blowing it up to a Cantor-circle.

Fix an integer $n\ge3$. Set $I=[0,1]$, $I_0=[0,1/n]$, $I_1=[1-1/n,1]$, $\mathbb{S}^1=\mathbb{R}/2\pi\mathbb{Z}$, and let $\sigma_i:I_i\to I$ be the affine homeomorphism preserving orientation for $i=1$ and reversing for $i=0$. The {\bf model map} 
$g:(I_0\sqcup I_1)\times\mathbb{S}^1\longrightarrow I\times\mathbb{S}^1$
is defined by $$g(x,t)=(\sigma_i(x),(-1)^{i+1}n\cdot t~\tu{mod}~2\pi)$$ for $(x,t)\in I_i\times\mathbb{S}^1$. Its {\bf Julia set},  defined here as the non-escaping set, is a Cantor set of circles, or simply a Cantor-circle.

Set $\mathcal{A}_0=(1/n,\,1-1/n)\times\mathbb{S}^1$ and $\mathcal{A}_k=g^{-k}(\mathcal{A}_0)$ for $k\ge1$. Then $\mathcal{A}_k$ consists of $2^k$ annuli, each of width $(1-2/n)n^{-k}$. The Julia set of $g$ is
$$J_g=\mathcal{C}\times\mathbb{S}^1=I\times\mathbb{S}^1\setminus\bigcup_{k\ge0}\mathcal{A}_k,$$
a Cantor-circle. 

Recall that $\phi_B:B\to\ol{\mathbb{C}}\setminus\overline{\D}$ and $\phi_T:T\to \mathbb{D}$ are B\"{o}ttcher maps of $B$ and $T$, respectively. Since $\partial B$ and $\partial T$ are Jordan curves, these two maps extend homeomorphically to  $\overline{B}$ and $\overline{T}$, still denoted by $\phi_B$ and $\phi_T$, respectively. 
%A continuous onto map $\pi$ of $\overline{\mathbb{C}}$ is called a \textbf{quotient map} if $\pi^{-1}(z)$ is a singleton or a full continuum for all $z\in\overline{\mathbb{C}}$.

\begin{thm}\label{thm:blow-upII}
Let $f$ be a PCF McMullen map with $v\in\pa T$ and $J$ be its Julia set. Then $J$ is a necklace. Moreover, there exists a continuous onto map $\pi:I\times \mathbb{S}^1\to \overline{\mathbb{C}}\setminus (B\cup T)$ such that:
\begin{enumerate}
\item $\pi(J_g)=J$ and $f\circ\pi=\pi\circ g$ on $J_g$;
\item for any $z\in J$ whose orbit avoids $\{\pm v\}$, $\pi^{-1}(z)\cap J_g$ is a singleton;
\item for any $z\in J$ with $f^k(z)\in\{\pm v\}$ for some $k\ge 1$, $\pi^{-1}(z)\cap J_g$ consists of exactly two points; 
\item $\pi(0,t)=\phi_T^{-1}(e^{\textup{\tb{i}}t})$ and $\pi(1,t)=\phi_B^{-1}(e^{\textup{\tb{i}}t})$ for all $t\in\mathbb{S}^1$.
\end{enumerate}
\end{thm}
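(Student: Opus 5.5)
We construct $\pi$ as the uniform limit of a sequence of pullbacks, in the usual Thurston/annular way. The setting is Case~(3) of Proposition~\ref{prop:classify-no-bounded-cycles}, with $A=\overline{\mathbb{C}}\setminus\overline{B\cup T}$ and
\[
f^{-1}(A)=A_1\cup A_2\cup f^{-1}(T).
\]
Here $A_1$ is adjacent to $\partial T$ and $A_2$ to $\partial B$. The sets $A_1,A_2$ are degenerate annuli mapping by degree $n$ coverings onto $A$ (more precisely, onto $A$ minus the closure of $T$'s preimage on the outer side). Also
\[
f^{-1}(\overline T)=\bigcup_{k=0}^{2n-1} e^{\pi \textbf{i}k/n}\overline U
\]
is a degenerate annulus, pinched at the $2n$ free critical points. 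The model is organized as follows. $I_0\times\mathbb{S}^1$ corresponds to $\overline{A_1}$ and maps to $I\times\mathbb{S}^1$ with orientation reversal in $x$. This matches the fact that $f$ sends $\partial T$ onto $\partial B$ and the inner boundary of the necklace $f^{-1}(\overline T)$ onto $\partial T$. Likewise $I_1\times\mathbb{S}^1$ corresponds to $\overline{A_2}$, and $\overline{\mathcal{A}_0}$ corresponds to $f^{-1}(\overline T)$.

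\textbf{Step 1 (initial map).} Build a continuous surjection $\pi_0:I\times\mathbb{S}^1\to\overline{A}$ with the boundary values in (4). It should send $\{1/n\}\times\mathbb{S}^1$ and $\{1-1/n\}\times\mathbb{S}^1$ onto the two boundary curves of $f^{-1}(\overline T)$, and send $\overline{\mathcal{A}_0}$ onto $f^{-1}(\overline T)$. It should also satisfy $f\circ\pi_0=\pi_0\circ g$ on $\partial(I_0\cup I_1)\times \mathbb{S}^1$. We parametrize the boundary curves using the B\"ottcher coordinates and pullbacks of internal rays, lifting $e^{\textbf{i}t}\mapsto e^{\pm \textbf{i}nt}$. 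The sign $(-1)^{i+1}$ in $g$ is forced by the orientation of $f$ on the inner annulus. Rotational equivariance $\pi_0(x,t+\pi/n)=e^{\pi\textbf{i}/n}\pi_0(x,t)$ should be imposed throughout, so that every choice is compatible with $f(\omega z)=\omega^n f(z)$. The curves $\{1/n\}\times\mathbb{S}^1$ and $\{1-1/n\}\times\mathbb{S}^1$ go to the two Jordan-like boundary loops of the degenerate annulus, each passing through all $2n$ critical points. The map then fills $\overline{\mathcal{A}_0}$ and the collars $[0,1/n]\times\mathbb{S}^1$ and $[1-1/n,1]\times\mathbb{S}^1$ homeomorphically onto $f^{-1}(\overline T)$ and the annuli $\overline{A_1},\overline{A_2}$, except where the degeneration forces collapse.

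\textbf{Step 2 (iteration).} Define $\pi_{k+1}$ on $I_i\times\mathbb{S}^1$ as the lift of $\pi_k\circ g$ through the covering $f:A_{i+1}\to A\setminus\overline{f^{-1}(T)}$. The lift exists and is unique given the boundary normalization, since $g$ and $f$ have the same degree $n$ on each piece and $\pi_k$ is equivariant. On $\overline{\mathcal{A}_0}$ we keep $\pi_{k+1}=\pi_0$. Because the postcritical set of $f$ lies on $\partial T\cup\partial B$ (here $f(v)\in\partial B$ and $v\in\partial T$), these lifts are branched only at the boundary of $A$, so they are well defined. Moreover, $\pi_{k+1}=\pi_k$ on $\overline{\mathcal{A}_0\cup\cdots\cup\mathcal{A}_{k}}$ and on the boundary circles.

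\textbf{Step 3 (convergence and properties).} Uniform convergence follows from the expansion of $f$ on $J$ in the orbifold metric, the Julia set of a PCF map being hyperbolic away from $P_f$. Successive lifts satisfy $d(\pi_{k+1},\pi_k)\le C\rho^k$; the homotopy between $\pi_1$ and $\pi_0$ is the step to control, and it stays inside $\overline{A}$. The limit $\pi$ satisfies (1) and (4) by construction. It maps $J_g$ onto $J$, because the complement of $\pi(J_g)$ in $\overline{A}$ is exactly the union of the iterated preimages of $T$ lying in $A$.

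\textbf{Step 4 (fibers, the main obstacle).} The hard part is (2) and (3). The only identifications made by $\pi_0$ on $\partial\mathcal{A}_0$ occur at the critical points: two points, one on each boundary circle, are glued at each $c\in\mathrm{Crit}$. Points whose orbits avoid $\pm v$ avoid the critical points. For such points, injectivity on $J_g$ comes from shrinking nested puzzle-like pieces: the images under $\pi$ of components of $I\times\mathbb{S}^1\setminus\bigcup_{j\le k}\overline{\mathcal{A}_j}$ intersected with small angular sectors have diameters tending to $0$, and distinct points of $J_g$ are eventually separated by pulled-back copies of these sectors, whose closures meet only at preimages of critical points. For points that eventually hit $\pm v$, a pullback of the local two-to-one gluing gives exactly two preimages, since each critical point is simple. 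The delicate check is that no further gluing arises in the limit, which would make fibers continua; this is where disjointness of distinct pulled-back necklaces and Lemma~\ref{lem:non-sym-domain} enter. Finally, extending $\pi$ to $\overline{\mathbb{C}}$ by collapsing each annulus component of $\mathcal{A}_k$ onto the corresponding degenerate preimage necklace yields a quotient map that is at most two-to-one on the Cantor-circle. Hence $J$ is a necklace.
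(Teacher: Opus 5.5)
Your plan to pull back directly from the model $g$ to $f$, instead of first producing a rational model with Cantor-circle Julia set, is a legitimate alternative. But Steps~1--2 skip the one nontrivial point of such a pullback: the \emph{twist class} of $\pi_0$.

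A correction first. $f:\overline{A_1}\to\overline{A}$ is a degree-$n$ covering onto all of $\overline{A}$, not onto $A\setminus\overline{f^{-1}(T)}$. The $A_i$ are honest annuli, and $\overline{A}=\overline{A_1}\cup\overline{A_2}\cup f^{-1}(\overline{T})$. The deck group of this covering consists of the rotations $z\mapsto e^{2\pi\textbf{i}j/n}z$.

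Now normalize the lift of $\pi_0\circ g$ on $I_0\times\mathbb{S}^1$ to agree with $\pi_0$ on $\{0\}\times\mathbb{S}^1$. On $\{1/n\}\times\mathbb{S}^1$ this lift is another lift of the same map, so it equals $e^{2\pi\textbf{i}j/n}\pi_0$ for some $j$. Composing $\pi_0|_{I_1\times\mathbb{S}^1}$ with one Dehn twist changes $j$ by one. Neither your boundary data nor the rotational equivariance detects this, since Dehn twists commute with $t\mapsto t+\pi/n$. So for a generic $\pi_0$ as in Step~1, $\pi_1$ is not even continuous.

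The problem persists even when the first lifts glue. Suppose $\pi_0,\dots,\pi_{k+1}$ are all continuous, and let $\delta_k\in 2\pi\mathbb{Z}$ be the relative twist of $\pi_{k+1}$ with respect to $\pi_k$, i.e.\ the difference of angular displacements along $[0,1]\times\{t\}$.
\begin{itemize}
\item $f$ induces multiplication by $-n$ on $\pi_1(\overline{A_1})$ and by $n$ on $\pi_1(\overline{A_2})$. Hence each collar contributes $\delta_{k-1}/n$, and $\overline{\mathcal{A}_0}$ contributes nothing, so $\delta_k=\frac{2}{n}\delta_{k-1}$.
\item By the monodromy of the covering, $\pi_{k+2}$ glues only if $\delta_k\in 2\pi n\mathbb{Z}$.
\end{itemize}
So unless $\delta_0=0$, i.e.\ $\pi_1\simeq\pi_0$ rel $\{0,1\}\times\mathbb{S}^1$, the scheme breaks down at a finite stage. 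This $\delta_0=0$ is exactly the hypothesis your Step~3 estimate relies on (``the homotopy between $\pi_1$ and $\pi_0$''), and nothing in Step~1 secures it. You must choose the numbers of Dehn twists of $\pi_0$ on the collars and across $\overline{\mathcal{A}_0}$ so that it holds, and check that the resulting congruences are solvable.

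This is precisely what Thurston's theorem provides in the paper. Pushing $\pm v$ into $T$ gives $f_*$, whose only irreducible stable multicurve is the core curve, with eigenvalue $2/n<1$ (the same factor as in your recursion). Cui--Tan then give a rational map $\tilde f$ and an isotopy $\phi_0\simeq\phi_1$ rel $P_{f_*}\cup\mathcal{N}$. Since $P_{f_*}$ has at least two points on each side of the annulus, this isotopy class fixes the twist. It yields the initial homotopy $h_0\simeq h_1$ rel $\overline{\mathbb{C}}\setminus\Omega_0$, the pullback then converges, and McMullen's theorem supplies $\psi:J_g\to J_{\tilde f}$.

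If you solve the twist equation by hand, which is feasible in this annular setting, your route would avoid both Cui--Tan and McMullen's theorem, a real simplification. As written, however, Steps~2--3 are not justified.

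Step~4 is also only a sketch. You need to show two things:
\begin{itemize}
\item the $\pi$-images of the two sides of each pulled-back necklace meet only at its pinch points;
\item no fiber contains a nondegenerate arc of a buried circle.
\end{itemize}
The paper gets the first from $h^{-1}(\partial B)=\partial h_0^{-1}(B)$ plus pullback, and the second from the fact that such an arc would eventually cover a whole circle.

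Finally, with the normalization $\phi_T'(0)>0$ one has $\phi_B\circ f\circ\phi_T^{-1}(w)=\frac{\lambda}{|\lambda|}w^{-n}$. So (4) and $f\circ\pi_0=\pi_0\circ g$ on $\{0\}\times\mathbb{S}^1$ are compatible only if $\lambda>0$ or $\phi_T$ is renormalized, e.g.\ to $z\mapsto 1/\phi_B(\sqrt[n]{\lambda}/z)$. This affects the statement itself as much as your Step~1.
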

\noindent\tb{Remark.} One can show that the map $\pi$ in Theorem~\ref{thm:blow-upII} is uniquely determined. Since this uniqueness is not needed in what follows, we omit the proof.

\begin{proof}
Let $\mathcal{D}$ be the union of two small open disks centered at $\pm v$, respectively, such that $\mathcal{D}\cap\partial T$ consists of two open arcs. Choose a homeomorphism $\alpha:\overline{\mathcal{D}}\to\overline{\mathcal{D}}$ with $\alpha|_{\partial\mathcal{D}}=\textup{id}$ that sends both $\pm v$ into $T\cap\mathcal{D}$.

Modify $f$ near the critical points to obtain a branched covering $f_*$ as follows. Let $\mathcal{D}_1=f^{-1}(\mathcal{D})$ and define
\[
f_*(z)=\begin{cases}
f(z), & z\in\overline{\mathbb{C}}\setminus\mathcal{D}_1,\\[4pt]
\alpha\circ f(z), & z\in\mathcal{D}_1.
\end{cases}
\]
Then  $f_*$ is a branched covering and differs from $f$ only in $\DDD_1$, and all critical points of $f_*$ escape to infinity. As in the case of rational maps, we denote by $P_{f_*}$ the closure of the union of the critical orbits of $f_*$.   

Set $V^*_0=V_0=\overline{\mathbb{C}}\setminus(\overline{B}\cup\overline{T})$. Then $V^*_1:=f_*^{-1}(V^*_0)$ and $V_1:=f^{-1}(V_0)$ each consists of two annuli nested in $V^*_0$ and $V_0$, respectively; see Figure \ref{fig:xi1}.

Let $\xi_0=\textup{id}$ on $\overline{\mathbb{C}}$. We will construct a quotient map $\xi_1$ such that $\xi_0$ and $\xi_1$ are homotopic rel $\overline{\mathbb{C}}\setminus V_0^*$ and
\[
\xi_0\circ f_* = f\circ\xi_1 \quad\text{on } V_1^*.
\]

Define $\xi_1(z)=\xi_0(z)$ for $z\in\overline{\mathbb{C}}\setminus\mathcal{D}_1$. A component $D^*$ of $\mathcal{D}_1^*$ meets $V_1^*$ in two disks $D_+^*,D_-^*$, and $D_0^*=D^*\setminus\overline{V_1^*}$ is also a disk. Similarly, the corresponding component of $\mathcal{D}_1$ yields $D_+,D_-,D_0$, where $D_0$ consists of two disjoint disks in Fatou domains.  For $i\in\{+, -\}$, define $\xi_1:D^*_i\to D_i$ by
$
\xi_1(z) = (f|_{D_i})^{-1}\circ\xi_0\circ f_*(z).
$ 

\begin{figure}[H]
  \centering
  \includegraphics[width=15cm]{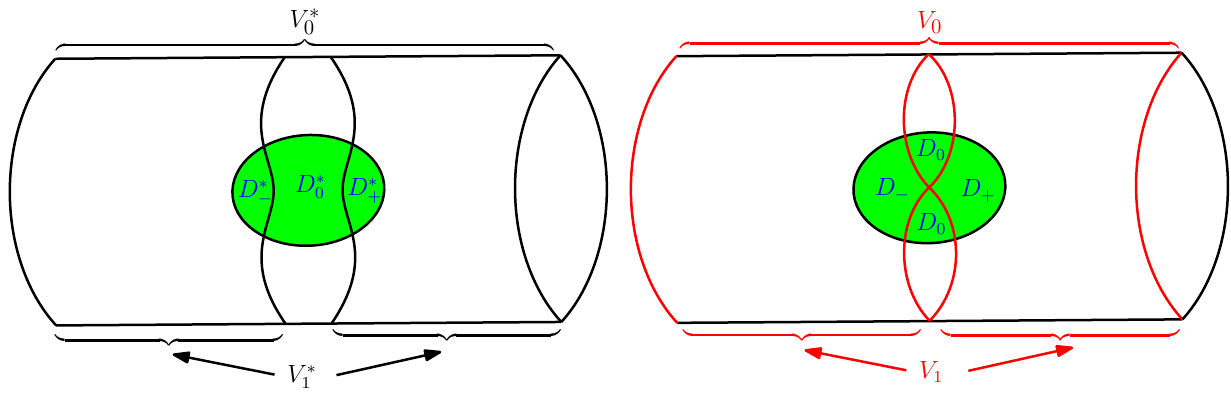}
  \caption{The construction of $\xi_1$}\label{fig:xi1}
\end{figure}

Extend $\xi_1$ to $D_0^*$ as a quotient map: $\xi_1:\overline{D_0^*}\to\overline{D_0}$ sends an arc joining two points on $\partial V_1^*$ to the unique critical point and is injective elsewhere. 

By construction, $\xi_0\circ f_* = f\circ\xi_1$ on $V_1^*$. Moreover, there exists a homotopy $\xi_t:\overline{\mathbb{C}}\to\overline{\mathbb{C}}$, $t\in[0,1]$, such that each $\xi_t$ is a quotient map and $\xi_t(z)=\xi_0(z)$ for all $z\in\overline{\mathbb{C}}\setminus V^*_0$.

\medskip\noindent\textbf{Claim 1.} \emph{There exist a neighborhood $\mathcal{N}$ of infinity,  an isotopy $\phi_t:\overline{\mathbb{C}}\to\overline{\mathbb{C}}$, $t\in [0,1]$ rel. $\NNN\cup P_{f_*}$ and a ratoinal map $\tilde{f}$, such that $\phi_0$ is holomorphic in $\NNN$ and 
$\phi_0\circ f_*=\tilde{f}\circ\phi_1\textup{ on } \overline{\mathbb{C}}.$
Moreover, the Julia set of $\tilde{f}$ is a Cantor-circle.}
\begin{proof}[Proof of Claim 1]
By \cite[Theorem 1.1]{CT}, to prove the existence of $\phi_t$ and $\tilde{f}$, it suffices to check that $f_*$ has no Thurston obstruction. 

Recall that a multicurve is a non-empty finite collection of non-peripheral Jordan curves in $\overline{\mb{C}}\setminus P_{f_*}$ that are pairwise disjoint and  pairwise non-homotopic rel $P_{f_*}$. A multicurve $\gamma$ is called {$f_*$-stable} if each non-peripheral component of the preimage $f_*^{-1}(\Gamma)$ of a curve $\gamma\in\Gamma$ is homotopic rel $P_{f_*}$ to a curve in $\Gamma$. A multicurve is called irreducible if for any two curves $\gamma_i, \gamma_j \in \Gamma$, there exists an integer $k \ge 1$ such that $f_*^{-k}(\gamma_j)$ has a component homotopic to $\gamma_i$ rel $P_{f_*}$. Finally, a {Thurston obstruction} is an $f_*$-stable multicurve whose transition matrix has leading eigenvalue at least $1$ (see \cite{CT}).

By a standard reduction argument, it suffices to check that no irreducible $f_*$-stable multicurve is a Thurston obstruction. Indeed, any Thurston obstruction contains an irreducible stable sub-multicurve with the same leading eigenvalue.

Let $\mathcal{N}$ be a disk containing $\infty$ with $f_*(\mathcal{N})\Subset \mathcal{N}$. For each $k$, let $\NNN_k$ denote the union of the two components of $f^{-k}(\NNN)$ containing $0$ and $\infty$. Then there exists an integer $m>0$ such that $\NNN_m$ consists of two disks in $B$ and $T$ respectively, and contains $P_{f_*}$.

Let $\Gamma$ be an irreducible multicurve in $\overline{\mathbb{C}}\setminus P_{f_*}$. Since $f(\mathcal{N}) \Subset \mathcal{N}$ is an attracting neighborhood of $\infty$, the forward iterates $f_*^k(\mathcal{N}_m)$ converge to $\{\infty\}$ as $k\to\infty$. Thus there exists $k_0>0$ such that $\Gamma\cap f_*^{k_0}(\mathcal{N}_m)=\emptyset$. Among all multicurves isotopic to $\Gamma$ rel $P_{f_*}$, we choose a representative (still denoted $\Gamma$) such that for each $k=0,\dots,k_0$, the intersection $\Gamma\cap f_*^k(\mathcal{N}_m)$ has the minimal number of components. This \emph{minimality} means that no isotopic deformation of $\Gamma$ can further reduce the number of components of these intersections.

We claim that this minimal choice forces $\Gamma$ to be entirely contained in the annulus $\overline{\mathbb{C}}\setminus \overline{\mathcal{N}_m}$. Suppose otherwise, then some curve $\gamma\in\Gamma$ meets $\mathcal{N}_m$. Consider a component $\tilde{\gamma}$ of $f_*^{-1}(\Gamma)$ that is isotopic to $\gamma$ rel $P_{f_*}$. By the minimality of the intersection number, $\tilde{\gamma}$ must also intersect $\mathcal{N}_m$, otherwise we could isotope $\Gamma$ to reduce the number of components. It follows that $\gamma_1=f_*(\tilde{\gamma})\in\Gamma$ intersects $f_*(\mathcal{N}_m)$.

Applying the same argument to $\gamma_1$ and using the minimal intersection with $f_*(\mathcal{N}_m)$, we obtain a curve $\gamma_2\in\Gamma$ intersecting $f_*^2(\mathcal{N}_m)$. Inductively, for every $k\ge1$ we find $\gamma_k\in\Gamma$ intersecting $f_*^k(\mathcal{N}_m)$. Taking $k=k_0$ contradicts $\Gamma\cap f_*^{k_0}(\mathcal{N}_m)=\emptyset$. Therefore, $\gamma\cap\mathcal{N}_m=\emptyset$ for every curve $\gamma\in\Gamma$, as claimed.

Consequently, $\Gamma$ lies in the annulus $\overline{\mathbb{C}}\setminus \overline{\mathcal{N}_m}$. Because $P_{f_*}\subset \mc{N}_m$, any non-peripheral curve in this annulus is isotopic to the core curve. Hence $\Gamma$ consists of a single curve $\gamma$, and $f_*^{-1}(\gamma)$ has exactly two components $\gamma_1,\gamma_2$, each isotopic to $\gamma$ and mapped onto $\gamma$ as a covering of degree $n\ge 3$. The Thurston matrix is therefore the $1\times 1$ matrix $(2/n)$, whose unique eigenvalue is $2/n<1$. Thus, $\Gamma$ is not a Thurston obstruction.

Since no irreducible $f_*$-stable multicurve can be a Thurston obstruction, $f_*$ has no Thurston obstructions. By \cite[Theorem 1.1]{CT}, there exist a rational map $\tilde{f}$ and two isotopic homeomorphisms $\phi_0,\phi_1$ rel $\mathcal{N}\cup P_{f_*}$ such that $\phi_0\circ f_*=\tilde{f}\circ\phi_{1}$ on $\overline{\mathbb{C}}$. 

Using the isotopy lifting theorem for $\tilde{f}$ and $f_*$, we obtain a sequence of homeomorphisms $\{\phi_k\}_{k\ge0}$ such that $\phi_k$ is isotopic to $\phi_{k+1}$ rel $f_*^{-k}(\mathcal{N})\supseteq \mathcal{N}_k$ and $\phi_k\circ f_*=\tilde{f}\circ\phi_{k+1}$ on $\overline{\mathbb{C}}$.  
Set $\mathcal{W}=\phi_{m+1}(\mathcal{N}_m)$ and $\mathcal{W}_1=\phi_{m+1}(\mathcal{N}_{m+1})$. Then $\mathcal{W}\Subset\mathcal{W}_1$ and $\tilde{f}(\mathcal{W}_1)\subseteq\mathcal{W}$.  

Let $A=\overline{\mathbb{C}}\setminus\overline{\mathcal{W}}$; this annulus is disjoint from $P_{\tilde{f}}$. Its preimage $\tilde{f}^{-1}(A)$ consists of two annuli compactly contained in $A$ that separate the two components of $\partial A$. Hence, by \cite[Proposition 2.2]{HP12}, the Julia set of $\tilde{f}$ is a Cantor-circle.
\end{proof} 
We may specify the isotopy $\phi_t$ in the claim such that $\phi_0$ is holomorphic in $B\cup T$ and $\phi_t=\phi_0$ on $B\cup T$ for $t\in[0,1]$. Indeed, both $f_*$ and $\tilde{f}$ are conformally conjugate to $z\mapsto z^n$ on their immediate attracting basins. Claim 1 gives a local conjugacy $\phi_m$ that is holomorphic on $\mc{N}_m$; by the uniqueness of B\"ottcher coordinates, this local conjugacy extends uniquely to the whole basin $B$, and likewise to $T$. We then replace $(\phi_0,\phi_1)$ by a representative in the same isotopy class that is holomorphic on $B\cup T$.

Let $\Omega_0=\phi_0(V_0^*)$ and $\Omega_1=\tilde{f}^{-1}(\Omega_0)$.  
Define $h_t=\xi_t\circ\phi_t^{-1}$ for $t\in[0,1]$. Then $\{h_t\}_{t\in[0,1]}$ is a homotopy on $\overline{\mathbb{C}}$ rel $\overline{\mathbb{C}}\setminus\Omega_0$ satisfying $h_0\circ\tilde{f}=f\circ h_1$ on $\Omega_1$.

Since both $\tilde{f}:\Omega_1\to\Omega_0$ and $f:V_1\to V_0$ are coverings, the general homotopy lifting theorem lifts $h_t:\Omega_0\to V_0$ rel $\partial\Omega_0$ to a homotopy $h_t:\Omega_1\to V_1$ rel $\partial\Omega_1$ for $t\in[1,2]$. This homotopy extends to all of $\overline{\mathbb{C}}$ for $t\in[1,2]$, with each $h_t$ a quotient map and $h_t(z)=h_1(z)$ for $z\in\overline{\mathbb{C}}\setminus\Omega_1$.

By induction, we obtain a sequence of homotopies $\{h_t\}_{t\in[m,m+1]}$ on $\overline{\mathbb{C}}$ for all $m\ge0$. Set $\Omega_m=\tilde{f}^{-m}(\Omega_0)$ and $V_m=f^{-m}(V_0)$. Then for each $m$ the following hold:

\begin{enumerate}
\item $h_t:\overline{\mathbb{C}}\to\overline{\mathbb{C}}$ is a quotient map with $h_t(\Omega_m)=V_m$;
\item $h_t(z)=h_m(z)$ for all $z\in\overline{\mathbb{C}}\setminus\Omega_m$ and $t\in[m,m+1]$;
\item $h_m\circ\tilde{f}=f\circ h_{m+1}$ on $\Omega_{m+1}$;
\item $h_m^{-1}(z)$ is a single point if and only if $z\in\overline{\mathbb{C}}\setminus\bigcup_{1\le k\le m}f^{-k}(\pm v)$;
\item $h_m^{-1}(z)$ is an arc in $\overline{\mathbb{C}}\setminus\Omega_m$ joining two points on distinct boundary components of $\Omega_m$ if and only if $z\in\bigcup_{1\le k\le m}f^{-k}(\pm v)$.
\end{enumerate}

\noindent\textbf{Claim 2.} \emph{The sequence $\{h_m\}$ converges uniformly to a quotient map on $\overline{\mathbb{C}}$.}

\begin{proof}[Proof of Claim 2]
The argument follows the same pattern as in \cite[Theorem~1.1]{CPT}. By \cite[Lemma~3.1]{CPT}, a uniform limit of quotient maps is again a quotient map, so it suffices to find constants $M>0$ and $\rho>1$ such that
\[
\operatorname{dist}(h_{m+1}(z),h_m(z))\le M\rho^{-m}
\quad\text{for all }m\ge1\text{ and }z\in\overline{\mathbb{C}}.
\]

The {homotopic length} of a curve $\gamma$ is the infimum of the lengths of smooth curves homotopic to $\gamma$ rel $P_f$ with fixed endpoints, measured in the orbifold metric (see \cite[Appendix~A.1]{CGZ}).

For $z\in\Omega_0$, set $\gamma_z(t)=h_t(z)$, $t\in[0,1]$. The homotopic length of $\gamma_z$ varies continuously with $z$ and tends to $0$ as $z\to\partial\Omega_0$, so it is bounded by some constant $M_1$ for all $z\in\Omega_0$.

Fix $m\ge1$ and $z\in\overline{\mathbb{C}}$. If $z\notin\Omega_m$, then $\operatorname{dist}(h_m(z),h_{m+1}(z))=0$. If $z\in\Omega_m$, put $w=f^m(z)\in\Omega_0$. Then the path $\beta(t)=h_t(z)$, $t\in[m,m+1]$, is a lift of $\gamma_w$ under $f^m$ starting at $h_m(z)$. Hence,
\[
\operatorname{dist}(h_m(z),h_{m+1}(z))\le C\,L_\omega[\beta]\le CM_1\rho^{-m}
\]
for some constant $C>0$ independent of $m$ and $z$, by \cite[Appendix~A.3 and Lemma~A.1]{CGZ}. This completes the proof of Claim~2.
\end{proof}

Let $h$ be the uniform limit of $\{h_m\}$. Since $J_{\tilde{f}}=\bigcap_{m\ge0}\overline{\Omega_m}$ and $J=\bigcap_{m\ge0}\overline{V_m}$, we have $h(\overline{\Omega_m})=\overline{V_m}$ and $h(\partial\Omega_m)=\partial V_m$ for every $m\ge0$. Hence $h(J_{\tilde{f}})\subseteq J$, and surjectivity yields $h(J_{\tilde{f}})=J$. Moreover, the properties of $h_m$ imply $h\circ\tilde{f}=f\circ h$ on $J_{\tilde{f}}$.

Now consider $\partial B$. By the properties of $h$, the preimage $h^{-1}(\partial B)$ lies in $J_{\tilde{f}}$ and contains $\partial h_0^{-1}(B)$. Take any $z\in h^{-1}(\partial B)\setminus\partial h_0^{-1}(B)$. Since $J_{\tilde{f}}$ is a Cantor-circle, for all sufficiently large $m$ there are two components $H,H'$ of $\overline{\mathbb{C}}\setminus\Omega_m$ separating $z$ from $\partial h_0^{-1}(B)$. The map $h$ agrees with $h_m$ on $H\cup H'$, and $h_m(H)\cap h_m(H')=\emptyset$, contradicting $h(z)\in\partial B$. Therefore $\partial h_0^{-1}(B)=h^{-1}(\partial B)$. Moreover, the expanding property of $\tilde{f}$ and $f$ implies that $h$ is injective on $\partial h^{-1}(B)$. Pulling back, we obtain $h^{-1}(\partial V_m)\cap J_{\tilde{f}}=\partial\Omega_m$ and $h$ is injective on each component of $\partial V_m$ for every $m\ge1$. Thus Statement~(3) follows from property~(5) above.

Take any $z\in\bigcap_{m\ge0}V_m$. Its preimage $h^{-1}(z)\subseteq\bigcap_{m\ge0}\Omega_m$ is a full connected compact set contained in a component of $J_{\tilde{f}}$ (a circle). If $h^{-1}(z)$ were not a single point, it would be an arc and would eventually map onto a whole circle in $J_{\tilde{f}}$. Its image under $h$ would then be a single point in $J$, impossible. Therefore $h^{-1}(z)$ is a singleton.

Finally, $g$ and $\tilde{f}$ are quasiconformally conjugate in an annular neighborhood of $h^{-1}(\partial B)$.  
The dynamics $\tilde{f}:\tilde{f}^{-1}(A)\to A$ (defined in Claim 1) and $g$ are combinatorially equivalent  in the sense of \cite[Appendix A]{McM98}. Hence, by \cite[Theorem A.1]{McM98}, there exists a quasisymmetric conjugacy $\psi:J_g\to J_{\tilde{f}}$.  
Then $\pi=h\circ\psi:J_g\to J$ is the required semi-conjugacy, completing the proof of the theorem.
\end{proof}

\subsection{Proof of Theorem \ref{thm1}}\label{sec4}
The blow-up theorems above reduce complicated Julia sets to two simple models. Together with the properties of Fatou chains from Section~\ref{Fatou chains}, this gives the full classification of Theorem~\ref{thm1}, which we now prove.
\begin{proof}[Proof of Theorem~\ref{thm1}]
Let $f$ be a PCF McMullen map. Recall that $B$ and $T$ are the Fatou domains containing $\infty$ and $0$ respectively, and both are Jordan domains by Lemma~\ref{lem:jordan-domains}. We distinguish two main cases.

\medskip\noindent{\textbf{Case~1.} $f$ has no bounded attracting cycles.}
Then Proposition~\ref{prop:classify-no-bounded-cycles} classifies the dynamics into five subcases, which translate directly into the four topological classes:

\begin{itemize}
\item[(i)] $f^k(v)=\infty$ for some $k\ge2$, or $v$ is buried in $J$. Then $J$ is a Sierpi\'nski carpet (Proposition~\ref{prop:classify-no-bounded-cycles}(1), (5)).
\item[(ii)] $v\in\partial B$. Then $\partial B\cap\partial T=\Crit$ by Lemma~\ref{lem:disjoint}, so $J$ is a cluster (Proposition~\ref{prop:classify-no-bounded-cycles}(2)).
\item[(iii)] $v\in\partial T$. In this case $n\ge3$ and Theorem~\ref{thm:blow-upII} blows $J$ up to a Cantor-circle; hence $J$ is a necklace (Proposition~\ref{prop:classify-no-bounded-cycles}(3)).
\item[(iv)] $f^k(v)\in\partial T$ for some $k\ge1$. Corollary \ref{cor:sier} blows $J$ to a Sierpi\'nski carpet, so $J$ is a Sierpi\'nski-like carpet.
\end{itemize}

\medskip\noindent{\textbf{Case~2.} $f$ has bounded attracting cycles.}
Let $\mathcal{U}$ be the union of the immediate basins of all bounded attracting cycles, and let $\mathcal{E}$ be the maximal Fatou chain generated by $\mathcal{U}$ (see Section~\ref{sec:fatouchain}). Lemma~\ref{classification} provides the following key facts: each component of $\mathcal{E}$ is a full continuum; any two points in a component can be joined by a curve meeting $J$ in at most countably many points; and if $\mathcal{E}\cap\partial B\neq\emptyset$, then each component of $\mathcal{E}$ meets $\partial B$ at a single point.

\begin{itemize}
\item[(i)] $\mathcal{E}\cap\partial B=\emptyset$ and $\mathcal{E}=\overline{\mathcal{U}}$. Then $\mathcal{E}$ has finitely many components, each a Jordan domain, and the closures of any two are disjoint. Every other Fatou domain eventually maps to $\mathcal{E}$ or to $B$, and $P_f\cap J=\emptyset$. Hence, every Fatou domain is a Jordan domain with pairwise disjoint closures, and $J$ is a Sierpi\'nski carpet.
\item[(ii)] $\mathcal{E}\cap\partial B=\emptyset$ but $\mathcal{E}\neq\overline{\mathcal{U}}$. Then by Corollary~\ref{cor:sier}, $J$ can be blown up to a Sierpi\'nski carpet, hence is a Sierpi\'nski-like carpet.
\item[(iii)] $\mathcal{E}\cap\partial B\neq\emptyset$. By Lemma~\ref{classification}(3), each component of $\mathcal{E}$ meets $\partial B$ at a single point. Since $f^{-1}(B\cup\mathcal{E})=B\cup T\cup f^{-1}(\mathcal{E})$, any component of $f^{-1}(\mathcal{E})$ containing a critical point meets both $\partial T$ and $\partial B$, as $f^{-1}(B)=B\cup T$ and the critical point is a branch point between these two preimages. Lemma~\ref{classification}(2) then yields a curve inside that component joining a point of $\partial B$ to a point of $\partial T$ and passing through at most countably many Julia points. Thus, $J$ is a cluster.
\end{itemize}

In every subcase, $J$ falls into exactly one of the four topological classes, and the necklace case occurs only for $n\ge3$. This completes the proof.
\end{proof}

\section{Dynamical relations for Sierpi\'nski-like carpets}\label{sec5}
In this section we prove Theorem~\ref{thm:sierpinski-like}, which gives the dynamical relation forced by quasisymmetries between Sierpi\'nski-like carpet Julia sets. The statement covers general PCF rational maps with Sierpi\'nski models; for McMullen maps, Corollary~\ref{cor:sier} reduces it to the corresponding dynamical relation.

%\subsection{Sierpi\'{n}ski-like carpet case}

%\begin{thm}\label{thm:sierpinski-like}
%Let \( J \) and \( \tilde{J} \) be Sierpi\'{n}ski-like Julia sets of  PCF McMullen rational maps \( f \) and \( \tilde{f} \) with the same exponent $n$, respectively. Suppose \( \xi: J \to \tilde{J} \) is a quasisymmetry. Then there exist integers \( l \geq 1 \) and \( m \geq 0 \) such that
%\[
%\tilde{f}^{m+l} \circ \xi = \tilde{f}^{m} \circ \xi \circ f^l \quad \text{on } J.
%\]
%\end{thm}
\begin{thm}\label{thm:sierpinski-like}
Let \( J \) and \( \tilde{J} \) be the Julia sets of PCF rational maps \( f \) and \( \tilde{f} \), respectively. Suppose there exist Sierpi\'{n}ski rational maps $g$ and $\tilde{g}$, and quotient maps $\pi$ and $\tilde{\pi}$ over $\overline{\mathbb{C}}$ such that
\begin{enumerate}
    \item $\pi\circ g=f\circ \pi$ on $J_g$ and $\tilde{\pi}\circ \tilde{g}=\tilde{f}\circ\tilde{\pi}$ on $J_{\tilde{g}}$;
    \item $\pi$ and $\tilde{\pi}$ are injective on the buried point sets;
    \item for any Fatou domain $U$ of $g$, any two points in $\pi(\ol{U})$ can be joined by a curve that meets $J$ in at most countably many points; the analogous statement holds for $\tilde{g}$ and $\tilde{\pi}$.
\end{enumerate}
Suppose \( \xi: J \to \tilde{J} \) is a quasisymmetry. Then there exist integers \(m', l \geq 1 \) and \( m \geq 0 \) such that
\[
\tilde{f}^{m'} \circ \xi = \tilde{f}^{m} \circ \xi \circ f^l \quad \text{on } J.
\]
\end{thm}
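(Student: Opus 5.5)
The plan is to lift $\xi$ to a homeomorphism $\hat\xi: J_g\to J_{\tilde g}$ between the two Sierpi\'nski carpets, show that $\hat\xi$ is quasisymmetric, apply Proposition~\ref{pro:sierpinski} to obtain a relation $\tilde g^{m'}\circ\hat\xi=\tilde g^{m}\circ\hat\xi\circ g^{l}$ on $J_g$, and then push this relation down through $\pi,\tilde\pi$ using the semiconjugacies in hypothesis (1).

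\textbf{Step 1: $\xi$ preserves the collapsed structure.} The first task is to show that $\xi$ maps each set $\pi(\overline U)$, with $U$ a Fatou domain of $g$, onto a set $\tilde\pi(\overline{\tilde U})$. I would characterize these sets topologically. Two points of $J$ lie in a common $\pi(\overline U)$ exactly when they can be joined by a curve meeting $J$ in countably many points; one direction is hypothesis (3). For the converse, a curve that passes through countably many Julia points cannot cross the uncountable buried part of the carpet, because $\pi$ is injective there and the buried set separates distinct peripheral circles in $J_g$. This gives an equivalence relation on $J$ that is defined purely in terms of $J$, with the saturated sets being the $\pi(\overline U)$. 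Since $\xi$ extends to a homeomorphism of the sphere, it preserves this relation.

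\textbf{Step 2: construction of the lift.} On the buried points I set $\hat\xi=\tilde\pi^{-1}\circ\xi\circ\pi$, which is well defined by hypothesis (2). By Step 1, $\xi$ induces a bijection $U\mapsto\tilde U$ between Fatou domains. I would extend $\hat\xi$ to $\partial U$ by continuity from the buried points. This requires that each boundary point of a peripheral circle is a limit of buried points in a way compatible with the fibers of $\pi$, which holds because those circles are disjoint Jordan curves. The result is a homeomorphism of carpets that maps peripheral circles to peripheral circles; by Whyburn it extends to $\overline{\mathbb C}$.

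\textbf{Step 3: quasisymmetry of $\hat\xi$.} This is the main obstacle. For a carpet it suffices, by Bonk's criterion, to check that $\hat\xi$ is weakly quasisymmetric, i.e.\ that it distorts three-point ratios by a bounded factor at all scales. My approach is a distortion estimate for iterates that exploits expansion and the postcritical finiteness of $g$. Take $x\in J_g$ and a small scale $r$. I would choose $k$ so that $g^k$ blows up $B(x,r)$ to a definite size with bounded distortion; this is possible away from the finitely many postcritical points, and near them I would use orbifold-metric estimates. The same is then done for $f$ via $\pi$, using a lemma comparing $\operatorname{diam} f^k(\pi(E))$ with $\operatorname{diam} g^k(E)$. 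At the large scale, $\pi$ and $\tilde\pi$ are uniformly continuous, and uniform perfectness together with the local connectivity of $\pi(\overline U)$ gives two-sided control. The danger is that $\pi$ collapses a continuum, so that a small image could come from a large preimage. I would exclude this using the fact that the collapsed sets $\pi(\overline U)$ are small only when $U$ is deep in the dynamics, so both sides shrink at comparable rates. Transferring the quasisymmetry of $\xi$ back through these comparisons then yields the three-point bounds for $\hat\xi$.

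\textbf{Step 4: descending the relation.} Once $\hat\xi$ is quasisymmetric, Proposition~\ref{pro:sierpinski} gives the relation on $J_g$. Composing with $\tilde\pi$ and using $\tilde\pi\circ\tilde g=\tilde f\circ\tilde\pi$ and $\tilde\pi\circ\hat\xi=\xi\circ\pi$, I get $\tilde f^{m'}\circ\xi\circ\pi=\tilde f^{m}\circ\xi\circ f^{l}\circ\pi$ on $J_g$. Since $\pi(J_g)=J$, the desired identity holds on all of $J$.
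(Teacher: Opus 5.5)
Your architecture is the paper's: lift to the carpets, prove the lift quasisymmetric via distortion of iterates, apply Proposition~\ref{pro:sierpinski}, and descend by density. Steps 1 and 4 are sound. The ``cannot cross the buried part'' assertion in Step 1 becomes rigorous by applying Sierpi\'nski's theorem to the continuum $\pi^{-1}(\gamma)$.

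The genuine gap is in Step 3, the heart of the proof. The obstruction is not that collapsed sets might be large at small scales. It is that $\pi$ identifies distinct points of a \emph{single} peripheral circle at \emph{definite} mutual distance, and this is scale invariant. In Case (S1), for example, a component of $f^{-1}(\overline{U_1})$ is two closed Jordan disks meeting at a critical point. The map $\pi$ sends the closed Fatou disk $\overline D$ of $g$ onto it by collapsing a continuum in $\overline D$ that joins two far-apart points $a\neq a'$ of $\partial D$, and pulling back by $g$ reproduces this at every scale. So after you blow a triple $x,y,z$ up to size $r_0$ with $g^k$, you may have $\pi(g^kx)=\pi(g^kz)$. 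Uniform continuity, uniform perfectness and local connectivity of $\pi(\overline U)$ then give no lower bound on projected distances, and passing through $\xi$ yields no information on $d(\hat\xi(x),\hat\xi(z))$. Your remedy (``both sides shrink at comparable rates'') does not address this, because at the rescaled level the relevant Fatou domain is not deep.

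The paper avoids this in two moves. First, it runs the two-sided distortion argument (Lemma~\ref{lem:distortion}: pushforward under $g$ and $\tilde f$, pullback under $f$ and $\tilde g$) only for \emph{diameters of curves} $\alpha,\beta\subset J_g$ with a common initial point. Since $\pi$ collapses no arc of $J_g$, compactness gives a uniform lower bound on $\operatorname{diam}\pi(\alpha_m)$ once $\operatorname{diam}\alpha_m\asymp r_0$; this is exactly the large-scale control you lack. Second, it converts point triples into curves:
\begin{itemize}
\item by \cite[Theorem 1.10]{BLM14} the peripheral circles of $g$ and $\tilde g$ are uniform quasicircles, so the curve estimate makes $\hat\xi$ uniformly quasisymmetric on each peripheral circle, which is what handles pinched pairs like $a,a'$;
\item $\hat\xi$ is then extended uniformly quasisymmetrically into each Fatou domain;
\item the Fatou cross-cuts of the geodesics from $x$ to $y$ and to $z$ are replaced by peripheral arcs of comparable diameter, reducing the three-point condition to the curve estimate.
\end{itemize}

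A smaller point concerns Step 2. Extending ``by continuity from buried points'' is well defined only once you show that buried points tending to $p\in\partial U$ enter a single access of $\overline{\mathbb C}\setminus\pi(\overline U)$ at $\pi(p)$. Fibers of $\tilde\pi$ in $\overline{\tilde U}$ can join several points of $\partial\tilde U$, so the limit is not determined by $\xi(\pi(p))$ alone. The paper handles this with cross-cuts and Carath\'eodory boundaries; the fact that the circles are disjoint Jordan curves is not the relevant reason.
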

\noindent{\bf Remark.} If $\deg f=\deg\tilde f$, then since $\xi$ is a homeomorphism, comparing degrees on both sides of the relation gives $m'=m+l$.

\vskip 0.2cm
To apply the lifting argument introduced in Section~\ref{sec:outline}, we need a distortion lemma in both directions, controlling diameters under pushforward and pullback. This generalizes \cite[Lemma 4.1]{BLM14}. 

For a constant $C\geq 1$, we write $a\asymp b$ and say $a$ and $b$ are {\bf comparable} for the constant $C$ whenever $b/C\leq a\leq Cb$.
 
\begin{lema}\label{lem:distortion}
Let \( f \) be a subhyperbolic rational map. Fix a constant \( C \geq 1 \). Then for every sufficiently small \( \epsilon_{0} > 0 \), there exists a constant \( C_1=C_1(C) \geq 1 \) such that the following holds:

Let \( \alpha \subseteq \beta \) be small connected sets containing more than one point and intersecting the Julia set of $f$. Let $\alpha_k=f^k(\alpha)$ and $\beta_k=f^k(\beta)$, where $k$ is the largest integer such that $\tu{diam}(\beta_k)<\epsilon_0$. 

\begin{enumerate}
\item (pushforward) If $\tu{diam}(\alpha)\asymp \tu{diam}(\beta)$ for the constant $C$, then $$\tu{diam}(\alpha_k)\asymp\tu{diam}(\beta_k)\tu{ and }\tu{diam}(\beta_k)\asymp \epsilon_0$$ for the constant $C_1$.

\item (pullback) If $\tu{diam}(\alpha_k)\asymp\tu{diam}(\beta_k)$ for the constant $C$, then $\tu{diam}(\alpha)\asymp\tu{diam}(\beta)$ for the constant $C_1$.
\end{enumerate}
\end{lema}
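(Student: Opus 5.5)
The plan is to follow the proof of \cite[Lemma~4.1]{BLM14}, using the orbifold metric of the subhyperbolic map $f$. Since $f$ is subhyperbolic, there is a neighborhood $\mathcal{W}$ of $J$ and an orbifold metric $\rho$ on $\mathcal{W}$, comparable to the spherical metric up to bounded factors away from the finitely many postcritical points in $J$. In this metric $f$ is uniformly expanding: $\|Df\|_\rho\ge\rho_0>1$.

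\textbf{Step 1: Koebe-type distortion with bounded degree.} Fix $\epsilon_0$ so small that for every $w\in J$ the spherical ball $D(w,4\epsilon_0)$ satisfies the following. Every component of $f^{-k}(D(w,4\epsilon_0))$ meeting $J$ is a topological disk mapped onto $D(w,4\epsilon_0)$ with degree at most $d_0$. Here $d_0$ is independent of $k$, because a component can contain critical points only of bounded total multiplicity along the orbit (subhyperbolicity: critical orbits in $J$ are finite and eventually periodic, with no critical points in the cycles). We also require that these components shrink uniformly: $\diam\le C_0\rho_0^{-k}$.

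\textbf{Step 2: The distortion principle.} I will use the standard fact for proper holomorphic maps of bounded degree $d_0$ between disks, e.g.\ \cite[Lemma~4.1]{BLM14} or via quasiregular/Koebe arguments. If $F:U\to D(w,4\epsilon_0)$ has degree $\le d_0$, and $\alpha\subseteq\beta\subseteq F^{-1}(D(w,2\epsilon_0))$ are connected, then
$$\frac{\diam F(\alpha)}{\diam F(\beta)} \asymp_{\,\cdot} \left(\frac{\diam\alpha}{\diam\beta}\right)^{\gamma}$$
with exponents $\gamma\in[1/d_0,d_0]$ and constants depending only on $d_0$. In particular, comparability in the domain is equivalent to comparability in the image, quantitatively in both directions.

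\textbf{Step 3: Apply with $F=f^{k}$.} Let $k$ be maximal with $\diam\beta_k<\epsilon_0$. Since $\beta$ is small and meets $J$, such a $k$ exists: by expansion diameters grow until they exceed $\epsilon_0$. Maximality gives $\diam\beta_{k+1}\ge\epsilon_0$, and since $f$ is Lipschitz on $\overline{\mathbb{C}}$ with constant $L$, we get $\diam\beta_k\ge\epsilon_0/L$. Hence $\diam\beta_k\asymp\epsilon_0$. Pick $w\in\beta_k\cap J$. Then $\beta_k\subseteq D(w,\epsilon_0)$, and the component $U$ of $f^{-k}(D(w,4\epsilon_0))$ containing $\beta$ maps with degree $\le d_0$ by Step~1. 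The pushforward and pullback statements (1) and (2) now both follow from Step~2 with $C_1$ depending on $C$, $d_0$ and $L$.

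The main obstacle is Step~1: uniform bounds on the degree of $f^k$ on pullbacks of balls of fixed size near postcritical points in $J$. I would handle this by choosing $\epsilon_0$ smaller than a fixed fraction of the distance from $J$ to the attracting cycles, and smaller than the minimal distance between distinct postcritical points in $J$. Then a pullback chain meets each critical point at most a bounded number of times before entering a periodic critical-free cycle, which yields $d_0\le(\max\deg_c f)^{\#\mathrm{Crit}}$.
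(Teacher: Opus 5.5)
Your reduction is the same as the paper's. Both use bounded-degree disk pullbacks of small balls centred on $J$, the bound $\diam\beta_k\ge\epsilon_0/M$ from maximality of $k$ and the Lipschitz constant of $f$, and a bounded-degree distortion estimate applied to $f^k$. The gap is that your Step~2 \emph{is} the analytic content of the lemma, and you assert it rather than prove it. The citation \cite[Lemma~4.1]{BLM14} cannot carry it, because the statement under review is presented as a generalization of that lemma. Koebe distortion alone is not enough either: it controls univalent maps, while $f^k$ is branched, so you need a mechanism that turns $\deg\le N$ into a quantitative size estimate. You have also put the difficulty in the wrong place. Step~1 is routine, and the paper takes it as immediate from the finiteness of $P_f\cap J$. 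One correction there: $4\epsilon_0$ should be below the distance from $J$ to the closure of the whole Fatou part of $P_f$, not just to the attracting cycles. Otherwise a Fatou critical value entering $D(w,4\epsilon_0)$ can destroy the disk property.

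The paper supplies the missing estimate, and only at the top scale, since $\diam\beta_k\asymp\epsilon_0$ is already known. Your all-scales version in Step~2 is true but harder, because it requires renormalizing at the scale of $F(\beta)$.
Fix $p\in\alpha\cap J$, and let $B_k$ be the component of $f^{-k}(B(f^k(p),2\epsilon_0))$ containing $p$. Take Riemann maps $\psi:B_k\to\mathbb{D}$ and $\phi:B(f^k(p),2\epsilon_0)\to\mathbb{D}$ with $\psi(p)=0$ and $\phi(f^k(p))=0$. Then $h=\phi\circ f^k\circ\psi^{-1}$ is proper of degree at most $N$ with $h(0)=0$. For the component $U_r$ of $h^{-1}(\mathbb{D}(0,r))$ containing $0$, every $x\in\partial U_r$ satisfies $r\le|x|\le 4r^{1/N}$. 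The lower bound is the Schwarz lemma. The upper bound comes from $\frac{1}{2\pi N}\ln\frac{1}{r}\le\mathrm{mod}(\mathbb{D}\setminus\overline{U_r})\le\mu(|x_0|)<\frac{1}{2\pi}\ln\frac{4}{|x_0|}$, where $\mu(t)=\mathrm{mod}(\mathbb{D}\setminus[0,t])$.

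Since $\psi(\alpha)\subseteq\psi(\beta)$ are connected and contain $0$, the lower bound gives $\diam\psi(\alpha)\gtrsim\diam\phi(\alpha_k)$ and $\diam\psi(\beta)\gtrsim\diam\phi(\beta_k)\asymp 1$. This yields (2) directly. For (1), Koebe turns $\diam\alpha\asymp\diam\beta$ into $\diam\psi(\alpha)\ge\delta_1(C)$. The upper bound, applied to the component of $h^{-1}(\mathbb{D}(0,2\diam\phi(\alpha_k)))$ containing $\psi(\alpha)$, then forces $\diam\phi(\alpha_k)\gtrsim\delta_1^N$. Every Koebe step also needs $\psi(\beta)$ to lie in a fixed compact subdisk of $\mathbb{D}$. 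This too follows from the degree bound, for instance by running the same modulus estimate with the outer ball enlarged by a factor depending only on $N$.
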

\begin{proof}
Since $f$ is subhyperbolic, $P_f\cap J$ is a finite set. So there exist $\epsilon_0 > 0$ and an integer $N \geq 1$ such that for any point $p$ in the Julia set $J$ and any $m \geq 1$, each component of $f^{-m}(B(p, 2\epsilon_0))$ is a disk on which the degree of $f^m$ is at most $N$.

Let $k$ be as in the statement. Let $M$ denote the supremum of the spherical derivative of $f$. Then $\diam(\beta_k) \geq \epsilon_0/M$. Fix $p \in \alpha \cap J$ and set $B = B(f^k(p), 2\epsilon_0)$. Let $B_k$ be the component of $f^{-k}(B)$ containing $p$.
Choose conformal maps $\psi: B_k \to \mathbb{D}$ and $\phi: B \to \mathbb{D}$ with $\psi(p) = 0$ and $\phi(f^k(p)) = 0$. Then
$$
h := \phi \circ f^k \circ \psi^{-1}: \mathbb{D} \to \mathbb{D}
$$
is a proper map fixing $0$ with $\deg(h) \leq N$. 

\medskip\noindent\textbf{Claim.} \emph{Fix $r\in(0,1)$ and let $U_r$ be the component of $h^{-1}(\mathbb{D}(0,r))$ containing $0$. Then every $x\in\partial U_r$ satisfies
\(
r\le |x|\le 4r^{1/N}.
\)}
\begin{proof}[Proof of the claim]
The lower bound $r\le |x|$ follows from the Schwarz Lemma.

For the upper bound, pick $x_0\in\partial U_r$ with $|x_0|=\max_{x\in\partial U_r}|x|$. The restriction
\[
h:\mathbb{D}\setminus\overline{U_r}\longrightarrow \mathbb{D}\setminus\overline{\mathbb{D}(0,r)}
\]
is a proper map of degree at most $N$, so the monotonicity of conformal modulus gives
\[
\frac{1}{2\pi N}\tu{ln}\frac{1}{r}\le \tu{mod}(\mathbb{D}\setminus\overline{U_r})\le \mu(|x_0|),
\]
where $\mu(t)=\tu{mod}(\mathbb{D}\setminus[0,t])$ for $0<t<1$.

The standard estimate $\mu(t)<\frac{1}{2\pi}\tu{ln}\frac{4}{t}$ holds; see \cite{Ah}. Combining this with the previous inequality yields $|x_0|\le 4r^{1/N}$, and hence $|x|\le 4r^{1/N}$ for every $x\in\partial U_r$.
\end{proof}

Since $\diam(\beta_k) \geq \epsilon_0/M$, it follows from the Koebe distortion theorem that there exists a positive number $\delta_0$ such that $\diam(\phi(\beta_k)) > \delta_0$.

\medskip\noindent (1) Set $r = \diam(\phi(\beta_k))/4$ and let $U$ be the component of $h^{-1}(\mathbb{D}(0,r))$ containing $0$. Since $\diam(\phi(\beta_k)) = 4r$, the set $\phi(\beta_k)$ cannot be contained in $\mathbb{D}(0,r)$; hence $\psi(\beta) \not\subseteq U$, and $\partial U$ meets $\psi(\beta)$. By the claim, any $x \in \partial U \cap \psi(\beta)$ satisfies $|x| \geq r$, so $\diam(\psi(\beta)) \geq r \geq \delta_0/4$.

Since $\diam(\alpha) \asymp \diam(\beta)$, Koebe distortion yields $\diam(\psi(\alpha)) \geq \delta_1$ for some $\delta_1 = \delta_1(C) > 0$. Applying the claim to the component of $h^{-1}(\mathbb{D}(0, \diam(\phi(\alpha_k))/4))$ containing $0$, whose boundary intersects $\psi(\alpha)$, we obtain
\[
\diam(\phi(\alpha_k)) \geq (\delta_1/8)^N.
\]
As $\alpha_k \subseteq \beta_k$, we also have $\diam(\phi(\alpha_k)) \leq \diam(\phi(\beta_k)) \leq 2$. Thus, $\diam(\phi(\alpha_k)) \asymp 1$, and Koebe distortion implies $\diam(\alpha_k) \asymp \epsilon_0 \asymp \diam(\beta_k)$.

\medskip\noindent (2) The argument is similar. By assumption and Koebe distortion, $\diam(\phi(\alpha_k)) \asymp \diam(\phi(\beta_k))$. Let $r = \diam(\phi(\alpha_k))/4$. The component $U$ of $h^{-1}(\mathbb{D}(0,r))$ containing $0$ has boundary intersecting $\psi(\alpha)$. The claim gives $\diam(\psi(\alpha)) \geq r \geq \delta_2 > 0$, hence $\diam(\psi(\alpha)) \asymp \diam(\psi(\beta))$. The conclusion follows by Koebe distortion.
\end{proof}

\begin{proof}[Proof of Theorem \ref{thm:sierpinski-like}]
The proof is divided into the following three steps.

\subsection*{\boldmath Step 1. Inducing the homeomorphism $\mathbf{\tilde{\xi}}$.\unboldmath} 

We shall show that $\xi$ induces a homeomorphism $\tilde{\xi}:J_g\to J_{\tilde{g}}$.

We first match the closures of the Fatou domains. Set $E=\pi(\overline{U})$ and $\tilde{E}=\xi(E)$ for a Fatou domain $U$ of $g$. We claim that $\tilde{E}=\tilde{\pi}(\overline{\tilde{U}})$ for a unique Fatou domain $\tilde{U}$ of $\tilde{g}$.

To see this, note that by condition~(3), any two points of $E$ can be joined by a curve meeting $J$ in at most countably many points. Since $\xi$ is a homeomorphism, $\tilde{E}$ has the same property in $\tilde{J}$. Therefore $\tilde{E}$ cannot meet two distinct sets of the form $\tilde{\pi}(\overline{\tilde{U}})$: any curve in $\tilde{E}$ joining a point in one such set to a point in another would have to intersect $\tilde{J}$ in uncountably many points, which is impossible. Hence $\tilde{E}\subseteq\tilde{\pi}(\overline{\tilde{U}})$ for some Fatou domain $\tilde{U}$ of $\tilde{g}$. Applying the same argument to $\xi^{-1}$ yields the reverse inclusion, so equality holds.

Let $X\subseteq J_g$ and $\tilde{X}\subseteq J_{\tilde{g}}$ denote the buried point sets of $J_g$ and $J_{\tilde{g}}$, respectively. By condition~(2), $\pi$ maps $X$ bijectively onto a subset of the buried points of $J$, and $\pi(X)\cap\pi(\overline{U})=\emptyset$ for every Fatou domain $U$ of $g$. The same holds for $\tilde{\pi}$, $\tilde{X}$, and the Fatou domains of $\tilde{g}$. Since $\xi$ is a homeomorphism, the claim above implies that $\xi$ sends $\pi(X)$ bijectively onto $\tilde{\pi}(\tilde{X})$. Therefore the composition
\[
\tilde{\xi}:=\tilde{\pi}^{-1}\circ\xi\circ\pi:X\to\tilde{X}
\]
is a well-defined bijection between the buried point sets.

Now we extend $\tilde{\xi}$ to the peripheral circles by using prime ends. Since $U$ is a Jordan domain, its boundary $\partial U$ is identified with the Carath\'{e}odory boundary $\mathrm{Cara}(\partial U)$ of $\overline{\mathbb{C}}\setminus\ol{U}$. A prime end can be represented by a fundamental chain $\{L_n\}$ of cross-cuts in $\overline{\mathbb{C}}\setminus\ol{U}$; we may choose each $L_n$ so that its endpoints lie on $\partial U$ and its interior buried in $J$. Since $\pi$ is a quotient map sending $U$ onto a component of $\overline{\mathbb{C}}\setminus J$ with boundary $\pi(\partial U)$, the images $\{\pi(L_n)\}$ form a fundamental chain of cross-cuts in $\overline{\mathbb{C}}\setminus E$, defining a prime end in $\mathrm{Cara}(\partial E)$. Thus, $\pi$ induces a homeomorphism from $\mathrm{Cara}(\partial U)$ to $\mathrm{Cara}(\partial E)$.

The same reasoning applied to $\xi$ and $\tilde{\pi}$ yields homeomorphisms between $\mathrm{Cara}(\partial E)$ and $\mathrm{Cara}(\partial\tilde{E})$, and between $\mathrm{Cara}(\partial\tilde{U})$ and $\mathrm{Cara}(\partial\tilde{E})$. We therefore obtain a homeomorphism $\tilde{\xi}:\partial U\to\partial\tilde{U}$ by composing these maps:
\[
\tilde{\xi}|_{\partial U}=\left(\tilde{\pi}^{-1}\circ\xi\circ\pi\right)|_{\mathrm{Cara}(\partial U)}.
\]

Doing this for every Fatou domain of $g$ yields a bijection $\tilde{\xi}:J_g\to J_{\tilde{g}}$. Since the diameters of the Fatou domains of both $g$ and $\tilde{g}$ tend to zero, the extended map is continuous, hence a homeomorphism.

\medskip\noindent{\boldmath\bf Step 2. Proving quasisymmetry of \(\mathbf{\tilde{\xi}}\).\unboldmath} 
Note that the following diagram commutes for every integer $k\ge1$ by condition (1) in the theorem:
\[
\begin{tikzcd}[row sep=large, column sep=large]
J_g \arrow[r, "g^k", leftarrow] \arrow[d, "\pi"'] & 
J_g \arrow[r, "\tilde{\xi}"] \arrow[d, "\pi"] & 
J_{\tilde{g}} \arrow[r, "\tilde{g}^k"] \arrow[d, "\tilde{\pi}"] & 
J_{\tilde{g}} \arrow[d, "\tilde{\pi}"] \\
J \arrow[r, "f^k", leftarrow] & 
J \arrow[r, "\xi"] & 
\tilde{J} \arrow[r, "\tilde{f}^k"] & 
\tilde{J}
\end{tikzcd}
\]

\noindent{\bf Claim 1.} \emph{Fix a constant \(C \ge 1\). Then there exists a constant \(C' = C'(C)\) such that, if \(\alpha\) and \(\beta\) are curves in \(J_g\) with the same initial point and \(\operatorname{diam}(\alpha) \asymp \operatorname{diam}(\beta)\) for $C$, then \(\operatorname{diam}(\tilde{\xi}(\alpha)) \asymp \operatorname{diam}(\tilde{\xi}(\beta))\) for $C'$. }
\begin{proof}[Proof of Claim 1]
Since \( \tilde{\xi} \) is a homeomorphism, we may assume that the curves \( \alpha \) and \( \beta \) have diameters less than some small \( \delta>0 \).

Fix \( r_0 > \delta \). We iterate $\alpha$ and $\beta$ with $g$ and apply Lemma \ref{lem:distortion}~(1):
there exists an integer $m\geq 0$ such that, setting $\alpha_m=g^m(\alpha)$ and $\beta_m=g^m(\beta)$,
 \[ \operatorname{diam}(\alpha_m) \asymp \operatorname{diam}(\beta_m)\tu{ and } \operatorname{diam}(\beta_m) \asymp r_0 \]
for a constant $C_1=C_1(C)$.

We now pass $\alpha_m$ and $\beta_m$ to the $f$-system via $\pi$. Define  \( \alpha'_m = \pi(\alpha_m) \) and \( \beta'_m =\pi(\beta_m) \). Using the fact that \( \pi \) does not collapse an arc in $J_g$ to a point, we obtain \( \operatorname{diam}(\alpha'_m) \asymp \operatorname{diam}(\beta'_m) \) for a constant \( C_2 = C_2(C, r_0) \). 

 %Let $\alpha':=\pi(\alpha)=f^m(\alpha_m')$ and $\beta':=\pi(\beta)=f^m(\beta_m')$. Lemma \ref{lem:distortion}(2) gives that $$\tu{diam}(\alpha')\asymp\tu{diam}(\beta')$$ for a constant $C_3=C_3(C, r_0)$. 
Let $\alpha':=\pi(\alpha)$ and $\beta':=\pi(\beta)$. Then $f^m(\alpha')=\alpha_m'$ and $f^m(\beta')=\beta_m'$. By Lemma \ref{lem:distortion}~(2), $$\tu{diam}(\alpha')\asymp\tu{diam}(\beta')$$
 for a constant $C_3=C_3(C, r_0)$. 

Let \( k \) be the largest integer such that the diameters of  \( \tilde{\alpha}_k := \tilde{f}^k \circ \xi(\alpha') \) and \( \tilde{\beta}_k := \tilde{f}^k \circ \xi(\beta') \) are less than $r_0$. The distortion property of $\xi$ and Lemma \ref{lem:distortion}~(1) yield that for a constant \( C_4 = C_4(C, r_0) \), 
\[ \operatorname{diam}(\tilde{\alpha}_k) \asymp \operatorname{diam}(\tilde{\beta}_k) \tu{ and } \operatorname{diam}(\tilde{\beta}_k) \asymp r_0. \]

We now lift back $\tilde{\alpha}_k$ and $\tilde{\beta}_k$ to the $\tilde{g}$-system via $\tilde{\pi}$. Set \( \alpha_k^* = \tilde{g}^k \circ \tilde{\xi}(\alpha) \) and \( \beta_k^* = \tilde{g}^k \circ \tilde{\xi}(\beta) \). It follows from the commutative diagram in step 1 that
 \[ \tilde{\pi}(\alpha_k^*) = \tilde{\alpha}_k\tu{ and } \tilde{\pi}(\beta_k^*) = \tilde{\beta}_k .\]
The uniform continuity of $\tilde{\pi}$ allows us to transfer the diameter comparability from $\tilde{\alpha}_k$ and $\tilde{\beta}_k$ to $\alpha^*_k$ and $\beta^*_k$.

Finally, we apply Lemma \ref{lem:distortion}~(2) to the map $\tilde{g}$: forward iterating $\tilde{g}$ by $k$ steps sends $\tilde{\xi}(\alpha)$ and $\tilde{\xi}(\beta)$ onto $\alpha^*_k$ and $\beta^*_k$, respectively. This yields the desired conclusion: the original images $\tilde{\xi}(\alpha)$ and $\tilde{\xi}(\beta)$ have comparable diameters. This proves Claim 1.
\end{proof}
Let \( U \) and $\tilde{U}$ be Fatou domains of $g$ and $\tilde{g}$ such that \( \partial \tilde{U} = \tilde{\xi}(\partial U) \). By \cite[Theorem 1.10]{BLM14}, the boundaries of Fatou domains of \( g \) and \( \tilde{g} \) are uniform quasicircles. Thus, there exist  \( K \)-quasiconformal maps \( \phi \) and \( \tilde{\phi} \) on \( \overline{\mathbb{C}} \) with dilatation $K$ independent of $U$ such that \( \phi(\overline{U}) = \overline{\mathbb{D}} \) and \( \tilde{\phi}(\overline{\tilde{U}}) = \overline{\mathbb{D}} \).

Define the boundary map \( \psi = \tilde{\phi} \circ \tilde{\xi} \circ \phi^{-1}: \partial \mathbb{D} \to \partial \mathbb{D} \). We now show that \( \psi \) is uniformly quasisymmetric. For any two adjacent non-overlapping intervals \( \alpha \) and \( \beta \) of equal lengths on \( \partial \mathbb{D} \), the curves \( \phi^{-1}(\alpha) \) and \( \phi^{-1}(\beta) \) in \( J_g \) satisfy \( \tu{diam}(\phi^{-1}(\alpha)) \asymp \tu{diam}(\phi^{-1}(\beta)) \). By Claim 1, the images under \( \tilde{\xi} \circ \phi^{-1} \) have comparable diameters. The quasisymmetry of \( \tilde{\phi} \) then implies \( \tu{diam}(\psi(\alpha)) \asymp \tu{diam}(\psi(\beta)) \) for a constant depending only on \( K \).

Extend \( \tilde{\xi} \) to \( \overline{U} \) by defining \( \tilde{\xi}|_{\overline{U}} = \tilde{\phi}^{-1} \circ \psi \circ \phi \). This yields a homeomorphism \( \tilde{\xi}: \overline{\mathbb{C}} \to \overline{\mathbb{C}} \) that is uniformly quasisymmetric on each Fatou domain of \( g \).

To prove \( \tilde{\xi}: J_g \to J_{\tilde{g}} \) is quasisymmetric, it suffices to show that there is a constant $C\geq 1$ with the property:

\vskip 0.2cm\emph{For any triple of points \( x, y, z \in J_g \) with \( \tu{dist}(x, y) = \tu{dist}(x, z) \) in the spherical metric, the diameters of $\tilde{\xi}(\alpha)$ and $\tilde{\xi}(\beta)$ are comparable with the constant $C$, where  \( \alpha \) and \( \beta \) are spherical geodesics from \( x \) to \( y \) and from \( x \) to \( z \), respectively.} 

\medskip The intersection of $\alpha$ with the Fatou set of $g$ consists of cross-cuts \( \{\alpha_k\} \). Since Fatou domain boundaries are uniform quasicircles, for each \( \alpha_k \), we may choose an arc \( \alpha'_k \subseteq \partial U \) sharing the same endpoints such that  \(\tu{diam}(\alpha'_k) \asymp \tu{diam}(\alpha_k) \) with a constant $C_1$ uniformly in \( k \). By the quasisymmetry of \( \tilde{\xi} \) on $\overline{U}$, there exists constant \( C_2 = C_2(C_1) \) for which it holds that
\[
\tu{diam}(\tilde{\xi}(\alpha_k)) \asymp \tu{diam}(\tilde{\xi}(\alpha'_k)).
\]
We then modify $\alpha$ to avoid the Fatou set: let \( \alpha' = (\alpha \setminus \bigcup_k \alpha_k) \cup \bigcup_k \alpha'_k \subseteq J_g \). Consequently, $$ \tilde{\xi}(\alpha') = (\tilde{\xi}(\alpha) \setminus \bigcup_k \tilde{\xi}(\alpha_k)) \cup \bigcup_k \tilde{\xi}(\alpha'_k).$$
It follows from the triangle inequality that
$$\tu{diam}(\tilde{\xi}(\alpha))\leq\tu{diam}(\tilde{\xi}(\alpha'))+\tu{max}\{\tu{diam}(\tilde{\xi}(\alpha_k))\}\leq (1+C_2)\tu{diam}(\tilde{\xi}(\alpha'))$$
with the reverse inequality holding symmetrically. This implies that
\[
\tu{diam}(\tilde{\xi}(\alpha)) \asymp \tu{diam}(\tilde{\xi}(\alpha')) \quad \text{and} \quad \tu{diam}(\tilde{\xi}(\beta)) \asymp \tu{diam}(\tilde{\xi}(\beta')).
\]
Since \( \tu{diam}(\alpha) \asymp \tu{diam}(\alpha') \), \( \tu{diam}(\beta) \asymp \tu{diam}(\beta') \), we obtain \( \tu{diam}(\alpha')\asymp\tu{diam}(\beta') \). Both $\alpha'$ and $\beta'$ lie in $J_g$; applying Claim 1 gives \(\tu{diam}(\tilde{\xi}(\alpha')) \asymp \tu{diam}(\tilde{\xi}(\beta')) \) for a constant \( C_3 = C_3(C_1) \). Thus, \( \tu{diam}(\tilde{\xi}(\alpha)) \asymp \tu{diam}(\tilde{\xi}(\beta)) \). Hence, $\tilde{\xi}$ is quasisymmetric on $J_g$.\vspace{3pt}

\subsection*{Step 3. Final Conjugacy.} \vspace{3pt}

Since \( \tilde{\xi}: J_g \to J_{\tilde{g}} \) is a quasisymmetry between Sierpi\'{n}ski carpet Julia sets, by Proposition \ref{pro:sierpinski}, there exist integers \(m', l \geq 1 \) and \( m \geq 0 \) such that
\[
\tilde{g}^{m'} \circ \tilde{\xi} = \tilde{g}^m \circ \tilde{\xi} \circ g^l \quad \text{on } J_g.
\]

Restricting to the buried point set $X$ (which is dense), and using the conjugacies \( f \circ \pi = \pi \circ g \) and \( \tilde{f} \circ \tilde{\pi} = \tilde{\pi} \circ \tilde{g} \), together with the relation \( \xi \circ \pi = \tilde{\pi} \circ \tilde{\xi} \) on $J_g$, we obtain
\[
\tilde{f}^{m'} \circ \xi = \tilde{f}^{m} \circ \xi \circ f^l \quad \text{on } \pi(X).
\]
By the continuity of \( f \), \( \tilde{f} \), and \( \xi \), and the density of \( \pi(X) \) in \( J \), this equality holds on the whole Julia set \( J \). This completes the proof.
\end{proof}

\section{Dynamical relations for necklace Julia sets}\label{sec6}
In this section we prove Theorem~\ref{thm:necklace}, the necklace case of the dynamical relation. 

Recall that the McMullen maps are $f_\lambda(z)=z^n+\lambda/z^{n}$ with $\lambda\in\mathbb{C}^*$ and $n\ge2$. In the parameter plane, the unique hyperbolic component $\mathcal{H}$ containing $0$ is called the \tb{McMullen domain}; equivalently, $\mathcal{H}$ consists exactly of those parameters whose Julia sets are homeomorphic to the standard Cantor-circle $\mathcal{C}\times\mathbb{S}^1$~\cite{QRWY15}.

By Theorem \ref{thm1}, necklace Julia sets occur only when $n\ge3$; hence the discussion in this section concerns $n\ge3$. By Lemma \ref{lem:parameter-sym}, the Julia sets of $f_\lambda$ and $f_{\lambda e^{{2\pi \textbf{i}}/{(n-1)}}}$ differ only by a rotation. We may therefore restrict attention to the fundamental domain $$\mathcal{W}=\{\lambda\in\mathbb{C}^*: \textup{arg}(\lambda)\in [0, {2\pi}/{(n-1)})\}.$$
\begin{prop}[\textup{\cite[Theorem 5.5]{QRWY15}}]\label{prop:fundamental}
The intersection $\mathcal{L}=\partial\mathcal{H}\cap \mathcal{W}$ is an arc, which can be parametrized by $\Phi:\mathcal{L}\to [0, \frac{\pi}{n-1})$ with the dynamical property that $\Phi(\lambda)$ is the angle of the internal ray of $T_\lambda$ that lands at $v_\lambda$. If the Julia set of $f_\lambda$ is a necklace for $\lambda\in\mathcal{W}$, then $\lambda\in\mathcal{L}$. 
\end{prop}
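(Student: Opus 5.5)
The plan is to reduce both assertions to the B\"ottcher coordinate of the free critical value in $T_\lambda$, viewed as a function of the parameter. First I would treat the McMullen domain itself. For $\lambda\in\mathcal{H}$ the Julia set is a Cantor-circle, so both $\pm v_\lambda$ lie in $T_\lambda$. I would then define
\[
\Psi(\lambda)=\phi_{T,\lambda}(v_\lambda)\in\mathbb{D}.
\]
Here $\phi_{T,\lambda}$ is the B\"ottcher map of $T_\lambda$, normalized as in Section~\ref{sec2}. This map is holomorphic in $\lambda$, by holomorphic dependence of B\"ottcher coordinates. Its behaviour as $\lambda\to0$ can be read off from the expansions of $v_\lambda=2\sqrt{\lambda}$ and $\phi_{T,\lambda}$, which yield a power of $\lambda$ up to a unit. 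The map is also proper from $\mathcal{H}\setminus\{0\}$ onto $\mathbb{D}^*$. Indeed, as $\lambda\to\partial\mathcal{H}$, $v_\lambda$ approaches $\partial T_\lambda$, since otherwise the Cantor-circle structure persists and $\lambda$ would still be in the open hyperbolic component.

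It follows that $\Psi$ is a finite covering. The rotation symmetry of Lemma~\ref{lem:parameter-sym} commutes with $\Psi$ up to a rotation of $\mathbb{D}$. Together with the identification of $v$ with $-v$ given by Lemma~\ref{lem:symmetric}, this cuts the covering down so that the sector $\mathcal{W}\cap\mathcal{H}$ maps bijectively onto a sector of $\mathbb{D}^*$ of opening angle $\pi/(n-1)$. I would verify the exact angle by tracking how $\arg\lambda$ and $\arg\Psi$ scale near $\lambda=0$.

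The second step is to parametrize $\mathcal{L}$. For each angle $t\in[0,\pi/(n-1))$ I would consider the parameter ray $\{\Psi^{-1}(re^{\textbf{i}t}) : 0<r<1\}$ inside $\mathcal{W}\cap\mathcal{H}$ and show that it lands at a single point $\lambda(t)\in\partial\mathcal{H}$. At that point, the internal ray of angle $t$ in $T_{\lambda(t)}$ lands at $v_{\lambda(t)}$. This is a transfer of dynamical landing to parameter landing: one uses that the internal ray of angle $t$ in $T_\lambda$ moves continuously, and that for limit parameters $\partial T$ is a Jordan curve (Lemma~\ref{lem:jordan-domains}). I would then set $\Phi=\lambda^{-1}$. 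Injectivity of $\Phi$ holds because two distinct landing angles cannot share the same $v_\lambda$ once $\partial T_\lambda$ is a Jordan curve. Continuity, together with compactness, then shows that $\mathcal{L}$ is an arc.

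The third step concerns necklaces. If $J_\lambda$ is a necklace, Theorem~\ref{thm1} and its proof place us in Case~(3) of Proposition~\ref{prop:classify-no-bounded-cycles}, so $v_\lambda\in\partial T_\lambda$. Let $t$ be the angle of the ray landing at $v_\lambda$. Using the uniqueness of a PCF map with a prescribed combinatorial position of $v$ (Thurston rigidity), I would identify $\lambda$ with $\lambda(t)$, and hence conclude $\lambda\in\mathcal{L}$.

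The main obstacle is the landing of parameter rays, that is, the local connectivity of $\partial\mathcal{H}$ along $\mathcal{W}$. Non-landing would only be possible through an accumulation continuum of parameters all having $v_\lambda$ on $\partial T_\lambda$ at angle $t$. My first attempt at ruling this out is a holomorphic-motion and transversality argument. On such a continuum, the landing point of the angle-$t$ ray and $v_\lambda$ would coincide identically, which is an analytic relation that fails at nearby PCF parameters with differing combinatorics. If this becomes too delicate, I would instead simply cite \cite[Theorem 5.5]{QRWY15} for this step.
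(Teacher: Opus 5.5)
The paper contains no proof of this proposition. The whole statement is quoted from \cite[Theorem 5.5]{QRWY15}. If you fall back on that citation, you are doing exactly what the paper does, and the rest of your outline is unnecessary. Taken as a self-contained proof, however, your outline has a genuine gap at the one step with real content: showing that the parameter rays land and that the landing map is a continuous bijection onto $\partial\mathcal{H}\cap\mathcal{W}$. Citing \cite[Theorem 5.5]{QRWY15} at that point cites the statement itself.

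Your sketched substitute does not work as set up. With the normalization $\phi_{T,\lambda}'(0)>0$ of Section~\ref{sec2}, one has $\phi_{T,\lambda}(z)^n\,\phi_{B,\lambda}(f_\lambda(z))\equiv\lambda/|\lambda|$ on $T_\lambda$, so $\phi_{T,\lambda}'(0)=|\lambda|^{-1/n}$. Consequently neither $\Psi(\lambda)=\phi_{T,\lambda}(v_\lambda)$ nor the landing point of the angle-$t$ ray of $T_\lambda$ is holomorphic in $\lambda$, and the identity theorem does not apply to the relation you write down. The holomorphic invariant is $\lambda\mapsto 1/\phi_{B,\lambda}(f_\lambda(v_\lambda))$ (a function of $\sqrt{\lambda}$ when $n$ is odd). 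Its rays are indexed by the $B$-angle $s$ of $f_\lambda(v_\lambda)$, and $nt+s\equiv\arg\lambda \pmod{2\pi}$. Returning to the $T$-angle $\Phi$ therefore needs extra control of $\arg\lambda$ along $\mathcal{L}$.

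A mechanism that does work runs as follows. Near $\lambda_0\in\partial\mathcal{H}$ no free critical point lies in $B_\lambda$, so $\phi_{B,\lambda}^{-1}$ is a holomorphic motion of $\{|w|>1\}$. By the $\lambda$-lemma, $\lambda\mapsto\phi_{B,\lambda}^{-1}(e^{\mathbf{i}s})$ is then holomorphic on a disk. The relation $f_\lambda(v_\lambda)=\phi_{B,\lambda}^{-1}(e^{\mathbf{i}s})$ holds on the entire prime-end impression. A nondegenerate impression would force it on an open set, contradicting $f_\lambda(v_\lambda)\in B_\lambda$ for nearby $\lambda\in\mathcal{H}$; the contradiction comes from $\mathcal{H}$, not from ``nearby PCF parameters''. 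This must be run on impressions, not only on accumulation sets of single rays. Landing of every ray does not by itself give continuity of $t\mapsto\lambda(t)$, so your ``continuity plus compactness'' conclusion is unsupported.

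The supporting steps also need repair.
\begin{itemize}
\item A proper holomorphic map is only a branched covering. Ruling out branching requires the local degree at $\lambda=0$ together with simple connectivity of $\mathcal{H}\cup\{0\}$, and you address neither.
\item Lemma~\ref{lem:jordan-domains} is proved only for PCF maps. Points of $\partial\mathcal{H}$ with irrational angle have infinite critical orbits, so the Jordan property of $\partial B_\lambda$ and $\partial T_\lambda$ there must come from the holomorphic motion above or from \cite{QWY12}.
\item In the necklace step, Thurston rigidity would require that the $T$-angle $t$ determine the combinatorial class. The dynamically meaningful datum is $s$, and by the relation above, equal $t$ at two parameters does not give equal $s$.
\end{itemize}

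For the necklace step, only $\lambda\in\partial\mathcal{H}$ is needed, and it follows directly. Since $v_\lambda\notin\overline{B_\lambda}$, the motion $h_\mu$ of $\overline{B_\lambda}$ exists for $\mu$ near $\lambda$. Put $x_0=f_\lambda(v_\lambda)$. The function $\mu\mapsto f_\mu(v_\mu)-h_\mu(x_0)$ has an isolated zero at $\lambda$; otherwise every nearby parameter would be PCF, with $f_\mu(v_\mu)$ at the same rational angle on $\partial B_\mu$. By Rouch\'e, for each $y\in B_\lambda$ close to $x_0$ there is $\mu$ near $\lambda$ with $f_\mu(v_\mu)=h_\mu(y)\in B_\mu$. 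Hence $v_\mu\in T_\mu$ and $\mu\in\mathcal{H}$.
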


Recall from Section~\ref{sec:symmetry} that $G_\lambda$ is the dihedral group of order $4n$ generated by the rotation $z\mapsto e^{\pi\tb{i}/n}z$ and the involution $z\mapsto \sqrt[n]{\lambda}/z$; every element of $G_\lambda$ preserves the Julia set of $f_\lambda$.

\begin{thm}\label{thm:necklace}
Let \( J \) and \( \tilde{J} \) be necklace Julia sets of PCF McMullen maps \( f_\lambda \) and \( f_{\tilde{\lambda}} \), respectively, with the same exponent $n\geq 3$. Assume $\lambda, \tilde{\lambda}\in\mathcal{W}$. Suppose \( \xi: J \to \tilde{J} \) is a quasisymmetry. Then, up to precomposing $\xi$ with an element of $G_\lambda$, we have 
\[
f_{\tilde{\lambda}}\circ \xi = \xi\circ f_{\lambda} \quad \text{on } J.
\]
\end{thm}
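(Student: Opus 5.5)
The plan is to transfer $\xi$, via the blow-up of Theorem~\ref{thm:blow-upII}, to a homeomorphism $\hat\xi$ of the Cantor-circle $J_g$. Let $\pi$ and $\tilde\pi$ be the semiconjugacies for $f=f_\lambda$ and $\tilde f=f_{\tilde\lambda}$. We then show that the rule ``equivalent points go to equivalent points'' forces $\hat\xi$ to be, on each circle fiber $\{x\}\times\mathbb S^1$, a rigid rotation or reflection compatible with $g$. This yields $g\circ\hat\xi=\hat\xi\circ g$, and the relation for $f$ follows by projecting with $\pi,\tilde\pi$.

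\emph{Step 1: normalize $\xi$ on $\partial B\cup\partial T$.} By Proposition~\ref{prop:classify-no-bounded-cycles}(3), each domain of $f^{-1}(T)$ has closure meeting its two neighbours at free critical points. Pulling back, every bounded Fatou domain other than $T$ has closure meeting the closure of another Fatou domain. The exceptions are $\overline B$ and $\overline T$, which are disjoint from every other Fatou closure; here we use statement (2) of Theorem~\ref{thm:blow-upII}, since no point of $\partial B$ or $\partial T$ has orbit through $\pm v$. This property is topological, and $\xi$ maps boundaries of Fatou domains to boundaries of Fatou domains. Hence $\xi$ maps $\{\partial B,\partial T\}$ onto $\{\partial\tilde B,\partial\tilde T\}$. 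After precomposing with the involution in $G_\lambda$ we may assume $\xi(\partial B)=\partial\tilde B$ and $\xi(\partial T)=\partial \tilde T$.

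\emph{Step 2: lift.} By statements (2)--(3) of Theorem~\ref{thm:blow-upII}, $\pi|_{J_g}$ is injective except at pairs lying over iterated preimages of $\pm v$. Each such pair consists of points on two distinct, consecutive circles of $J_g$. The image under $\pi$ of a component of $\mathcal A_k$ is a ``bead-chain'' of $2n$ Fatou domains; they are pinched at the critical points for $k=1$ and at their pullbacks for larger $k$. Using that $\xi$ respects separation, both between $\partial B$ and $\partial T$ and by the necklace rings, I would define $\hat\xi=\tilde\pi^{-1}\circ\xi\circ\pi$ on unpinched points and extend by continuity. I would then show the following:
\begin{itemize}
\item $\hat\xi$ is a homeomorphism $J_g\to J_g$.
\item It maps circles to circles and induces an order-preserving homeomorphism $\kappa$ of the Cantor set $\mathcal C$ fixing $0$ and $1$.
\item It maps the pinch set on each circle bijectively onto the corresponding pinch set, respecting the pairing across consecutive circles.
\end{itemize}
In particular, $\kappa$ respects the nested annuli $\mathcal A_k$, so $\kappa$ commutes with $\sigma_0,\sigma_1$ combinatorially and is the identity on $\mathcal C$.

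\emph{Step 3: rigidity on circles (main obstacle).} Write $\hat\xi(x,t)=(x,h_x(t))$. By Statement~(4), on the fibre $x=1$ the map $h_1$ is $\tilde\phi_B\circ\xi\circ\phi_B^{-1}$, and likewise at $x=0$. The pinch points on the two circles bounding a component of $\mathcal A_1$ are the $2n$ critical points. Their angles are, by Proposition~\ref{prop:fundamental}, $\Phi(\lambda)$ rotated by multiples of $\pi/n$, up to the orientation change of $\sigma_0$. The pinch points on deeper circles are their $g$-pullbacks, i.e.\ solutions of $n^k t\equiv \pm\Phi+j\pi/n$. I would proceed as follows.
\begin{itemize}
\item The pairing of pinch points across circles, together with $g\circ\pi=\pi\circ g$, gives the functional relation $h_{\sigma_i(x)}(\pm n t)=\pm n\,h_x(t)$ modulo $2\pi$, on pinch angles.
\item The pinch angles on the circle adjacent to $\partial B$ at all levels accumulate densely.
\item Continuity together with this $n$-adic self-similarity then forces $h_x(t)=\epsilon t+c$ with $\epsilon=\pm1$. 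Orientation preservation of $\xi$ should give $\epsilon=1$, and preservation of the $2n$ critical pinch points forces $c\in(\pi/n)\mathbb Z$ and $\tilde\Phi=\Phi$. The relation $\tilde\Phi=\Phi$ combined with $\lambda,\tilde\lambda\in\mathcal W$ gives $\tilde\lambda=\lambda$.
\end{itemize}
This is the delicate step: one must show the functional equation really propagates to all circles, and avoid non-rigid circle maps that preserve a countable set. I would first try to prove that $h_1$ is conjugate to its own pullback under $t\mapsto nt$ and then use density of $\bigcup_k n^{-k}\{\text{pinch angles}\}$. Quasisymmetry of $\xi$ is used here only to guarantee equicontinuity of the family $\{h_x\}$.

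\emph{Step 4: conclusion.} After precomposing with the rotation $z\mapsto e^{-\textup{\textbf{i}}c}z\in G_\lambda$, the lift $\hat\xi$ is the identity, hence commutes with $g$. Projecting with $\pi$ and $\tilde\pi$ gives $f_{\tilde\lambda}\circ\xi=\xi\circ f_\lambda$ on the dense set of unpinched points, and hence on all of $J$ by continuity.
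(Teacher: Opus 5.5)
\paragraph{Overall.} Your Steps 1 and 2 agree with the paper's set-up, but Step 3 has a genuine gap: quasisymmetry is never used in an essential way. Step 1 even supplies a justification the paper skips. The cleanest form: if $\partial B$ or $\partial T$ met the closure of another Fatou domain, iterating would produce a point of $\partial B\cap\partial T$, contradicting Lemma~\ref{lem:disjoint}. Your appeal to Theorem~\ref{thm:blow-upII}(2) is slightly off, since $\pm v\in\partial T$.

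\paragraph{Where Step 3 fails.} The relation $h_{\sigma_i(x)}(\pm nt)=\pm n\,h_x(t)$ on pinch angles is just $\hat\xi\circ g=g\circ\hat\xi$ on the pinch set, i.e.\ essentially what you are trying to prove. It does not follow from $g\circ\pi=\pi\circ g$ and the pinch pairing, because $\xi$ has no a priori relation to the dynamics. What topology does yield is weaker:
\begin{itemize}
\item On a peripheral circle $\gamma$ of level $k$, the pinch set is $2n^{k+1}$ evenly spaced points, so $\hat\xi|_\gamma$ agrees there with some rotation $\mathrm{rot}_{\alpha_\gamma}$.
\item Approximating an arbitrary circle by peripheral circles of increasing level, uniform continuity makes each $h_x$ a rotation $\mathrm{rot}_{\alpha(x)}$. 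This is how the paper handles $\{1\}\times\mathbb{S}^1$, which carries no pinch points.
\end{itemize}
Nothing topological makes $\alpha(x)$ constant. Already for $\tilde\lambda=\lambda$, let $\beta$ be continuous on $\mathcal{C}$, equal at the two endpoints of each gap, with values in $\frac{\pi}{n^{k+1}}\mathbb{Z}$ at endpoints of level-$k$ gaps. A nonconstant staircase with such quantized plateau values exists, since the grids refine by the factor $n\ge 3$. Then $(x,t)\mapsto(x,t+\beta(x))$ preserves every circle, every pinch set and the pairing. It therefore descends to a homeomorphism of $J$ satisfying everything in your Steps 1--2. Yet it violates your functional relation, which would require $\beta(\sigma_i(x))\equiv\pm n\,\beta(x)\pmod{2\pi}$, false in general. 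It also violates your conclusion $h_x(t)=t+c$ with $c$ independent of $x$. These maps are behind the paper's remark that necklace Julia sets have infinite homeomorphism groups. Your plan uses quasisymmetry only for equicontinuity, which holds for every homeomorphism of the compact set $J_g$, so it cannot exclude them.

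\paragraph{The missing idea.} Quasisymmetry must be used to force $\alpha(x)$ to be constant; this is the paper's Claim~1.
\begin{itemize}
\item Suppose the rotation numbers differ across some component $H_0$ of $\mathcal{H}_{k_0}$. Repeatedly cutting along gaps gives nested annuli $H_k$ of width $n^{-(k+k_0)}$ across which the rotation numbers still differ by at least $2^{-k}\Delta_0$.
\item This produces triples $x,y,z$ with $\mathrm{dist}(x,y)=\mathrm{dist}(x,z)$ whose images have distance ratio of order at least $(n/2)^k\to\infty$. Here $n\ge 3$ is essential.
\item On the other hand the ratio is uniformly bounded. Push forward by $g$ to a definite scale and pass to $J$ via $\pi$, pulling back with Lemma~\ref{lem:distortion}. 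Apply the quasisymmetry $\xi$, push forward by $\tilde f$, return through $\tilde\pi$, and pull back by $g$.
\end{itemize}

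\paragraph{The final arithmetic.} Only once $\hat\xi=\mathrm{rot}_\alpha$ globally does the arithmetic close. Even then the $2n$ critical pinch points alone give only $\alpha\in d_0/n+\frac{\pi}{n}\mathbb{Z}$. The paper combines levels $k=0$ and $k=1$ to get $(n-1)d_0\in\pi\mathbb{Z}$. Hence $d_0=0$ by Proposition~\ref{prop:fundamental}, and then $\alpha\in\frac{\pi}{n}\mathbb{Z}$.
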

\begin{proof}
Write $f=f_\lambda$ and $\tilde{f}=f_{\tilde{\lambda}}$ to simplify the notation. Let $g$ be the model map from Theorem~\ref{thm:blow-upII}. Then there exist quotient maps $\pi:J_g\to J$ and $\tilde{\pi}:J_g\to\tilde{J}$ such that
\[
f\circ\pi=\pi\circ g \quad\text{and}\quad \tilde{f}\circ\tilde{\pi}=\tilde{\pi}\circ g \quad\text{on }J_g.
\]

By precomposing $\xi$ with an involution in $G_\lambda$ if necessary, we may assume that $\xi$ sends $\partial B$ to $\partial\tilde{B}$ and $\partial T$ to $\partial\tilde{T}$.
We now show that $\xi$ induces a homeomorphism $\tilde{\xi}:J_g\to J_g$ satisfying $\tilde{\pi}\circ\tilde{\xi}=\xi\circ\pi.$

Each component $\gamma$ of $J_g$ is a circle. It is called \textbf{peripheral} if $g$ eventually maps it onto $\{1\}\times\mathbb{S}^1$, and \textbf{buried} otherwise. The image $\pi(\gamma)$ is a buried Jordan curve in $J$ if and only if $\gamma$ is buried. In this case, $\xi\circ\pi(\gamma)$ is a buried Jordan curve in $\tilde{J}$, so there is a unique circle $\tilde{\gamma}\subseteq J_g$ such that $\tilde{\pi}(\tilde{\gamma})=\xi\circ\pi(\gamma)$. We define
\[
\tilde{\xi}|_\gamma=\tilde{\pi}^{-1}\circ\xi\circ\pi.
\]

If $\gamma$ is peripheral, then $\pi$ maps $\gamma$ homeomorphically into the boundary of a component of $f^{-k}(\overline{B})$ for some $k\ge0$. Since $\xi$ sends boundaries of Fatou domains to boundaries of Fatou domains, $\xi\circ\pi(\gamma)$ lies on the boundary of a component of $\tilde{f}^{-k}(\overline{\tilde{B}})$. Thus there is a unique peripheral circle $\tilde{\gamma}$ of $J_g$ with $\tilde{\pi}(\tilde{\gamma})=\xi\circ\pi(\gamma)$, and we define $\tilde{\xi}|_\gamma$ as before.

The continuity of quotient maps $\pi$ and $\tilde{\pi}$ yields that $\tilde{\xi}:J_g\to J_g$ is a homeomorphism. To see that $\tilde{\xi}$ fixes each circle of $J_g$, recall that Proposition~\ref{prop:classify-no-bounded-cycles}\,(3) and the $n$-folding dynamics of $f$ imply that $f^{-k}(\overline{T})$ consists of $2n^k$ Fatou domains arranged in a cycle. Since $\xi$ is a homeomorphism, it preserves this combinatorial structure and hence the level of every peripheral circle; by continuity it also preserves the levels of buried circles. Therefore $\gamma=\tilde{\gamma}$ in both cases above, and
\[
\tilde{\xi}(\{x\}\times\mathbb{S}^1)=\{x\}\times\mathbb{S}^1
\]
for every $x\in\mathcal{C}$.

Recall from Theorem~\ref{thm:blow-upII} that $\mathcal{A}_0=(1/n,1-1/n)\times\mathbb{S}^1$ and $\mathcal{A}_k=g^{-k}(\mathcal{A}_0)$ for $k\ge1$; hence $\partial\mathcal{A}_k$ is a union of peripheral circles of $J_g$. Let $\gamma_k$ be one of these circles. Set
\[
X_k=g^{-k}(\pi^{-1}(\tu{Crit}))\cap\gamma_k \quad\text{and}\quad \tilde{X}_k=g^{-k}(\tilde{\pi}^{-1}(\tilde{\tu{Crit}}))\cap\gamma_k.
\]
Both $X_k$ and $\tilde{X}_k$ consist of $2n^{k+1}$ points evenly distributed on $\gamma_k$. Because $\tilde{\xi}(X_k)=\tilde{X}_k$ and $\tilde{\xi}$ preserves their cyclic order, the restriction $\tilde{\xi}|_{X_k}:X_k\to\tilde{X}_k$ is a rotation
\[
\textup{rot}_{\alpha_k}:(x,\theta)\mapsto (x,\alpha_k+\theta~\textup{mod}~2\pi).
\]
If $\gamma_k$ and $\gamma_k'$ are boundary components of a component of $\mathcal{A}_k$, then $\alpha_k=\alpha_k'~\textup{mod}~2\pi$.

Now fix a circle $\gamma$ of $J_g$ and choose a sequence of peripheral circles $\gamma_k$ converging to $\gamma$. Since $\#X_k=2n^{k+1}\to\infty$, the continuity of $\tilde{\xi}$ implies that the rotation numbers $\alpha_k$ converge to a real number $\alpha$, and therefore $\tilde{\xi}|_\gamma=\textup{rot}_\alpha$.

\medskip\noindent\textbf{Claim 1.} \emph{Globally, $\tilde{\xi}: J_g\to J_g$ is a rotation, that is, the restrictions of $\tilde{\xi}$ to circles of $J_g$ share a common rotation number}. 

\begin{proof}[Proof of Claim 1]
Set $\mathcal{H}_0=I\times\mathbb{S}^1$ and define inductively $\mathcal{H}_{k+1}=\mathcal{H}_k\setminus\mathcal{A}_k$ for $k\ge0$. Each component of $\mathcal{H}_k$ is an annulus of width $n^{-k}$. Suppose the claim is false. Since the two boundary circles of any component of $\mathcal{A}_k$ share a common rotation number, we may assume that some component $H_0$ of $\mathcal{H}_{k_0}$ has positive rotation difference $\Delta_0=\Delta(H_0)$.

The annulus $\mathcal{A}_{k_0}$ cuts $H_0$ into two sub-annuli. One of them, say $H_1\subseteq\mathcal{H}_{k_0+1}$, must satisfy $\Delta(H_1)\geq\Delta_0/2$. Inductively, we obtain nested annuli $H_k\subseteq\mathcal{H}_{k+k_0}$ such that
\[
\Delta(H_k)\ge\frac{\Delta_0}{2^k}.
\]

For each $H_k$, choose points $x,y,z\in\partial H_k$ with $x,y$ on one boundary circle and $z$ on the other, and with $\textup{dist}(x,y)=\textup{dist}(x,z)=\textup{width}(H_k)$. Since $\tilde{\xi}$ acts as a rotation on each boundary circle, $\textup{dist}(\tilde{\xi}(x),\tilde{\xi}(y))=\textup{dist}(x,y)$, while the rotation difference gives $\textup{dist}(\tilde{\xi}(x),\tilde{\xi}(z))\ge\Delta(H_k)$. Hence
\begin{equation}\label{eq:diff}
\frac{\textup{dist}(\tilde{\xi}(x),\tilde{\xi}(z))}{\textup{dist}(\tilde{\xi}(x),\tilde{\xi}(y))}
\ge\frac{\Delta_0\,n^{k+k_0}}{2^k}\to\infty \quad\text{as }k\to\infty.
\end{equation}

To obtain a contradiction, it remains to show that this ratio is uniformly bounded in $k$. Recall that for every integer $l\ge1$ we have the following commutative diagram.
\[
\begin{tikzcd}[row sep=large, column sep=large]
J_g \arrow[r, "g^l", leftarrow] \arrow[d, "\pi"'] & 
J_g \arrow[r, "\tilde{\xi}"] \arrow[d, "\pi"] & 
J_{g} \arrow[r, "g^l"] \arrow[d, "\tilde{\pi}"] & 
J_{g} \arrow[d, "\tilde{\pi}"] \\
J \arrow[r, "f^l", leftarrow] & 
J \arrow[r, "\xi"] & 
\tilde{J} \arrow[r, "\tilde{f}^l"] & 
\tilde{J}
\end{tikzcd}
\]

We first show that $\textup{dist}(\pi(x),\pi(y))\asymp\textup{dist}(\pi(x),\pi(z))$. Fix a small constant $r_0>0$. For each large integer $k$, let $l=l(k)$ be the maximal integer such that
\[
\textup{dist}(g^l(x),g^l(y))=\textup{dist}(g^l(x),g^l(z))\ge r_0.
\]
Since $\pi$ is injective on $\partial\mathcal{H}_m$ for every $m\ge0$, the triple
\[
x'=\pi\circ g^l(x),\quad y'=\pi\circ g^l(y),\quad z'=\pi\circ g^l(z)
\]
is separated by some universal constant $\epsilon_0>0$; hence $\textup{dist}(x',y')\asymp\textup{dist}(x',z')$. Choose a topological disk $D$ centered at $x'$ whose boundary contains $y'$ and $z'$ and with
\[
\textup{shape}(D,x'):=\frac{\max_{w\in\overline{D}}\textup{dist}(x',w)}{\min_{w\notin D}\textup{dist}(x',w)}\le C
\]
for some universal constant $C\ge1$. Let $D'$ be the component of $f^{-l}(D)$ containing $\pi(x)$. Then $\pi(y),\pi(z)\in\partial D'$, and Lemma~\ref{lem:distortion}\,(2) gives
\[
\textup{shape}(D',\pi(x))\le C_1=C_1(C).
\]
Thus, we have $\textup{dist}(\pi(x),\pi(y))\asymp\textup{dist}(\pi(x),\pi(z))$.

Set $\tilde{x}=\xi\circ\pi(x)$, $\tilde{y}=\xi\circ\pi(y)$ and $\tilde{z}=\xi\circ\pi(z)$. Since $\xi$ is quasisymmetric and the previous step shows that $\textup{dist}(\pi(x),\pi(y))\asymp\textup{dist}(\pi(x),\pi(z))$, we have $\textup{dist}(\tilde{x},\tilde{y})\asymp\textup{dist}(\tilde{x},\tilde{z})$. Choose a disk $\tilde{D}$ centered at $\tilde{x}$ whose boundary contains $\tilde{y}$ and $\tilde{z}$ and with $\textup{shape}(\tilde{D},\tilde{x})\le C_2$. Push $\tilde{D}$ forward by $\tilde{f}$ until its diameter first exceeds $r_0$; let $m$ be this minimal iterate and set $\tilde{D}_1=\tilde{f}^m(\tilde{D})$. By Lemma \ref{lem:distortion}, the image $\tilde{D}_1$ contains a round disk $\tilde{D}_0$ of definite size centered at $x_*=\tilde{f}^m(\tilde{x})$.

We claim that $\tilde{D}_0$ avoids the two points $y_*=\tilde{f}^m(\tilde{y})$ and $z_*=\tilde{f}^m(\tilde{z})$. Indeed, $\tilde{f}^m:\tilde{\pi}(\overline{H_k})\to\tilde{f}^m(\tilde{\pi}(\overline{H_k}))$ is a covering, and, because $r_0$ is small, its restriction to $\tilde{D}_0\cap\tilde{\pi}(\overline{H_k})$ is injective. By taking $r_0$ small enough we may also arrange that $\tilde{D}_0$ is compactly contained in $\tilde{D}_1$, so it does not meet the boundary points $y_*$ and $z_*$.
Consequently,
\[
\textup{dist}(x_*,y_*)\asymp\textup{dist}(x_*,z_*)\ge r_1=r_1(r_0,C_2).
\]
The uniform continuity of $\tilde{\pi}$ therefore gives
\[
\textup{dist}(g^m\circ\tilde{\xi}(x),g^m\circ\tilde{\xi}(y))
\asymp\textup{dist}(g^m\circ\tilde{\xi}(x),g^m\circ\tilde{\xi}(z)).
\]
Pulling back by $g^m$, we obtain
$
{\textup{dist}(\tilde{\xi}(x),\tilde{\xi}(z))}\leq C_3\ {\textup{dist}(\tilde{\xi}(x),\tilde{\xi}(y))}
$
for a constant $C_3$ independent of $k$, contradicting \eqref{eq:diff}. This proves Claim~1.
\end{proof} 

Recall that $\phi_T:\partial T\to\partial\mathbb{D}$ is the extended B\"ottcher map. By Theorem~\ref{thm:blow-upII}~(4),
$
\pi(0,\tu{arg}(\phi_T(z)))=z \text{ for }z\in\partial T.
$
The B\"ottcher map is rotationally symmetric, and $\pi$ inherits the same symmetry:
\[
\pi(0,\alpha+\tu{arg}(\omega))=\omega\,\pi(0,\alpha),
\]
for every $\alpha\in\mathbb{R}/2\pi\mathbb{Z}$ and every $2n$-th root of unity $\omega$. The same holds for $\tilde{\pi}$.

By Claim~1, we may assume that $\tilde{\xi}=\tu{rot}_\alpha$ on $J_g$. After precomposing $\xi$ with a rotation in $G_\lambda$ if necessary, we may further assume that $0\le\alpha<\pi/n$.

Both $X:=g(X_0)$ and $\tilde{X}:=g(\tilde{X}_0)$ consist of two points whose images under $\pi$ and $\tilde{\pi}$ are the critical values of $f$ and $\tilde{f}$, respectively. Let $d_0=\tu{dist}(X,\tilde{X})$. Since $\lambda,\tilde{\lambda}\in\mathcal{W}$ by assumption, Proposition~\ref{prop:fundamental} gives $d_0<{\pi}/(n-1)$.

Since $g^{k+1}(X_k)=X$ and $g^{k+1}(\tilde{X}_k)=\tilde{X}$, we have
$
\tu{dist}(X_k,\tilde{X}_k)={d_0}/{n^{k+1}}
$
and $\tu{rot}_\alpha(X_k)=\tilde{X}_k$ for each peripheral circle $\gamma_k$. Because the points of $X_k$ are spaced by $2\pi/(2n^{k+1})$, it follows that
\[
\alpha=\frac{d_0}{n^{k+1}}+\frac{2\pi}{2n^{k+1}}\,m_k
\]
for every $k\ge0$ and some integer $0\le m_k<2n\cdot n^k$. Combining the cases $k=0,1$ gives
\[
d_0=\frac{\pi}{n-1}(m_1-nm_0).
\]
Since $d_0<\pi/(n-1)$, this forces $d_0=0$. The $k=0$ relation reduces to $\alpha=(\pi/n)m_0$, and the normalization $0\le\alpha<\pi/n$ implies $m_0=0$. Hence, $\alpha=0$ and $\tilde{\xi}=\textup{id}$.

Thus, $\xi=\tilde{\pi}\circ\pi^{-1}$ conjugates $f$ and $\tilde{f}$ on the buried point sets of their Julia sets. By continuity, this conjugacy extends to all of $J$, so $\xi\circ f=\tilde{f}\circ\xi$ on $J$. This completes the proof of the theorem.
\end{proof}
%Applying the arguments in the proof of Theorem \ref{thm:necklace} to $\tilde{f}=f$ and $\xi=\tu{id}$, we establish the uniqueness of the semi-conjugacy $\pi$.

%\begin{cor}\label{cor:uniqueness}
%Let $f$ be a PCF McMullen map for $n\geq 3$ whose Julia set $J$ is a necklace. Then the quotient map $\pi$ statisfying conditions (1)-(4) in Theorem \ref{thm:blow-upII} is unique.
%\end{cor}

\section{Dynamical relations for cluster Julia sets}\label{sec7}

In this section we establish the dynamical relation for PCF McMullen maps whose Julia set is a cluster; the precise statement is as follows.

\begin{thm}\label{thm:intersection-I}
Let $J$ and $\tilde{J}$ be cluster Julia sets of PCF McMullen maps $f_\lambda$ and $f_{\tilde{\lambda}}$, 
respectively. Suppose $\xi: J \to \tilde{J}$ is an orientation-preserving homeomorphism. 
Then, by precomposing $\xi$ with an element of $G_\lambda$ if necessary, we have
\begin{equation}\label{eq:semi-conjugacy}
f_* \circ \xi = \xi \circ f_\lambda \quad \text{on } J,
\end{equation}
where $f_*(z) = \omega \cdot f_{\tilde{\lambda}}(z)$ for some $2n$-th root of unity $\omega$.
\end{thm}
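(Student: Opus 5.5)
The plan follows the outline in the introduction: we show that the homeomorphism $\xi$ respects the combinatorial tree-like structure of Fatou domains, and then build the conjugacy level by level. We use a \emph{connecting number}. For two Fatou domains $U,V$ of $f=f_\lambda$, let $c(U,V)$ be the number of components $K$ of $\overline{\mathbb{C}}\setminus(\overline U\cup\overline V)$ whose closure meets both $\partial U$ and $\partial V$, or equivalently the number of points of $\partial U\cap\partial V$, counting also chains joining them through countably many Julia points. This quantity is defined purely topologically. Because $\xi$ extends to an orientation-preserving homeomorphism of the sphere that maps Fatou domains to Fatou domains (boundaries of complementary components are preserved), $c$ is preserved by $\xi$.

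The first step is to show that $\{B,T\}$ is the unique pair realizing the maximal connecting number, equal to $2n$. In the case $v\in\partial B$ this is immediate from Lemma~\ref{lem:disjoint}: $\partial B\cap\partial T=\Crit$ has $2n$ points. In the Fatou chain case of Theorem~\ref{thm1} (Case 2(iii)), the $2n$ components of $f^{-1}(\mathcal{E})$ containing critical points each join $\partial B$ to $\partial T$, by Lemma~\ref{classification}(3) and the rotational disjointness in Lemma~\ref{lem:Fatou-cycle}. For any other pair $(U,V)$, we argue via Lemma~\ref{lem:non-sym-domain} and Lemma~\ref{lem:separate}: if $U\ne B,T$, then $\overline U$ does not separate $0$ from $\infty$ and is disjoint from its rotations. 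Pushing $U$ forward until it lands on $B$ or $T$ then shows that the number of connections is strictly smaller, since the map is injective near such a domain unless it is critical. Hence $\xi(\{B,T\})=\{\tilde B,\tilde T\}$, and after precomposing with the involution in $G_\lambda$ we get $\xi(B)=\tilde B$ and $\xi(T)=\tilde T$. Next, the $2n$ connection points (or connecting components) on $\partial B$ are cyclically ordered and are permuted by $\xi$ preserving cyclic order. After precomposing with a rotation in $G_\lambda$, $\xi$ therefore sends the critical connection adjacent to internal angle $0$ to the corresponding one of $\tilde f$, up to a $2n$-th root of unity. That residual rotation is exactly what is absorbed into $f_*=\omega f_{\tilde\lambda}$; Lemma~\ref{lem:parameter-sym}(1) guarantees that $f_*$ has the same Julia set.

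The second step is inductive labeling. Define the depth of a Fatou domain $U$ as the least $k$ with $f^k(U)\in\{B,T\}$. The connecting numbers recorded in the introduction encode the depth: a domain of depth $k$ attaches to the tree of domains of depth $<k$ in a prescribed way, and its connecting number with $B$ or $T$ decreases with depth. By induction on $k$ we show that $\xi$ maps depth-$k$ domains to depth-$k$ domains and that $f_*\circ\xi$ and $\xi\circ f$ agree as maps on the set of depth-$k$ domains. The key point is that $f$ maps the configuration of domains of depth $\le k+1$ attached around a given domain as a covering of degree determined by the critical structure. Since both dynamics are pulled back from the same configuration around $B\cup T$, and $\xi$ matches cyclic orders at each attaching point, the lift is forced to be unique. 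We also verify, on boundaries, that $\xi$ conjugates the induced maps on the countable set $\Sigma$ of attaching points (preimages of $\Crit$ and of the Fatou-chain contact points).

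The final step passes to the limit. The set $\Sigma$ is dense in $J$: since $J$ is locally connected and the Fatou domains shrink in diameter under pullback for PCF maps, their boundaries are dense and iterated preimages of the contact points accumulate everywhere. The relation $f_*\circ\xi=\xi\circ f$ holds on $\Sigma$, so by continuity it holds on $J$. I expect the main obstacle to be the first step, proving strict maximality of $c(B,T)$ in the Fatou-chain cluster case. There the connections between $B$ and $T$ pass through whole continua $E\subset f^{-1}(\mathcal{E})$ rather than single points, so other pairs could a priori be connected through comparable numbers of such continua. I would first try to rule this out by pushing any competing pair forward to a pair involving $B$ or $T$, using that components of $\mathcal{E}$ meet $\partial B$ in exactly one point and are not mapped onto $T$.
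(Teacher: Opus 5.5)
Your outline follows the paper's (connecting number, then $\xi\{B,T\}=\{\tilde B,\tilde T\}$, then level-by-level matching, then a limit), but both load-bearing steps fail as written.

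\textbf{The invariant.} Counting components of $\overline{\mathbb{C}}\setminus(\overline U\cup\overline V)$ whose closures meet both boundaries gives $1$ for \emph{every} pair with disjoint closures. In the Fatou-chain cluster case $v\notin\partial B$, so $\partial B\cap\partial T=\emptyset$ by Lemma~\ref{lem:disjoint}, and hence $c(B,T)=1$. Your alternative, counting ``chains through countably many Julia points'', does not count any well-defined discrete objects, and you never show that such a count is $\xi$-invariant. This is exactly the obstacle you flag, and it remains unresolved.

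The paper fixes the objects being counted. Let $K_v$ be the component of the maximal Fatou chain containing $v$ (with $K_v=\{v\}$ when $v\in\partial B$), and let $\mathcal{K}$ be the union of all iterated preimages of $\pm K_v$. Then $\ell(\Omega,\Omega')$ is the number of components of $\mathcal{K}$ connecting $\Omega$ and $\Omega'$, after arguing that $\xi$ permutes these components. When those components are closed Fatou domains, $\ell$ is instead the number of connecting Fatou domains.

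Your strictness argument also misses some pairs. Fatou domains of the bounded superattracting basins, and their preimages, never land on $B\cup T$, and some of them are critical. So ``push forward until landing on $B$ or $T$, using injectivity'' says nothing about pairs inside $\mathcal{K}$, which the paper treats separately.

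\textbf{The induction.} Connecting numbers with $B$ or $T$ do not decrease with depth. Take $v\in\partial B$. For each component $V_1$ of $\overline{\mathbb{C}}\setminus(\overline B\cup\overline T)$, $f$ maps $\overline{V_1}$ homeomorphically onto $\overline{\mathbb{C}}\setminus B$. Its $\partial B$-arc goes onto one of the two arcs of $\partial B$ between $\pm v$. In B\"ottcher angles that arc has length $\pi$ and contains exactly $n$ of the $2n$ critical points, which are spaced $\pi/n$ apart. Hence the depth-one domain $T_1\subset V_1$ touches $\partial B$ at $n$ points. Now take a depth-two domain $T_2$ lying between two consecutive contact points on that arc. $f$ carries its enclosing region homeomorphically onto some $\overline{V_1'}$, sending $\partial B$ into $\partial B$, so $T_2$ also touches $\partial B$ at $n$ points.

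What is missing is the nested tower. Set $\mathcal{T}_i=f^{-i}(T)$. Let $\mathcal{K}_i$ be the components of $\mathcal{K}$ joining each $T_i$ to the boundary of the region $V_i$ containing it. Set $\mathcal{V}_{i+1}=\mathcal{V}_i\setminus(\overline{\mathcal{T}_i}\cup\mathcal{K}_i)$, so that $f\colon V_i\to V_{i-1}\setminus(K_v\cup -K_v)$ is conformal. One then uses \emph{relative} connecting numbers inside $V_i$, counting connections between $\Omega$ and $\partial V_i$. These equal $2n$ exactly at $T_i$, but this only makes sense once $\xi(V_i)=\tilde V_i$ is known inductively.

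\textbf{The lift and the role of $\omega$.} Your claim that ``the lift is forced to be unique'' has a similar hole. The map $f_*^{-1}\circ\xi\circ f$ is well defined on $V_{i-1}$ only if $\xi$ carries $(\pm K_v)\cap V_{i-2}$ onto the corresponding sets for $f_*$. Ensuring $\xi(\pm K_v)=\pm K_{v_*}$ is precisely what the root of unity in $f_*=\omega f_{\tilde\lambda}$ is for. You attribute $\omega$ to a residual mismatch of critical points. However, rotations in $G_\lambda$ already act transitively on $\Crit$, and $f_*$ has the same critical points as $f_{\tilde\lambda}$; $\omega$ only rotates the critical values. Where $\xi$ sends $\pm K_v$ is never addressed in your proposal.
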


\noindent\textbf{Remark.} (1) The rational maps $f_{\tilde{\lambda}}$ and $f_*$ share the same Julia set $\tilde{J}$ by Lemma~\ref{lem:parameter-sym}. (2) The conclusion of Theorem~\ref{thm:intersection-I} only requires $\xi$ to be an orientation-preserving homeomorphism; quasisymmetry is not needed.

\subsection{Tower structure}
Let $f$ be a PCF McMullen map of exponent $n\geq 2$ whose Julia set is a cluster. The discussion in Subsection~\ref{sec4} shows that, according to the location of the free critical value $v$, the dynamics of $f$ falls into one of the following two cases:

\medskip
\textbf{Case~(I).} $v\in\partial B$. Then $\partial B \cap\partial T$ consists of the $2n$ free critical points.

\medskip\textbf{Case~(II).} $v\in K_v$ and $K_v\cap\partial B\neq\emptyset$, where $K_v$ is a component of the maximal Fatou chain. Then $-v\in -K_v$ and $K_v\cap -K_v=\emptyset$ by Lemma \ref{classification}(1). 

\medskip We set $K_v:=\{v\}$ by convention in Case (I). In both cases, let $\mathcal{K}$ be the union of all iterated preimages of $K_v$ and $-K_v$. 

An open or closed set $E$ is said to \textbf{connect} two disjoint sets $E_1$ and $E_2$ if $\overline{E}\cap\overline{E_1}\neq \emptyset$ and $\overline{E}\cap\overline{E_2}\neq\emptyset$. 
%$E$ is called a \textbf{connecting component} of $E_1$ and $E_2$. 
 
We now construct a tower structure of the cluster Julia set associated to $f$, which will be the key to encoding the dynamics and recognizing the quasisymmetry rigidity.

\medskip\noindent\textbf{Level 0.}
Set $\mathcal{V}_0:=V_0:= \C \setminus \overline{B}$ and $\mathcal{T}_0:=T_0:=T$.
Let $\mathcal{K}_0 := \mathcal{K}(T_0)$ denote the union of all components of 
$\mathcal{K}$ that connect $\pa B$ and $\pa T$ (i.e. $\pa V_0$ and $\pa T_0$).
By Lemma~\ref{classification}(3), $\mathcal{K}_0$ consists of exactly $2n$ components, 
each containing a free critical point and meeting each of $\partial B$ and $\partial T_0$ at a single point.
\medskip

\noindent\textbf{Level 1.}
Set $\mathcal{V}_1 := \mathcal{V}_0 \setminus (\overline{\mathcal{T}_0} \cup \mathcal{K}_0)$.
Then $\mathcal{V}_1$ consists of $2n$ components, each bounded by:
(i) one boundary arc of $\partial T$, (ii) one boundary arc of $\partial B$, 
and (iii) two adjacent components of $\mathcal{K}_0$.
The restriction of $f$ to each component of $\mathcal{V}_1$ is a conformal map onto 
$V_0 \setminus (K_v \cup -K_v)$.

Let $\mathcal{T}_1 := f^{-1}(\mathcal{T}_0)$. Since $f$ is conformal on each component of $\mathcal{V}_1$, 
the components of $\mathcal{T}_1$ and $\mathcal{V}_1$ are in one-to-one correspondence: 
each component $V_1$ of $\VVV_1$ contains exactly one component $T_1$ of $\mc{T}_1$.

Let $\mathcal{K}(T_1)$ denote the union of all components of $\mathcal{K}$ connecting $\pa V_1$ and $\pa T_1$ and $\mc{K}_{1}:= \bigcup \mathcal{K}(T_{1})$, where $T_{1}$ runs over all components of $\mathcal{T}_{1}$.
From the conformality of $f$ on $V_1$, exactly one of the following holds:
\begin{itemize}
\item[(1)] $K_v \not\subset \mathcal{K}_0$. Then $\mathcal{K}(T_1)\cap \mathcal{K}_0=\emptyset$. Each component of $\mathcal{K}(T_1)$ lies in $\overline{V_1}$ and meets its boundary at a single point.
\item[(2)] $K_v \subseteq \mathcal{K}_0$. Then $\mathcal{K}(T_1) \cap \mathcal{K}_0 \neq \emptyset$, 
consisting of at most two components, each of which is disjoint from $V_1$, while each component 
$K'$ in $\mathcal{K}(T_1)\setminus \mc{K}_0$ satisfies $K' \subseteq \overline{V_1}$ and $K' \cap \partial V_1$ 
is a single point. This configuration never occurs for McMullen maps in Case (I): $v\in\pa B$.
\end{itemize}
In either case, we have $f(\mathcal{K}(T_1)) = \mathcal{K}(T_0)$ and $\#\tu{Comp}(\mathcal{K}(T_1)) = 2n$. The components of $\mathcal{K}(T_1)\setminus\mathcal{K}_0$ are called \textbf{new} for $\mathcal{K}(T_1)$. It is clear that

\medskip\noindent$\textbf{Fact 1.}$
For each component $V_1$ of $\mathcal{V}_1$, the two boundary arcs of $\partial V_1 \setminus \mathcal{K}(T_0)$ 
are from distinct Fatou domains.
Each of these arcs meets at least one new component of $\mathcal{K}(T_1)$ at a single point.

\medskip\noindent{\boldmath\textbf{Level $i \geq 2$.}}
Having defined $\mathcal{T}_{i-1}$, $\mathcal{V}_{i-1}$, and $\mathcal{K}_{i-1}$, we set 
$$\mathcal{T}_i := f^{-1}(\mathcal{T}_{i-1})\text{\,\,\, and \,\,\,}\mathcal{V}_i := \mathcal{V}_{i-1} \setminus (\overline{\mathcal{T}_{i-1}} \cup \mathcal{K}_{i-1}).$$
Then $f$ restricted to each component $V_i$ of $\mathcal{V}_i$ satisfies:
\begin{equation}\label{eq:conformal-Vi}
f: V_i \to V_{i-1} \setminus (K_v \cup -K_v) \quad \text{is conformal.}
\end{equation}
It may occur that both $\pm K_v$ are disjoint from $V_{i-1}$ in the above. These conformal maps establish bijections between components of $\mathcal{V}_i$ and $\mathcal{T}_i$: 
each $V_i$ contains a unique component $T_i$.

Let $\mathcal{K}(T_i)$ denote the union of all components of $\mathcal{K}$ connecting $\partial V_i$ and $\pa T_i$. Set $\mc{K}_i:=\bigcup\mc{K}(T_i)$, where $T_i$ runs over all components of $\mc{T}_i$. 
By \eqref{eq:conformal-Vi}, we have $f(\mathcal{K}(T_i)) = \mathcal{K}(T_{i-1})$ and 
$\#\tu{Comp}(\mathcal{K}(T_i)) = 2n$. Components of $\mathcal{K}(T_i)\setminus(\mathcal{K}_0\cup\ldots\cup \mathcal{K}_{i-1})$ are called \textbf{new} for $\mc{K}(T_i)$. Analogous to Fact 1, by pullback, we have

\vskip 0.3cm
\noindent$\textbf{Fact 2.}$
For each component $V_i$ of $\mathcal{V}_i$, 
$\partial V_i \setminus (\mc{K}_0 \cup \ldots \cup \mc{K}_{i-1})$ 
has two boundary arcs from distinct Fatou domains, each meeting at least one new component of $\mathcal{K}(T_i)$ at a single point.
\vskip 0.3cm

%Finally, set $\mathcal{K}_i := \bigcup K(T_i)$ where the union is over all components $T_i$ of $\mc{T}_i$, and $\mathcal{T} := \bigcup_{i \geq 0} \mathcal{T}_i$.

The tower $(\mathcal{V}_i, \mathcal{T}_i, \mathcal{K}_i)_{i \geq 0}$ captures the nested self-similarity of the cluster Julia set; see Figure \ref{fig:connect}. The conformal maps \eqref{eq:conformal-Vi} at each level 
are the key to transferring dynamical information across scales.

Similarly, for another PCF McMullen map $\tilde{f}$ with the cluster Julia set $\tilde{J}$, 
we define the corresponding tower $(\tilde{\mathcal{V}}_i, \tilde{\mathcal{T}}_i, \tilde{\mathcal{K}}_i)_{i \geq 0}$.

% ====================================================================
%\subsubsection{The main theorem and proof strategy}
% ====================================================================

\subsection{Proof of Theorem \ref{thm:intersection-I}}
The proof proceeds in four steps:

Step 1. Using the {connecting number} $\ell(\Omega, \Omega')$ as a topological invariant, we show that $\xi$ sends the distinguished pair $\{B, T\}$ to $\{\tilde{B}, \tilde{T}\}$, possibly swapping them.

Step 2. Inductively, we show that $\xi$ preserves the tower structure: $\xi(\mathcal{V}_i) = \tilde{\mathcal{V}}_i$, $\xi(\mathcal{T}_i) = \tilde{\mathcal{T}}_i$, 
and $\xi(\mathcal{K}_i) = \tilde{\mathcal{K}}_i$ for all $i \geq 0$.

Step 3. We introduce a labeling system on the components of $V_i$ and prove that $\xi$ respects these labels.

Step 4. We construct the modified map $f_*$ and verify 
the conjugacy \eqref{eq:semi-conjugacy} by a limit argument.

We now carry out each step in detail. Write $f:=f_\lambda$ and $\tilde{f}:=f_{\tilde{\lambda}}$ for simplicity. 

\subsection*{Step 1. Identifying the distinguished Fatou domains.} The map $\xi$ obviously extends continuously to the Fatou set. We may assume $\xi$ is a homeomorphism over $\overline{\mb{C}}$. 

\begin{lema}\label{claim:preserve-BT}
As a collection of Fatou domains, $\{\xi(B),\, \xi(T)\}=\{\tilde{B},\, \tilde{T}\}.$
%In other words, $\xi$ either preserves $B$ and $T$ individually or swaps them.
\end{lema}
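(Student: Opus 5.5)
The plan is to use the \emph{connecting number} from the outline (Section~\ref{sec:outline}). For two Fatou domains $\Omega,\Omega'$ with disjoint closures, let $\ell(\Omega,\Omega')$ be the number of components of $\mathcal{K}$, or more intrinsically the number of complementary pieces, that connect $\partial\Omega$ and $\partial\Omega'$. To make this topological, define it directly in terms of $J$ and the Fatou set. $\ell(\Omega,\Omega')$ is the maximal number of pairwise disjoint continua $K\subseteq\overline{\mathbb{C}}\setminus(\Omega\cup\Omega')$ such that each $K$ meets both $\partial\Omega$ and $\partial\Omega'$, and any two points of $K$ can be joined by a curve in $K$ meeting $J$ in at most countably many points. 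In Case (I), use instead the number of common boundary points of $\partial\Omega\cap\partial\Omega'$. Both notions are defined through $J$, the Fatou domains and countability of intersections with $J$. They are therefore preserved by the homeomorphism $\xi$, which maps Fatou domains to Fatou domains since it extends to $\overline{\mathbb{C}}$. So $\ell(\xi(\Omega),\xi(\Omega'))=\ell(\Omega,\Omega')$.

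Next compute $\ell(B,T)$. By the Level~0 description, $\mathcal{K}_0$ has exactly $2n$ components, each meeting $\partial B$ and $\partial T$ in a single point. In Case (I), $\partial B\cap\partial T=\tu{Crit}$ has $2n$ points by Lemma~\ref{lem:disjoint}. So $\ell(B,T)=2n$. I must also check that no other ``countable'' connections exist between $\partial B$ and $\partial T$. Any such continuum would have to pass through a component of $\mathcal{V}_1$. There, $f$ is conformal onto $V_0\setminus(K_v\cup -K_v)$, so the connection would map to a countable connection from $\partial B$ to itself avoiding $\pm K_v$. I expect this to be impossible except through $\mathcal{K}$.

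The main step is to show that $\ell(\Omega,\Omega')<2n$ for every other pair $\{\Omega,\Omega'\}\neq\{B,T\}$. The approach is to push forward. If $\Omega,\Omega'$ are not both in $\{B,T\}$, take the first iterate $f^k$ at which one of them lands on $B$ or $T$, with $\Omega\mapsto B$ say. Before this time, the pair lies inside some component $V_i$ of the tower, where $f$ is conformal by \eqref{eq:conformal-Vi}. Any connecting continuum between $\partial\Omega$ and $\partial\Omega'$ must then lie in $\overline{V_i}$. This is because $V_i$ is bounded by arcs of two Fatou domains and by pieces of $\mathcal{K}$ meeting the boundary in single points, so a countable connection cannot exit through a Julia arc. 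Conformality maps connecting continua injectively to connecting continua of the image pair, so $\ell$ does not decrease under this push-forward. Once the image pair is $(B,\Omega'')$ with $\Omega''\ne T$, the connections must avoid $T$. By Fact~1/Fact~2, $\Omega''$ lies in some $V_i$ and touches $\partial B$ only through the at most two boundary arcs and the new components of $\mathcal{K}(T_i)$ inside $\overline{V_i}$. Their number is bounded by the components coming from the pullback of $\pm K_v$, which is at most $2<2n$, or at most $n<2n$ via the degree of $f$ near a critical value in Case~(II). The hardest step is making this count precise: one must show that every countable connection is carried by a component of $\mathcal{K}$, and bound how many components of $\mathcal{K}$ a single non-$\{B,T\}$ pair can share. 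I would first try to bound the count via the preimage structure $f^{-1}(\pm K_v)$ together with Lemma~\ref{lem:non-sym-domain}, which prevents rotated copies from meeting.

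Finally, $\{B,T\}$ is the unique pair realizing the maximum $2n$, and $\ell$ is a topological invariant. Applying the same computation to $\tilde f$, the pair $\{\xi(B),\xi(T)\}$ attains $\ell=2n$ for $\tilde J$, so it equals $\{\tilde B,\tilde T\}$.
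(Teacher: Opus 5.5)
Your proposal has a genuine gap, and it sits in the definition of the invariant itself. You count arbitrary pairwise disjoint continua with the countable-connection property. Nothing then stops a connecting continuum from running through a Fatou domain other than $\Omega,\Omega'$, and the value $\ell(B,T)=2n$ is false.

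Concretely, in Case (II) take a component $V_1$ of $\mathcal{V}_1$ and its domain $T_1$. By Fact~1 there is a new component $K_a$ of $\mathcal{K}(T_1)$ touching the $\partial B$-arc of $\partial V_1$, and a new component $K_b$ touching the $\partial T$-arc. Let $\gamma$ be a crosscut of $T_1$ (two internal rays) joining $K_a\cap\partial T_1$ to $K_b\cap\partial T_1$. Then $K_a\cup\gamma\cup K_b\subseteq\overline{\mathbb{C}}\setminus(B\cup T)$ meets both $\partial B$ and $\partial T$. It has the countable-connection property (Lemma~\ref{classification}(2), pulled back), and it is disjoint from every component of $\mathcal{K}_0$. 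Doing this in each of the $2n$ components of $\mathcal{V}_1$ gives $\ell(B,T)\ge 4n$ for your definition.

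So your expectation that countable connections between $\partial B$ and $\partial T$ only go through $\mathcal{K}$ fails. The pushed-forward object you describe, a countable connection from $\partial B$ to itself in $V_0\setminus(K_v\cup -K_v)$, plainly exists (e.g.\ through $T$), and the example above is its pullback. The same defect breaks your push-forward step in three ways. Connecting continua are not confined to $\overline{V_i}$, since they can leave through the Fatou domains bounding $V_i$. The two domains need not lie in a common $V_i$ at all. And the bounds ``at most $2$'' or ``at most $n$'' are not derived from anything.

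The missing idea is to count only connections by a \emph{single} component of $\mathcal{K}$ (or, when the components of $\mathcal{K}$ are closed Jordan domains, by a single Fatou domain), and then prove separately that this restricted count is a topological invariant. In the non-Jordan case the paper does this by showing that $\xi$ sends components of $\mathcal{K}$ to components of $\tilde{\mathcal{K}}$. The reason is that each component of $\mathcal{K}$ meets every domain of $\mathcal{T}\cup B$ in at most one point, and that point does not lie on the boundary of a Fatou domain inside the component. In the Jordan case this correspondence is unavailable, so the paper counts Fatou domains instead and needs extra subcases, including pairs with both domains in $\mathcal{K}$; your plan does not address these.

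The inequality $\ell<2n$ for all other pairs then comes from the tower structure:
\begin{itemize}
\item two $T_i$'s in different components of $\mathcal{V}_i$ can share only non-new bridges (Fact~2);
\item for nested levels, some new bridge of $\mathcal{K}(\Omega')$ misses $\overline{\Omega}$;
\item a domain inside $\mathcal{K}$ gives $\ell\le 1$;
\item in the Jordan case, a deeper wall $\overline{T_k}\cup\mathcal{K}(T_k)$ caps the count at $3$.
\end{itemize}
Your Case (I) count of common boundary points is a legitimate invariant with value $2n$ on $\{B,T\}$. It still needs this tower argument to bound the count for all other pairs.
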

\begin{proof}[Proof of Lemma \ref{claim:preserve-BT}] 
Let $\Omega$ and $\Omega'$ be Fatou domains of $f$ whose closures are disjoint. 
The \textbf{connecting number} $\ell(\Omega, \Omega')$ is defined as follows, corresponding to two topological configurations:

\medskip{\textbf{Case (A).} Non-Jordan case:} no component of $\mathcal{K}$ is a closed Jordan domain. Then $\ell(\Omega, \Omega')$ is the number of components of $\mathcal{K}$ connecting $\Omega$ and $\Omega'$.

\textbf{Case (B).} Jordan case: all components of $\mathcal{K}$ are closed Jordan domains (hence closed Fatou domains). Then $\ell(\Omega, \Omega')$ 
is the number of Fatou domains connecting $\Omega$ and $\Omega'$.

\medskip The connecting number is a topological invariant: it depends only on the topological intersection pattern of the boundaries of $\Omega$ and $\Omega'$, and hence is preserved under homeomorphisms, i.e.,
$\ell(\Omega, \Omega')=\ell(\xi(\Omega), \xi(\Omega')).$

We first prove the lemma for Case (A). Set $\mathcal{T} := \bigcup_{i \geq 0} \mathcal{T}_i$. Since each component $K$ of $\mathcal{K}$ intersects the boundary of any Fatou domain in $\mathcal{T}\cup B$ in at most one point, and since this intersection point avoids the boundaries of the Fatou domains contained in $K$, the image $\xi(K)$ must be a component of $\tilde{\mc{K}}$. Consequently, $\xi$ establishes one-to-one correspondence between $\mc{K}$ and $\tilde{\mc{K}}$, and between $\mc{T}\cup B$ and $\tilde{\mc{T}}\cup\tilde{B}$.

By definition,
$\ell(B,T)=\#\tu{Comp}(\mathcal{K}(T_0))=2n.$
Thus it suffices to show $\ell(\Omega, \Omega')<2n$ whenever $\{\Omega, \Omega'\}\neq \{B, T\}$.

\medskip\noindent\textbf{Subcase (A-1).} Both $\Omega,\Omega'\subset \mathcal{T}_{i}$ with $i\ge 1$. Write $\Omega\subset V_i$ and $\Omega'\subset V_i'$ where $V_i$ and $V_i'$ are components of $\mathcal{V}_{i}$. Then $V_i\neq V_i'$ since the components of $\mc{T}_i$ and $\mc{V}_i$ are in one-to-one correspondence. Any component $K$ of $\mc{K}$ connecting $\Omega$ and $\Omega'$ must intersect both $\partial V_i$ and $\partial V_i'$. Recall that $\mathcal{K}(\Omega)$ is the union of all components of $\mathcal{K}$ connecting $\partial V_i$ and $\partial \Omega$, and similarly for $\mathcal{K}(\Omega')$. Therefore $K$ belongs to both $\mathcal{K}(\Omega)$ and $\mathcal{K}(\Omega')$. By Fact 2, $K$ cannot be new. It follows that $\ell<\#\tu{Comp}(\mathcal{K}(\Omega))=2n.$

\medskip\noindent\textbf{Subcase (A-2).} $\Omega\subset \mathcal{T}_{i}$ and $\Omega'\subset \mathcal{T}_{i'}$ with $i'>i\geq0$. Write $\Omega\subset V_i$ and $\Omega'\subset V_{i'}$ as before. If $V_i\cap V_{i'}=\emptyset$, then any component connecting $\Omega$ and $\Omega'$ must meet $\partial V_i$, $\partial V_{i'}$, $\partial\Omega$, and $\partial\Omega'$; by Fact 2 again there are fewer than $2n$ such components. If $V_{i'}\subset V_i$, then $\Omega\cap V_{i'}=\emptyset$. Hence, any component connecting $\Omega$ and $\Omega'$ must exit $V_{i'}$ before reaching $\Omega$. By Fact 2, some new component of $\mathcal{K}(\Omega')$ is disjoint from $\overline{\Omega}$. It forces $\ell<\#\tu{Comp}(\mathcal{K}(\Omega'))=2n$.

\medskip\noindent\textbf{Subcase (A-3).} One domain is $B$ and the other lies in $\mathcal{T}_{i}$ with $i\ge 1$. Recall that $V_0=\C\setminus\overline{B}$ contains $T_0=T$. Thus this case is entirely analogous to Subcase (A-2), with the outer domain $V_i$ replaced by $V_0$ and the inner domain $V_{i'}$ replaced by $V_i$ containing $\Omega'$. Since $V_i\subset V_0$, any component of $\mathcal{K}$ connecting $B$ and $\Omega'$ must exit $V_i$ before reaching $B$; the same counting argument as in Subcase (A-2) gives $\ell<2n$.

\medskip\noindent\textbf{Subcase (A-4).} At least one domain lies in $\mathcal{K}$. Suppose $\Omega$ is contained in a component $K$ of $\mathcal{K}$. Since the components of $\mathcal{K}$ are pairwise disjoint, any component of $\mathcal{K}$ connecting $\Omega$ and $\Omega'$ must coincide with $K$ itself; no other component of $\mathcal{K}$ can meet the boundary of $\Omega$. Hence there is at most one connecting component, and $\ell\leq 1$.

Combining the above arguments, we obtain the lemma for Case (A). 

\medskip We now turn to Case (B) where all components of $\mathcal{K}$ are closed Fatou domains.
The difference from Case (A) is that here the components of $\mathcal{K}$ are themselves closed Fatou domains.
Thus the map $\xi$ need not send each component of $\mathcal{K}$ to a component of $\tilde{\mathcal{K}}$. We cannot use the component-by-component correspondence from Case (A). We simply regard the components of $\mathcal{K}$ as closed Fatou domains and count them accordingly.

Observe that in Case (B), any Fatou domain connecting two domains in $\mathcal{T}\cup B$ must lie in $\mathcal{K}$. Therefore, the same arguments as above imply $\ell<2n$ whenever $\Omega,\Omega'\subset\mathcal{T}\cup B$.

Since $\ell=0$ whenever $\Omega\subset \mc{K}$ and $\Omega'\subset \mc{T}\cup B$ (because $\ol{\Omega}\cap\ol{\Omega'}=\emptyset$ by definition and any Fatou domain connecting  them lies neither in $\mc{K}$ nor in $\mc{T}\cup B$, it suffices to consider the case that $\Omega, \Omega'\subset \mathcal{K}$. Then any connecting component for $\Omega'$ and $\Omega$ lies in $\mc{T}\cup B$.
Write $\Omega\subset \mc{K}_i$ and $\Omega'\subset \mc{K}_{i'}$ for minimal integers $i, i'\geq 0$.
\begin{figure}[http]
\centering
\begin{tikzpicture}
\node at (0,0){ \includegraphics[width=11cm]{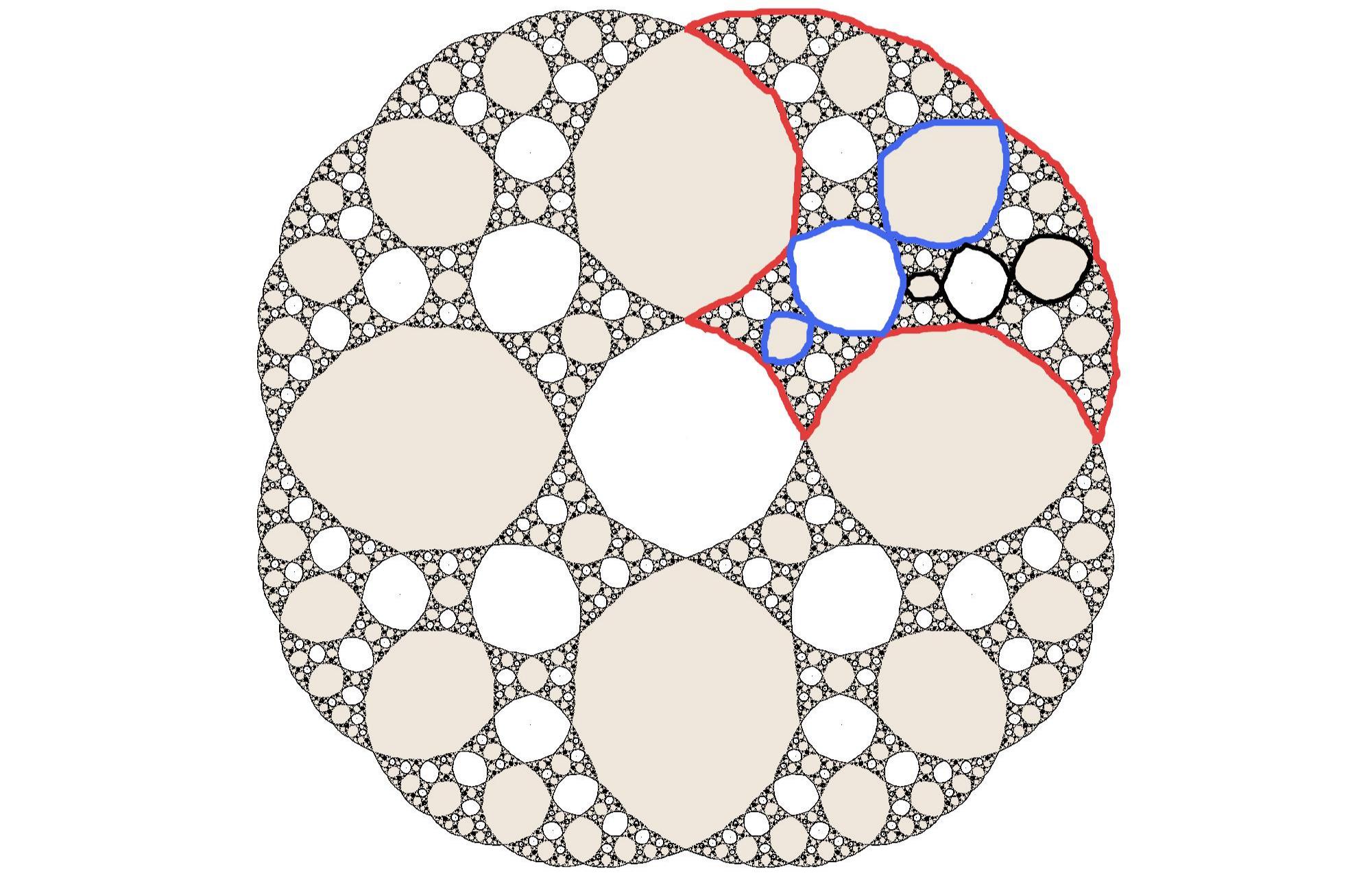}};
\node at (1.25, 1.25){$T_1$};
\node at (2.3, 1.25){$T_2$};
\node at (5.0, 1.25){$B$};
\node at (0, 0){$T$};
%\ruler{3}{4};
\end{tikzpicture}
\caption{Illustration of the proof of Case (B).}\label{fig:connect}
\end{figure}

\medskip\noindent\textbf{Subcase (B-1).} $i=i'=0$. Then $\Omega$ and $\Omega'$ can be connected by $B$, $T$, and possibly some components $T_k$ of $\mc{T}_k$ with $k\geq 1$. If such a component $T_k$ exists, write $T_k\subset V_1$. By Fact 1, no components of $V_1\setminus(\overline{T_1}\cup\mc{K}(T_1))$ connects $\Omega$ and $\Omega'$. It then follows that $T_k=T_1$; see Figure~\ref{fig:connect}. Thus, $\ell\leq 3<2n$.
\vskip 0.2cm

\medskip\noindent\textbf{Subcase (B-2).} $i=i'\geq 1$. If $\Omega$ and $\Omega'$ lie in distinct components of $\mc{V}_1$, then only $B$ and $T$ possibly connect them. Thus, $\ell\leq 2<2n.$ By assumption $\Omega$ and $\Omega'$ are new for $\mc{K}_i$. Note that the images $f(\Omega)$ and $f(\Omega')$ are also new for $\mc{K}_{i-1}$. Since $f$ is conformal on components of $\mc{V}_1$, $\ell(\Omega, \Omega')=\ell(f(\Omega), f(\Omega'))$. By induction, we are reduced to Subcase (B-1). Thus, $\ell<2n$.

\medskip\noindent\textbf{Subcase (B-3).} $i'>i\geq 0$. By applying the argument of (B-2) inductively and using the conformality of $f$, we are reduced to the base situation $i=0$ and $i'\geq 1$. 
Suppose $\overline{\Omega'}$ is new for $\mc{K}(T_{i'})$ with $\Omega'\cup T_{i'}\subset V_{i'}\subset  V_1$. 

We consider the base case $i'=1$. Set $\mc{U}=V_1\setminus (\overline{T_1}\cup \mc{K}(T_1))$. If no component of $\mc{U}$ connects $\Omega$ and $\Omega'$, then any component connecting $\Omega$ and $\Omega'$ can involve at most two Fatou domains $B$ and $T_1$ (or, symmetrically, $T$ and $T_1$); hence $\ell\leq 2<2n$.

Otherwise, let $U$ be the unique component of $\mc{U}$ that connects $\Omega, \Omega'$. Applying Fact 2 iteratively inside $U$, we obtain a component $V_k$ of some deeper level $\mc{V}_k(k\geq 2)$ nested in $U$ and a domain $T_k\subset V_k$ such that $\overline{T_k}\cup \mc{K}(T_k)$ separates $\Omega$ from $\Omega'$. Consequently, any component connecting $\Omega$ and $\Omega'$ can involve at most three Fatou domains, namely $B, T_1, T_k$, or the symmetric triple $T, T_1, T_k$; see Figure~\ref{fig:connect}. Thus $\ell\leq 3<2n$.

For $i'\geq 2$, since $\Omega'\subset \mc{K}(T_{i'})\subset V_{i'}\subset V_{i'-1}$, there exists a component $U$ of $V_{i'}\setminus\overline{T_{i'}}\cup \mc{K}(T_{i'})$ connecting $\Omega$ and $\Omega'$; otherwise, only $B, T_{i'}$ and $T_{i'-1}$ (or, symmetrically, $T, T_{i'}$ and $T_{i'-1}$) are available as connecting components, giving $\ell\leq 3<2n$; see Figure~\ref{fig:connect}. By Fact 2, inside $U$, we find a deeper wall $\overline{T_k}\cup\mc{K}(T_k)$ separating $\Omega$ from $\Omega'$ within $\ol{V_k}$, whence any connecting path involves at most three Fatou domains. Therefore, $\ell<2n$.

In every subcase we have shown that $\ell(\Omega, \Omega')<2n$ unless $\{\Omega, \Omega'\}=\{B, T\}$. Since $\xi$ preserves the connecting number $\ell$, the pair $\{B, T\}$ is mapped onto itself. The proof of Lemma \ref{claim:preserve-BT} is complete.
\end{proof}

%---------------------------------------------------------------------
\begin{rmk}
The proof shows that the connecting number $\ell$ not only distinguishes $\{B,T\}$ from all other pairs, but also encodes the level of a Fatou domain in the tower $(\mc{V}_{i},\mc{T}_{i})$. This observation will be crucial in the inductive argument of Proposition~\ref{prop:preserve-tower} below.
\end{rmk}

\subsection*{Step 2. Preserving the tower structure}

Having established that $\xi$ preserves $B$ and $T$, by precomposing $\xi$ with an involution in $G_{\lambda}$ if necessary, we may assume $
\xi(B)=\tilde{B}\text{ and } \xi(T)=\tilde{T}.$ We now show inductively 
that $\xi$ preserves the entire tower:

\begin{prop}\label{prop:preserve-tower}
For all $i \geq 0$, we have
\begin{equation}\label{eq:preserve-tower}
\xi(\mathcal{V}_i) = \tilde{\mc{V}}_i, \quad
\xi(\mc{T}_i) = \tilde{\mc{T}}_i, \quad
\xi(\mathcal{K}_i) = \tilde{\mathcal{K}}_i.
\end{equation}
\end{prop}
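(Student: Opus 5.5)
We argue by induction on $i$. The base case $i=0$ is essentially Lemma \ref{claim:preserve-BT} together with the normalization $\xi(B)=\tilde B$, $\xi(T)=\tilde T$: then $\xi(\mathcal{V}_0)=\xi(\C\setminus\overline{B})=\tilde{\mathcal{V}}_0$ and $\xi(\mathcal{T}_0)=\tilde{\mathcal{T}}_0$. For $\mathcal{K}_0$ we use the characterization by connection: $\mathcal{K}_0$ is the union of the components of $\mathcal{K}$ (in Case (A)), or of the closed Fatou domains lying in $\mathcal{K}$ (in Case (B)), that connect $\partial B$ and $\partial T$. In Case (A), the proof of Lemma \ref{claim:preserve-BT} showed that $\xi$ induces a bijection between the components of $\mathcal{K}$ and those of $\tilde{\mathcal{K}}$, and between $\mathcal{T}\cup B$ and $\tilde{\mathcal{T}}\cup\tilde B$. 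Since $\xi$ preserves closures and intersections, it sends the $2n$ components connecting $B,T$ onto the $2n$ components connecting $\tilde B,\tilde T$. In Case (B) we argue the same way with Fatou domains; here $\ell(B,T)=2n$ is realized exactly by the domains of $\mathcal{K}_0$, and these are the only Fatou domains whose closures meet both $\partial B$ and $\partial T$.

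For the inductive step, assume \eqref{eq:preserve-tower} holds up to level $i-1$. Since $\mathcal{V}_i=\mathcal{V}_{i-1}\setminus(\overline{\mathcal{T}_{i-1}}\cup\mathcal{K}_{i-1})$ and $\xi$ is a homeomorphism of $\overline{\C}$, we immediately get $\xi(\mathcal{V}_i)=\tilde{\mathcal{V}}_i$, and in particular $\xi$ induces a bijection between the components $V_i$ and $\tilde V_i$. The main point is to show that $\xi$ sends the distinguished domain $T_i\subset V_i$ to $\tilde T_i\subset\tilde V_i$. For this we characterize $T_i$ intrinsically among the Fatou domains in $V_i$ that are not in $\mathcal{K}$, using relative connecting numbers. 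Define $\ell_{V_i}(\Omega)$ as the number of components of $\mathcal{K}$ (or Fatou domains in $\mathcal{K}$, in Case (B)) whose closures meet both $\partial\Omega$ and $\partial V_i$. Pulling back by the conformal map \eqref{eq:conformal-Vi} $i$ times reduces the count to level $0$, which gives $\ell_{V_i}(T_i)=\#\textup{Comp}(\mathcal{K}(T_i))=2n$. The subcase analysis of Lemma \ref{claim:preserve-BT}, based on Fact 2 and on the nesting of $V_k\subset V_i$ for the domains $T_k$ with $k>i$ inside $V_i$, shows $\ell_{V_i}(\Omega)<2n$ for every other Fatou domain $\Omega\subset V_i$. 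Since $\xi(\partial V_i)=\partial\tilde V_i$, this count is a topological invariant, so $\xi(T_i)=\tilde T_i$, and hence $\xi(\mathcal{T}_i)=\tilde{\mathcal{T}}_i$. Once $\xi(T_i)=\tilde T_i$ and $\xi(V_i)=\tilde V_i$ are known, the base-case argument applied to the pair $(\partial V_i,\partial T_i)$ gives $\xi(\mathcal{K}(T_i))=\tilde{\mathcal{K}}(\tilde T_i)$, and taking the union over all $T_i$ yields $\xi(\mathcal{K}_i)=\tilde{\mathcal{K}}_i$.

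The step I expect to be hardest is the strict inequality $\ell_{V_i}(\Omega)<2n$ for $\Omega\ne T_i$. The difficulty is that $\partial V_i$ is not the boundary of a single Fatou domain: it consists of arcs of two Fatou domains together with pieces of $\mathcal{K}_0,\dots,\mathcal{K}_{i-1}$. Components of $\mathcal{K}$ lying in the earlier levels $\mathcal{K}_j$ could touch $\partial V_i$ and inflate the count. My first attempt would be to count only components that meet $\partial V_i$ along the two Fatou-domain arcs identified in Fact 2, which excludes the old components. I would then transport the whole configuration to level $0$ via the conformal map $f^i:V_i\to V_0\setminus(\ldots)$, where the desired inequality is exactly the one already proved in Lemma \ref{claim:preserve-BT}. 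One has to check that removing $\pm K_v$ in \eqref{eq:conformal-Vi} neither creates nor destroys connections; the case distinction (1)/(2) at level 1 is where this needs care.
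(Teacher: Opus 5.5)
Your proposal follows the paper's proof essentially step for step: induction on $i$, the base case from Lemma~\ref{claim:preserve-BT} together with the connection characterization of $\mathcal{K}_0$, then $\xi(\mathcal{V}_i)=\tilde{\mathcal{V}}_i$ directly from level $i-1$. After that, $T_i$ is singled out as the unique Fatou domain in $V_i$ whose relative connecting number to $\partial V_i$ equals $2n$, and $\mathcal{K}(T_i)$ is recovered from the connecting property. Keep the unrestricted count, as the paper does: the restriction you float at the end (discarding components from earlier levels) would drop the old components of $\mathcal{K}_0$ that belong to $\mathcal{K}(T_1)$ in configuration (2), which correspond under $f$ to $\pm K_v$, so $T_1$ would score fewer than $2n$ and the reduction to the level-$0$ count of Lemma~\ref{claim:preserve-BT} would no longer match.
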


\begin{proof}
The proof is by induction on $i$. We begin with the base cases $i=0, 1$.

For $i=0$, we have $\mc{V}_0=\C\setminus\overline{B}$ and $\tilde{\mc{V}}_0=\C\setminus\overline{\tilde{B}}$. By Lemma~\ref{claim:preserve-BT} and the inductive assumption, $\xi(\mc{V}_0)=\tilde{\mc{V}}_0$ and $\xi(\mc{T}_0)=\tilde{\mc{T}}_0$. Moreover, $\mc{K}_0$ consists exactly of those components of $\mc{K}$ that connect $\pa B=\pa V_0$ and $\pa T=\pa T_0$. Since this connecting property is preserved by $\xi$, we obtain $\xi(\mathcal{K}_0)=\tilde{\mathcal{K}}_0$, and consequently $\xi(\mc{V}_1)=\tilde{\mc{V}}_1$.

For $i=1$, fix a component $V_1$ of $\mc{V}_1$. For a Fatou domain $\Omega\subset V_1$, define the generalized connecting number by
\[\ell_1(\Omega):=\left\{
\begin{array}{ll}
\#\{\text{components of }\mathcal{K}\text{ connecting }\partial V_1\text{ and }\Omega\}, & \hbox{\tu{in Case (A)};} \\[5pt]
\#\{\text{Fatou domains connecting }\partial V_1\text{ and }\Omega\}, & \hbox{in Case (B).}
\end{array}
\right.
\]
Since $f$ is conformal on $V_1$, the arguments in Lemma~\ref{claim:preserve-BT} show that $\ell_1(\Omega)\leq 2n$, with equality if and only if $\Omega=T_1$ for some component $T_1\subset\mc{T}_1$. As $\ell_1$ is a topological invariant and $\xi(V_1)=\tilde{V}_1$, the map $\xi$ sends each such $T_1$ to a component $\tilde{T}_1$ of $\tilde{\mc{T}}_1$; hence $\xi(\mathcal{K}(T_1))=\mathcal{K}(\tilde{T}_1)$. Recalling that $\mc{K}_1=\bigcup\mc{K}(T_1)$, where the union is taken over all components $T_1\subset\mc{T}_1$, we conclude that \eqref{eq:preserve-tower} holds for $i=1$.

Assume \eqref{eq:preserve-tower} holds for all $j \leq i - 1$.
We have $V_i = V_{i-1} \setminus (\overline{T_{i-1}} \cup \mathcal{K}_{i-1})$.
By the induction hypothesis, $\xi$ maps $V_{i-1} \setminus \overline{T_{i-1}}$ 
bijectively onto $\tilde{V}_{i-1} \setminus \overline{\tilde{T}_{i-1}}$.
The components of $\mathcal{K}(T_{i-1})$ are characterized 
as the components of $\mathcal{K}$ connecting $\partial V_{i-1}$ and $\pa T_{i-1}$, and this property is preserved by $\xi$.
Hence $\xi(V_i) = \tilde{V}_i$.

The equality $\xi(T_i) = \tilde{T}_i$ follows by analogous argument: defining a generalized connecting number $\ell_i$ for Fatou domains in $V_i$, which is preserved by $\xi$; similar arguments to those in Lemma \ref{claim:preserve-BT} imply that $\ell_i(\Omega)\leq 2n$, with equality if and only if $\Omega=T_i$. 

Finally, $\xi(\mathcal{K}(T_i)) ={\mathcal{K}}(\tilde{T}_i)$ since $\mathcal{K}(T_i)$ 
consists of the components of $\mathcal{K}$ connecting $\partial V_i$ and $\pa T_i$. Hence $\xi(\mathcal{K}_i) = \tilde{\mathcal{K}}_i$. This completes the proof of the proposition.
\end{proof}

% ====================================================================
\subsection*{Step 3. The labeling system}
% ====================================================================

We now encode the geometry of the tower using symbolic labels to its components.
\vskip 0.2cm

\tb{Level 1.} The $2n$ components $V_1$ of $\mc{V}_1$ inherit a cyclic order from their 
counterclockwise arrangement around $T$. We label them by 
$\eta(V_1) \in \{0, 1, \ldots, 2n-1\}$ such that two components 
$V_1$ and $V_1'$ are connected by a component of $\mc{K}_0$ if and only if 
$\eta(V_1) = \eta(V_1') \pm 1\tu{ mod }{2n}$.

\vskip 0.2cm
{\boldmath\textbf{Level $i \geq 2$.}}
Fix a component $V_i$ of $\mc{V}_i$. By \eqref{eq:conformal-Vi}, 
the conformal map $f: V_i \to V'_{i-1} \setminus (K_v \cup -K_v)$ is a bijection onto its image. We use $f$ to lift the existing label on $\mc{V}_{i-1}$. Specifically, suppose $\eta(V_{i-1}')=\tau_2\ldots\tau_{i}$. Let $\tau_1=\eta(V_1)$ where $V_1$ is the unique component of $\mc{V}_1$ containing $V_i$. Then we assign to $V_i$ the label
\begin{equation}\label{eq:label}
\eta(V_i) = \tau_1\tau_2\cdots \tau_{i}.
\end{equation}

In general, each component $V_i \subset \mc{V}_i$ carries a label in \eqref{eq:label} satisfying the following two consistency conditions:
\begin{enumerate}
\item[(L1)] If $V_i \subset V_{i-1}$ with $i\geq 2$, 
then $\eta(V_{i-1}) = \tau_1 \ldots \tau_{i-1}$;
\item[(L2)] If $f(V_i) \subset V_{i-1}'$ where $V_{i-1}'$ is a component of $\mc{V}_{i-1}$, 
then $\eta(V'_{i-1}) = \tau_2 \ldots \tau_i$ (the left shift of $\eta(V_i)$).
\end{enumerate}

In general, $\xi(K_v)$ need not coincide with $K_{\tilde{v}}$, because $\xi$ may place $K_v$ at a symmetrically different position. Here $K_{\tilde{v}}$ is the component of the maximal Fatou chain of $\tilde{f}$ containing its critical value $\tilde{v}$ (with $K_{\tilde{v}}=\{\tilde{v}\}$ in Case~(I)). To align the markings, we replace $\tilde{f}$ by a suitable rotation $f_*=e^{k\pi\tb{i}/n}\tilde{f}$; by Lemma~\ref{lem:parameter-sym}, this preserves the Julia set and dynamics. The corresponding set $K_{v_*}$ in the tower of $f_*$ is then chosen so that $\xi(\pm K_v)=\pm K_{v_*}$.  This implies that, for any component $K$ of $\mc{K}_0$,
\begin{equation}\label{eq:conjugate}
  \xi\circ f(K)=f_*\circ \xi(K).
\end{equation}

We define labels $\eta_*$ for the tower of $f_*$ in the same manner. Since $\xi$ maps $\mc{V}_1$ 
bijectively onto $\tilde{\mc{V}}_1$ and $\xi$ preserves orientation, we may assume $\eta(V_1)=\eta_*(\xi(V_1))$, i.e., $\eta(V_1)=\eta_*(\tilde{V}_1)$ if and only if $\xi(V_1)=\tilde{V}_1$ for all components $V_1, \tilde{V}_1$ of $\mc{V}_1$, $\tilde{\mc{V}}_1$, respectively.

\begin{lema}\label{lem:label-alignment}
$\eta(V_i) = \eta_*(\xi(V_i))$ for all $i\geq 1$ and all components $V_i$ of $\mc{V}_i$.
\end{lema}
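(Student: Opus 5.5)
I would argue by induction on the level $i$. The base case $i=1$ holds by the normalization fixed just before the lemma. Assume the identity for all components of $\mc{V}_{i-1}$ and fix a component $V_i$ of $\mc{V}_i$ with $\eta(V_i)=\tau_1\tau_2\cdots\tau_i$. By Proposition~\ref{prop:preserve-tower}, $\xi(V_i)$ is a component $\tilde V_i$ of $\tilde{\mc{V}}_i$. Write $\eta_*(\tilde V_i)=\sigma_1\cdots\sigma_i$. It suffices to check the first letter and the shifted tail separately, using the two consistency conditions (L1) and (L2).

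For the first letter, $V_i\subset V_{i-1}\subset\cdots\subset V_1$ with $\eta(V_1)=\tau_1$. Since $\xi$ is a homeomorphism preserving every level of the tower (Proposition~\ref{prop:preserve-tower}), it preserves this nesting, so $\xi(V_i)\subset\xi(V_1)$. The base case gives $\eta_*(\xi(V_1))=\tau_1$, and then (L1) for $f_*$ gives $\sigma_1=\tau_1$. More generally, the induction hypothesis and (L1) give $\sigma_1\cdots\sigma_{i-1}=\tau_1\cdots\tau_{i-1}$. So only the last letter needs work.

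For the tail, I would compare $f(V_i)$ and $f_*(\xi(V_i))$. By \eqref{eq:conformal-Vi}, $f(V_i)=V'_{i-1}\setminus(K_v\cup -K_v)$, where $\eta(V'_{i-1})=\tau_2\cdots\tau_i$ by (L2). Likewise $f_*(\tilde V_i)=\tilde V'_{i-1}\setminus(K_{v_*}\cup-K_{v_*})$ with $\eta_*(\tilde V'_{i-1})=\sigma_2\cdots\sigma_i$. By the induction hypothesis, it suffices to prove
\[
\xi(V'_{i-1})=\tilde V'_{i-1},\quad\text{i.e.}\quad \xi(f(V_i))\text{ and } f_*(\xi(V_i)) \text{ lie in the same component of } \tilde{\mc{V}}_{i-1}.
\]
This is the main obstacle, because we do not yet know $\xi\circ f=f_*\circ\xi$. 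I would prove it from boundary data. The boundary of $V_i$ consists of arcs of $\partial T_{i-1}$ and $\partial V_{i-1}$ together with components of $\mc{K}_{i-1}$; these are the two arcs from distinct Fatou domains and the adjacent $\mc{K}$-components described in Fact~2. The map $f$ sends these boundary pieces onto boundary pieces of $V'_{i-1}$, which pass through levels $i-2$ and below. The combinatorial position of $V'_{i-1}$ among the components of $\mc{V}_{i-1}$ is determined by which components of $\mc{K}_0\cup\cdots\cup\mc{K}_{i-2}$, and which domains of $\mc{T}_{i-2}$, its closure touches. These touching data are determined by the images under $f$ of the corresponding lower-level pieces on $\partial V_i$.

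By the induction hypothesis, together with \eqref{eq:conjugate} at level $0$ (pulled back through the conformal maps \eqref{eq:conformal-Vi}), $\xi$ intertwines $f$ and $f_*$ on the sets $\mc{K}_j$ and $\mc{T}_j$ for $j\le i-2$ as set maps. Concretely, for each component $K$ of $\mc{K}_j$ with $j\le i-2$, $\xi(f(K))=f_*(\xi(K))$. I would first establish this intertwining as an auxiliary statement inside the same induction. Then $\xi(V'_{i-1})$ and $f_*(\tilde V_i)$ have closures touching the same lower-level pieces in the same cyclic order, since $\xi$ preserves orientation. Hence they are the same component, which gives $\sigma_2\cdots\sigma_i=\tau_2\cdots\tau_i$ and closes the induction.

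One point to watch is whether these touching data determine a component of $\mc{V}_{i-1}$ uniquely inside a given $V_{i-2}$. If they do not, I would add the cyclic order of the $2n$ components of $\mc{K}(T_{i-2})$ around $T_{i-2}$, inherited from the level-$1$ labelling via (L2), to separate the remaining candidates.
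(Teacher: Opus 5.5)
Your framework matches the paper's. It is a simultaneous induction on the labels and on \eqref{eq:conjugate}: (L1) gives the first $i-1$ digits, and (L2) reduces the rest to showing that $\xi(f(V_i))$ and $f_*(\xi(V_i))$ lie in the same component of $\tilde{\mc{V}}_{i-1}$. The gap is in how you justify that last claim, which is one level short. What singles out $V'_{i-1}$ among the $2n$ components of $\mc{V}_{i-1}$ inside its level-$(i-2)$ component is the pair of bridges of $\mc{K}_{i-2}$ bounding it. The pieces of $\partial V_i$ that $f$ maps onto these are (in general new) bridges of $\mc{K}(T_{i-1})\subset\mc{K}_{i-1}$, not pieces of level $\le i-2$. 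So ``$f_*(\xi(V_i))$ touches the same pieces as $\xi(V'_{i-1})$'' requires $f_*(\xi(L))=\xi(f(L))$ for components $L$ of $\mc{K}_{i-1}$. Your auxiliary statement covers only $j\le i-2$, and this level-$(i-1)$ relation is essentially the level-$i$ statement itself. The level-$\le i-2$ data actually present on $\partial V_i$ is only the portion of $\partial V_{i-1}$ it touches, and that distinguishes only the gaps adjacent to old components. Already for $i=2$ this fails: take a gap $V_2\subset V_1$ lying between two new bridges attached to the $\partial T$-arc of $\partial V_1$. At level $0$ it touches only $\partial T$, whose image lies in $\partial B$, and every component of $\mc{V}_1$ touches $\partial B$. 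So \eqref{eq:conjugate} on $\mc{K}_0$ says nothing about which component of $\tilde{\mc{V}}_1$ contains $f_*(\xi(V_2))$.

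Your fallback does not close this. Cyclic order determines the correspondence only up to a rotation of $\mathbb{Z}/2n\mathbb{Z}$. The level-$1$ labelling cannot supply the anchor, because the label of the component containing $f_*(\xi(V_i))$ is exactly the unknown.

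The missing idea, which is what the paper does, is to compare $\xi$ with $\xi_*=f_*^{-1}\circ\xi\circ f$ on $V_{i-1}$. This map is well defined because $\xi(\pm K_v)=\pm K_{v_*}$, and because, by the induction hypothesis and (L2), $\xi$ maps the component of $\mc{V}_{i-2}$ containing $f(V_{i-1})$ onto the one containing $f_*(\tilde{V}_{i-1})$. Both $\xi$ and $\xi_*$ are orientation-preserving. Both carry the cyclically ordered bridges of $\mc{K}(T_{i-1})$, and hence the gaps in $V_{i-1}$, onto those of $\mc{K}(\tilde{T}_{i-1})$, so they differ by a rotation. By \eqref{eq:conjugate} at levels $\le i-2$ they agree on the old components meeting $\partial V_{i-1}$. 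That anchors the cyclic correspondence and forces the rotation to be trivial. The result is $\xi_*(V_i)=\xi(V_i)$ for every gap, which is exactly the claim you need. It also gives \eqref{eq:conjugate} on $\mc{K}_{i-1}$ for the next step of the induction.
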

\begin{proof}[Proof of Lemma~\ref{lem:label-alignment}]
The base case $i=1$ holds, and equation \eqref{eq:conjugate} holds for every component of $\mc{K}_0$, by the above setting for $\eta_*$.

We proceed by induction on $i$. Assume that the lemma holds for all levels up to $i-1$ and that \eqref{eq:conjugate} holds for every component of $\mc{K}_0\cup\cdots\cup\mc{K}_{i-2}$, where $i\geq 2$. Let $V_i\subset V_{i-1}$ be a component at level $i$, and set $\tilde{V}_i=\xi(V_i)$ and $\tilde{V}_{i-1}=\xi(V_{i-1})$. By the induction hypothesis, $\eta(V_{i-1})=\eta_*(\tilde{V}_{i-1})$, so condition (L1) implies that the first digits of $\eta(V_i)$ and $\eta_*(\tilde{V}_i)$ coincide with those of $\eta(V_{i-1})$.

The maps
$f: V_{i-1}\to V_{i-2}\setminus(K_v\cup -K_v)\text{ and }f_*: \tilde{V}_{i-1}\to \tilde{V}_{i-2}\setminus(K_{v_*}\cup -K_{v_*})$
are conformal. Since $\eta(V_{i-1})=\eta_*(\tilde{V}_{i-1})$, condition (L2) gives $\eta(V_{i-2})=\eta_*(\tilde{V}_{i-2})$ (for $i\geq 3$); hence $\xi(V_{i-2})=\tilde{V}_{i-2}$ by the induction hypothesis. For $i=2$, this statement is trivial since $V_0=\tilde{V}_0=\C\setminus\overline{B}$.

The choice of $f_*$ ensures that $\xi$ sends $\{\pm K_v\}\cap V_{i-2}$ onto $\{\pm \tilde{K}_{v_*}\}\cap \tilde{V}_{i-2}$ (if any). Therefore, the induced homeomorphism
\[
\xi_* := f_*^{-1}\circ \xi\circ f: V_{i-1}\to \tilde{V}_{i-1}
\]
is well defined.

Since $\xi$ preserves the tower, it maps the cyclic arrangement of the $2n$ ``bridges" in $\mc{K}(T_{i-1})$ around $T_{i-1}$ to the corresponding arrangement in $\mc{K}(\tilde{T}_{i-1})$. The conjugation defining $\xi_*$ inherits this property. Moreover, by \eqref{eq:conjugate}, $\xi$ and $\xi_*$ agree on $\mc{K}_0\cup\cdots\cup\mc{K}_{i-2}$, whose components intersect $\partial V_{i-1}$ and determine the positions of the gaps inside $V_{i-1}$. Therefore $\xi_*$ sends $V_i$ to the same gap as $\xi(V_i)$, i.e., $\xi_*(V_i)=\tilde{V}_i$. From $\xi_*=f_*^{-1}\circ\xi\circ f$ we obtain $\xi\circ f(V_i)=f_*(\tilde{V}_i)$. By induction and condition (L2), it then follows that $V_i$ and $\tilde{V}_i$ share the remaining $i-1$ digits. Hence, $\eta(V_i)=\eta_*(\xi(V_i))$.

Moreover, the same argument shows that \eqref{eq:conjugate} holds for every component of $\mc{K}_{i-1}$, completing the induction.
\end{proof}

% ====================================================================
\subsection*{Step 4. The conjugacy}
% ====================================================================

We now complete the proof of Theorem~\ref{thm:intersection-I}.

\begin{proof}[Completion of the proof of Theorem \ref{thm:intersection-I}]
For $i \geq 1$, label the components of $\mc{T}_i$ by 
$\eta(T_i) := \eta(V_i)$ where $V_i$ is the unique component of $\mc{V}_i$ 
containing $T_i$. By condition (L2), the left shift operator $\sigma$ satisfies 
$\sigma(\eta(T_i)) = \eta(f(T_i))$. A similar label applies to 
$\tilde{\mc{T}}_i$ for $f_*$.

Lemma~\ref{lem:label-alignment} gives $\eta(T_i) = \eta_*(\xi(T_i))$ 
for all $i \geq 1$. Therefore, for $i\geq 2$, we have
$$\eta_*(f_* \circ \xi(T_i))= \sigma(\eta_*(\xi(T_i))) = \sigma(\eta(T_i)) = \eta(f(T_i))= \eta_*(\xi \circ f(T_i)).$$

Since the labeling is faithful (distinct components have distinct labels), 
we conclude $f_* \circ \xi(T_i) = \xi \circ f(T_i)$ for every component $T_i$ of $\mc{T}_i$ and every $i \geq 1$.

Since both $f$ and $\tilde{f}$ are postcritically finite, 
$\diam(T_i)\to 0$ and $\diam(\tilde{T}_i)\to 0$ as $i\to\infty$.
Every point $z\in J$ is the limit of a sequence of components 
$T_i\subset\mc{T}_i$. By continuity, taking limits in the identity 
$f_*\circ\xi=\xi\circ f$ on each $T_i$ yields 
$f_*\circ\xi(z)=\xi\circ f(z)$. Since $z$ was arbitrary, this holds for all $z\in J$. This completes the proof of Theorem~\ref{thm:intersection-I}.
\end{proof}

\section{Quasisymmetric rigidity of the Julia sets}\label{sec8}
In this section, we complete the proof of Theorem~\ref{thm:mobius}.
Before the proof, we outline the proof of Proposition \ref{thm:BLM}, with further details referring to \cite[Theorem 1.4]{BLM14} or \cite[Theorem 5.2]{QYZ}.

\begin{proof}[Sketch of the proof of Proposition~\ref{thm:BLM}]
Since $f$ is postcritically finite, there exists a system of B\"{o}ttcher maps on its Fatou domains that are conformally conjugate to $z\mapsto z^{\delta}$ on $\mathbb{D}$ piecewise \cite[Lemma 3.3]{BLM14}. By the dynamical relation in the condition and \cite[Lemma 7.1]{BLM14}, $\xi$ can be conformally extended to the periodic Fatou domains; and further extended to the entire Fatou set via iterative liftings of $f$ and $\tilde{f}$.

The extended $\xi$ is a homeomorphism of $\overline{\mathbb{C}}$, conformal on the Fatou set, and quasisymmetric on the Julia set. Moreover, it maps Fatou centers to Fatou centers.

Next, we only need to verify that $\xi$ is a M\"{o}bius transformation.  The key observation is that boundaries of the Fatou domains of $f$ and $\tilde{f}$ are uniform quasicircles \cite[Proposition 3.5]{QYZ}. Let $U$ be a Fatou domain of $f$ and $V=\xi(U)$. By the removability of quasicircles for quasiconformal maps and the Schwarz reflection principle on $\partial \mathbb{D}$, the restriction $\xi: \overline{U}\to \overline{V}$ is uniformly quasisymmetric.

Now, the maps restricted to closures of Fatou domains are uniformly quasisymmetric, and to $J$ is also quasisymmetric. As shown in the latter part of the proof of \cite[Proposition 5.1]{Bon11}, $\xi$ is quasiconformal on $\overline{\mathbb{C}}$.

Since the Julia set $J$ has measure zero \cite[Theorem 3.8]{QYZ}, $\xi$ is a quasiconformal map that is conformal on $\overline{\mathbb{C}}\setminus J$, which has full measure in $\ol{\mathbb{C}}$. Thus, $\xi$ is 1-quasiconformal, and this implies $\xi$ is a M\"{o}bius transformation.
\end{proof}

\begin{proof}[Proof of Theorem \ref{thm:mobius}]
By Lemma~\ref{lem:parameter-sym}, multiplying $\lambda$ by $e^{2\pi{\bf i}k/(n-1)}$ only rotates the Julia set by $e^{\pi{\bf i}k/(n(n-1))}$. Thus we may normalize: $\arg(\lambda), \arg(\tilde{\lambda})\in[0,{2\pi}/(n-1))$.
Moreover, if $J$ is a cluster, we replace $\tilde{f}$ with $f_*$ as in Theorem~\ref{thm:intersection-I}. \vspace{3pt}

For Sierpi\'{n}ski carpet Julia sets, Proposition~\ref{pro:sierpinski} gives the dynamical relation \eqref{eq:26}; for Sierpi\'{n}ski-like carpet Julia sets, Corollary~\ref{cor:sier} reduces them to the Sierpi\'{n}ski model and Theorem~\ref{thm:sierpinski-like} yields \eqref{eq:26}; for necklace and cluster Julia sets, Theorems~\ref{thm:necklace} and \ref{thm:intersection-I} give, after precomposing $\xi$ with an element of $G_{\lambda}$ if necessary, conjugacy relations of the form \eqref{eq:semi-conjugacy}, which are special cases of \eqref{eq:26}. Thus the quasisymmetry $\xi$ satisfies \eqref{eq:26} on $J$ in all four topological cases. Proposition~\ref{thm:BLM} then implies that $\xi$ extends to a M\"{o}bius transformation on $\overline{\mathbb{C}}$, and the relation still holds on $\overline{\mathbb{C}}$. Moreover, $\xi$ maps each Fatou domain of $f$ and its center to those of $\tilde{f}$. \vspace{5pt}

Since $\xi$ is a homeomorphism and $\deg f=\deg\tilde f$, comparing degrees in \eqref{eq:26} gives $m'=m+l$, i.e.,
$
\tilde{f}^{m+l}\circ\xi=\tilde{f}^{m}\circ\xi\circ f^{l}.
$
Furthermore, for any integer $q\ge1$, applying the relation repeatedly shows that
\[
\tilde{f}^{q(m+l)}\circ\xi=\tilde{f}^{qm}\circ\xi\circ f^{ql};
\]
indeed, the induction step follows by composing the relation on the left with $\tilde f^{qm}$. Hence we may replace $(l,m)$ by $(ql,qm)$ and assume $l\ge5$ and $m\ge0$. 

\medskip\noindent\textbf{Claim 1.} \emph{As a collection of Fatou domains, $\xi\{B, T\}=\{\tilde{B}, \tilde{T}\}$.}
\begin{proof}[Proof of Claim 1]
For necklace and cluster Julia sets the claim is clear, since $\xi$ is a conjugacy (Theorems~\ref{thm:necklace} and \ref{thm:intersection-I}). Thus we assume $J$ is a Sierpi\'{n}ski or Sierpi\'{n}ski-like carpet.

Suppose $U:=\xi^{-1}(\tilde T)\notin\{B,T\}$. From the dynamical relation we have
\[
\tu{deg}(\tilde f^{m+l}\circ\xi,U)=\tu{deg}(\tilde f^m\circ\xi\circ f^l,U)=n^{m+l},
\]
which forces $\deg(f^l,U)=n^l$. Hence $f$ has degree $n$ on each domain in the orbit segment $U,f(U),\dots,f^{l-1}(U)$, so each contains a critical point of local degree $n$. Since free critical points of $f$ are simple, this forces $n=2$.

Since $f^{-1}(T)$ contains no critical point (as the critical values $\pm2\sqrt{\lambda}\neq 0$), the orbit segment avoids $f^{-1}(T)$.
With $l\ge5$ and only four free critical points, two terms in the orbit segment coincide; thus the segment contains a Fatou domain cycle. Moreover, the two free critical values of $f$ are critical points. Since $n=2$, any two free critical points $c$ and $c'$ satisfy $f^2(c)=f^2(c')$. Thus, the four Fatou domains $V_i$ ($i=0,1,2,3$) containing free critical points, one of them being $U$, satisfy
\[
f^l(V_i)=f^l(U)\text{ and }\tu{deg}(f^l,V_i)=n^l.
\]
Combining this with the dynamical relation, we obtain
\[
\tu{deg}(\tilde f^{l+m},\xi(V_i))=\tu{deg}(\tilde f^m\circ\xi\circ f^l,V_i)=\tu{deg}(\tilde f^{l+m}\circ\xi,U)=n^{l+m},
\]
and
$
\tilde f^{m+l}(\xi(V_i))=\tilde f^m(\xi(f^l(U)))=\tilde B.
$
Hence, at least two of $V_0,\dots,V_3$ are sent by $\xi$ to Fatou domains other than $\tilde B$ and $\tilde T$, and these domains are iterated onto $\tilde B$. This is impossible because $\tilde f^{-1}(\tilde T)$ contains no critical points.
Therefore, $\xi^{-1}(\tilde T)\in\{B,T\}$. The same argument applies to $\tilde B$. This completes the proof of Claim~1.
\end{proof}
Next, by precomposing an element of $G_{\lambda}$ if necessary, we may assume $\xi(B)=\tilde{B}$ and $\xi(T)=\tilde{T}$. Thus, $\xi$ fixes $0$ and $\infty$. Write $\xi(z)=az$. We perform Laurent expansions for $\tilde{f}^{m+l}\circ\xi$ and $\tilde{f}^m\circ\xi\circ f^l$ in a punctured neighborhood of infinity.
Since $\tilde{f}^{m+l}\circ\xi=\tilde{f}^m\circ\xi\circ f^l$, the coefficients of their highest-order terms  coincide. It implies $|a|=1$.\vspace{5pt}

\noindent\textbf{Claim 2.} \emph{$\xi(f^{-1}(T))=\tilde{f}^{-1}(\tilde{T})$. }
\begin{proof}[Proof of Claim 2]
Still, it suffices to consider the case where $J$ is a Sierpi\'nski or Sierpi\'{n}ski-like carpet. Let $U$ be a Fatou domain in $f^{-1}(T)$ and $\tilde{U}=\xi(U)$. Since $$\tu{deg}(\tilde{f}^m\circ\xi\circ f^l, U)=\tu{deg}(\tilde{f}^{m+l}, \tilde{U})=n^{m+l-1},$$  we have $\tilde{U}\subseteq \tilde{f}^{-1}(\tilde{T})$ whenever $n\geq 3$.
For $n=2$, either $\tilde{U}\subseteq \tilde{f}^{-1}(\tilde{T})$ or $\tilde{U}$ is critical and $\tilde{f}^2(\tilde{U})=\tilde{T}$. It suffices to exclude the latter case. 

By considering the inverse $\xi^{-1}$, $U$ is critical and $f^2(U)=T$. A direct computation shows $\lambda=\tilde{\lambda}=-{1}/{16}$; the free critical points of $f$ and $\tilde{f}$ are $\{\pm\frac{1}{2}e^{\pi \tb{i}/4}, \pm\frac{1}{2}e^{-\pi\tb{i}/4}\}$, and the roots of $f$ and $\tilde{f}$ are $\{\pm{1}/{2}, \pm\tb{i}/2\}$. 

Note that the union of roots and free critical points of $f$ (resp. $\tilde{f}$) is evenly distributed on the circle $\{|z|={|\lambda|}^{1/4}\}$. The dynamical relation implies that $\xi(f^{-1}(T))$ consists of four critical Fatou domains of $\tilde{f}$. By considering the inverse $\xi^{-1}$, similarly $\xi^{-1}(\tilde{f}^{-1}(\tilde{T}))$ consists of four critical Fatou domains of $f$. Thus $\xi$ interchanges the roots of $f$ with the free critical points of $\tilde f$. 

By composing an element of $G_{\lambda}$ if necessary, we may assume $\xi(z)=e^{\pi\tb{i}/4}z$. Suppose $f(z_0)={1}/{2}$. Then the local degree of $\tilde{f}^m\circ\xi\circ f^l$ at the point $z_0$ equals $2^{m+l-2}$. However, since neither $\xi(z_0)$ nor $\tilde{f}(\xi(z_0))$ is a critical or root point of $\tilde{f}$, the local degree of $\tilde{f}^{m+l}\circ\xi$ at $z_0$ is strictly less than $2^{m+l-2}$, contradicting the dynamical relation.
\end{proof}

By Claim 2, the $2n$-th roots of $-\lambda$ are mapped onto those of $-\tilde{\lambda}$ under the affine transformation $\xi$. Since $|a|=1$, we have $\tilde{\lambda}=a^{2n}\lambda$ and
$a=e^{\tb{i}\theta}$ with $$\theta=\frac{\tu{arg}(\tilde{\lambda})-\tu{arg}(\lambda)}{2n},$$ up to precomposing $\xi$ with a rotation in $G_{\lambda}$.

It remains to show that $a^{2n}$ is an $(n-1)$-th root of unity. Suppose otherwise. Then $a^{-2n(n-1)}\neq1$, so $a^{-(n-1)}$ is not a $2n$-th root of unity. Define $h=\xi\circ f\circ\xi^{-1}$. A direct computation gives
\begin{equation}\label{eq:h}
h(z)={a^{-(n-1)}}\tilde f(z).  
\end{equation}

Since $h$ is conjugate to $f$ via $\xi$, we have $J_h=\xi(J)=\tilde J$. Thus, the identity map $\tu{id}:J_h\to\tilde J$ is a quasisymmetry. By the corresponding rigidity theorem (Proposition~\ref{pro:sierpinski}, Theorems~\ref{thm:sierpinski-like}, \ref{thm:necklace}, or \ref{thm:intersection-I}), there exist integers $l'\ge5$ and $m'\ge0$ such that
$$
\tilde f^{l'+m'}=\tilde f^{m'}\circ h^{l'}\text{ on }\overline{\mathbb{C}}.
$$

Pick a point $z_0$ that is not a critical point of $\tilde{f}$ but whose image $\tilde{f}(z_0)$ is a root of $\tilde{f}$. Such a point $z_0$ exists since $\tilde{f}$ has only two critical values and $2n$ roots. Thus, the local degree of $\tilde{f}^{l'+m'}$ at $z_0$ satisfies
$$\tu{deg}(\tilde{f}^{l'+m'}, z_0)=n^{l'+m'-2}.$$

Note that the roots and critical points of $h$ coincide with those of $\tilde{f}$ by \eqref{eq:h}. Since $a^{-(n-1)}$ is not a $2n$-th root of unity by assumption, the point $h(z_0)=a^{-(n-1)}\tilde{f}(z_0)$ is not a root of $h$. Hence, the local degree of $\tilde{f}^{m'}\circ h^{l'}$ at $z_0$ satisfies
$$\tu{deg}(\tilde{f}^{m'}\circ h^{l'}, z_0)\leq 2n^{l'+m'-3}.$$
The dynamical relation forces these two degrees to coincide, which can happen only if $n=2$, $h(z_0)$ is a critical point of $h$, and $h^2(z_0)$ is a root of $h$. Since $h$ is conjugate to $f$ via $\xi$, the point $c_0:=\xi^{-1}(h(z_0))$ is a critical point of $f$, and $f(c_0)$ is a root of $f$. A direct computation gives $f(z)=z^2-{1}/{(16z^2)}$. 

Now pick a point $w_0$ that is not a critical point of $h$ but whose image $h(w_0)$ is a root of $h$. A similar argument yields $\tu{deg}(\tilde{f}^{m'}\circ h^{l'}, w_0)=2^{l'+m'-2}$. By the assumption that $a^{n-1}$ is not a $2n$-th root of unity,  the point $\tilde{f}(w_0)=a^{n-1}h(w_0)$ is not a root of $\tilde{f}$. Thus, $\tu{deg}(\tilde{f}^{l'+m'}, w_0)\leq 2^{l'+m'-2}$. The dynamical relation forces equality. Thus, $\tilde{f}(w_0)$ is a critical point of $\tilde{f}$ and $\tilde{f}^2(w_0)$ is a root of $\tilde{f}$. This implies $\tilde{f}(z)=z^2-1/(16z^2)$.

The roots of both $f$ and $\tilde{f}$ are $\{\pm1/2, \pm\textbf{i}/2\}$. By Claim 2, $\xi$ maps the roots of $f$ onto those of $\tilde{f}$. Thus, $a^{2n}=1$, which is obviously an $(n-1)$-th root of unity. This completes the proof of the theorem.
\end{proof}

%----------------------------------------------------------------------------------------------------------------


\begin{thebibliography}{99}

%\bibitem[AIM09]{AIM09} K. Astala, T. Iwaniec, G. M. Martin, \textit{Elliptic partial differential equations and quasiconformal mappings in the plane}, Princeton Univ. Press, Princetion, NJ, 2009.

%\bibitem[Bea83]{Bea83}A. F. Beardon, \emph{The geometry of discrete groups}, Springer, New York, 1983.
\bibitem[Ah]{Ah} L. V. Alhfors, Lectures on quasi-conformal mappings, Wadsworth \& Brook/Cole, Advanced Books \& Software, Monsterey 1987.

\bibitem[BF]{BF} J. Belk and B. Forrest, Quasisymmetries of finitely ramified Julia sets, \emph{Math. Ann.}, (2025) 393:1683-1740.

\bibitem[Bon06]{Bonk2006ICM} M. Bonk, The quasiconformal geometry of fractals, in \textit{Proceedings of the International Congress of Mathematicians, Madrid 2006}, Vol.~II, 1349--1373, Eur. Math. Soc., Z\"{u}rich, 2006.

\bibitem[Bon11]{Bon11}M. Bonk, Uniformization of Sierpi\'{n}ski carpets in the plane, \textit{Invent. Math.} \textbf{186} (2011), 559-665.

\bibitem[BM13]{BM1}M. Bonk and S. Merenkov, Quasisymmetric rigidity of square Sierpi\'{n}ski carpets, \textit{Ann. of Math.} (2)177(2013) 591-643.

%\bibitem[BM1]{BM1}M. Bonk and S. Merenkov, Quasisymmetric rigicity of square Sierpi\'{n}ski carpets, \textit{Ann. of Math.} (2)177(2013) 591-643.


\bibitem[BM20]{BM2}M. Bonk and S, Merenkov, Square Sierpi\'{n}ski carpets and Latt\`{e}s maps, \textit{ Math. Z.} 296(2020), 695--718.

\bibitem[BLM]{BLM14}M. Bonk, M. Lyubich and S. Merenkov, Quasisymmetries of Sierpi\'{n}ski carpet Julia sets, \textit{Adv. Math.} {301}(2016) 383-422.

\bibitem[BKM]{BKM} M. Bonk, B. Kleiner, S. Merenkov, Rigidity of Schottky sets, \textit{Amer. J. Math}. 131 (2009) 409--443.

%\bibitem[Bou97]{Bou97}M. Bourdon, Immeubles hyperboliques, dimension conforme et rigidit\'{e} de Mostow, \textit{Geom. Funct. Anal.} \textbf{7} (1997), 245-268.

%\bibitem[BF]{BF14}B. Branner and N. Fagella, \emph{Quasiconformal Surgery in Holomorphic Dynamics}, Cambridge University Press, New York, 2014.

%\bibitem[BP]{BP02} M. Bourdon and H. Pajot, Quasiconformal geometry and hyperbolic geometry, in \emph{Rigidity in dynamics and geometry} (Cambridge, 2000), pages 1-17, Springer, Berlin, 2002.

\bibitem[CGZ]{CGZ} G. Cui, Y. Gao and J. Zeng, Invariant graphs in Julia sets and decompositions of rational maps, arXiv: 2408.12371v2[math.DS], 23 Nov 2024.

\bibitem[CPT1]{CPT} G. Cui, W. Peng, and L. Tan, On a theorem of Rees-Shishikura, \textit{Ann. Fac. Sci. Toulouse Math.} \textbf{21} (2012), no. 5, 981-993.
\bibitem[CPT2]{CPT2} G. Cui, W. Peng, and L. Tan, Renormalizations and wandering
Jordan curves of rational maps, \textit{Commun. Math. Phys}. 344 (2016), 67--115.

\bibitem[CT]{CT} G. Cui, and L. Tan, A characterization of hyperbolic rational maps, \textit{Invent. math.}. 183 (2011), 451--516.
%\bibitem[DFGJ14]{DFGJ14}R. L. Devaney, N. Fagella, A. Garijo and X. Jarque, Sierpi\'{n}ski curve Julia sets for quadratic rational maps, \textit{Ann. Acad. Sci. Fenn. Math.} \textbf{39} (2014), 3-22.

%\bibitem[DH]{DH}


\bibitem[DLU]{DLU05}R. L. Devaney, D. Look and D. Uminsky, The escape trichotomy for singularly perturbed rational maps, \textit{Indiana Univ. Math. J.} \textbf{54} (2005), 1621-1634.

\bibitem[DP]{DP}R. L. Devaney and K. Pilgrim, Dynamic classification of escape-time Sierpi\'{n}ski curve
Julia sets. \emph{Fund. Math.}, \textbf{202}(2009), 181-198.

\bibitem[GZ]{GZ} Y. Gao, J. Zeng, Polynomials in molecules, arXiv: 2601. 18605v1.

%\bibitem[Hat]{Hat02}A. Hatcher, \emph{Algebraic topology,} Cambridge Univ. Press, Cambridge, 2002.

\bibitem[HP]{HP12}P. Ha\"{\i}ssinsky and K. Pilgrim, Quasisymmetrically inequivalent hyperbolic Julia sets, \textit{Revista Math. Iberoamericana}, \textbf{28} (2012), 1025-1034.

%\bibitem[Kle06]{Kle06}B. Kleiner. The asymptotic geometry of negatively curved spaces: uniformization, geometrization and rigidity, In \emph{International Congress of Mathematicians. Vol. II}, pages 743-768. Eur. Math. Soc., Z\"{u}rich, 2006.

%\bibitem[KL09]{KL09}J. Kahn and M. Lyubich, Quasi-Additivity Law in conformal geometry, \textit{Ann. Math.} \textbf{169} (2009), 561-193.

%\bibitem[LV73]{LV73}O. Lehto and K. I. Virtanen, \textit{Quasiconformal Mappings in the Plane}, Springer Verlag, Berlin, Heidelberg, New York, 1973.

%\bibitem[Lev90]{Lev90} G. Levin, Symmetries on Julia sets (Russian), \textit{Mat. Zametki} \textbf{48} (1990), 72-79, 159; translation in Math. Notes \textbf{48} (1990), 1126-1131 (1991).


%\bibitem[Lev01]{Lev01} G. Levin, Letter to the editors: ``Symmetries on Julia sets" (Russian), Mat. Zametki \textbf{69} (2001), 479-480; translation in Math. Notes \textbf{69} (2001), 432-433.
\bibitem[LLMM]{LLMM}R. Lodge, M. Lyubich, S. Merenkov and S. Mukherjee, On dynamical gaskets generated by rational maps, Kleinian groups, and Schwarz reflections. \emph{Conform. Geom. Dyn.}, 27(01), 1-54(2023).
%\bibitem[LP]{LP97}G. Levin and F. Przytycki, When do two rational functions have the same Julia set? \textit{Proc. Amer. Math. Soc.} \textbf{125} (1997), 2179-2190.

%\bibitem[Lyu97]{Lyu97}M. Lyubich, Dynamics of quadratic polynomials I-II, \textit{Acta Math.} \textbf{178} (1997), 185-297.

\bibitem[LM]{LM} M. Lyubich and S. Merenkov, Quasisymmetries of the basilica and the Thompson group. \emph{Geom. Funct. Anal.}, 28(3),727-754(2018).

\bibitem[LMM]{LMM} Y. Luo, M. Mj and S. Mukherjee, Universality of the Basilica, arXiv:2601.13553v1.% [math.DS] 20 Jan 2026.

%\bibitem[LN]{LN} Y. Luo and D. Ntalampekos, Uniformization of gasket Julia sets, arXiv:2411.17227v1.% [math.DS] 26 Nov 2024.

%\bibitem[Ma\~{n}93]{Man93} R. Ma\~{n}\'{e}, On a lemma of Fatou, \textit{Bol. Soc. Bras. Mat.} \textbf{24} (1993), 1-11.

%\bibitem[McM94]{McM94}C. T. McMullen, \textit{Complex Dynamics and Renormalization}, Ann. Math. Studies \textbf{135}, Princeton U. Press, Princeton, NJ, 1994.

%\bibitem[McM95]{McM95}C. T. McMullen, The classification of conformal dynamical system. In \emph{Current developments in mathematics, 1995 (Cambridge, MA),} pages 323-360. Internat. Press, Cambridge, MA 1994.

%\bibitem[Ma]{Man93} R. Ma\~{n}\'{e}, On a lemma of Fatou, \textit{Bol. Soc. Bras. Mat.} \textbf{24} (1993), 1-11.

\bibitem[McM]{McM98}C. T. McMullen, Self-similarity of Siegel disks and Hausdorff dimension of Julia sets, \textit{Acta Math.}, \textbf{180}(1998), 247-292.

\bibitem[Mer10]{Mer}S. Merenkov, A Sierpi\'{n}ski carpet with the co-Hopfian property, \textit{Invent. Math.} 180 (2010), 361-388.

\bibitem[Mer12]{Mer12}S. Merenkov, Planar relative Schottky sets and quasisymmetric maps, \textit{Proc. Lond. Math.
Soc.} (3) \textbf{104} (2012), no. 3, 455-485

\bibitem[Mer14]{Mer14}S. Merenkov, Local rigidity of Schottky maps, \textit{Proc. Amer. Math. Soc.} \textbf{142} (2014), no. 12, 4321-4332.

%\bibitem[Mil93]{Mil93} J. Milnor, Geometry and dynamics of quadratic rational maps, with an appendix by J. Milnor and L. Tan, \textit{Exper. Math.} \textbf{2} (1993), Vol 1, 37-83.

%\bibitem[Mil00]{Mil00} J. Milnor, Periodic orbits, externals rays and the Mandelbrot set: An expository account, in ``\textit{G\'{e}om\'{e} Complexe et Syst\`{e}mes Dynamiques, Colloque en l'honneur d'Adrien Douady}'', \textit{Ast\'{e}risque}, \textbf{261} (2000), 277-333.

%\bibitem[Mil]{Mil06} J. Milnor, \textit{Dynamics in One Complex Variable}, 3rd ed., Princeton Univ. Press, Princeton, NJ, 2006.

%\bibitem[Moo25]{Moo25}R. L. Moore, Concerning upper semicontinuous collections of compacta, \emph{Trans. Amer. Math. Soc.} \textbf{27} (1925), 416-426.

%\bibitem[MS]{MS} C. T. McMullen and D. Sullivan, Quasiconformal homeomorphisms and dynamics. III. The Teichm\"{u}ller space of a holomorphic dynamical system. \emph{Adv. Math.}, 135(1998), 351-395.


\bibitem[QRWY]{QRWY15} W. Qiu, P. Roesch, X. Wang and Y. Yin, Hyperbolic components of McMullen maps. \emph{Ann. Sci. \'{E}c. Norm. Sup\'{e}r (4)}, 48(3)(2015), 703-737.

\bibitem[QWY]{QWY12}W. Qiu, X. Wang and Y. Yin, Dynamics of McMullen maps, \textit{Adv. Math.} \textbf{229} (2012), 2525-2577.



\bibitem[QYZ]{QYZ} W. Qiu, F. Yang, and J. Zeng, Quasisymmetric geometry of Sierpi\'{n}ski carpet Julia sets, \textit{Fund. Math.} 244 (2019), no. 1, 73-107.


%\bibitem[Ste06]{Ste06}N. Steinmetz, On the dynamics of the McMullen family $R(z)=z^m+\lambda/z^\ell$, \textit{Conform. Geom. Dyn.} \textbf{10} (2006), 159-183.

%\bibitem[Sul85]{Sul85}D. Sullivan, Quasiconformal homeomorphisms and dynamics I: Solution of the Fatou-Julia problem on wandering domains, \textit{Ann. of Math.} \textbf{122} (1985), 401-418.

%\bibitem[Tio15]{Tio15}G. Tiozzo, Topological entropy of quadratic polynomials and dimension of sections of the Manderbrot set, \textit{Adv. Math.} \textbf{273} (2015), 651-715.

%\bibitem[Why58]{Why58}G. Whyburn, Topological characterization of the Sierpi\'{n}ski curves, \textit{Fund. Math.} \textbf{45} (1958), 320-324.

%\bibitem[XQY14]{XQY14}Y. Xiao, W. Qiu and Y. Yin, On the dynamics of generalized McMullen maps, \emph{Ergod. Th. $\&$ Dynam. Sys.} {\bf 34} (2014), 2093-2112.

%\bibitem[Yin99]{Yin99}Y. Yin, On the Julia sets of semi-hyperbolic rational maps, \textit{Chinese J. Contemp. Math.} \textbf{20} (1999), no. 4, 469-476.

%\bibitem[Yin]{Yin}Y. Yin,  Geometry and dimension of Julia sets, in \textit{The Mandelbrot Set, Theme and Variations}, London Math. Soc. Lecture Note Ser. \textbf{274}, Cambridge Univ. Press, Cambridge, 2000, pp. 281-287.

%\bibitem[Zak03]{Zak03}S. Zakeri, External rays and the real slice of the Mandelbrot set, \textit{Ergod. Th. $\&$ Dynam. Sys.} \textbf{23} (2003), 637-660.

\end{thebibliography}
\end{document}